\documentclass[11pt]{article}

\usepackage{amsmath,amssymb,amsthm}
\usepackage{graphicx}

\usepackage{titlesec}

\titlespacing*{\paragraph}
  {0pt}   
  {5pt}   
  {0.5em}   

\usepackage{mathrsfs}

\usepackage{geometry,graphicx,xcolor,booktabs,enumitem}
\usepackage[colorlinks=true,linkcolor=blue!60!black,citecolor=blue!60!black]{hyperref}
\usepackage{tikz}
\usetikzlibrary{arrows.meta,positioning}
\usepackage{xcolor}
\usepackage{color,soul}
\newcommand{\black}{\color{black}{}}

\definecolor{myblue}{RGB}{90,140,200}

\definecolor{orange}{rgb}{0.8, .2, 0.0}

\definecolor{cR}{RGB}{33,102,172}
\definecolor{cS}{RGB}{214,96,77}
\definecolor{cA}{RGB}{26,150,65}
\definecolor{cB}{RGB}{178,24,43}

\theoremstyle{definition}
\newtheorem{definition}{Definition}[section]
\def\bR{{\mathbb R}}
\theoremstyle{plain}
\newtheorem{lemma}[definition]{Lemma}
\newtheorem{proposition}[definition]{Proposition}
\newtheorem{thmX}{Theorem}
 
\newtheorem{thmF}{Theorem}
 
\newtheorem{thmG}{Theorem}

\newtheorem{conH}{Conjecture}

\newtheorem{theorem}{Theorem}

\newtheorem{corollary}[definition]{Corollary}

\theoremstyle{remark}
\newtheorem{remark}[definition]{Remark}
\theoremstyle{conjecture}

\newtheorem*{lemmA}{Lemma A}

\newcommand{\eps}{\varepsilon}
\newcommand{\KX}{{\cal K}(X)}

\newcommand{\dH}{d_H}
\DeclareMathOperator{\id}{id}
\def\bZ{\mathbb Z}
\def\cA{{\cal A}}
\def\cB{{\cal B}}
\def\cC{{\cal C}}
\def\C{\cC}
\def\cD{{\cal D}}
\def\cF{{\cal F}}
\def\F{\cF}
\def\cE{{\cal E}}
\def\cG{{\cal G}}
\def\cK{{\cal K}}
\def\cH{{\cal H}}
\def\cI{{\cal I}}
\def\cL{{\cal L}}
\def\cM{{\cal M}}
\def\cN{{\cal N}}
\def\cO{{\cal O}}

\def\cQ{{\cal Q}}
\def\cR{{\cal R}}
\def\sR{{\cal R}}
\def\cS{{\cal S}}
\def\cT{{\cal T}}
\def\RF{\mathscr{R}_\cF}

\def\Gl{\cG_\cF}
\def\owR{\cR^{ow}_\cF}
\def\epsilon{\varepsilon}
\newcommand{\BF}{\bf\boldmath }

\def\Ato{\succcurlyeq_{\cA_\cF}}
\def\Dto{\succcurlyeq_D}
\def\Sto{\succcurlyeq_S}
\def\Fto{\succcurlyeq_\cF}
\def\Hto{\succcurlyeq_{H_\cF}}
\def\Cto{\succcurlyeq_{\cC_\cF}}
\def\Cdto{\succcurlyeq_{\cC_{\cF,d}}}
\def\CtoF{\succcurlyeq_{\cC_{\widehat F}}}
\def\CEto{\succcurlyeq_{\cC_\cE}}

\usepackage{lineno}
\newcommand{\beq}{\begin{linenomath}\begin{equation*}} 
\newcommand{\eeq}{\end{equation*}\end{linenomath}} 
\newcommand{\beqn}{\begin{linenomath}\begin{equation}} 
\newcommand{\eeqn}{\end{equation}\end{linenomath}} 
\def\Om{\Omega_\cF}
\def\P{\Phi_\cF^I}

\def\PJ{\Phi_\cF^J}
\def\Pn{\Phi_\cF^{I_n}}

\def\Ceq{\overset{\cC_{\cF}}{=}}

\def\Cdeqeta{\overset{\cC_{\cF,d,\eta}}{=}}

\def\Feq{\overset{\cF}{=}}
\def\Seq{\overset{\cS}{=}}
\def\Aeq{\overset{\cA_\cF}{=}}

\DeclareMathOperator{\Node}{Node}
\DeclareMathOperator{\Down}{Down}
\DeclareMathOperator{\Up}{Up}

\title{Recurrent and gradient dynamics\\
of Iterated Function Systems.}
\author{Roberto De Leo}
\date{\today}

\begin{document}
\maketitle

\begin{abstract}
    We study the qualitative dynamics of general Iterated Function Systems (IFSs) through chain recurrence and a decomposition into recurrent and gradient-like behavior.
    At the same time, we move the focus from the topological property of the phase space to the dynamical properties of the system and we say that an IFS has compact dynamics if it has an ``invariant'' compact set (global attractor) that attracts every compact subset of the phase space.

    For an IFS with compact dynamics, we introduce a distinction that has no counterpart for a single map: ordinary chain recurrence, which may be realized along a suitable itinerary, and orbitwise chain recurrence, which requires the whole semigroup orbit of a point to remain in its chain component. 
    This leads naturally to two directed graphs encoding the recurrent components and the downstream relations between them.

    We prove that the chain structure of an IFS with compact dynamics over a locally compact space is completely determined by the restriction of the IFS to its compact global attractor. 
    We then obtain a Conley-type trichotomy: every point of the global attractor has either orbitwise recurrent dynamics, mixed recurrent–gradient dynamics, or purely gradient dynamics. The three alternatives admit equivalent descriptions in terms of bitrajectories and strict Lyapunov functions. 
    We also give conditions guaranteeing connectedness of the chain graph and of the orbitwise chain graph.

    Finally, we compare the point dynamics of the IFS with two associated single-map systems. 
    The chain graph is naturally isomorphic to that of the full one-sided step skew product, while orbitwise chain components determine distinct chain components of the Hutchinson map. We also establish a localization principle extending several compact-space results from Conley theory for closed relations to IFSs whose global attractor has a compact neighborhood.
\end{abstract}

\section{Introduction}

The qualitative dynamics of a dynamical system is largely determined by the structure of its recurrent behavior and by the gradient-like dynamics connecting different recurrent components. 
For a single map, this point of view is classical and lies at the heart of Conley theory. 
In the setting of an iterated function system (IFS), however, the presence of several generators introduces a new distinction: recurrence may occur along a suitable choice of itinerary without being preserved by the action of the whole semigroup.

The purpose of this article is to develop this recurrent–gradient viewpoint for general IFSs. 
We distinguish between ordinary chain recurrence, which records recurrence obtainable along some itinerary, and orbitwise chain recurrence, which requires the entire semigroup orbit of a point to remain in the same chain component. 
This leads to two related decompositions of the dynamics and, in particular, to a trichotomy between orbitwise recurrent, mixed recurrent–gradient, and purely gradient behavior.

Our main tool is the notion of stream, introduced in~\cite{DLY25} for semiflows: a closed and transitive extension of the orbit relation encoding which points are dynamically downstream from which. 
The recurrent equivalence classes of a stream form the nodes of a directed graph, while downstream relations between distinct nodes describe the gradient-like part of the dynamics. 
Here we adapt this framework to IFSs, with particular emphasis on the chain stream generated by the $\eps$-chains of Barnsley and Vince.





\paragraph{General IFSs.}
Historically, IFSs have been used quite successfully for the construction of deterministic fractals~\cite{Bar88} in the context of contractive maps, beginning with the seminal article of Hutchinson~\cite{Hut81} and the introduction of the general theory of contractive IFS by Michael Barnsley in~\cite{Bar88}. 
Recently, Barnsley and Andrew Vince~\cite{BV12,Vin13} and, independently, the present author~\cite{DL08,DL15,DL15b} started investigating some self-projective IFSs on real and complex projective spaces whose maps are not contractions.
At the same time, Barnsley et al. started investigating properties of ``general'' IFSs~\cite{BV11,BV13,BL14}, namely IFSs where the only requirement is that maps be continuous, mostly from the point of view of the {\em chaos game}.
Here we rather study general IFSs as dynamical systems, with particular emphasis on recurrence, gradient-like behavior, and the global organization of their chain components.

\paragraph{Compact dynamics.} 
For simplicity, we restrict our attention to locally compact metric spaces.
We also require separability for technical reasons. 
We do not require completeness and we assume instead that our IFSs have ``compact dynamics''.
More precisely, we require our IFSs to have a non-empty global attractor, namely an $H$-invariant (i.e. invariant under the Hutchinson map) compact set that attracts every compact subset of the phase space.
We showed in~\cite{DLY26a} that the qualitative behavior of a semiflow with compact dynamics is equivalent to that of its restriction to its global attractor, and here we show that the same happens in case of IFSs.
This way, we switch the focus from the topological properties of the phase space to the dynamical properties of the IFS.

\paragraph{Streams.}
Given an IFS $\cF$ with compact dynamics on a locally compact metric space $X$, we use ``streams'' to decompose the dynamics of $\cF$ into its ``recurrent'' and ``gradient'' components and represent the qualitative dynamics of $\cF$ in a graph. 
A stream of $\cF$ is a closed and transitive binary relation on $X$ that establishes ``which point is downstream from which''. 
For any given point $x\in X$, every point on the orbit of $x$ is downstream from $x$ but in general, due to the closure and transitivity properties, the set of points downstream from a given point is larger than its sole orbit.
Given a stream $S$ of $\cF$, it is natural to single out those
subsets $N\subset X$ such that every point of $N$ is both upstream and downstream from every point of $N$.
We call each of these sets a ``node'', since these sets will be the nodes of the graph we will use to encode the qualitative behavior of the IFS.
One can think nodes as a generalization of the concept of periodic orbit.
A node $N$ is downstream from another node $M$ if the points of $N$ are downstream from those of $M$.
We encode the qualitative behavior of $S$ into a directed graph by taking all nodes of $S$ as the nodes of the graph and saying that there is an edge from $M$ to $N$ in the stream's graph if $N$ is downstream from $M$.
Due to transitivity, the graph of any stream has no cycles.


\paragraph{Chain recurrence and orbitwise chain recurrence.}
A new feature appears as soon as $m>1$.
The usual notion of chain recurrence for an IFS, introduced by Barnsley and Vince, describes recurrence that can be realized by a suitable choice of itinerary, but its recurrent set and its chain nodes need not be forward-invariant under the whole IFS. 
This leads us to distinguish between \emph{chain recurrence} and the  stronger notion of \emph{orbitwise chain recurrence}, in which every point of the semigroup orbit of $x$ is required to remain in the chain-equivalence
class of $x$. 
The two notions coincide for $m=1$, but can differ substantially for a genuine IFS. 
Accordingly, we associate to the chain stream two directed graphs: the chain graph, which records the possible recurrent and gradient dynamics obtained by choosing itineraries, and the orbitwise chain graph, whose nodes are forward-invariant under the whole free-monoid action.


\paragraph{Related literature.}
Our framework also admits two complementary interpretations already present in the literature. 
First, the IFS determines the one-step closed relation $\mathscr R_{\mathcal F}
=
\cup_{i=1}^m \operatorname{graph}(f_i),$ whose set-image operator is precisely the Hutchinson map. 
Thus, the chain theory developed here is closely related to the Conley theory of closed relations developed by Akin~\cite{Akin93} and Akin and Wiseman~\cite{AW17} and by McGehee~\cite{McG92} and McGehee--Wiandt~\cite{MW06}  (see Section~\ref{subsec: Akin Wiseman}).
What distinguishes our approach from these two predecessors is not the underlying object but rather what is done with it.
The goal of Akin and Wiseman is to extend Conley's chain-recurrence theory and Easton's strong-chain recurrence theory in two directions: first from maps to relations, and second from compact metric spaces to general uniform spaces.
The goal of McGehee and Wiandt is to show that the natural setting for the Conley Decomposition Theorem is iteration of closed relations on compact spaces.
In our case, we are not interested in streams per se but only as a tool to encode the qualitative behavior of a dynamical system into a graph, so that it can be easily represented and communicated.  
Hence, our results are mainly about the ingredients of the graph (nodes and edges) and its properties, in particular conditions that grant its connectedness.

Second, after adjoining the itinerary as part of the state, the IFS becomes the single step skew-product map $
\widehat F(\omega,x)
=
\bigl(\sigma\omega,f_{\omega_0}(x)\bigr)$.
From this map, one can extract information about the behavior of the IFS.
In Section~\ref{sec:skew} we show that the projection onto $X$ induces an isomorphism between the chain graph of this skew product and the chain graph of the original IFS. 
Thus, although a chain node downstairs need not itself be forward-invariant, it is the projection of a genuine invariant chain component of an ordinary dynamical system upstairs.
At the same time, we show that the skew product map does not retain information about the orbitwise nodes and so can tell only one part of the story.

An IFS can have infinitely many different streams, each highlighting some different aspects of its dynamics, but the most important is arguably the chain stream, based on an adaptation to IFS of Charles Conley's $\eps$-chains.



\paragraph{Main results.}
Assume that an IFS $\cF$ on a locally compact metrizable space $X$ has compact dynamics and denote by $\Gl$ its global attractor. 
First of all, we have the following:
    \begin{enumerate}
        \item The chain stream of $\cF$ has the same nodes and edges (and so the same graph) as the chain stream of the restriction of $\cF$ to $\Gl$ (Theorem~\ref{prop: C = C glob}).\par
        {\em This means that $\cF$ has the same qualitative dynamics as an IFS on a compact space.}
        \item For every orbitwise chain node $N$ of $\cF$, the set $\Om(N)$ is non-empty and is the greatest nonempty compact $H$-invariant subset of $N$.
        (Theorem~\ref{thm: N is H-invariant}).\par
        {\em In the IFS framework, H-invariance plays the role that invariance plays for semiflows.}  
        \item If $\Gl$ is connected, the chain graph of $\cF$ is connected (Theorem~\ref{thm: weak chain graph is connected}). 
        Moreover, if every point in $\Gl$ has a backward orbit whose limit points lie in some orbitwise chain node, then also the orbitwise chain graph of $\Phi_\cF$ is connected (Theorem~\ref{thm: G conn => graph conn}).\par
        {\em Connectedness is a strong topological condition for a graph. 
        In~\cite{DLY26a}, James Yorke and the present author proved that, given a semiflow with compact dynamics on a locally compact space, the connectedness of its global attractor is enough to grant the connectedness of its chain graph. 
        These results show what happen in case of IFSs.}
        \item If the smallest stream of $\cF$ has countably many nodes, then it coincides with its chain stream (Theorem~\ref{thm: Omega-stream}).\par
        {\em Hence, in these cases the chain stream is the smallest stream of $\cF$ and so the one that is most strictly related to the dynamics of $\cF$.}
        \item The Hutchinson map $H_\cF$ of $\cF$ has compact dynamics and its global attractor is a subset of $\cK(\Gl)$, the set of all compact subsets of $\Gl$ (Theorem~\ref{thm: F has comp dyn <=> H has comp dyn}).
        More generally, we show that $H_\cF$ has compact dynamics if and only if $\cF$ has compact dynamics.\par
        {\em The Hutchinson map of an IFS is able to reveal hidden aspects of its dynamics. 
        This result shows that our framework for semiflows, applied to $H_\cF$, aligns nicely the framework for IFS applied to $\cF$.
        Notice that, if the global attractor of $\cF$ is connected, then the global attractor of $H_\cF$ is connected as well and so, by Theorem~6 in~\cite{DLY26a}, the graph of $H_\cF$ is connected.}
        \item For each orbitwise chain node $N$ of $\cF$, the set $\Om(N)$, as a compact $H$-invariant set, is a chain-recurrent point of $H_\cF$ and for distinct orbitwise chain chain nodes $N,M$ of $\cF$ the sets $\Om(N),\Om(M)$ are contained in distinct chain nodes of $H_\cF$. 
        In particular, if the chain graph of $H_\cF$ consists of a single node, then the chain graph of $\cF$ has a single node as well, which coincides with $\Gl$ (Theorem~\ref{thm: H one node => F one node}).\par
        {\em The relation between the dynamics of $\cF$ and the dynamics of $H_\cF$ is often non-trivial. 
        Hence, general relations between the qualitative behavior of these two systems are particularly important.
        This result shows that, when the dynamics of $H_\cF$ is particularly simple (namely the whole global attractor is itself a pond), the same happens to $\cF$.
        The inverse result does not hold: even for IFSs consisting of a single map there are cases 
        where the chain graph of $\cF$ has a single node while the chain graph of $H_\cF$ has many.
        }
    \end{enumerate}
    %

    Since global attractors are often topologically complicated and hard to find analytically, we provide a criterion to establish their existence: an IFS $\cF$ has compact dynamics if and only if it has a compact global trapping region (Theorems~\ref{thm: Q => G} and~\ref{thm: compact dynamics = compact gl tr reg}).
    Often a system has many trapping regions, and these trapping regions are frequently topologically simple (e.g. convex open sets).

    As an application of this framework, we extend to IFSs the celebrated theorem of Conley on semiflows that, loosely speaking and expressed in our framework language, either a point is in a chain node or ``in an edge'', namely every point that is not chain-recurrent has the property that each bitrajectory through it has its $\alpha$- and $\omega$-limit sets lying in distinct chain nodes.
    In Theorem~\ref{thm: trichotomy} we confirm that the same holds for IFSs. 
    In case of IFSs, moreover, we distinguish between orbitwise recurrent points, which are those chain-recurrent points $x$ for which every trajectory of $x$ lies in the same node as $x$, and those for which, on the contrary, there are trajectories that are not contained in the same node as $x$.
    Hence, in case of IFSs, for every point $x\in\Gl$ we have exactly three possible (and mutually exclusive) possibilities: orbitwise recurrent dynamics, mixed recurrent-gradient dynamics and purely gradient dynamics.

    As a second application, we prove a localization principle (Theorem~\ref{thm:localization}) that shows that any proposition on compact spaces based solely on results concerning the chain-recurrent set, its chain components and their downstream order can be transferred immediately to IFS with compact dynamics whose global attractor has a compact neighborhood.
    We use this principle to extend to such IFSs a few theorems about the chain-recurrent set by Akin-Wiseman and McGehee-Wiandt (see Section~\ref{subsec: Akin Wiseman}).  
    
    Finally, we show that the chain graph of an IFS coincide with the chain graph of its skew product (Theorem~\ref{thm:skew}), a dynamical system often used to study the qualitative properties of a IFS. 
    At the same time, though, we argue that there are some critical information about the collective dynamics of the IFS that are not visible to its skew product (see Section~\ref{sec:skew}).


    \paragraph{Structure of the article.}
    In Section~\ref{sec: IFS}, we introduce the main definitions about the dynamics of an IFS.
    In Section~\ref{sec: global attractor}, we introduce global attractors and trapping regions and prove their main properties. 
    In Section~\ref{sec: streams}, we introduce streams of IFSs and prove several of their fundamental properties.
    In Section~\ref{sec: chains}, we study in some detail the fundamental properties of the chain stream of an IFS.
    In Section~\ref{sec: Hutch}, we apply our framework on semiflows to the Hutchinson map and compare the chain stream of an IFS with the chain stream of its Hutchinson map.
    Finally, in Section~\ref{sec: related}, we compare our work with the main literature related to the subject, import some result from it and extend them within our framework and finally prove some further related proposition.
%
\section{Iterated Function Systems}
\label{sec: IFS}
We introduce several definitions and some related basic results on the qualitative dynamics of IFSs.
Our goal is to be able to describe the qualitative behavior of general IFSs, namely IFSs whose maps are not necessarily contractions.

\medskip\noindent
{\bf The phase space.} Throughout the article, {\BF$X$} will denote a {\bf metrizable, separable and locally compact} topological space. 
We will usually denote points in $X$ by {\BF$x,y,z$} and {\BF$d(x,y)$} will denote the distance between $x$ and $y$ for some metric $d$ compatible with the topology of $X$. 

\medskip\noindent
{\bf Distance functions.} 
Throughout the article we use the following distance functions derived from $d(x,y)$.
In the last two items, we assume that $A,B$ are nonempty and compact.
\begin{enumerate}
    \item ({\bf point-to-set distance}) for $x\in X$ and $A\subset X$ we set {\BF$d(x,A)$}$=\inf_{a\in A}d(x,a)$;
    \item ({\BF one-sided excess of $A$ over $B$}) for $A,B\subset X$, we set {\BF$e(A,B)$}$=\sup_{a\in A}d(a,B)$;
    \item ({\BF Hausdorff distance}) for $A,B\subset X$, we set {\BF$d_H(A,B)$}$=\max\{e(A,B),e(B,A)\}$.
\end{enumerate}

\medskip\noindent
{\bf The multi-indices semigroup.} We denote by {\BF$\cI^m$} the free monoid generated by $\{1,\dots,m\}$ with identity 0.
Given $I=i_1\dots i_k\in\cI^m$, we denote by {\BF$|I|=k$} the number of indices in $I$ and call this number the {\BF length} of $I$.
We denote by {\BF $\cI^m_k$} the set of all indices of $\cI^m$ of length at least $k$.


\medskip\noindent
{\bf Iterated Function Systems.} 
%
    An {\bf iterated function system} (IFS) on a topological space $X$ is a finite collection {\BF$\cF = \{f_1,\dots,f_m\}$} of continuous maps of $X$ into itself.
    Given $I=i_1\dots i_k\in\cI^m$, we set
    $$
    \P = f_{i_k}\circ\dots\circ f_{i_1}.
    $$

    An IFS $\cF$ determines a continuous semigroup right action
    $$
     \Phi_\cF: \cI^m \times X \to X
    $$
    by $(I,x)\mapsto\Phi^I_\cF(x)$.
    In particular, this means that:
    \begin{enumerate}
        \item $\Phi_\cF^0=id_X$;
        \item $\Phi^I_\cF\circ\Phi^J_\cF=\Phi^{J\cdot I}_\cF$ for every $I,J\in\cI^m$.
    \end{enumerate}
    We call $\Phi_\cF$ the {\bf action} associated to $\cF$.
    Notice that, when $m=1$, this reduces to the definition of {\em discrete-time semiflow} on $X$.
    
    Finally, we say that $\cF$ is {\bf contractive} when each of its maps $f_i$ is a contraction.
 
\medskip\noindent
{\bf Invariance under $\Phi_\cF$.}
  We say that a set $A\subset X$ is {\bf forward-invariant} (resp. {\bf backward-invariant}) under $\Phi_\cF$ if $\Phi^I_\cF(A)\subset A$ (resp. $\Phi^I_\cF(A)\supset A$) for every $I\in\cI^m$.
  Equivalently, if $f_i(A)\subset A$ (resp. $f_i(A)\supset A$) for every $i=1,\dots,m$.
  We say that a set $A$ is {\bf invariant} under $\Phi_\cF$ if it is both backward- and forward-invariant, namely if $\Phi^I_\cF(A)=A$ for every $I\in\cI^m$.
  Finally, we write {\BF$H_\cF(A)=\cup_{i=1}^m f_i(A)$} and say that $A$ is {\bf $H$-invariant} under $\cF$ if $H_\cF(A)=A$.
%

\medskip\noindent
{\bf Orbits.}
    We write {\BF $x\Fto y$} if $y=\Phi^I_\cF(x)$ for some $I\in\cI^m$.
    We write {\BF $x\Feq y$} if $x\Fto y$ and $y\Fto x$.
    We call {\bf orbit relation} of $\cF$ the set 
    \beq
    \text{\BF$\cO_\cF$} = \{(x,y): x\Fto y\},
    \eeq
    so that the orbit of any given point $x$ under $F$ is given by
    \beq
    \text{\BF$\cO_\cF(x)$} = \{y: (x,y)\in\cO_\cF\}.
    \eeq


    

\medskip\noindent
{\bf Limit sets and recurrence.}
    The limit set of a point $x\in X$ under $\cF$ is the set
    \beq
    \text{\BF$\Om(x)$} = \{y: \text{ there are }\; I_n\in\cI^m\text{ such that }|I_n| \to\infty\text{ and }\Phi^{I_n}_\cF(x)\to y\}.
    \eeq
    Given a subset $A\subset X$, we define
    \beq
    \text{\BF$\Om(A)$} = \{y:\text{ there are }I_n\in\cI^m\text{ and }a_n\in A\text{ such that }|I_n|\to\infty\text{ and }\Phi^{I_n}_\cF(a_n)\to y\}.
    \eeq
    Notice that, for a singleton $A=\{x\}$, the definition reduces to the limit set of a point, so $\Om(\{x\})=\Om(x)$.

    
    We say that $x$ is {\bf fixed} for $\cF$, or that $x$ is a {\bf fixed-point} of $\cF$, if $\cO_\cF(x)=\{x\}$.
    Finally, we say that $x$ is {\bf recurrent} under $\cF$ if $x\in\Om(x)$ and we denote by $\cR_\cF$ the set of all points that are recurrent under $\cF$; if, moreover,  $x\in\Om(z)$ for every $z\in\cO_\cF(x)$, we say that $x$ is {\bf orbitwise recurrent}.
    We denote by $\cR^{ow}_\cF$ the set of all orbitwise recurrent points.

    \medskip\noindent
    {\bf Orbitwise recurrence.}
    For $m=1$, both recurrence and orbitwise recurrence reduce to the standard concept of recurrence for semiflows.
    For $m>1$, the two concepts in general differ.
    For instance, the recurrent set is not necessarily forward-invariant, as the following example shows.

\medskip
Let $X=[0,1]$ and consider the IFS $\cF=\{f,g\}$, where
\[
f(t)=\min\left\{t+\frac12,1\right\},
\qquad
g(t)=\min\{2t,1\}.
\]
Since $0$ is fixed under $g$, $0\in\Om(0)$ and so $0\in\cR_\cF$.
Now, let \(x>0\). 
For every \(t\in[0,1]\), one has $f(t)\geq \min\{2t,1\}$ and $g(t)=\min\{2t,1\}$.
Consequently, every word \(h_n\circ\cdots\circ h_1\), with \(h_j\in\cF\), satisfies
\[
h_n\circ\cdots\circ h_1(x)
\geq
\min\{2^n x,1\}.
\]
Thus, for all $I$ with $|I|$ sufficiently large, we have that $\P(x)=1$ and, therefore, $\Omega_{\cF}(x)=\{1\}$
for every $x\in(0,1]$.
Hence, $\cR_\cF=\{0,1\}$.
However, $f(0)=\frac12\notin\cR_\cF$, so $\cR_\cF$ is not forward-invariant.

\medskip
On the contrary, $\cR^{ow}_\cF$ is forward-invariant.
Indeed 1 is orbitwise recurrent since it is fixed for the whole IFS.
On the other side, since $\cO_\cF(0)=\{0,1/2,1\}$ and $\cO_\cF(1/2)=\{1/2,1\}$, $0\not\in\cR^{ow}_\cF$. 
Hence, $\cR^{ow}_\cF=\{1\}$, which is forward-invariant.
The following proposition shows that this is not a coincidence.

\begin{proposition}
\label{prop: forward invariant}
The sets $\cR^{ow}_\cF$ and $\Om(A)$, $A\subset X$, are forward-invariant (and so is $\Om(x)$ for all $x\in X$).
\end{proposition}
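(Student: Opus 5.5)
It suffices to check invariance under each generator $f_i$, since forward-invariance under $\Phi_\cF$ is equivalent to $f_i(A)\subset A$ for every $i=1,\dots,m$, as noted in the definition of invariance. I will first treat $\Om(A)$, then get $\Om(x)$ as the special case $A=\{x\}$ via $\Om(\{x\})=\Om(x)$, and finally treat $\cR^{ow}_\cF$.

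For $\Om(A)$, take $y\in\Om(A)$ and fix $i$. By definition there are $I_n\in\cI^m$ with $|I_n|\to\infty$ and $a_n\in A$ such that $\Phi^{I_n}_\cF(a_n)\to y$. By continuity of $f_i$, $f_i(\Phi^{I_n}_\cF(a_n))\to f_i(y)$. Since the action is a right action with $\P=f_{i_k}\circ\dots\circ f_{i_1}$, we have $f_i\circ\Phi^{I_n}_\cF=\Phi^{I_n\cdot i}_\cF$, where $I_n\cdot i$ is the word $I_n$ with $i$ appended. Its length $|I_n|+1$ still tends to infinity. Hence $f_i(y)\in\Om(A)$, which gives $f_i(\Om(A))\subset\Om(A)$.

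For $\cR^{ow}_\cF$, take $x$ orbitwise recurrent and set $y=f_i(x)$. We must check two things.

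\emph{First, $y\in\Om(z)$ for every $z\in\cO_\cF(y)$.} Such a $z$ also lies in $\cO_\cF(x)$ by transitivity of $\Fto$ (concatenate the words). By hypothesis, $x\in\Om(z)$. Since $\Om(z)$ is forward-invariant by the previous step, $y=f_i(x)\in\Om(z)$.

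\emph{Second, $y\in\Om(y)$.} This is the case $z=y\in\cO_\cF(y)$ of the first check, using the empty word.

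Therefore $y\in\cR^{ow}_\cF$.

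The only point requiring care is the order convention of the right action. We need that post-composing with $f_i$ corresponds to appending an index, so that the word length grows. I expect no genuine obstacle: the whole argument is continuity plus the definition of $\Om$.
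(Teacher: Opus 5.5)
Your proof is correct and follows the paper's argument: continuity of $f_i$ combined with $f_i\circ\Phi^{I_n}_\cF=\Phi^{I_n i}_\cF$ for $\Omega_\cF(A)$, and transitivity of the orbit relation for $\cR^{ow}_\cF$. The only differences are cosmetic: for $\cR^{ow}_\cF$ you reuse the forward-invariance of $\Omega_\cF(z)$ from the first part instead of repeating the continuity argument inline, and you write out the check $y\in\Omega_\cF(y)$, which the paper leaves implicit because it is the case $z=y\in\cO_\cF(y)$.
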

\begin{proof}
    Let $x\in \Om(A)$, and fix $i\in\{1,\dots,m\}$. 
    We show that $f_i(x)\in \Om(A)$.
    Since $x\in \Om(A)$, by definition there exist points $a_n\in A$ and multi-indices $I_n\in I^m$ such that $|I_n|\to\infty$ and $\Phi_\cF^{I_n}(a_n)\to x$.
    Since $f_i$ is continuous, it follows that
\[
f_i\bigl(\Phi_\cF^{I_n}(a_n)\bigr)=\Phi_\cF^{I_n i}(a_n)\to f_i(x).
\]
    Moreover, $|I_n i|=|I_n|+1\to\infty$. Therefore, by definition, $f_i(x)\in \Om(A)$, namely $\Om(A)$ is forward-invariant.

    Now, let $x\in \owR$, and fix $i\in\{1,\dots,m\}$. 
    We claim that $f_i(x)\in \owR$.
    Indeed, let $z\in \cO_\cF(f_i(x))$. 
    By transitivity, $z\in \cO_\cF(x)$. 
    Since $x\in \owR$, by definition we have $x\in \Om(z)$, so that there exists a sequence $I_n\in\cI^m$ such that $|I_n|\to\infty$ and $\Pn(z)\to x$.
    Applying $f_i$ and using continuity, we obtain $f_i\bigl(\Phi_\cF^{I_n}(z)\bigr)\to f_i(x)$.
    Equivalently, 
    $\Phi_\cF^{I_n i}(z)\to f_i(x)$.
    Since $|I_n i|=|I_n|+1\to\infty$, this shows that $f_i(x)\in \Om(z)$.
    Since $z$ was arbitrary, it follows that $f_i(x)\in \owR$, namely $\owR$ is forward-invariant.
\end{proof}



\medskip\noindent
{\bf Example: the orbitwise recurrent set of contractive IFSs.}
Contractive IFSs are one of the most studied kinds of IFS in literature. 
It is well-known~\cite{Hut81,Bar88} (see Theorem~\ref{thm: Hutch}) that a contractive IFS over a complete metric space $X$ admits a unique non-empty compact set $K$ which is $H$-invariant.
Next proposition shows that the recurrent set of any contractive IFS is precisely that invariant set.
\begin{proposition}
\label{prop: R_F=K}
    Assume that $X$ is complete and that $\cF$ is contractive.
    Denote by $K$ the Hutchinson attractor of $\cF$, i.e. the unique non-empty compact set such that $K=H_\cF(K)$.
    Then $\cR_\cF=\owR=K$.
\end{proposition}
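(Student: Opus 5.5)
We prove the chain of inclusions $\owR\subset\cR_\cF\subset K\subset\owR$. The first inclusion is immediate from the definitions, since $x\in\cO_\cF(x)$ via the empty word $0$. So the work lies in showing that every recurrent point lies in $K$, and that every point of $K$ is orbitwise recurrent.

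\emph{Step 1: $\cR_\cF\subset K$.} Let $c<1$ be a common contraction constant for $f_1,\dots,f_m$. We first claim that for any $x\in X$ and any word $I$ of length $k$,
\[
d\bigl(\P(x),K\bigr)\le c^k d(x,K).
\]
This follows by induction on $k$. Indeed, for $y\in X$ choose $a\in K$ with $d(y,a)=d(y,K)$, which is possible since $K$ is compact. Then $f_i(a)\in H_\cF(K)=K$, so $d(f_i(y),K)\le d(f_i(y),f_i(a))\le c\,d(y,K)$. Now take $x\in\cR_\cF$. There are words $I_n$ with $|I_n|\to\infty$ and $\Phi^{I_n}_\cF(x)\to x$. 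By the claim, $d(\Phi^{I_n}_\cF(x),K)\to0$, and hence $d(x,K)=0$. Since $K$ is closed, $x\in K$. The same argument shows more generally that $\Om(x)\subset K$ for every $x\in X$.

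\emph{Step 2: $K\subset\owR$.} Fix $x\in K$ and $z\in\cO_\cF(x)$. Since $K$ is forward-invariant ($f_i(K)\subset H_\cF(K)=K$), we have $z\in K$. We must show $x\in\Om(z)$. Using $H_\cF(K)=K$ repeatedly, for every $k$ we get $K=\bigcup_{|I|=k}\P(K)$. So there is a word $I_k$ of length $k$ and a point $y_k\in K$ with $\Phi^{I_k}_\cF(y_k)=x$. By contractivity,
\[
d\bigl(\Phi^{I_k}_\cF(z),x\bigr)=d\bigl(\Phi^{I_k}_\cF(z),\Phi^{I_k}_\cF(y_k)\bigr)\le c^k d(z,y_k)\le c^k\operatorname{diam}K\to0.
\]
Hence $x\in\Om(z)$. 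Since $z$ was an arbitrary point of the orbit, $x\in\owR$. Taking $z=x$ gives in particular $x\in\cR_\cF$.

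The only mildly delicate point is Step 2. The key observation is that the backward coding of $x$ inside $K$ can be applied to an arbitrary starting point $z$, with the uniform bound $\operatorname{diam}K<\infty$ making the estimate independent of $z$. Once this is in place, the three inclusions close up and give $\cR_\cF=\owR=K$.
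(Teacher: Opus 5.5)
Your proof is correct. Step 2 is the paper's argument for $K\subset\owR$ essentially verbatim: write $x=\Phi_\cF^{I_k}(y_k)$ with $y_k\in K$ and $|I_k|=k$, then bound by $c^k\operatorname{diam}K$.

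Where you differ is the inclusion $\cR_\cF\subset K$. The paper uses Hutchinson's theorem: completeness of $\KX$ and the contractivity of $H_\cF$ on $\KX$ give $d_H(H_\cF^n(\{x\}),K)\to0$, hence $\sup_{|I|=n}d(\Phi_\cF^I(x),K)\to0$. It then argues by contraposition: a point $x\notin K$ satisfies $d(\Phi_\cF^I(x),x)>d(x,K)/2$ for all sufficiently long words, so $x\notin\Om(x)$.

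You obtain the same uniform exponential approach to $K$ from the one-step estimate $d(f_i(y),K)\le c\,d(y,K)$. That estimate needs only $f_i(K)\subset K$ and contractivity, and from it you conclude $d(x,K)=0$ directly. Your route has three advantages. It is more elementary, since it never passes to the hyperspace. It gives the stronger statement $\Om(x)\subset K$ for every $x\in X$. It uses completeness of $X$ only to guarantee that $K$ exists. It is in fact the same estimate the paper itself uses later, in Corollary~\ref{cor: K=G_F} and in the proof of Proposition~\ref{prop: R_C=K}. In exchange, the paper's version ties the result to the convergence $H_\cF^n(\{x\})\to K$ in the Hausdorff metric, the viewpoint it develops in Section~\ref{sec: Hutch}.
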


\begin{proof}
    Let $\lambda\in(0,1)$ be such that $d(f_i(u),f_i(v))\le \lambda\, d(u,v)$ for every $u,v\in X$ and $i=1,\dots,m$.
    Such $\lambda$ exists because $\cF$ finite and each $f_i$ is a contraction.
    Hence, for every $I\in\cI^m$,
    \[
    d(\Phi_\cF^I(u),\Phi_\cF^I(v))\le \lambda^{|I|}d(u,v).
    \]

    \noindent
    {\BF 1. $K\subset \owR$.}
    Let $x\in K$ and $z\in \cO_\cF(x)$.
    Since $K$ is forward-invariant, $z\in K$. 
    Since $K=H_\cF(K)$, for each $n\geq1$ there is a multiindex $I_n$ with $|I_n|=n$ and a $y_n\in K$ such that $x=\Pn(y_n)$.
    Therefore
$$
d\bigl(\Pn(z),x\bigr)
=
d\bigl(\Pn(z),\Pn(y_n)\bigr)
\leq
\lambda^n d(z,y_n)
\leq
\lambda^n\operatorname{diam}(K),
$$
    so that $\Pn(z)\to x$.
    Since $|I_n|=n\to\infty$, this shows that $x\in\Om(z)$.
    Since $z\in \cO_\cF(x)$ was arbitrary, we conclude that $x\in \owR$, so that $K\subset \owR$.    

    \medskip\noindent
    {\BF 2. $\cR_\cF\subset K$.}
    Let $x\notin K$ and set $d(x,K)>0$.
    Since $X$ is complete, $\KX$ is complete, and $H_\cF$ is a contraction on $\KX$. 
    Therefore we have that $d_H\bigl(\cH_\cF^n(\{x\}),K\bigr)\to0$ and, consequently,
    \[
    \sup_{|I|=n}d\bigl(\P(x),K\bigr)
    =
    e\bigl(\cH_\cF^n(\{x\}),K\bigr)
    \leq
    d_H\bigl(\cH_\cF^n(\{x\}),K\bigr)
    \longrightarrow0.
    \] 
    Hence, for all sufficiently large $n$ and every $I\in\cI^m_n$,
\[
d(\P(x),x)\ge d(x,K)-d(\P(x),K)>d(x,K)/2.
\]
    Thus, no sequence of multiindices $L_n$ with $|L_n|\to\infty$ can satisfy $\Phi_\cF^{L_n}(x)\to x$.
    Hence $x\notin\Om(x)$, so $x$ is not recurrent.
    This means that $\cR_\cF\subset K$.

    Since $\owR\subset\cR_\cF$, (1) and (2) imply that $\cR_\cF=\owR=K$.
\end{proof}

\medskip\noindent
{\bf Limit sets are closed.} The following topological characterization of the limit set shows that every limit set is closed.
\begin{proposition}[Topological description of the limit set]
\label{prop: topological description of Om}
\[
\Om(A)
=
\bigcap_{n\ge 0}\overline{\bigcup_{|I|\ge n} \P(A)}.
\]
In particular, 
$\Om(A)$ is closed for every $A\subset X$ and $\Om(x)$ is closed for every $x\in X$.
\end{proposition}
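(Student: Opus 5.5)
The plan is a direct double inclusion, with the same structure as the usual proof that $\omega$-limit sets are closed. To shorten notation, set
\[
T_n=\bigcup_{|I|\ge n}\P(A)
\qquad\text{and}\qquad
\Omega'=\bigcap_{n\ge0}\overline{T_n}.
\]
The sets $T_n$ are decreasing in $n$, and hence so are their closures. Closedness of $\Om(A)$ will follow from the identity, because an intersection of closed sets is closed. Closedness of $\Om(x)$ is then the special case $A=\{x\}$, using the remark that $\Om(\{x\})=\Om(x)$.

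For the inclusion $\Om(A)\subset\Omega'$, take $y\in\Om(A)$. By definition there are $a_k\in A$ and $I_k\in\cI^m$ with $|I_k|\to\infty$ and $\Phi_\cF^{I_k}(a_k)\to y$. Fix $n$. For all sufficiently large $k$ we have $|I_k|\ge n$, so $\Phi_\cF^{I_k}(a_k)\in T_n$. Hence $y$ is the limit of a sequence that eventually lies in $T_n$, and therefore $y\in\overline{T_n}$. Since $n$ was arbitrary, $y\in\Omega'$.

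For the reverse inclusion, take $y\in\Omega'$. Here I use that $X$ is metrizable, so closure can be described by distances. For each $n$ we have $y\in\overline{T_n}$, so there is a point of $T_n$ within distance $1/n$ of $y$. That point has the form $\Phi_\cF^{I_n}(a_n)$ with $a_n\in A$ and $|I_n|\ge n$. Then $d(\Phi_\cF^{I_n}(a_n),y)<1/n\to0$ and $|I_n|\ge n\to\infty$, so $y\in\Om(A)$ by definition.

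Neither step is a real obstacle. The only point requiring care is the choice, in the second inclusion, of a diagonal sequence that achieves both conditions at once: the lengths $|I_n|$ must tend to infinity while the points converge to $y$. Indexing the choice by $n$, as above, handles both simultaneously. Metrizability (first countability suffices) is what lets sequences describe the closures.
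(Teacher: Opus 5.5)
Your proof is correct and follows the paper's argument: the same double inclusion, with your balls of radius $1/n$ around $y$ playing the role of the paper's decreasing neighbourhood basis $(U_n)$ at $y$ in the second inclusion. The only slip is cosmetic: start that index at $n\ge 1$ to avoid $1/0$.
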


\begin{proof}
We prove the two inclusions separately.

\medskip
\noindent
\emph{First inclusion:}
\[
\Om(A)\subset \bigcap_{n\ge 0}\overline{\bigcup_{|I|\ge n}\Phi_\cF^I(A)}.
\]

Let $y\in \Om(A)$. By definition, there exist points $a_n\in A$ and multi-indices
$I_n$ such that
$|I_n|\to\infty$
and $\Phi_\cF^{I_n}(a_n)\to y$.
Fix $N\ge 0$. Since $|I_n|\to\infty$, there exists $n_0$ such that
$|I_n|\ge N$ for all $n\ge n_0$.
Hence, for every $n\ge n_0$,
\[
\Phi_\cF^{I_n}(a_n)\in \bigcup_{|I|\ge N}\Phi_\cF^I(A).
\]
Passing to the limit, we obtain
\[
y\in \overline{\bigcup_{|I|\ge N}\Phi_\cF^I(A)}.
\]
Since $N$ was arbitrary,
\[
y\in \bigcap_{N\ge 0}\overline{\bigcup_{|I|\ge N}\Phi_\cF^I(A)}.
\]

\medskip
\noindent
\emph{Second inclusion:}
\[
\bigcap_{n\ge 0}\overline{\bigcup_{|I|\ge n}\Phi_\cF^I(A)}\subset \Om(A).
\]

Let
\[
y\in \bigcap_{n\ge 0}\overline{\bigcup_{|I|\ge n}\Phi_\cF^I(A)}.
\]
Then for every $n\ge 0$,
\[
y\in \overline{\bigcup_{|I|\ge n}\Phi_\cF^I(A)}.
\]
Choose a decreasing neighborhood basis $(U_n)_{n\ge 0}$ at $y$, with
\[
U_0\supset U_1\supset U_2\supset \cdots,
\qquad
\bigcap_{n\ge 0}U_n=\{y\}.
\]
Since
\[
U_n\cap \bigcup_{|I|\ge n}\Phi_\cF^I(A)\neq\varnothing,
\]
for each $n$ there exist a point $a_n\in A$ and a multi-index $I_n$ with
$|I_n|\ge n$, and so $|I_n|\to\infty$, and $\Phi_\cF^{I_n}(a_n)\in U_n$.
Moreover, since the neighborhoods $U_n$
shrink to $y$, it follows that
$
\Phi_\cF^{I_n}(a_n)\to y.
$
Therefore, by definition,
$
y\in \Om(A).
$

Combining the two inclusions, we conclude that
$$
\Om(A)=\bigcap_{n\ge 0}\overline{\bigcup_{|I|\ge n}\Phi_\cF^I(A)}.
$$

Finally, each set
$
\overline{\bigcup_{|I|\ge n}\Phi_\cF^I(A)}
$
is closed, so their intersection is closed.
\end{proof}

\begin{remark}
If $A$ is forward-invariant under $F$, then
\[
\bigcup_{|I|\ge n+1} \P(A)\subset \bigcup_{|I|\ge n} \P(A),
\]
so the family
\[
\overline{\bigcup_{|I|\ge n} \P(A)}
\]
is decreasing, and the preceding proposition expresses $\Om(A)$
as the intersection of this nested family of closed sets.
\end{remark}

\medskip\noindent
{\BF $H$-invariance plays the role of invariance in IFSs with $m>1$.}
When $m=1$, invariance is a fundamental concept: when $X$ is compact, or even just when a semiflow has compact dynamics (see Section~\ref{sec: global attractor}), the global attractor is invariant and so are the chain recurrent set and each of its nodes (see Section~\ref{sec: chains}).

On the contrary, when $m>1$, having invariant sets is a strong condition and often an IFS has no invariant set.
%
    For instance, consider an IFS $\cF=\{f_1,\dots,f_m\}$ on a compact space $X$ whose maps are all contractions and denote by $V_i$ the fixed point of $f_i$.
    Assume that there is at least a pair of  indices $i,j$ such that $V_i\neq V_j$.
    Then there is no non-empty compact subset of $X$ which is invariant under $\Phi_\cF$.
%

Next sections show that, for $m>1$, the role of invariance is played by H-invariance.
%


%
%

\section{Global attractor of an IFS}
\label{sec: global attractor}
%
Just as in case of semiflows, the IFSs with richest asymptotics are the ones whose dynamics is, in some sense, ``compact''.
Of course this includes all IFSs defined over a compact space $X$ but requiring $X$ to be compact is unnecessarily strong -- it is enough that the orbits of the system are attracted by and stay within some compact region. 
For instance, this is precisely what happens in case of the celebrated Lorenz ``butterfly'' system~\cite{Lor63}. 
In this section, we give a precise definition of the IFSs that can be studied within our framework.

\medskip\noindent
{\bf Attracting and absorbing.}
    We say that a set $A\subset X$ {\bf absorbs} a set $K$ under $\cF$ if
    there exists $\tau>0$ such that
    \[
    \P(K)\subset A \text{ for every }I\in\cI^m_\tau.
    \]
    Given an $\eps>0$, we set
    $$
    N_\eps(A)=\{x : d(x,A)<\eps\}.
    $$
    We say that $A$ {\bf attracts} $K$ under $\cF$ if, for every $\eps>0$, there exists $\tau>0$ such that 
    $$
    \P(K)\subset N_\eps(A)\text{ for all }I\in\cI^m_\tau.
    $$
%

\medskip
For a semiflow $F$, we defined in~\cite{DLY26a} the global attractor as an {\em invariant} compact set that attracts under $F$ every compact subset of $X$.
In the framework of IFSs, this definition would not work because, as we pointed out in the previous section, often flows of IFSs have no invariant set whatsoever.
It turns out, though, that most of the theory works in the framework of IFSs if we replace {\em invariance} by {\em H-invariance}.

\medskip\noindent
{\bf Global Attractor.}    
    We call {\bf global attractor} a compact $H$-invariant set $\cG\subset X$ that attracts, under $\cF$,  each compact subset $K\subset X$.

\medskip
%
\begin{proposition}
    \label{prop: G is unique}
    Assume that $\cF$ has global attractors $\cG$ and $\cG'$. 
    Then $\cG=\cG'$. 
\end{proposition}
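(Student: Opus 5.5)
By symmetry it suffices to prove $\cG\subset\cG'$; applying the same argument with the roles exchanged gives $\cG'\subset\cG$, hence equality. The idea is to use that $\cG$ is compact, so it is attracted by $\cG'$, and that $\cG$ is $H$-invariant, so every point of $\cG$ is the image of some point of $\cG$ under words of arbitrarily large length. Combining the two shows that each point of $\cG$ lies within any prescribed distance of $\cG'$. Closedness of $\cG'$ (it is compact in a metric space) then gives $\cG\subset\cG'$.

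In detail, fix $x\in\cG$ and $\eps>0$. Since $\cG$ is a compact subset of $X$ and $\cG'$ is a global attractor, there is $\tau>0$ such that $\P(\cG)\subset N_\eps(\cG')$ for every $I\in\cI^m_\tau$.

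On the other hand, $H_\cF(\cG)=\cG$ implies, by induction on $n$, that $H_\cF^n(\cG)=\bigcup_{|I|=n}\P(\cG)=\cG$ for all $n\ge1$. I would record this identity $H^n_\cF(A)=\bigcup_{|I|=n}\P(A)$ explicitly; it follows directly from the definitions of $H_\cF$ and $\P$.

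Taking $n\ge\tau$, there exist a multi-index $I$ with $|I|=n$ and a point $y\in\cG$ such that $x=\P(y)$. Therefore $x\in\P(\cG)\subset N_\eps(\cG')$, that is, $d(x,\cG')<\eps$. Since $\eps$ was arbitrary, $d(x,\cG')=0$, and because $\cG'$ is closed we get $x\in\cG'$.

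There is no real obstacle here. The only point needing care is the induction showing that $H$-invariance yields preimages under words of every length $n$, i.e.\ that $H_\cF^n$ agrees with the union over words of length exactly $n$, with the composition order matching the convention $\P=f_{i_k}\circ\dots\circ f_{i_1}$. Since the union is over all words of a given length, the order does not actually matter.

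Note also that we only need $\cG'$ to attract the single compact set $\cG$, and that $\cI^m_\tau$ contains words of length exactly $n$ for every $n\ge\tau$, so the choice of $n$ above is legitimate.
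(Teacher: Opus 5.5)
Your proof is correct and is essentially the paper's argument. You use that one attractor attracts the other, together with $H$-invariance in the form $H_\cF^n(\cG)=\bigcup_{|I|=n}\P(\cG)=\cG$, to get $\cG\subset N_\eps(\cG')$ for every $\eps>0$, and then conclude by closedness and symmetry. The paper runs the same argument with the roles of $\cG$ and $\cG'$ exchanged.
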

\begin{proof}
Assume that $\cG$ and $\cG'$ are two global attractors of $\cF$. By definition, both $\cG$ and $\cG'$ are compact, $H$-invariant, and attract every compact subset of $X$.
Since $\cG$ is a global attractor, it attracts $\cG'$. 
Hence, for every $\eps>0$, there is a $N>0$ such that $\P(\cG')\subset N_\eps(\cG)$ for every $I\in\cI^m_N$.
In particular, $H_\cF^n(\cG')\subset N_\eps(\cG)$ for every $n\geq N$.
Since $\cG'$ is $H$-invariant, this means that $\cG'\subset N_\eps(\cG)$ for every $\eps>0$.
Hence, $\cG'\subset\cG$.
By symmetry, $\cG=\cG'$.
\end{proof}
The following result proves for IFSs a standard alternate characterization of the global attractor of a semiflow.
\begin{proposition}
    \label{prop: G_F minimal}
    Let $\cF$ be an IFS with a global attractor $\cG$.
    Then $\cG$
    is the minimal compact forward-invariant set of $\cF$ that attracts all compact subsets of $X$.
\end{proposition}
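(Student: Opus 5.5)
The argument has two parts: first show that $\cG$ belongs to the class, then show that it is contained in every member of the class. Minimality is then read as ``smallest with respect to inclusion'', which gives both minimality and uniqueness.

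\emph{$\cG$ belongs to the class.} By definition $\cG$ is compact and attracts every compact subset of $X$. It remains to check forward invariance. $H$-invariance gives $f_i(\cG)\subset H_\cF(\cG)=\cG$ for each $i=1,\dots,m$, which is exactly forward invariance under $\Phi_\cF$.

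\emph{$\cG$ lies in every member.} Let $K$ be compact and forward-invariant, and suppose $K$ attracts every compact subset of $X$. In particular $K$ attracts $\cG$: for every $\eps>0$ there is $\tau$ with $\P(\cG)\subset N_\eps(K)$ for all $I\in\cI^m_\tau$. Take any $n\ge\tau$. $H$-invariance gives
\[
\cG=H_\cF^n(\cG)=\bigcup_{|I|=n}\P(\cG)\subset N_\eps(K).
\]
Letting $\eps\to0$ and using that $K$ is closed, we get $\cG\subset K$. This is the same argument as in Proposition~\ref{prop: G is unique}.

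Forward invariance of $K$ is not used in the second part. It is needed only so that $\cG$ itself qualifies as a member of the class.

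I expect no real obstacle. The only point needing care is the identity $H_\cF^n(\cG)=\bigcup_{|I|=n}\P(\cG)$, which follows by a straightforward induction on $n$ from the definition of $H_\cF$ and the composition rule for $\Phi_\cF$.
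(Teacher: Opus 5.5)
Your proof is correct and follows essentially the same route as the paper. Membership of $\cG$ in the class comes from $H$-invariance implying forward invariance, and $\cG\subset K$ comes from combining $H_\cF^n(\cG)=\cG$ with the attraction of $\cG$ by $K$ and the closedness of $K$. The only cosmetic difference is that the paper argues pointwise, writing each $x\in\cG$ as $x=\Phi_\cF^{I_n}(x_n)$ with $x_n\in\cG$ and $|I_n|=n$, whereas you argue with the whole set, as in Proposition~\ref{prop: G is unique}.
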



\begin{proof}
By definition, $\cG$ is compact, $H$-invariant, and attracts every compact subset of $X$. 
Since H-invariance implies forward-invariance, $\cG$ is a compact forward-invariant set that attracts every compact subset of $X$.

Let now $K\subset X$ be any compact forward-invariant set that attracts every compact subset of $X$. 
We claim that
$
\cG\subset K.
$

Since $\cG$ is $H$-invariant, for every $n\ge 1$ we have
$
H_\cF^n(\cG)=\cG.
$
Fix an $x\in \cG$. 
Then, for every $n\ge 1$, there exist a point $x_n\in \cG$ and a multiindex $I_n\in \cI^m$ with $|I_n|=n$ such that
$
x=\Phi_\cF^{I_n}(x_n).
$

Since $K$ attracts every compact subset of $X$, in particular it attracts $\cG$ and so $e(\Phi_\cF^I(\cG),K)\to 0$ for $|I|\to\infty$.
Applying this to the multiindices $I_n$, we obtain $d\!\left(\Phi_\cF^{I_n}(x_n),K\right)\to 0$.
Since $\Phi_\cF^{I_n}(x_n)=x$ for every $n$, it follows that $d(x,K)=0$, so that $x\in K$.
Since $x\in \cG$ was arbitrary, this proves that
$
\cG\subset K,
$
namely $\Gl$ is minimal.
\end{proof}

\medskip\noindent
{\bf Compact Dynamics.}
    We say that $\cF$ has {\bf compact dynamics} if it has a global attractor $\cG$.
From now on, we will denote by {\BF$\Gl$} the global attractor of an IFS $\cF$ with compact dynamics.
\begin{proposition}
    \label{prop: omega(K) subset G}
    Assume that $\cF$ has compact dynamics.
    Then 
    $\Om(K)\subset\Gl$ for every compact set $K\subset X$.
    In particular, $\Om(x)\subset\Gl$ for every $x\in X$.
    Moreover, if $K$ is forward-invariant, $\Om(K)$ is $H$-invariant.
\end{proposition}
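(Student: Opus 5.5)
The first claim follows directly from the attraction property. Let $y\in\Om(K)$, so there are $a_n\in K$ and $I_n$ with $|I_n|\to\infty$ and $\Phi_\cF^{I_n}(a_n)\to y$. Since $\Gl$ attracts $K$, for every $\eps>0$ there is $\tau$ with $\Phi_\cF^I(K)\subset N_\eps(\Gl)$ for all $I\in\cI^m_\tau$. Hence $d(\Phi_\cF^{I_n}(a_n),\Gl)<\eps$ for all large $n$. Passing to the limit gives $d(y,\Gl)\le\eps$ for every $\eps>0$. Because $\Gl$ is compact, hence closed, we get $y\in\Gl$. Taking $K=\{x\}$ gives the statement for $\Om(x)$.

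For $H$-invariance when $K$ is forward-invariant, Proposition~\ref{prop: forward invariant} already gives $H_\cF(\Om(K))\subset\Om(K)$. So it remains to show $\Om(K)\subset H_\cF(\Om(K))$. Take $y\in\Om(K)$ with $\Phi_\cF^{I_n}(a_n)\to y$ and $|I_n|\to\infty$; discarding finitely many terms, assume $|I_n|\ge1$. Write $I_n=J_n i_n$, so that $\Phi_\cF^{I_n}(a_n)=f_{i_n}(z_n)$ with $z_n=\Phi_\cF^{J_n}(a_n)$ and $|J_n|\to\infty$. Since there are only $m$ indices, after passing to a subsequence we may take $i_n=i$ constant.

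The main obstacle is extracting a convergent subsequence of $z_n$, since $X$ is only locally compact. The attraction property handles this. By the first part, $d(z_n,\Gl)\to0$, because $z_n\in\Phi_\cF^{J_n}(K)$ with $|J_n|\to\infty$. Choose $g_n\in\Gl$ with $d(z_n,g_n)\to0$. By compactness of $\Gl$, a subsequence $g_n\to z\in\Gl$, and then $z_n\to z$ along that subsequence. This $z$ is a limit of $\Phi_\cF^{J_n}(a_n)$ with $|J_n|\to\infty$, so $z\in\Om(K)$. By continuity of $f_i$, $f_i(z)=\lim f_i(z_n)=y$. Hence $y\in f_i(\Om(K))\subset H_\cF(\Om(K))$.

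Forward-invariance of $K$ was not needed in this compactness argument. It is only needed if one also wants nonemptiness of $\Om(K)$ for nonempty $K$, which follows from the same compactness reasoning applied to any sequence $\Phi_\cF^{I_n}(a)$.
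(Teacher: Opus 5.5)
Your first part coincides with the paper's. For the inclusion $\Om(K)\subset H_\cF(\Om(K))$ you take a genuinely different route from the paper. The paper uses forward-invariance of $K$ to keep $z_n=\Phi_\cF^{J_n}(a_n)$ inside $K$, and then extracts a convergent subsequence by compactness of $K$. You instead let the $z_n$ accumulate on the compact set $\Gl$ through the attraction estimate. Both arguments are correct, and they generalize in incomparable directions. The paper's version never uses the global attractor, so it shows that $\Om(K)$ is $H$-invariant for every compact forward-invariant $K$ of an arbitrary IFS. The same mechanism reappears in the proof of Theorem~\ref{thm: Q => G}, where compact dynamics is the conclusion rather than a hypothesis. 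Your version needs compact dynamics but discards forward-invariance entirely. It therefore shows that, for an IFS with compact dynamics, $\Om(K)$ is $H$-invariant for \emph{every} compact $K$, so the forward-invariance hypothesis in the statement is superfluous. Your closing remark is slightly off, though. Forward-invariance is not needed for nonemptiness either: as you note yourself, applying attraction and compactness of $\Gl$ to $\Phi_\cF^{I_n}(a)$ with $|I_n|=n$ already yields a point of $\Om(K)$ whenever $K\neq\varnothing$.
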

\begin{proof}
    Let $K\subset X$ be compact. 
    Since $\Gl$ attracts $K$, $e(\P(K),\Gl)\to 0$ for $|I|\to\infty$.
    If $y\in\Om(K)$, there exist $x_n\in K$ and multiindices $I_n$ with $|I_n|\to\infty$ such that $\Pn(x_n)\to y$.
    Hence $y\in N_\eps(\Gl)$ for every $\eps>0$. Since $\Gl$ is closed, this is possible only if $y\in \Gl$. 
    Therefore, $\Om(K)\subset \Gl$.

    Assume now that $K$ is forward-invariant. 
    By Proposition~\ref{prop: forward invariant}, $\Om(K)$ is forward-invariant and so $H_\cF\bigl(\Om(K)\bigr)\subset \Om(K)$.

    For the reverse inclusion, let $y\in\Om(K)$. 
    Then there exist points $x_n\in K$ and multiindices $I_n\in I^m$ such that $|I_n|\to\infty$ and $\Pn(x_n)\to y$.
    For all sufficiently large $n$, write $I_n=J_n i_n$, $i_n\in\{1,\ldots,m\}$.
    Since the alphabet is finite, after passing to a subsequence we may assume that $i_n=i$ is constant. 
    Set $z_n=\Phi_\cF^{J_n}(x_n)$.
    Since $K$ is forward-invariant, $z_n\in K$ for every $n$. 
    Since $K$ is compact, after passing to a further subsequence we may assume that $z_n\to z\in K$.
    Moreover, $|J_n|=|I_n|-1\to\infty$, so, by definition, $z\in\Om(K)$.
Then
\[
y
=
\lim_{n\to\infty}\Pn(x_n)
=
\lim_{n\to\infty}f_i(z_n)
=
f_i(z),
\]
    so that $y\in f_i\bigl(\Om(K)\bigr)\subset H_\cF\bigl(\Om(K)\bigr)$.
    Hence $\Om(K)\subset H_\cF\bigl(\Om(K)\bigr)$ and so $H_\cF\bigl(\Om(K)\bigr)=\Om(K)$.
\end{proof}

\begin{lemma}
    \label{lemma: O(A)=A}
    Let $A\subset X$ be closed and forward-invariant.
    Then $\Om(A)\subset A$.
\end{lemma}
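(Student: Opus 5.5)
The plan is to argue directly from the definition of $\Om(A)$, using only forward-invariance and closedness of $A$; compactness or compact dynamics will not be needed. Take $y\in\Om(A)$. By definition there are points $a_n\in A$ and multi-indices $I_n\in\cI^m$ with $|I_n|\to\infty$ and $\Pn(a_n)\to y$.

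First I would use forward-invariance. Since $f_i(A)\subset A$ for every $i$, induction on the length of the word gives $\P(A)\subset A$ for every $I\in\cI^m$; this is exactly the equivalent formulation recorded in the definition of forward-invariance. Hence every term $\Pn(a_n)$ lies in $A$.

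Next, since $A$ is closed and $\Pn(a_n)\to y$, the limit $y$ lies in $A$. As $y\in\Om(A)$ was arbitrary, $\Om(A)\subset A$.

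Alternatively, the same conclusion follows from Proposition~\ref{prop: topological description of Om}. Forward-invariance gives $\bigcup_{|I|\ge n}\P(A)\subset A$, and closedness then gives $\overline{\bigcup_{|I|\ge n}\P(A)}\subset A$ for each $n$. Therefore the intersection over $n$, which equals $\Om(A)$, is contained in $A$. There is no genuine obstacle; the only point to check is that forward-invariance under each $f_i$ propagates to all words $\P$, and that is immediate.
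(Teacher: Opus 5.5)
Your proof is correct and is essentially the paper's own argument: forward-invariance puts every $\Phi_\cF^{I_n}(a_n)$ in $A$, and closedness of $A$ then puts the limit $y$ in $A$. Your alternative route via Proposition~\ref{prop: topological description of Om} is also valid, but it is not needed.
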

\begin{proof}
    Assume first that $A$ is closed and forward-invariant. 
    If $y\in\Om(A)$, then there exist $x_n\in A$ and multiindices $I_n$ with $|I_n|\to\infty$ such that $\Phi_\cF^{I_n}(x_n)\to y$.
    Since $A$ is forward-invariant, $\Phi_\cF^{I_n}(x_n)\in A$ for every $n$; since $A$ is closed, it follows that $y\in A$. 
    Hence, $\Om(A)\subset A$.
%
\end{proof}


\begin{lemma}
\label{lemma: Om}
    Assume that $\cF$ has compact dynamics. 
    Then, for every $x\in X$,
\[
\Om(x)\cap \owR \neq \varnothing .
\]
    In particular, $\owR\neq\emptyset$ and, for every $x\in X$, $\Om(x)\neq\emptyset$.
\end{lemma}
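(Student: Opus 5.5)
The plan is to find a minimal compact forward-invariant subset of $\Om(x)$ via Zorn's lemma and show that all of its points are orbitwise recurrent. This is the IFS analogue of the classical argument that minimal sets consist of recurrent points.

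\emph{Step 1: $\Om(z)$ is nonempty and compact for every $z\in X$.} Since $\Gl$ is compact and $X$ is locally compact, I choose $\eps>0$ such that the closed $\eps$-neighborhood $\overline{N_\eps(\Gl)}$ is compact. The singleton $\{z\}$ is compact, so $\Gl$ attracts it. Hence there is $\tau$ such that $\Phi^I_\cF(z)\in N_\eps(\Gl)$ for all $I\in\cI^m_\tau$. Pick multiindices $I_n$ with $|I_n|=n$. For $n\ge\tau$ the points $\Phi^{I_n}_\cF(z)$ lie in a compact set, so a subsequence converges. Its limit lies in $\Om(z)$, so $\Om(z)\neq\varnothing$. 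By Proposition~\ref{prop: topological description of Om}, $\Om(z)$ is closed, and by Proposition~\ref{prop: omega(K) subset G} it is contained in $\Gl$. Therefore $\Om(z)$ is compact. By Proposition~\ref{prop: forward invariant}, $\Om(z)$ is also forward-invariant.

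\emph{Step 2: a minimal set.} Let $\mathcal M$ be the family of nonempty closed forward-invariant subsets of $\Om(x)$, ordered by inclusion. By Step 1, $\Om(x)\in\mathcal M$, so $\mathcal M$ is nonempty. For a chain in $\mathcal M$, its intersection is closed and forward-invariant. It is also nonempty, because the chain consists of nonempty closed subsets of the compact set $\Om(x)$ and has the finite intersection property. By Zorn's lemma, $\mathcal M$ has a minimal element $M$.

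\emph{Step 3: $M\subset\owR$.} Let $y\in M$ and $z\in\cO_\cF(y)$. Since $M$ is forward-invariant, $z\in M$. Then $\Om(z)\subset\Om(M)$, and $\Om(M)\subset M$ by Lemma~\ref{lemma: O(A)=A}, because $M$ is closed and forward-invariant. By Step 1, $\Om(z)$ is nonempty, closed and forward-invariant, so $\Om(z)\in\mathcal M$. Minimality of $M$ then gives $\Om(z)=M\ni y$. Since $z\in\cO_\cF(y)$ was arbitrary, and this includes $z=y$, the point $y$ is recurrent and in fact orbitwise recurrent. Hence $\varnothing\neq M\subset\Om(x)\cap\owR$.

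The only step that is not routine is Step 1: nonemptiness and compactness of limit sets. This is where local compactness of $X$ and compact dynamics are used, since $X$ itself need not be compact. Once Step 1 is in place, the Zorn argument is standard. The "in particular" statements of the lemma follow immediately from Steps 1 and 3.
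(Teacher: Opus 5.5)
Your proof is correct and follows essentially the same route as the paper. Both arguments get nonemptiness and compactness of $\Om(x)$ from local compactness together with attraction of $\{x\}$. Both then take a Zorn-minimal nonempty compact forward-invariant $M\subset\Om(x)$, where minimality forces $\Om(z)=M$ for every $z$ in the orbit of a point of $M$.

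The differences are cosmetic. You obtain compactness from $\Om(z)\subset\Gl$ and nonemptiness by extracting a convergent subsequence, where the paper uses the nested-intersection description. You also justify $\Om(z)\subset M$ explicitly via Lemma~\ref{lemma: O(A)=A}, which the paper leaves implicit.
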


\begin{proof}
    Let \(x\in X\). 
    Since \(\Gl\) is compact and \(X\) is locally compact, $\Gl$ has a compact neighbourhood \(K\). 
    Since \(\Gl\) attracts  \(\{x\}\), there exists \(n_0\ge 0\) such that $\P(x)\in K$ for $|I|\ge n_0$.
    Hence, 
    $$
    \Om(x)=\bigcap_{n\ge n_0}\overline{\bigcup_{|I|\ge n}\P(x)}
    $$ 
    is a decreasing intersection of non-empty compact subsets of \(K\) and so is non-empty and compact. 
    By Proposition~\ref{prop: forward invariant} it is also forward-invariant.
    Now choose, by Zorn's lemma, a minimal non-empty compact forward-invariant subset $M\subset \Om(x)$.
    We claim that every point of \(M\) is orbitwise recurrent. 
    Let \(p\in M\) and let \(z\in \cO_\cF(p)\). 
    Since \(M\) is forward-invariant, \(z\in M\), and hence $\Om(z)\subset M$.
    Moreover, \(\Om(z)\) is non-empty, compact and forward-invariant.
    By the minimality of \(M\), we must have $\Om(z)=M$.   
    In particular, $p\in \Om(z)$.
    Since this holds for every \(z\in \cO_\cF(p)\), we have \(p\in \owR\).
    Thus, \(M\subset \owR\), and, since \(M\subset\Om(x)\), we obtain
    $\Om(x)\cap \owR\neq\varnothing$.
    The remaining assertions follow immediately.
\end{proof}

Notice that, in general, one cannot conclude that every point of $\Om(x)$ is orbitwise recurrent (or even just recurrent); the lemma only guarantees the existence of at least one orbitwise recurrent point in $\Om(x)$.


%

%

%
{\bf 
%

%
%

%
}

\begin{proposition}
\label{prop: R subset G}
Assume that $\cF$ has compact dynamics.
Then $\cR_\cF\subset\Gl$.
\end{proposition}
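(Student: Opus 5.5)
The plan is to reduce the statement directly to Proposition~\ref{prop: omega(K) subset G}. Take $x\in\cR_\cF$. By definition of recurrence, $x\in\Om(x)$. Since $\{x\}$ is compact and $\cF$ has compact dynamics, Proposition~\ref{prop: omega(K) subset G} gives $\Om(x)\subset\Gl$. Therefore $x\in\Gl$, and since $x$ was an arbitrary recurrent point, $\cR_\cF\subset\Gl$.

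If one prefers not to quote Proposition~\ref{prop: omega(K) subset G}, the same conclusion follows from the definition of the global attractor. Recurrence gives multi-indices $I_n$ with $|I_n|\to\infty$ and $\Pn(x)\to x$. Because $\Gl$ attracts the compact set $\{x\}$, for every $\eps>0$ we have $\Pn(x)\in N_\eps(\Gl)$ for all large $n$. Passing to the limit gives $d(x,\Gl)\le\eps$ for every $\eps>0$, so $d(x,\Gl)=0$, and closedness of $\Gl$ yields $x\in\Gl$.

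There is no real obstacle here. The only point to check is that attraction is applied to a compact set, namely the singleton $\{x\}$, so the hypothesis in the definition of global attractor is met.
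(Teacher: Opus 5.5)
Your proof is correct and is exactly the paper's argument: recurrence gives $x\in\Om(x)$, and Proposition~\ref{prop: omega(K) subset G} applied to the compact singleton $\{x\}$ gives $\Om(x)\subset\Gl$, hence $x\in\Gl$. Your alternative direct argument just inlines the relevant part of the proof of that proposition, and it is also valid.
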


\begin{proof}
Let $x \in \cR_\cF$. 
Then $x \in \Om(x)$.
Since $\{x\}$ is compact and $\Gl$ attracts every compact subset of $X$,
Proposition~\ref{prop: omega(K) subset G} implies $\Om(x) \subset \Gl$.
Hence $x \in \Gl$ and so $\cR_\cF \subset \Gl$.
\end{proof}

\medskip\noindent
{\bf Trapping regions.}
Global attractors are often sets with a highly complicated structure (for instance, they are often not locally connected) and it is in general a hard problem finding out directly their existence. 
The concept of {\em trapping region} helps proving their existence.
Unlike global attractors, trapping regions are not unique and often are sets with an elementary structure, such as closed balls.
%
%
%

    We say that $Q\subset X$ is a {\bf trapping region} for $\cF$ if $Q$ is closed and forward invariant under $\Phi_\cF$.
    Given a trapping region $Q$, we denote by {\BF $\cF_Q$} the restriction of $\cF$ to $Q$, namely $\cF_Q=\{f_1|_Q,\dots,f_m|_Q\}$.
    We say that a trapping region $Q$ is {\bf global} if $Q$ absorbs, under $\Phi_\cF$, every compact set $K\subset X$.
    We denote by {\BF $\cK_\cF$} the set of all compact global trapping regions of $\cF$.
%


%
\begin{proposition}
    For any IFS $\cF$, the set $\cK_\cF$ is closed under $H_\cF$ and under finite intersections, namely:
    \begin{enumerate}
        \item if $Q\in\cK_\cF$ then $H_\cF(Q)\in\cK_\cF$;
        \item if $Q_1,Q_2\in\cK_\cF$, then $Q_1\cap Q_2\in\cK_\cF$.
    \end{enumerate}
    %
    
\end{proposition}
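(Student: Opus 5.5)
The plan is to verify directly, for each item, the three defining properties of membership in $\cK_\cF$: compactness, being a trapping region (closed and forward-invariant), and absorbing every compact $K\subset X$ under $\Phi_\cF$. Since $X$ is metrizable, hence Hausdorff, compact sets are closed, so closedness will follow from compactness in both cases.

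For item 1, let $Q\in\cK_\cF$. Each $f_i(Q)$ is compact as the continuous image of a compact set, so $H_\cF(Q)=\cup_{i=1}^m f_i(Q)$ is a finite union of compact sets and hence compact and closed. Forward invariance comes from $Q$: since $f_j(Q)\subset Q$ for every $j$, we get $f_j(f_i(Q))\subset f_j(Q)\subset H_\cF(Q)$, so $f_j(H_\cF(Q))\subset H_\cF(Q)$ for all $j$. For absorption, fix a compact $K$ and let $\tau$ be such that $\P(K)\subset Q$ for all $I\in\cI^m_\tau$. For $|I|\ge\tau+1$, write $I=Ji$ with $|J|\ge\tau$ and $i$ the last letter. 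Then $\P(K)=f_i(\Phi^J_\cF(K))\subset f_i(Q)\subset H_\cF(Q)$, so $H_\cF(Q)$ absorbs $K$ with time $\tau+1$.

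For item 2, let $Q_1,Q_2\in\cK_\cF$. The intersection $Q_1\cap Q_2$ is a closed subset of the compact set $Q_1$, so it is compact. It is forward-invariant because $f_i(Q_1\cap Q_2)\subset f_i(Q_1)\cap f_i(Q_2)\subset Q_1\cap Q_2$. Given a compact $K$ with absorption times $\tau_1,\tau_2$, taking $\tau=\max\{\tau_1,\tau_2\}$ gives $\P(K)\subset Q_1\cap Q_2$ for every $I\in\cI^m_\tau$.

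The only point that needs care is nonemptiness, in case the convention requires trapping regions to be nonempty. Even then it causes no difficulty: if $X\neq\varnothing$, a global trapping region absorbs a singleton $\{x\}$, so it contains $\P(x)$ for every long $I$ and is therefore nonempty. The same applies to $H_\cF(Q)$ and to $Q_1\cap Q_2$, since both are absorbing by the arguments above. Otherwise everything is routine bookkeeping with the index convention $\Phi^{Ji}_\cF=f_i\circ\Phi^J_\cF$.
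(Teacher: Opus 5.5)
Your proof is correct and complete. The paper omits this proof as elementary, and your direct check of the three properties is exactly the routine argument it has in mind: compactness, forward invariance via $f_j(f_i(Q))\subset f_j(Q)$, and absorption with time $\tau+1$ using $\Phi^{Ji}_\cF=f_i\circ\Phi^J_\cF$ for item~1, and the maximum of the two absorption times for item~2.
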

We omit the elementary proof.

\medskip
A fundamental role of trapping regions, as illustrated by the following result, is that one can replace the whole phase space $X$ by any global trapping region when studying the asymptotics of $\Phi_\cF$.
This means that {\bf it is the topological properties of compact global trapping regions, rather than those of the whole phase space $X$, that  play a role in the qualitative dynamics of an IFS with compact dynamics}.
\begin{proposition}
    \label{prop: G is natural under restrictions}
    Assume that $\cF$ has compact dynamics and let $Q\in\cK_\cF$.
    Then $\cF_Q$ has compact dynamics and $\cG_{\cF_Q}=\cG_\cF$.
\end{proposition}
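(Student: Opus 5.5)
The plan is to show directly that $\Gl$ is a global attractor of $\cF_Q$ and then invoke Proposition~\ref{prop: G is unique} applied to $\cF_Q$. Three things have to be checked:
\begin{enumerate}
    \item $\Gl\subset Q$;
    \item $\Gl$ is compact and $H$-invariant under $\cF_Q$;
    \item $\Gl$ attracts, under $\cF_Q$, every compact $K\subset Q$.
\end{enumerate}

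The first step is $\Gl\subset Q$. Since $Q$ is global, it absorbs the compact set $\Gl$: there is $\tau$ with $\P(\Gl)\subset Q$ for every $I\in\cI^m_\tau$. In particular $H_\cF^n(\Gl)\subset Q$ for $n\ge\tau$. Since $H_\cF(\Gl)=\Gl$, we have $H_\cF^n(\Gl)=\Gl$ for all $n$, so $\Gl\subset Q$. Alternatively, one could use Proposition~\ref{prop: G_F minimal}, since $Q$ is compact, forward-invariant and attracts compact sets (it absorbs them). The direct argument above is shorter, so I will use it.

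With $\Gl\subset Q$ in hand, the restricted maps $f_i|_Q$ agree with $f_i$ on $\Gl$. Hence $H_{\cF_Q}(\Gl)=H_\cF(\Gl)=\Gl$, and $\Gl$ is a compact $H$-invariant subset of $Q$ for $\cF_Q$.

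Next comes attraction. Let $K\subset Q$ be compact. It is compact in $X$, so $\Gl$ attracts it under $\cF$. For every $\eps>0$ there is therefore $\tau$ with $\P(K)\subset N_\eps(\Gl)$ for $I\in\cI^m_\tau$. On $Q$ we have $\Phi^I_{\cF_Q}=\P|_Q$, and $Q$ carries the restricted metric. The $\eps$-neighborhood of $\Gl$ in $Q$ is $N_\eps(\Gl)\cap Q$, and the images $\P(K)$ already lie in $Q$ by forward-invariance. So the attraction condition holds in $Q$.

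It remains to note that $Q$ is a legitimate phase space: it is closed in a locally compact separable metrizable space, hence itself locally compact, separable and metrizable. So $\cF_Q$ is an IFS in the paper's setting, with global attractor $\Gl$. By Proposition~\ref{prop: G is unique}, $\cG_{\cF_Q}=\Gl$.

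I expect no real obstacle. The only point needing care is the first step, $\Gl\subset Q$, since everything else relies on the restricted maps coinciding with the original ones on $\Gl$. Here $H$-invariance, rather than mere forward-invariance, is what makes the absorption argument work.
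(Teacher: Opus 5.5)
Your proposal is correct and follows essentially the same route as the paper: absorption of the compact set $\Gl$ by $Q$ combined with $H$-invariance gives $\Gl\subset Q$, after which $\Gl$ is seen to be a compact, $H$-invariant, attracting set for $\cF_Q$, and Proposition~\ref{prop: G is unique} gives $\cG_{\cF_Q}=\Gl$. Your extra checks (the $\eps$-neighborhoods in $Q$ are the traces $N_\eps(\Gl)\cap Q$, and $Q$ is itself a legitimate phase space) are details the paper leaves implicit.
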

\begin{proof}
    We first prove that $\Gl \subset Q.$
    Indeed, since $\Gl$ is compact and $Q$ absorbs every compact subset of $X$, there exists $\tau\ge 0$ such that $\Phi_\cF^I(\Gl)\subset Q$ for every  $I\in\cI^m_\tau$.
    Since $\Gl$ is $H$-invariant, for every $x\in \Gl$ and every $n\ge 1$ there exist $x_n\in \Gl$ and $I_n\in I^m$ with $|I_n|=n$ such that
    $
    x=\Phi_\cF^{I_n}(x_n).
    $
    Taking $n\ge \tau$, we get $x=\Phi_\cF^{I_n}(x_n)\in Q$. 
    Hence, $\Gl\subset Q$.

    As a subset of \(Q\), the set \(\Gl\) is compact, $H$-invariant under \(\cF_Q\), and attracts every compact subset of \(Q\), because it attracts every compact subset of \(X\). Hence, \(\Gl\) is a global attractor for \(\cF_Q\). 
    By Proposition~\ref{prop: G is unique}, \(\cF_Q\) has compact dynamics and ${\cal G}_{\cF_Q}=\Gl$.
\end{proof}

The next result also shows another fundamental side of compact global trapping regions: they grant the existence of a global attractor.
\begin{theorem}[Existence of a global attractor]
    \label{thm: Q => G}
    Assume that $\cF$ has a compact global trapping region $Q$. 
    Then $\cF$ has compact dynamics and $$\Gl=\Om(Q).$$
\end{theorem}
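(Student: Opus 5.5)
Set $G=\Om(Q)$. The plan is to show three things in turn: $G$ is non-empty and compact, $G$ is $H$-invariant, and $G$ attracts every compact $K\subset X$. Then $G$ is a global attractor, $\cF$ has compact dynamics, and Proposition~\ref{prop: G is unique} gives $\Gl=\Om(Q)$.

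\emph{Compactness and $H$-invariance.} We may assume $Q\neq\emptyset$; otherwise $Q$ could not absorb any non-empty compact set, e.g.\ a singleton. Since $Q$ is forward-invariant, the remark after Proposition~\ref{prop: topological description of Om} expresses $\Om(Q)$ as a decreasing intersection of the closed sets $\overline{\bigcup_{|I|\ge n}\P(Q)}$. Each of these is non-empty and contained in the compact set $Q$, so $G$ is non-empty and compact. Since $Q$ is compact and forward-invariant, Proposition~\ref{prop: omega(K) subset G} gives $H_\cF(G)=G$. Its proof of $H$-invariance uses only compactness and forward-invariance of $K$, not compact dynamics, so it applies here.

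\emph{Attraction.} This is the step I expect to be the main obstacle. First I show that $G$ attracts $Q$, by contradiction. Suppose there are $\eps>0$, multiindices $I_n$ with $|I_n|\to\infty$, and points $q_n\in Q$ with $d(\Pn(q_n),G)\ge\eps$. The points $\Pn(q_n)$ lie in the compact set $Q$, so a subsequence converges to some $y$. By definition $y\in\Om(Q)=G$, which contradicts $d(y,G)\ge\eps$.

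Next, let $K$ be an arbitrary compact set. Since $Q$ is global, there is $\tau$ with $\P(K)\subset Q$ for all $I\in\cI^m_\tau$. Fix $\eps>0$ and choose $\tau'$ such that $\Phi^J_\cF(Q)\subset N_\eps(G)$ for every $J\in\cI^m_{\tau'}$. Take any $I$ with $|I|\ge\tau+\tau'$ and split it as $I=I_1I_2$ with $|I_1|=\tau$ (rounding up if $\tau$ is not an integer) and $|I_2|\ge\tau'$. By the composition rule $\Phi^{I_1I_2}=\Phi^{I_2}\circ\Phi^{I_1}$, we get
\[
\P(K)=\Phi^{I_2}_\cF\bigl(\Phi^{I_1}_\cF(K)\bigr)\subset\Phi^{I_2}_\cF(Q)\subset N_\eps(G).
\]
Hence $G$ attracts $K$.

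Combining the three parts, $G$ is a compact $H$-invariant set attracting every compact subset of $X$. So $\cF$ has compact dynamics with $\Gl=\Om(Q)$, and uniqueness follows from Proposition~\ref{prop: G is unique}.
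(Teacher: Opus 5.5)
Your proof is correct and follows the paper's argument essentially step by step: compactness from the nested closed sets inside $Q$, $H$-invariance from the same subsequence argument (which, as you rightly note, is valid in Proposition~\ref{prop: omega(K) subset G} without compact dynamics, whereas the paper repeats it inline), and attraction of an arbitrary compact $K$ by splitting $I=I_1I_2$. The only cosmetic difference is that you show $\Om(Q)$ attracts $Q$ by a sequential contradiction argument, whereas the paper uses the fact that the decreasing compact sets $E_n=\bigcup_{|I|\ge n}\P(Q)$ satisfy $e(E_n,\Om(Q))<\varepsilon$ for $n$ large.
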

%
\begin{proof}
Recall that
\[
\Om(Q)=\bigcap_{n\ge 0} E_n,
\qquad
E_n=\bigcup_{|I|\ge n}\P(Q).
\]
Since $Q$ is forward-invariant, for every $n\ge 0$ we have
\[
E_n=\bigcup_{|I|=n}\P(Q).
\]
Hence each $E_n$ is a compact subset of $Q$, and
\[
E_{n+1}\subset E_n
\qquad\text{for all }n\ge 0.
\]
Therefore
\[
\Om(Q)=\bigcap_{n\ge 0}E_n
\]
is a non-empty compact subset of $Q$.

By Proposition~\ref{prop: forward invariant}, $\Omega_\cF(Q)$ is forward-invariant.
We now show that $\Om(Q)$ is also $H$-invariant. 
By its forward-invariance, 
\[
H_\cF(\Om(Q))\subset \Om(Q).
\]
It remains to prove the opposite inclusion.

Fix $x\in \Om(Q)$. Since $x\in E_n$ for every $n\ge 1$, for each $n\ge 1$ there exist $q_n\in Q$ and
a multiindex
\[
I_n=J_ni_n,\qquad |I_n|=n,
\]
such that
\[
x=\Phi_\cF^{I_n}(q_n)=f_{i_n}\bigl(\Phi_\cF^{J_n}(q_n)\bigr).
\]
Set
\[
y_n=\Phi_\cF^{J_n}(q_n).
\]
Then $y_n\in E_{n-1}\subset Q$ and
\[
x=f_{i_n}(y_n).
\]

Since $Q$ is compact and $\{1,\dots,m\}$ is finite, passing to a subsequence if necessary we may
assume that
\[
y_n\to y\in Q
\qquad\text{and}\qquad
i_n=i
\quad\text{for all }n.
\]
We claim that $y\in \Om(Q)$. 
Indeed, fix $p\ge 0$. 
If $n$ is sufficiently large, then $n-1\ge p$, and so
\[
y_n\in E_{n-1}\subset E_p.
\]
Since $E_p$ is closed, passing to the limit yields $y\in E_p$. Since $p$ was arbitrary, we conclude
that
\[
y\in \bigcap_{p\ge 0}E_p=\Om(Q).
\]
By continuity of $f_i$,
\[
x=\lim_{n\to\infty} f_i(y_n)=f_i(y),
\]
and since $y\in \Om(Q)$, this proves that $x\in H_\cF(\Om(Q))$. Hence
\[
\Om(Q)\subset H_\cF(\Om(Q)).
\]
Therefore
\[
H_\cF(\Om(Q))=\Om(Q),
\]
namely $\Om(Q)$ is $H$-invariant.

Now we prove that $\Om(Q)$ attracts every compact subset of $X$. 
Let $K\subset X$ be compact and
let $\varepsilon>0$. 
Since $Q$ is a global trapping region, there exists $N_0\ge 0$ such that
\[
\P(K)\subset Q \qquad\text{for every } I\in\cI^m_{N_0}.
\]
Since the family $(E_n)_{n\ge 0}$ is decreasing and
\[
\Om(Q)=\bigcap_{n\ge 0}E_n,
\]
compactness implies that there exists $n\ge 0$ such that
\[
e(E_n,\Om(Q))<\varepsilon.
\]
Now let $I\in\cI^m$ with $|I|\ge N_0+n$. Write $I=LJ$ with $|L|=N_0$ and $|J|\ge n$. Then
\[
\P(K)=\Phi_\cF^J(\Phi_\cF^L(K))\subset \Phi_\cF^J(Q)\subset E_n.
\]
Hence
\[
e(\P(K),\Om(Q))\le e(E_n,\Om(Q))<\varepsilon
\]
for every $|I|\ge N_0+n$. This proves that $\Om(Q)$ attracts every compact subset of $X$.

Thus, $\Om(Q)$ is compact and $H$-invariant and attracts every compact subset of $X$.
Hence, $\cF$ has compact dynamics and $\Gl=\Om(Q)$.
\end{proof}
\begin{corollary}[IFSs over compact spaces have compact dynamics]
    \label{cor: X compact => F has compact dynamics}
    Assume that $X$ is compact.
    Then any IFS $\cF$ on $X$ has compact dynamics and $\Gl=\Om(X)$
\end{corollary}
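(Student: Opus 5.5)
The statement follows from Theorem~\ref{thm: Q => G} once we exhibit a compact global trapping region, and the natural candidate is $Q=X$ itself.

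First, $X$ is closed in itself, and it is forward-invariant because each $f_i$ maps $X$ into $X$. Hence $X$ is a trapping region, and it is compact by hypothesis.

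Second, $X$ is global. For any compact $K\subset X$ and any $I\in\cI^m$ we have $\P(K)\subset X$, so $X$ absorbs $K$ with $\tau=0$. Therefore $X\in\cK_\cF$.

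Theorem~\ref{thm: Q => G} applied with $Q=X$ then gives that $\cF$ has compact dynamics and $\Gl=\Om(X)$. Every step is immediate from the definitions, so there is no real obstacle. The only thing to check is that nothing in the proof of Theorem~\ref{thm: Q => G} requires $Q$ to be a proper subset or to have nonempty boundary, and it does not: that proof only uses compactness, forward-invariance and absorption of compact sets, all of which hold for $Q=X$. The same argument also works with Proposition~\ref{prop: G is unique} in the background, but uniqueness is not needed because Theorem~\ref{thm: Q => G} already identifies the global attractor as $\Om(X)$.
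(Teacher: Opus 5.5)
Your proposal is correct and is exactly the paper's argument: $X$ is closed, forward-invariant and compact, and it trivially absorbs every compact subset of itself, so it is a compact global trapping region, and Theorem~\ref{thm: Q => G} with $Q=X$ gives compact dynamics and $\Gl=\Om(X)$. One cosmetic point: the paper's definition of absorption asks for some $\tau>0$, so take $\tau=1$ (any value works) rather than $\tau=0$.
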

\begin{proof}
    By definition $X$ is forward-invariant and trivially absorbs every compact subset of itself, so $X$ is a compact global trapping region for $\cF$.
    Hence, the claim follows immediately by Theorem~\ref{thm: Q => G} with $Q=X$.
\end{proof}
\begin{corollary}[Contractive IFSs over locally compact complete spaces have compact dynamics]
    \label{cor: K=G_F}
    Assume that $\cF$ is a contractive IFS and that $X$ is locally compact and complete.
    Then $\cF$ has compact dynamics and has a unique $H$-invariant compact set $K$ and $\Gl=K$.
\end{corollary}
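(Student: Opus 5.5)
The plan is to produce a compact global trapping region and then apply Theorem~\ref{thm: Q => G}. Let $\lambda<1$ be a common contraction constant for $f_1,\dots,f_m$; it exists because $\cF$ is finite. Since $X$ is complete, the classical Hutchinson theorem (Theorem~\ref{thm: Hutch}) gives a unique non-empty compact set $K$ with $H_\cF(K)=K$. Uniqueness of the $H$-invariant compact set therefore comes for free. It then suffices to show that $K$ is a global attractor, because Proposition~\ref{prop: G is unique} will give $\Gl=K$.

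To show that $K$ attracts every compact set $C\subset X$, I will argue directly rather than through trapping regions. Fix $c\in C$ and $k\in K$. For any $I$, choose $y\in K$ with $\P(y)\in K$; for instance $y=k$ works, since $K$ is forward-invariant. Then for every $x\in C$,
\[
d(\P(x),K)\le d(\P(x),\P(k))\le \lambda^{|I|}\,d(x,k)\le \lambda^{|I|}\bigl(\operatorname{diam}(C)+d(c,k)\bigr).
\]
The right-hand side tends to $0$ uniformly in $x\in C$ as $|I|\to\infty$. So for every $\eps>0$ there is $\tau$ with $\P(C)\subset N_\eps(K)$ for all $I\in\cI^m_\tau$. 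Together with compactness and $H$-invariance, this makes $K$ a global attractor, so $\cF$ has compact dynamics and $\Gl=K$.

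As an alternative that matches the section's theme, I could build a compact global trapping region explicitly. Take $Q=\{x: d(x,K)\le r\}$, using the inequality $d(f_i(x),K)\le \lambda d(x,K)$. This inequality follows by picking $k\in K$ nearest to $x$ and noting $f_i(k)\in K$. It makes $Q$ forward-invariant and absorbing, and since $\lambda<1$ any $r>0$ works. The main obstacle on this route is compactness of $Q$: a closed $r$-neighbourhood of a compact set need not be compact in a merely locally compact space. Local compactness together with compactness of $K$ gives some $r>0$ for which $Q$ is compact, and completeness ensures closed bounded pieces behave. Because the direct attraction argument above avoids this subtlety entirely, I will use it, mentioning the trapping region only as a remark.
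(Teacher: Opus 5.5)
Your argument is correct, but it reaches compact dynamics by a different route from the paper. The paper first gets $K$ from Hutchinson's theorem. It then uses local compactness to choose $\eps>0$ with $\overline{N_\eps(K)}$ compact, and uses $d(f_i(x),K)\le\lambda\, d(x,K)$ to show this set is forward-invariant. It shows the set absorbs every compact $C$ through the estimate $\sup_{x\in C}d(\P(x),K)\le\lambda^{|I|}\sup_{x\in C}d(x,K)$. Finally it invokes Theorem~\ref{thm: Q => G} to get compact dynamics, and identifies $\Gl=K$ by uniqueness of the nonempty compact $H$-invariant set.

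You instead check the definition of global attractor for $K$ directly, using essentially the same Lipschitz estimate. This is exactly what the paper itself does later, in the proof of Proposition~\ref{prop: R_C=K}. Your route is shorter and shows that local compactness plays no role in this corollary: completeness, needed for Hutchinson's theorem, plus the contraction estimate already suffice. The paper's route costs local compactness but yields an explicit compact global trapping region $\overline{N_\eps(K)}$, which fits the theme of the section. As a by-product it gives $\Gl=\Om\bigl(\overline{N_\eps(K)}\bigr)$.

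One correction to your closing remark: completeness does not make closed bounded sets compact. Compactness of $\{x: d(x,K)\le r\}$ for small $r$ comes solely from local compactness together with compactness of $K$. Indeed, $K$ lies at positive distance from the complement of the interior of a compact neighbourhood $V$ of $K$, so for small $r$ that set is a closed subset of $V$.
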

\begin{proof}
    Denote by $\lambda\in(0,1)$ a common contraction constant for its generators. 
    Since $X$ is complete, the hyperspace
    $\KX$ is complete, and the Hutchinson map $H_\cF$ is a contraction (see Section~\ref{sec: Hutch} for details on this point). 
    Hence, by Theorem~\ref{thm: Hutch}, $H_\cF$ has a unique fixed point $K\in\KX$ -- equivalently, $\cF$ has a unique nonempty $H$-invariant compact set $K\subset X$.

    By local compactness and compactness of $K$, there exists an $\eps>0$ such that $\overline{N_\eps(K)}$
    is compact.
    If $x\in \overline{N_\eps(K)}$, then $d(x,K)\leq\varepsilon$, and therefore
\[
d\bigl(f_i(x),K\bigr)
\leq\lambda d(x,K)
\leq\lambda\varepsilon
<\varepsilon.
\]
Hence
\[
f_i(\overline{N_\eps(K)})\subset N_\varepsilon(K)\subset \overline{N_\eps(K)},
\]
so $\overline{N_\eps(K)}$ is forward-invariant.

    Let now $C\subset X$ be compact and set $M:=\sup_{x\in C}d(x,K)<\infty$.
    Choose $N$ such that $\lambda^NM<\varepsilon$. 
    Then, for every $|I|\geq N$,
\[
\P(C)\subset N_\varepsilon(K)\subset \overline{N_\eps(K)}.
\]
    Thus, $\overline{N_\eps(K)}$ is a compact global trapping region and so,
    by Theorem~\ref{thm: Q => G}, $\cF$ has compact dynamics. 
    Since its global attractor is compact and $H$-invariant, the uniqueness of the fixed point of $H_\cF$ gives $\Gl=K$.
\end{proof}

\medskip\noindent
{\bf Example: trapping regions of the Sierpi\'nsky IFS.}
In Section~\ref{sec: IFS}, we discussed the recurrent set and global attractor of a contractive IFS.
Here we illustrate some global trapping regions of a particular contractive IFS, the Sierpi\'nski IFS, namely the contractive IFS whose attractor is the Sierpi\'nski gasket.
This is the IFS $\cS=\{s_A,s_B,s_C\}$ on $\bR^2$ where $A,B,C$ are the vertices of an equilateral triangle $T$ of side 1 and the map $s_P$, where $P=A,B,C$, is a homothety by a factor $1/2$ centered at $P$.
The phase space of $\cS$ is not compact but $\cS$ has compact dynamics. 

\begin{figure}
    \centering
    \includegraphics[width=0.5\linewidth]{}
    \caption{{\bf The Sierpi\'nski gasket.} The picture shows the equilateral triangle $T$ with vertices $A,B,C$ and the set $H^4_\cS(T)$. 
    Triangles are colored based on the last map they are the image of.
    Recall that the Sierpi\'nski gasket is obtained as the limit of $H^k_\cS(T)$ for $k\to\infty$.
    The disc $Q$ is a compact global trapping region for the Sierpi\'nski IFS, $H_\cS(Q)$ is its image under the Hutchinson map.}
    \label{fig: Sierp}
\end{figure}

A quick way to see this is to show that this IFS admits compact global trapping regions. 
Indeed, let $Q$ be any closed disc centered at the barycenter of $T$ and containing some neighborhood of $T$.
Notice that $s_A(x)=\frac{x+A}{2}$ and similarly for the other vertices.
Since $Q$ is convex and each vertex of $T$ is in $Q$, it follows at once that $Q$ is forward-invariant under $\Phi_\cF$.
Moreover, let $\delta>0$ such that $N_\delta(T)\subset Q$.
Given any compact $L\subset\bR^2$, standard arguments show that $\sup_{x\in L}d(\P(x),T)$ can be made arbitrarily small for $|I|\to\infty$, namely there is an $N\geq1$ such that $\P(L)\subset N_\delta(T)$ for all $|I|\geq N$.
Hence, $Q$ absorbs all compact sets of $\bR^2$ and so $Q$ is a compact global trapping region for $\Phi_\cS$.
In Figure~\ref{fig: Sierp} we show a particular instance of $Q$ and its first image under $H_\cS$, together with an approximation of the Sierpi\'nski gasket.

Notice that, although also the triangle $T$ is compact and forward-invariant, it is not a global trapping region.
Indeed, rather than absorbing all compact sets, it only attracts them. 
On the other side, $T$ is not the global attractor because it is not $H$-invariant.
We recall the reader that, by Corollary~\ref{cor: K=G_F}, the global attractor of $\Phi_\cS$ is precisely its unique attractor, namely the Sierpi\'nski gasket.

\medskip\noindent
{\bf Trapping regions and global attractors.}
Below we show that global attractors  always arise as the limit set of a compact global trapping region.

%
\begin{lemma}
    \label{lemma: for-inv}
    Assume that $\cF$ has compact dynamics.
    Then, for every $\eps>0$, there is a compact global trapping region $Q$ such that $Q\subset N_\eps(\Gl)$.
\end{lemma}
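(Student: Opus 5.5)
We build $Q$ from a compact neighborhood of $\Gl$ by taking the closure of its forward orbit after a suitable number of steps. First shrink $\eps$ so that $\overline{N_\eps(\Gl)}$ is compact. This is possible since $\Gl$ is compact and $X$ is locally compact. Set $U=N_{\eps/2}(\Gl)$ and $K=\overline{U}$, a compact neighborhood of $\Gl$ contained in $N_\eps(\Gl)$.

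Since $\Gl$ attracts $K$, there is $\tau$ with $\P(K)\subset N_{\eps/2}(\Gl)=U$ for all $I\in\cI^m_\tau$. Define
\[
Q=\overline{\bigcup_{|I|\ge \tau}\P(K)}\;\cup\;\bigcup_{|I|<\tau}\P(K)\cap\ldots
\]
A cleaner choice avoids this. Let
\[
Q=\bigcup_{|I|\ge\tau}\P(K).
\]
Then $Q\subset U\subset K$ by the choice of $\tau$. It is forward-invariant, because appending indices keeps $|I|\ge\tau$. Taking the closure $\overline Q\subset K$ gives a compact set, and it stays forward-invariant by continuity of the $f_i$. We also have $\overline Q\subset \overline{U}$. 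To land strictly inside $N_\eps(\Gl)$, use $\overline{N_{\eps/2}(\Gl)}\subset N_\eps(\Gl)$, which holds because points at distance $\le\eps/2$ are at distance $<\eps$. So $\overline Q$ is a compact trapping region in $N_\eps(\Gl)$.

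It remains to show absorption, and this is the main obstacle: $\overline Q$ need not contain a neighborhood of $\Gl$, so attraction alone does not immediately give absorption. The fix is to show $\overline Q\supset$ a neighborhood of $\Gl$, or at least that points of compact sets eventually enter $\overline Q$. Let $C$ be compact. Since $K$ is a neighborhood of $\Gl$, containing $N_{\eps/2}(\Gl)$, and $\Gl$ attracts $C$, there is $\sigma$ with $\Phi^J_\cF(C)\subset K$ for all $|J|\ge\sigma$. Then for $|I|\ge\sigma+\tau$, write $I=JL$ with $|J|=\sigma$ and $|L|\ge\tau$. This gives $\P(C)=\Phi^L_\cF(\Phi^J_\cF(C))\subset\Phi^L_\cF(K)\subset Q$. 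Hence $\overline Q$ absorbs $C$, so $\overline Q\in\cK_\cF$ with $\overline Q\subset N_\eps(\Gl)$.

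The trick that resolves the obstacle is that absorption is obtained by passing through $K$ first, rather than needing $Q$ itself to be a neighborhood. The only care needed is the indexing convention for composing multiindices, which must match $\Phi^I\circ\Phi^J=\Phi^{J\cdot I}$.
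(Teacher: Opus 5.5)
Your proof is correct and is essentially the paper's own argument. You take a compact neighborhood $K\subset N_\eps(\Gl)$ of $\Gl$, use the set $\overline{\bigcup_{|I|\ge\tau}\P(K)}\subset K$ as the trapping region (forward-invariant by appending indices and by continuity), and obtain absorption by first driving a compact set into $K$ and then splitting the word as $I=JL$. The only fix needed is cosmetic: delete the abandoned, half-written definition of $Q$ that ends with ``$\cap\ldots$''.
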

\begin{proof}

Fix $\varepsilon>0$. Since $X$ is locally compact and $\Gl$ is compact,
there exists a compact neighborhood $K$ of $\Gl$ such that
\[
\Gl \subset \operatorname{int}(K)\subset K \subset N_\varepsilon(\Gl).
\]

Since $\Gl$ attracts $K$, and $\operatorname{int}(K)$ is a neighborhood of
$\Gl$, there exists $N\ge 0$ such that
\[
\P(K)\subset \operatorname{int}(K)
\qquad \text{for every } I\in\cI^m_N .
\]

Define
\[
Q=\overline{\bigcup_{|I|\ge N}\P(K)}.
\]

Since
\[
\bigcup_{|I|\ge N}\P(K)\subset K,
\]
the set $Q$ is a compact subset of $K$, hence
\[
Q\subset K\subset N_\varepsilon(\Gl).
\]

We now prove that $Q$ is forward-invariant. Let $i\in\{1,\dots,m\}$. Then
\[
f_i\!\left(\bigcup_{|I|\ge N}\P(K)\right)
\subset
\bigcup_{|J|\ge N+1}\PJ(K)
\subset
\bigcup_{|J|\ge N}\PJ(K).
\]

By continuity of $f_i$,
\[
f_i(Q)
=
f_i\!\left(\overline{\bigcup_{|I|\ge N}\P(K)}\right)
\subset
\overline{f_i\!\left(\bigcup_{|I|\ge N}\P(K)\right)}
\subset Q.
\]

Thus $Q$ is forward-invariant.

Finally we prove that $Q$ absorbs all compact sets.
Let $C\subset X$ be compact.
Since $\Gl$ attracts $C$ and $\operatorname{int}(K)$ is a neighborhood of
$\Gl$, there exists $M\ge 0$ such that
\[
\PJ(C)\subset K
\qquad \text{for every } J\in\cI^m_M .
\]

Now let $L\in \cI^m_{M+N}$. 
Write $L=VU$ with $|V|=M$ and $|U|\ge N$. 
Then
\[
\Phi_\cF^L(C)=\Phi_\cF^U(\Phi_\cF^V(C))\subset \Phi_\cF^U(K)
\subset \bigcup_{|I|\ge N}\Phi_\cF^I(K)\subset Q.
\]

Hence
\[
\Phi_\cF^L(C)\subset Q
\qquad \text{for every } L\in \cI^m_{M+N}.
\]

Therefore, $Q$ absorbs every compact set, so it is a global trapping region.
\end{proof}
%
\begin{theorem}
    \label{thm: compact dynamics = compact gl tr reg}
    $\cF$ has compact dynamics if and only if it has a compact global trapping region $Q$.
\end{theorem}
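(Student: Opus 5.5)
The statement is an equivalence, and both directions follow from results already established in this section, so the plan is simply to combine them.

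For the ``if'' direction, assume that $\cF$ has a compact global trapping region $Q$. Then Theorem~\ref{thm: Q => G} applies directly: $\Om(Q)$ is a nonempty compact $H$-invariant set that attracts every compact subset of $X$. Hence $\cF$ has compact dynamics, with $\Gl=\Om(Q)$.

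For the ``only if'' direction, assume that $\cF$ has compact dynamics. Fix any $\eps>0$, for instance $\eps=1$, and invoke Lemma~\ref{lemma: for-inv}. It produces a compact global trapping region $Q\subset N_\eps(\Gl)$, which is in particular a compact global trapping region of $\cF$, so $\cK_\cF\neq\varnothing$.

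Neither direction has a real obstacle. The substantive work was already done in the two cited results: in Theorem~\ref{thm: Q => G}, the $H$-invariance of $\Om(Q)$ and its attraction of compact sets; in Lemma~\ref{lemma: for-inv}, the construction $Q=\overline{\bigcup_{|I|\ge N}\P(K)}$ from a compact neighborhood $K$ of $\Gl$, which is where local compactness of $X$ enters. The only point worth stressing in the write-up is that the lemma's hypothesis is exactly compact dynamics, with no further conditions. As a consistency check one may also note, via Theorem~\ref{thm: Q => G} and Proposition~\ref{prop: G is unique}, that every compact global trapping region $Q$ satisfies $\Om(Q)=\Gl$, independently of the choice of $Q$.
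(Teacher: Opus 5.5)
Your proposal is correct and coincides with the paper's own proof. The ``if'' direction is Theorem~\ref{thm: Q => G}, and the ``only if'' direction is Lemma~\ref{lemma: for-inv} applied with any $\eps>0$.
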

\begin{proof}
If $\cF$ has a compact global trapping region, then $\cF$ has compact dynamics by Theorem~\ref{thm: Q => G}.

Conversely, if $\cF$ has compact dynamics, then by Lemma~\ref{lemma: for-inv}, for every
$\varepsilon>0$ there exists a compact global trapping region
$Q\subset N_\varepsilon(\Gl)$. Hence $\cF$ has a compact global trapping
region.
\end{proof}

We are now able to provide a further characterization of the global attractor.

\begin{proposition}
\label{prop: char of G}
    Assume that $\cF$ has compact dynamics.
    Then 
    $\Gl$ is contained in every closed set of $X$ that attracts every compact subset of $X$.
\end{proposition}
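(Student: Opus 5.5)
The argument will follow the minimality proof of Proposition~\ref{prop: G_F minimal}. That proof never used forward-invariance or compactness of the competing set, only that it attracts $\Gl$ and is closed. Let $C\subset X$ be closed and suppose $C$ attracts every compact subset of $X$. We want to show $\Gl\subset C$.

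First, since $\Gl$ is compact, $C$ attracts $\Gl$. So for every $\eps>0$ there is $\tau>0$ with $\P(\Gl)\subset N_\eps(C)$ for all $I\in\cI^m_\tau$.

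Second, fix $x\in\Gl$. Because $H_\cF(\Gl)=\Gl$, iterating gives $H_\cF^n(\Gl)=\Gl$ for every $n\ge1$. Hence for each $n$ there are $x_n\in\Gl$ and $I_n$ with $|I_n|=n$ such that $x=\Phi_\cF^{I_n}(x_n)$. This backward-orbit representation uses $H$-invariance, exactly as in the earlier proofs.

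Third, given $\eps>0$, take $n\ge\tau$. Then $x=\Phi_\cF^{I_n}(x_n)\in\Phi_\cF^{I_n}(\Gl)\subset N_\eps(C)$, so $d(x,C)<\eps$. Since $\eps$ is arbitrary, $d(x,C)=0$, and because $C$ is closed, $x\in C$. Thus $\Gl\subset C$.

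There is no real obstacle here. The only points to check are that the attraction definition with a non-compact set $C$ still makes sense (it is phrased through $N_\eps(C)$, so it does), and that closedness of $C$ is what turns $d(x,C)=0$ into $x\in C$.
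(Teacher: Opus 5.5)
Your argument is correct, and it is more direct than the paper's. The paper first invokes Theorem~\ref{thm: compact dynamics = compact gl tr reg} to obtain a compact global trapping region $Q$. It then uses Theorem~\ref{thm: Q => G} to write $\Gl=\Om(Q)=\bigcap_n E_n$ with $E_n=\bigcup_{|I|\ge n}\P(Q)$. Finally it concludes from $\Gl\subset E_\tau\subset N_\eps(B)$, where $\tau$ comes from $B$ attracting $Q$.

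You instead use only the $H$-invariance of $\Gl$, through $H_\cF^n(\Gl)=\bigcup_{|I|=n}\P(\Gl)=\Gl$, together with attraction of the single compact set $\Gl$. This is exactly the argument of Proposition~\ref{prop: G_F minimal} with the superfluous hypotheses dropped. Your observation is accurate: closedness of the competitor is the only property of it that argument actually uses.

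Your route gains economy and a slightly sharper conclusion. It shows that $\Gl$ lies in every closed set that attracts $\Gl$ alone, and more generally that any compact $H$-invariant set lies in every closed set attracting it, with no trapping-region machinery. This is genuinely weaker as a hypothesis: since $\Gl\subset Q$, any set attracting $Q$ also attracts $\Gl$. The paper's route instead ties the result to the representation $\Gl=\Om(Q)$ for an arbitrary compact global trapping region. That fits the narrative of the section, but it is not needed for the conclusion.
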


\begin{proof}
Let $Q\in \cK_\cF$ be a compact global trapping region for $\cF$, whose existence is guaranteed by
Theorem~\ref{thm: compact dynamics = compact gl tr reg}. By Theorem~\ref{thm: Q => G},
\[
\Gl=\Om(Q)=\bigcap_{n\ge 0} E_n,
\qquad
E_n=\bigcup_{|I|\ge n}\Phi_\cF^I(Q).
\]

Let $B\subset X$ be closed and assume that $B$ attracts every compact subset of $X$.
Since $Q$ is compact, $B$ attracts $Q$. 
Therefore, for every $\varepsilon>0$, there exists $\tau\ge 0$ such that $\P(Q)\subset N_\varepsilon(B)$ for every $I\in \cI^m_\tau$.
It follows that
\[
E_\tau=\bigcup_{|I|\ge \tau}\P(Q)\subset N_\varepsilon(B).
\]
Since $\Gl\subset E_\tau$, we obtain $\Gl\subset N_\varepsilon(B)$.
Since $\varepsilon>0$ is arbitrary and $B$ is closed, this implies $\Gl\subset B$.
In particular, if $Q$ is a closed set that attracts every compact subset of $X$, then
$\Gl\subset Q$.
\end{proof}

\section{Streams and graphs of an IFS}
\label{sec: streams}


The idea of representing the qualitative dynamics of a semiflow with a graph appeared explicitly in literature, to our knowledge, in a seminal article by Stephen Smale~\cite{Sma67}, although it is implicit already in some work of Morse~\cite{Mor25}.
Smale's idea, which was limited to Axiom-A systems, was to decompose the recurrent set of system $F$ into elementary indecomposable compact ``blocks'' $N_i$, which are the nodes of the graph, and to draw an edge from node $N_i$ to node $N_j$, $i\neq j$, if there is bi-infinite sequence $\{x_n\}_{n\in\bZ}$ (which we call bitrajectory) with $x_{n+1}=F(x_n)$ for every $n$ whose forward limit points lie in $N_j$ and backward limit points lie in $N_i$. 
It turns out that each point has either a recurrent-type dynamics, if it belongs to a node, or a gradient-type dynamics, namely belongs to a bitrajectory that defines an edge of the graph.  

In~\cite{DLY25} we started developing a theory able to cover all semiflows, from discrete-time one-dimensional system to continuous-time infinite-dimensional ones. 
The optimal tool we devised to achieve this goal is the concept of {\em stream}, a closed and transitive extension of the orbit relation of the semiflow.
Given a semiflow $F$, any stream of $F$ defines a concept of {\em being downstream} and this way provides a natural way to extend the set of recurrent point of $F$ to a set that is closed and invariant and provides as well a natural way to decompose this set into elementary blocks.
These blocks can be represented as nodes of a graph and the transitivity of the relation allows to define edges between nodes so that there is an edge from node $N$ to node $M$ if points in $M$ are downstream from points $N$.

The main goal of this article is to improve our understanding of who are the key ingredients of the qualitative dynamics of an IFS.
To this end, in this section we extend the definition of stream to IFSs and prove some of its fundamental properties.
Then, in Section~\ref{sec: chains}, we will concentrate on the chain streams and provide for IFSs a similar decomposition of the types of dynamics that a point can have.  

\medskip\noindent
{\bf Stream of an IFS.}
Recall that a {\bf quasi-order} on $X$ is a binary relation on $X$ which is reflexive and transitive.
We call {\BF stream} of an IFS $\cF$ a closed quasi-order $S$ that contains $\cO_\cF$.
%
When $(x,y)\in S$, we say that $y$ is $S$-downstream from $x$ and we use the notation $$\text{{\BF $x\Sto y$}.}$$
Sometimes we use also the notation {\BF$y\in\Down_S(x)$} or {\BF$x\in\Up_S(y)$}.
When $x\Sto y$ and $y\Sto x$, we write 
$$\text{{\BF$x\Seq y$},}$$ and we say that $x$ and $y$ are {\BF$S$-equivalent}.

Unlike the case of semiflows, for $m>1$ we have more than one generator and so there are several possible definitions of ``recurrence''.
We say that $x$ is {\BF $S$-recurrent} if there is a generator $f_i$ such that $f_i(x)\Seq x$ and that $x$ is {\BF orbitwise $S$-recurrent} if $z\Seq x$ for every $z\in\cO_\cF(x)$.
    We denote by {\BF$\cR_S$} the set of all $S$-recurrent points and by {\BF$\cR^{ow}_S$} the set of all orbitwise $S$-recurrent points.
    We say that a set $M\subset\cR_{S}$ is $S$-equivalent if all points of $M$ are mutually $S$-equivalent.
    We call {\bf nodes} (resp. {\bf orbitwise nodes}) of $S$ the equivalence classes of $\cR_{S}$ (resp. of $\cR^{ow}_S$) with respect to $\Seq$.
    If $x\in\cR_S$, we denote by {\BF$\Node_S(x)$} the node of $S$ containing $x$.
    If $x\in\cR^{ow}_S$, we denote by {\BF$\Node^{ow}_S(x)$} the orbitwise node of $S$ containing $x$.
%
Notice that, as implicitly assumed in the definition above, $\Seq$ is an equivalence relation.

\begin{proposition}
    Each orbitwise node of a stream $S$ is a subset of a unique node of $S$.
\end{proposition}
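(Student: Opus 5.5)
The plan is to show two things: first, that $\cR^{ow}_S\subset\cR_S$; second, that the equivalence relation $\Seq$ restricted to $\cR^{ow}_S$ is the restriction of the one on $\cR_S$. Once both hold, every orbitwise node sits inside a node, and that node is unique because nodes are pairwise disjoint equivalence classes.

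\emph{Step 1: $\cR^{ow}_S\subset\cR_S$.} Let $x\in\cR^{ow}_S$ and choose any generator, say $f_1$. Then $f_1(x)=\Phi^{1}_\cF(x)\in\cO_\cF(x)$. By the definition of orbitwise $S$-recurrence, every $z\in\cO_\cF(x)$ satisfies $z\Seq x$, so in particular $f_1(x)\Seq x$. Hence $x$ is $S$-recurrent. This uses only $m\ge1$, since the IFS has at least one generator.

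\emph{Step 2: each orbitwise node lies in a single node.} Let $N$ be an orbitwise node and fix $x\in N$. By Step 1, $x\in\cR_S$, so $\Node_S(x)$ is defined. Take any other $y\in N$. Then $y\in\cR_S$ by Step 1, and $y\Seq x$ because $N$ is a $\Seq$-class of $\cR^{ow}_S$. Since $\Seq$ is the same relation on both sets, $y$ lies in the $\Seq$-class of $x$ within $\cR_S$, that is, $y\in\Node_S(x)$. Therefore $N\subset\Node_S(x)$.

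For uniqueness, suppose $N\subset M$ and $N\subset M'$ with $M,M'$ nodes. Since $N\neq\emptyset$, the nodes $M$ and $M'$ share a point, and equivalence classes that intersect coincide, so $M=M'$.

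There is no real obstacle here. The only point needing care is Step 1, namely that orbitwise recurrence implies recurrence in the generator-based sense. This holds because $f_i(x)$ always belongs to the orbit of $x$.
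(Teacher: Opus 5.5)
Your proof is correct and follows the same route as the paper. The paper only asserts that an orbitwise node is an $S$-equivalent subset of $\cR_S$, and therefore lies in a node. You supply the details it leaves to the reader: the inclusion $\cR^{ow}_S\subset\cR_S$ (via $f_1(x)\in\cO_\cF(x)$), and uniqueness from the disjointness of equivalence classes.
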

\begin{proof}
    Let $N$ be an orbitwise node of $S$.
    The reader can verify that $N$ is an $S$-equivalent set and so it lies in a node of $S$, which proves the claim.
\end{proof}

\medskip\noindent
{\bf The largest and the smallest stream.} 
One way to give a concrete meaning to streams is either to think that $\cF$ is subject to some control or that $\cF$ is an approximated model of some natural or abstract phenomenon one wants to study.
In this framework, $x\Sto y$ for $y\not\in\cO_\cF(x)$ means that $y$ is reachable from $x$ by steering the orbit as the point moves under $\cF$ or that $y$ lies on an orbit compatible with the approximation of the model.
The reader can verify the following basic results:
\begin{enumerate}
    \item {\em $X\times X$ is a stream, in fact the largest stream of each IFS};
    \item {\em the intersection of streams is a stream, so each IFS has a smallest stream.}
\end{enumerate}
The largest stream corresponds to the trivial situation where the controls (or the errors) are so large that every point can be reached from any other point of the phase space.
In this case, the dynamics of $\cF$ becomes irrelevant.

At the other end, the smallest stream is the one most faithful to the dynamics of $\cF$: $x\Sto y$ means that $y$ can be reached from $x$ by making arbitrarily small ``adjustments'' to the orbit of $x$.
For instance, consider the time-1 map $f$ of the ODE 
$$
x'=|x(x-1)(x-2)|.
$$
This system has three fixed points, $x=0,1,2$, and every other point moves rightward. 
Each point in $(0,1)$ asymptotes to 1 and every point in $(1,2)$ asymptotes to 2. 
Given any stream $S$ of this system, every point is downstream from 0.
Indeed, every point $x\in (0,1)$ has a backward orbit $x_n$ (namely $f(x_n)=x_{n-1}$ asymptoting (backward in time) to 0.
Since $x_n\Fto x$, then $x_n\Sto x$ and, since $S$ is closed, $0=\lim x_n\Sto x$.
Moreover, $f^\KX$ asymptotes to 1 and so $0\Sto1$. 
By the same type of argument, $0\Sto x$ for every $x\in[0,2]$.
The reason for this is that an arbitrarily small control/error would be able, for the orbit of a point $x\in(0,1)$, to bypass the fixed point $x=1$: it is enough to wait enough iterates so that $f^\KX$ is close enough to $1$ and then make the point jump over 1 and keep moving in the $(1,2)$ segment.

The reader is invited to look at Section~\ref{sec: chains} for a concrete and arguably the most important type of streams, namely streams of chains.
In case of $\eps$-chains, one allows a control/error of $\eps$ at every step.
Continuing the Sierpi\'nski IFS example, we exhibit there the recurrent sets of the smallest stream and of other ``looser'' streams of $\eps$-chains.

\medskip\noindent
{\bf Streams, limit sets and recurrent sets.}
Before introducing streams of chains, below we discuss some properties of general streams, in particular their invariance and their relation with limit sets and recurrence.
In particular, these results show two important strong points of streams: 1. for any stream $S$, the set of $S$-recurrent points is closed, while $\cR_\cF$ is not necessarily so; 2. moreover, $S$ provides a natural way to sort the points of $\cR_S$ into equivalence classes (nodes) which are themselves all closed sets.
Nodes are a natural generalization of periodic orbits, namely ``invariant'' (in some sense) closed sets such that any other point is ``dynamically equivalent'' to every other point of the set.
In case of the chain stream (see Section~\ref{sec: chains}) we are able to say even more: just as periodic orbits, each node is compact and entirely contained in the global attractor.


%
\begin{proposition}
    \label{prop: streams}
    Let $S$ be a stream of $\cF$. Then: 
    \begin{enumerate}
    \item $\cR^{ow}_S$ and each of its nodes are forward-invariant under $\Phi_\cF$;
    \item $\cR_S$, $\cR^{ow}_S$ and each of their nodes are closed;
    \item $\cR_\cF\subset\cR_\cS$ and $\cR^{ow}_\cF\subset\cR^{ow}_\cS$;
    \item if $\cF$ has compact dynamics, then $\Om(x)\cap\cR^{ow}_S\neq\emptyset$ for every $x\in X$;
    \item for every $x\in X$, $$\Down_S(x) \supset \cO_\cF(x)\bigcup\operatorname{Down}_S(\Om(x)).$$
    \end{enumerate}
\end{proposition}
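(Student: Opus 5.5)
The plan is to verify the five items one at a time. Throughout, the only tools are the three defining properties of a stream $S$: it is closed in $X\times X$, it is transitive, and it contains $\cO_\cF$. The one analytic device is passing to the limit along sequences, using closedness of $S$ and continuity of the $f_i$, together with finiteness of the alphabet.

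\textbf{Item 1.} Let $x\in\cR^{ow}_S$ and $z=f_i(x)$. Every $w\in\cO_\cF(z)$ lies in $\cO_\cF(x)$, so $w\Seq x$. Also $z\Seq x$, and transitivity of $\Seq$ then gives $w\Seq z$. Hence $z\in\cR^{ow}_S$. Since $z\Seq x$, $z$ lies in the same orbitwise node as $x$, so each orbitwise node is forward-invariant as well.

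\textbf{Item 2.} Take $x_n\in\cR_S$ with $x_n\to x$ and generators $i_n$ with $f_{i_n}(x_n)\Seq x_n$. Because the alphabet is finite, after passing to a subsequence I may take $i_n=i$ constant. Then $(x_n,f_i(x_n))$ and $(f_i(x_n),x_n)$ both lie in $S$. Continuity of $f_i$ and closedness of $S$ give $f_i(x)\Seq x$, so $x\in\cR_S$. For $\cR^{ow}_S$, take $x_n\to x$ in $\cR^{ow}_S$ and $z=\P(x)$. Then $\P(x_n)\Seq x_n$ for each $n$, and passing to the limit gives $z\Seq x$. For the nodes, I write
\[
\Node_S(x)=\cR_S\cap\Down_S(x)\cap\Up_S(x),
\]
and similarly for orbitwise nodes with $\cR^{ow}_S$ in place of $\cR_S$. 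Each of $\Down_S(x)$ and $\Up_S(x)$ is a slice of the closed set $S$, hence closed, so every node is an intersection of closed sets.

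\textbf{Item 3.} Let $x\in\cR_\cF$, so there are $I_n$ with $|I_n|\to\infty$ and $\Pn(x)\to x$. Write $I_n=i_nJ_n$, where $i_n$ is the first letter applied, and pass to a subsequence with $i_n=i$ constant. Then $\Pn(x)=\Phi^{J_n}_\cF(f_i(x))$, so $f_i(x)\Sto\Pn(x)$. Closedness of $S$ gives $f_i(x)\Sto x$, and $x\Sto f_i(x)$ holds trivially. Hence $x\in\cR_S$. In the orbitwise case, for each $z\in\cO_\cF(x)$ we have $x\in\Om(z)$, so some $\Pn(z)\to x$ with $z\Sto\Pn(z)$. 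Closedness gives $z\Sto x$, and $x\Sto z$ holds because $z\in\cO_\cF(x)$. So $z\Seq x$ for every $z\in\cO_\cF(x)$.

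\textbf{Item 4.} This follows immediately from Lemma~\ref{lemma: Om}, which gives a point of $\Om(x)\cap\owR$, combined with item 3.

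\textbf{Item 5.} The inclusion $\cO_\cF(x)\subset\Down_S(x)$ holds because $\cO_\cF\subset S$. Now let $y\in\Down_S(p)$ with $p\in\Om(x)$, and choose $\Pn(x)\to p$. Since $x\Sto\Pn(x)$ for each $n$, closedness gives $x\Sto p$, and transitivity gives $x\Sto y$.

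None of these steps is deep. The only place needing care is the subsequence extraction in items 2 and 3: one must fix a single generator using finiteness of the alphabet, and in item 3 one must peel off the \emph{first} letter applied (the convention $\P=f_{i_k}\circ\dots\circ f_{i_1}$ matters here), so that $\Pn(x)$ lies in the forward orbit of $f_i(x)$.
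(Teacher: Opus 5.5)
Your proof is correct and follows the paper's argument item by item. Forward-invariance comes from transitivity along the orbit, closedness from passing to limits in the closed relation $S$, item 4 from Lemma~\ref{lemma: Om} combined with item 3, and item 5 from closedness and transitivity.

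The only differences are details the paper leaves to the reader or omits. You extract a constant-generator subsequence to prove closedness of $\cR_S$, and you peel off the first letter applied to prove $\cR_\cF\subset\cR_S$; the paper's proof of item 3 treats only the orbitwise inclusion. You also describe nodes as $\cR_S\cap\Down_S(x)\cap\Up_S(x)$, which is slightly cleaner than the paper's sequential argument.
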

%

\begin{proof}
\noindent
(1) Let $x\in \cR^{ow}_S$.
To prove the forward-invariance of $\cR^{ow}_S$ and $\Node^{ow}_S(x)$, it is enough to prove that $\cO_\cF(x)\subset \Node^{ow}_S(x)$.
By definition, $x$ is $S$-equivalent to every point on its orbit.
By transitivity, this means that every point of $\cO_\cF(x)$ is equivalent to every other point on that orbit.
Hence, $\cO_\cF(x)\subset \Node^{ow}_S(x)$.

\medskip
\noindent
    (2) We prove the claim in case of orbitwise $S$-recurrence, the reader can adapt the argument to the other case.
    Let $x_n\in \cR^{ow}_S$ converge to some $x\in X$.
    Since each $x_n$ is orbitwise $S$-recurrent, for every $I\in\cI^m$ we have that $x_n\Seq\P(x_n)$.
    Since $S$ is closed, taking the limit we get that $x\Seq\P(x)$ for every $I\in\cI^m$, namely $x$ is $S$-equivalent to each point on its orbit.
    Hence, $x\in\cR^{ow}_\cS$, namely $\cR^{ow}_\cS$ is closed.

    Moreover, assume that the $x_n$ all belong to the same orbitwise node.
    Then $x_n\Seq x_m$ for every $n,m=1,2,\dots.$
    Hence, taking the limit for $n\to\infty$ and leaving $m$ fixed, we get that $x\Seq x_m$ for all $m$, so $x$ belongs to the same node as well.
    Therefore, each orbitwise node is closed.


\medskip\noindent
    (3) We proceed as in (2).
    Let $x\in \cR^{ow}_\cF$. 
    We need to prove that all points of $\cO_\cF(x)$ are $S$-equivalent to each other.

    Recall first of all that $\cO_\cF\subset S$, so $x\Sto y$ for all $y\in\cO_\cF(x)$.
    Since $x$ is orbitwise recurrent under $\cF$, we also know that, for every $y\in\cO_\cF(x)$, $x\in\Omega_\cF(y)$, namely there are $I_n\in\cI^m$ with $|I_n|\to\infty$ and $\Pn(y)\to x$.
    Since $y\Sto\Pn(y)$ for every $n$ and $S$ is closed, we get that $y\Sto x$.
    Hence, $y\Seq x$.
    Since $y$ was arbitrary, this shows that the orbit of $x$ is an $S$-equivalent set, so $x\in\cR^{ow}_S$.

\medskip\noindent
    (4) By Lemma~\ref{lemma: Om}, $\Om(x)\cap \cR^{ow}_\cF\neq\varnothing$ for every $x\in X$.
    By (3), $\cR^{ow}_\cF\subset \cR^{ow}_S$,so also $\Om(x)\cap \cR^{ow}_S\neq\varnothing$.


\medskip
\noindent
(5)
This is an immediate consequence of transitivity and closedness of $S.$
\end{proof}






\medskip\noindent
{\bf Closed quasi-orders and Lyapunov functions.}
A function $L\in C^0(X)$ is a {\bf Lyapunov function} for a quasi-order $D$ if $x\Dto y$ implies $L(x)\geq L(y)$.
We denote by $\text{\BF$\cL_D$}\subset C^0(X)$ the set of all Lyapunov functions of a given quasi-order $D$.

The set of Lyapunov functions of the quasi-order $\cO_\cF$ coincides with the set of Lyapunov functions of $\cF$, namely continuous functions $L$ such that $L(x)\geq L(\P(x))$ for all $I\in\cI^m$.
We denote this important set simply by {\BF$\cL_\cF$}.

One can also run things backward and use collections of functions to define quasi-orders in the following way.
Given a collection $\cS\subset C^0(X)$, we set
\beq
   \text{\BF$\cD_\cS$} = \{(x,y) : L(x)\geq L(y)\text{ for all }L\in \cS\}.
\eeq

The existence of non-trivial Lyapunov functions for a closed quasi-order is granted by the following fundamental result by Joseph Auslander.
\begin{lemmA}[Auslander, 1963~\cite{Aus63}, Thm.~4 (see also Lemma A in~\cite{DLY25})]
    \label{lem: A}
    Let $X$ be separable and locally compact and let $D$ be a closed quasi-order on $X$. 
    Then, if $x\not\Dto y$ (equivalently, if $(x,y)\not\in D$), there exists a function $L\in\cL_D$ such that $L(x)<L(y)$.
\end{lemmA}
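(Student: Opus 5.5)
The plan is to build $L$ as a countable superposition of ``lower-set'' Urysohn-type functions, exploiting separability and local compactness to pass from a single pair $(x,y)$ to continuity. First, I would reduce to the one-point compactification $X^*=X\cup\{\infty\}$, or else work with compact neighborhoods. Since $X$ is separable, metrizable and locally compact, it is $\sigma$-compact, and $X^*$ is compact metrizable. The closure of $D$ in $X^*\times X^*$, together with the diagonal, need not be transitive. So I would instead work directly in $X$, using a nested sequence of compact sets $K_1\subset \operatorname{int}K_2\subset\cdots$ exhausting $X$.

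The key step is a ``Nachbin-type'' separation. For a closed quasi-order $D$ on a locally compact space, I would show the following: given a compact set $A$ that is $D$-increasing-closed in a suitable sense and a closed set $B$ with $\Down_D(A)\cap B=\emptyset$ (here $\Down_D(A)$ is closed, being the image of the closed set $D\cap(A\times X)$ under a proper projection, since $A$ is compact), there is a closed $D$-decreasing set $W$, meaning $\Down_D(W)=W$, with
\[
\Down_D(A)\subset\operatorname{int}W,\qquad W\cap B=\emptyset .
\]
I would build $W$ by the standard ladder: enlarge $\Down_D(A)$ by a compact neighborhood, take $\Down_D$ again, and iterate. Repeating the construction over dyadic rationals $r\in(0,1)$ yields closed decreasing sets $W_r$ with $W_r\subset\operatorname{int}W_s$ for $r<s$. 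Setting
\[
L_0(z)=\inf\{r: z\in W_r\}
\]
(with value $1$ if no such $r$) gives a continuous function that is non-increasing along $D$, with $L_0(y)=0$ and $L_0(x)=1$. Concretely, I would take $A=\{y\}$ and $B=\{x\}$; this is legitimate because $x\notin\Down_D(y)$ would be the needed hypothesis. Flipping signs then gives $L=-L_0$ or $1-L_0$, with $L(x)<L(y)$ and $L$ non-increasing along $D$. I should double check the orientation: $x\not\Dto y$ means $y\notin\Down_D(x)$, so the natural choice is $A=\{x\}$, and then $L=$ the indicator-like function that equals $1$ on $\Down_D(x)$ would have $L(x)=1>L(y)$, which is the wrong direction. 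Instead I would use the up-set: $\Up_D(y)$ is closed and does not contain $x$. I would build increasing-closed sets $U_r\supset\Up_D(y)$ avoiding $x$, and set $L(z)=\sup\{r: z\in U_r\}$, so that $L(y)=1$, $L(x)=0$, and $L$ is non-increasing along $D$ because up-sets are closed under going upstream.

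The main obstacle is the enlargement step, since the interiors must nest while remaining saturated. The saturation $\Up_D(C)$ of a compact neighborhood $C$ is closed but may be noncompact and may fail to be a neighborhood whose closure avoids $x$. I would handle this by proving that $\Up_D$ of a compact set is closed (using the closedness of $D$ and the compactness of $C$) and that $x$ has a compact neighborhood avoiding it, via a sequential argument: if $x_n\to x$ with $x_n\in\Up_D(C_n)$ and $C_n$ shrinking to $\Up_D(y)$, then a limit point yields $x\in\Up_D(y)$, a contradiction. If the points $c_n$ escape to infinity, compactness fails. In that case I would fall back on citing Auslander's original argument as the paper does, since the statement is quoted from~\cite{Aus63}.
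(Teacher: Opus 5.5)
Your proposal has a genuine gap at the only step where the non-compact setting matters. The paper does not prove Lemma~A; it imports it from Auslander (and from~\cite{DLY25}), so your argument has to stand on its own. What you outline is Nachbin's order-Urysohn construction. That construction is standard when $X$ is compact. The one new ingredient needed on a separable locally compact space is a ladder of closed $D$-saturated sets $U_r\supset\Up_D(y)$ with $x\notin U_r$ and $U_s\subset\operatorname{int}U_r$ for $r<s$. This is exactly the step you leave open ("I would fall back on citing Auslander").

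The tools you propose for that step fail:
\begin{itemize}
\item $\Up_D(y)$ and $\Down_D(x)$ are in general non-compact, so there is no compact neighborhood of them to enlarge by.
\item Your separation claim is stated for a compact $A$, but each new rung of the ladder must be applied to the previous rung $W_r$, which is saturated and non-compact. The induction therefore does not close.
\item The saturation of a non-compact closed neighborhood need not be closed. On $X=\mathbb R$, the set $D=\Delta_{\mathbb R}\cup\{(1/n,n):n\geq1\}$ is a closed quasi-order, yet $\Up_D(\mathbb N)=\mathbb N\cup\{1/n:n\geq 1\}$ accumulates at $0$.
\end{itemize}
Your sequential argument only covers witnesses $c_n$ that stay in a compact set. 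The escaping case is the whole difficulty, and $\sigma$-compactness, which you note at the start, is never actually used. A minor point: the orientation detour is unnecessary. With $A=\{x\}$, $B=\{y\}$ and $L_0(z)=\inf\{r:z\in W_r\}$ you already get $L_0(x)=0<1=L_0(y)$, with $L_0$ non-increasing along $D$. Your first choice $A=\{y\}$ rested on the wrong hypothesis $x\notin\Down_D(y)$.

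One way to use $\sigma$-compactness without global saturated neighborhoods is to work on compacta and glue.
\begin{enumerate}
\item Exhaust $X$ by compact sets $K_1\subset\operatorname{int}K_2\subset K_2\subset\cdots$ with $x,y\in K_1$, and set $K_0=\varnothing$. Each $D_n=D\cap(K_n\times K_n)$ is a closed quasi-order on the compact space $K_n$; transitivity is inherited because $D$ itself is transitive.
\item Put $P=\Up_D(y)$ and $Q=\Down_D(x)$. These are closed, $P$ is up-closed, $Q$ is down-closed, and they are disjoint, since $z\in P\cap Q$ would give $x\Dto z\Dto y$.
\item Given $L_n\colon K_n\to[0,1]$, Lyapunov for $D_n$, with $L_n=1$ on $P\cap K_n$ and $L_n=0$ on $Q\cap K_n$, prescribe on the closed set $K_n\cup\bigl((P\cup Q)\cap K_{n+1}\bigr)$ the values $L_n$ on $K_n$, $1$ on $P$ and $0$ on $Q$.
\item These data are consistent and continuous by pasting. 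They are Lyapunov for $D_{n+1}$: the only potentially bad pairs $u\Dto v$ are those with $v\in P$ or $u\in Q$, and saturation forces $u\in P$, respectively $v\in Q$.
\item Nachbin's extension theorem for compact spaces with a closed (pre)order extends the data to a Lyapunov function $L_{n+1}\colon K_{n+1}\to[0,1]$ for $D_{n+1}$. On a compact space this is precisely where your Urysohn ladder does work.
\item The union $L=\bigcup_n L_n$ is continuous because $K_n\subset\operatorname{int}K_{n+1}$. It is a Lyapunov function for $D$ because every pair of $D$ lies in some $K_n\times K_n$. Finally, $L(x)=0<1=L(y)$.
\end{enumerate}
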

This lemma leads to the following fundamental result.
\begin{thmX}[Auslander, 1963~\cite{Aus63} (see Theorem A in~\cite{DLY25})]
    \label{thm: Joe}
    Let $X$ be separable and locally-compact and let $D$ be a closed quasi-order on $X$.
    Then 
    $
    D=\cD_{\cL_D}.
    $
\end{thmX}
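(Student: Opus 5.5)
The statement is Theorem~\ref{thm: Joe}: $D=\cD_{\cL_D}$ for a closed quasi-order $D$ on a separable, locally compact $X$. The plan is to prove the two inclusions separately. The inclusion $D\subset\cD_{\cL_D}$ is immediate from the definitions. The reverse inclusion follows from Lemma~A by contraposition.

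\emph{First inclusion.} Let $(x,y)\in D$, so $x\Dto y$. By the definition of Lyapunov function for $D$, every $L\in\cL_D$ satisfies $L(x)\ge L(y)$. By the definition of $\cD_\cS$ with $\cS=\cL_D$, this says exactly that $(x,y)\in\cD_{\cL_D}$. Hence $D\subset\cD_{\cL_D}$.

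\emph{Second inclusion.} Let $(x,y)\in\cD_{\cL_D}$ and suppose, for contradiction, that $(x,y)\notin D$. Since $X$ is separable and locally compact and $D$ is a closed quasi-order, Lemma~A applies and gives some $L\in\cL_D$ with $L(x)<L(y)$. On the other hand, membership in $\cD_{\cL_D}$ forces $L(x)\ge L(y)$ for every $L\in\cL_D$. This is a contradiction, so $(x,y)\in D$ and $\cD_{\cL_D}\subset D$. The two inclusions together give $D=\cD_{\cL_D}$.

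All of the analytic content sits in Lemma~A, namely the construction of a separating continuous Lyapunov function. That construction uses closedness of $D$, separability and local compactness, in the style of Urysohn and Auslander. Since Lemma~A is available earlier in the paper, the only point to check is that its hypotheses match ours exactly, which they do. So I do not expect a genuine obstacle in this deduction.
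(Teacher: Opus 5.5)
Your proof is correct and is exactly the deduction the paper intends: the paper gives no separate argument, and simply introduces the theorem with ``This lemma leads to the following fundamental result.'' That is, $D\subset\cD_{\cL_D}$ holds by the definition of a Lyapunov function, and $\cD_{\cL_D}\subset D$ follows from Lemma~A by contraposition, with all the real content sitting in Lemma~A, as you note.
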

In other words, on each separable locally compact space, each quasi-order has enough Lyapunov functions to completely characterize it.

\medskip\noindent
{\bf The smallest stream.}
The reader can verify that the intersection of any collection of streams of $\cF$ is a stream of $\cF$.
Since each IFS $\cF$ has a non-empty largest stream, the relation $X\times X$, it also has a non-empty smallest stream, which is the intersection of all of its streams.

We denote the smallest stream of $\cF$ by $\cA_\cF$ since the smallest stream of a semiflow was studied first in literature by Joseph Auslander~\cite{Aus63}.
This set has the following characterization in terms of Lyapunov functions.
\begin{proposition}
    \label{prop: A_F via Lyap functs}
    $\cA_\cF$ is the closed quasi-order generated by the set of Lyapunov functions of $\cF$: 
$$
\cA_\cF=\cD_{\cL_\cF}.
$$    
\end{proposition}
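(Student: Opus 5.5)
We prove $\cA_\cF=\cD_{\cL_\cF}$ by two inclusions, relying on Auslander's Theorem~\ref{thm: Joe} applied to the closed quasi-order $\cA_\cF$ itself.

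\emph{First, $\cD_{\cL_\cF}$ is a stream, so $\cA_\cF\subset\cD_{\cL_\cF}$.} The relation $\cD_{\cL_\cF}$ is reflexive and transitive, because it is defined by inequalities $L(x)\ge L(y)$ holding for all $L$ in a fixed family. It is closed, since it equals
\[
\bigcap_{L\in\cL_\cF}\{(x,y): L(x)\ge L(y)\},
\]
and each set in this intersection is closed by continuity of $L$. It contains $\cO_\cF$, because by definition each $L\in\cL_\cF$ satisfies $L(x)\ge L(\P(x))$ for every $I\in\cI^m$. Hence $\cD_{\cL_\cF}$ is a stream. Since $\cA_\cF$ is the smallest stream, $\cA_\cF\subset\cD_{\cL_\cF}$.

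\emph{Next, $\cD_{\cL_\cF}\subset\cA_\cF$.} Since $\cA_\cF$ is a closed quasi-order on the separable, locally compact space $X$, Theorem~\ref{thm: Joe} gives $\cA_\cF=\cD_{\cL_{\cA_\cF}}$. It therefore suffices to show $\cL_{\cA_\cF}\subset\cL_\cF$, using that $\cD_\cS$ is antitone in $\cS$: a larger family of functions imposes more inequalities and so gives a smaller relation. The inclusion $\cL_{\cA_\cF}\subset\cL_\cF$ holds because $\cO_\cF\subset\cA_\cF$. Any $L$ that is nonincreasing along $\cA_\cF$ is in particular nonincreasing along orbits, so $L\in\cL_\cF$. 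Consequently
\[
\cD_{\cL_\cF}\subset\cD_{\cL_{\cA_\cF}}=\cA_\cF.
\]

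Both inclusions are formal once Auslander's theorem is available, so no step is a real obstacle. The only nontrivial input is Theorem~\ref{thm: Joe}, and the one point to check is that its hypotheses hold: $X$ must be separable and locally compact, which is our standing assumption, and $\cA_\cF$ must be closed, which holds because it is an intersection of closed relations. In fact the closedness of $\cD_{\cL_\cF}$ in the first step already shows $\cL_\cF=\cL_{\cA_\cF}$, since any $L\in\cL_\cF$ is monotone along $\cD_{\cL_\cF}\supset\cA_\cF$.
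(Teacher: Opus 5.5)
Your proof is correct and follows the paper's argument. The paper also checks that $\cD_{\cL_\cF}$ is a stream, then uses $\cO_\cF\subset S\Rightarrow\cL_S\subset\cL_\cF$ together with Auslander's Theorem to get $\cD_{\cL_\cF}\subset\cD_{\cL_S}=S$; the only difference is that the paper does this for an arbitrary stream $S$, whereas you apply it directly to $S=\cA_\cF$.
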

\begin{proof}
    The reader can verify that $\cD_{\cL_\cF}$ is a closed quasi-order containing $\cO_\cF$, hence a stream.
    If $S$ is any stream, then $\cO_\cF \subset S$ implies that $\cL_\cS \subset \cL_\cF$, hence $\cD_{\cL_\cF} \subset \cD_{\cL_S} = S$ by Theorem~\ref{thm: Joe}.
\end{proof}
In the next section we will show that, under mild conditions, the chain stream is the smallest stream of an IFS.

\paragraph{Strict Lyapunov functions.}
Let \(D\) be a quasi-order on \(X\). We call a continuous function
\(L\colon X\to\mathbb R\) a {\bf strict Lyapunov function} for \(D\) if
\[
x\succcurlyeq_D y \Longrightarrow L(x)\geq L(y)
\]
and
\[
x\succcurlyeq_D y,\qquad y\not\succcurlyeq_D x
\Longrightarrow L(x)>L(y).
\]

\begin{proposition}[Existence of a strict Lyapunov function]
    \label{prop:strict-lyapunov}
    Let \(D\) be a closed quasi-order on a separable locally compact metrizable space \(X\). 
    Then \(D\) admits a bounded strict Lyapunov function
\[
L_D\colon X\longrightarrow [0,1].
\]
    In particular, \(L_D\) is constant on every \(D\)-equivalence class.
\end{proposition}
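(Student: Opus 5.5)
The strategy is to build $L_D$ as a convergent weighted sum of countably many Lyapunov functions supplied by Lemma~A, chosen so that every strict pair $x\succcurlyeq_D y$, $y\not\succcurlyeq_D x$ is separated by at least one of them.

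\textbf{Step 1: reduce to bounded functions.} Given $L\in\cL_D$, replace it by $\phi\circ L$ with $\phi(t)=\tfrac12+\tfrac1\pi\arctan t$. The map $\phi$ is continuous and strictly increasing with values in $(0,1)$, so $\phi\circ L$ is still a Lyapunov function for $D$. It also preserves every strict inequality $L(y)<L(x)$. From now on, every Lyapunov function used takes values in $[0,1]$.

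\textbf{Step 2: choose countably many separating functions.} Consider the set
\[
P=\{(x,y): x\succcurlyeq_D y,\ y\not\succcurlyeq_D x\}.
\]
For each $(x,y)\in P$ we have $(y,x)\notin D$. Lemma~A, applied to the pair $(y,x)$, gives $L_{(x,y)}\in\cL_D$ with $L_{(x,y)}(y)<L_{(x,y)}(x)$. By continuity there is an open product neighbourhood $U\times V$ of $(x,y)$ on which $L_{(x,y)}(v)<L_{(x,y)}(u)$ for all $u\in U$, $v\in V$.

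Since $X$ is separable and metrizable, $X\times X$ is second countable, hence hereditarily Lindel\"of. So the cover of $P$ by these neighbourhoods has a countable subcover, indexed by pairs $(x_n,y_n)$. Write $L_n=L_{(x_n,y_n)}$ and $U_n\times V_n$ for the corresponding neighbourhoods.

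\textbf{Step 3: sum and check the two conditions.} Set
\[
L_D=\sum_{n\ge1}2^{-n}L_n .
\]
The series converges uniformly, so $L_D$ is continuous with values in $[0,1]$.

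If $x\succcurlyeq_D y$, every term satisfies $L_n(x)\ge L_n(y)$, so $L_D(x)\ge L_D(y)$.

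If moreover $y\not\succcurlyeq_D x$, then $(x,y)\in P$ lies in some $U_n\times V_n$. There $L_n(x)>L_n(y)$, while all the other terms satisfy $\ge$, so $L_D(x)>L_D(y)$.

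Finally, if $x$ and $y$ are $D$-equivalent, the first condition applied in both directions gives $L_D(x)=L_D(y)$. Hence $L_D$ is constant on every $D$-equivalence class.

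\textbf{Main obstacle.} The delicate point is the countability reduction in Step~2. A single function from Lemma~A separates only one pair, so an open-set argument is needed to make countably many functions handle all of $P$. This works because the strict inequality is an open condition and $X\times X$ is Lindel\"of. The set $P$ itself need not be open or closed, but hereditary Lindel\"ofness of $X\times X$ means this causes no difficulty.
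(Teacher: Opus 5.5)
Your proposal is correct and follows essentially the same route as the paper: Auslander's Lemma~A, a Lindel\"of argument to extract countably many Lyapunov functions, rescaling by $\tfrac12+\tfrac1\pi\arctan$, and the weighted sum $\sum_n 2^{-n}L_n$. The only cosmetic difference is the set being covered. You cover the set of strict pairs $P$ using hereditary Lindel\"ofness. The paper instead covers the open set $(X\times X)\setminus D$ by the open sets $\{(x,y):\ell(x)<\ell(y)\}$, and then applies this cover to the transposed pair $(y,x)$ in the strictness check.
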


\begin{proof}
    For every \((x,y)\notin D\), by Auslander's Lemma~\ref{lem: A} there exists a function \(\ell_{x,y}\in \cL_D\) such that
\[
\ell_{x,y}(x)<\ell_{x,y}(y).
\]
    For \(\ell\in \cL_D\), set
\[
U_\ell=\{(x,y)\in X\times X:\ell(x)<\ell(y)\}.
\]
    Each \(U_\ell\) is open, and the family
\[
\{U_\ell:\ell\in \cL_D\}
\]
covers \((X\times X)\setminus D\).

Since \(X\) is separable and metrizable, \(X\times X\) is second
countable. Hence the open subspace \((X\times X)\setminus D\) is
Lindelöf. We may therefore choose a sequence $\ell_1,\ell_2,\ldots\in \cL_D$ such that
\[
(X\times X)\setminus D
=
\bigcup_{n\geq1}U_{\ell_n}.
\]

Let
\[
\vartheta(t)=\frac12+\frac1\pi\arctan t
\]
and set
\[
u_n=\vartheta\circ\ell_n.
\]
    Since \(\vartheta\) is strictly increasing, each \(u_n\) belongs to \(\cL_D\), and \(0<u_n<1\). 
    Define
\[
L_D(x)=\sum_{n=1}^{\infty}2^{-n}u_n(x).
\]
    The series converges uniformly, so \(L_D\) is continuous and takes values in \([0,1]\).

    Suppose that \(x\succcurlyeq_D y\). Since each \(u_n\) is a Lyapunov function for \(D\),
\[
u_n(x)\geq u_n(y)
\qquad\text{for every }n,
\]
    and therefore $L_D(x)\geq L_D(y)$.
    Assume moreover that \(y\not\succcurlyeq_D x\). 
    Then $(y,x)\notin D$.
    By the choice of the sequence \((\ell_n)\), there exists \(n\) such that $\ell_n(y)<\ell_n(x)$, so that $u_n(y)<u_n(x)$.
    Since all the other differences \(u_k(x)-u_k(y)\) are nonnegative, we obtain
\[
L_D(x)-L_D(y)
=
\sum_{k=1}^{\infty}2^{-k}
\bigl(u_k(x)-u_k(y)\bigr)>0.
\]
    Thus, \(L_D(x)>L_D(y)\).

    Finally, if \(x\) and \(y\) are \(D\)-equivalent, then both \(x\succcurlyeq_D y\) and \(y\succcurlyeq_D x\), so $L_D(x)=L_D(y)$.
\end{proof}


\medskip\noindent
{\BF$\Omega$streams.} An immediate consequence of transitivity and closedness of $S$ is that 
$$
\Down_S(x) \supset \cO_\cF(x) \cup \Down_S(\Om(x)).
$$
This means that, given a point $x$, every point in every node that intersects $\Om(x)$ is downstream from $x$. 

We say that a stream $S$ is an {\BF$\Omega$stream} if we actually have equality:
     $$
     \Down_S(x) = \cO_\cF(x) \cup \Down_S(\Om(x))\text{ for every }x\in X.
     $$
This is a strong condition: it says that, if $x$ is not in a node, the nodes immediately downstream from $x$ are those that intersect $\Om(x)$.
In other words, between $x$ and any other node downstream from $x$, there is some node that contains limit points of $x$.
Hence, $\Omega$streams are the streams more strictly related to the dynamics of $\cF$.
\begin{lemma}
    \label{lemma: x owR => OM(x) in owR}
    Let \(S\) be a stream of \(\cF\). 
    If \(x\in \cR_S^{\mathrm{ow}}\), then $\Om(x)\subset\Node^{ow}_S(x)$.
\end{lemma}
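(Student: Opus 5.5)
The argument rests on forward-invariance of the orbitwise node (Proposition~\ref{prop: streams}(1)) together with closedness of $S$. Let $x\in\cR^{ow}_S$ and $y\in\Om(x)$. By definition there are multi-indices $I_n$ with $|I_n|\to\infty$ and $\Pn(x)\to y$. We must show two things: that $y\in\cR^{ow}_S$, and that $y\Seq x$.

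First we show $y\Seq x$. Each point $\Pn(x)$ lies in $\cO_\cF(x)$. Since $x$ is orbitwise $S$-recurrent, $\Pn(x)\Seq x$ for every $n$, that is, both $(x,\Pn(x))\in S$ and $(\Pn(x),x)\in S$. Because $S$ is closed in $X\times X$, letting $n\to\infty$ gives $(x,y)\in S$ and $(y,x)\in S$. Hence $y\Seq x$.

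Next we show $y\in\cR^{ow}_S$. We could argue directly, but it is quicker to use closedness of the orbitwise node. By Proposition~\ref{prop: streams}(1), $\Node^{ow}_S(x)$ is forward-invariant, so $\Pn(x)\in\Node^{ow}_S(x)$ for every $n$. By Proposition~\ref{prop: streams}(2), this node is closed, so the limit $y$ also lies in $\Node^{ow}_S(x)$. This gives $y\in\cR^{ow}_S$ and $y\Seq x$ at once, so the first step is in fact subsumed.

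As a sanity check, the direct argument also works. For any $I\in\cI^m$ we have $\Phi^I_\cF(\Pn(x))\Seq\Pn(x)$, because both points lie on the orbit of $x$ and all of them are equivalent to $x$. Letting $n\to\infty$ and using continuity of $\Phi^I_\cF$ and closedness of $S$ gives $\Phi^I_\cF(y)\Seq y$.

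There is no real obstacle. The only point needing care is to cite correctly that the orbit of $x$ lies in its orbitwise node, which was shown in the proof of Proposition~\ref{prop: streams}(1), and that orbitwise nodes are closed, which is Proposition~\ref{prop: streams}(2).
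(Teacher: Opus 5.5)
Your proof is correct, and it takes a slightly different route from the paper's. The first step, passing $\Pn(x)\Seq x$ to the limit by closedness of $S$, is the same in both.

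The second step differs.
\begin{itemize}
\item The paper uses forward-invariance of $\Om(x)$ (Proposition~\ref{prop: forward invariant}) to place every $z\in\cO_\cF(y)$ inside $\Om(x)$. It then reruns the same limit argument to get $z\Seq x\Seq y$.
\item You instead observe that $\Om(x)\subset\overline{\cO_\cF(x)}\subset\Node^{ow}_S(x)$, using forward-invariance and closedness of orbitwise nodes (Proposition~\ref{prop: streams}(1)--(2)).
\end{itemize}

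What each buys: your route makes the lemma an immediate corollary of structural facts already established, and it yields the stronger inclusion of the whole orbit closure in the node. The paper's route is more self-contained, needing only the definition of $\Om$, its forward-invariance, and closedness of $S$. Your ``direct'' sanity check is essentially the continuity argument already contained in the proof of Proposition~\ref{prop: streams}(2).
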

\begin{proof}
    Let \(y\in\Om(x)\) and choose multiindices \(I_n\) such that $|I_n|\to\infty$ and $\Pn(x)\to y$.
    Since \(x\in \cR_S^{\mathrm{ow}}\), $\Pn(x)\Seq x$ for every \(n\). 
    By closedness of \(S\), $y\Seq x$.

    Now, let \(z\in \cO_\cF(y)\).
    Since \(\Om(x)\) is forward-invariant, $z\in\Om(x)$.
    Applying the preceding argument to \(z\), we obtain $z\Seq x\Seq y$.
    Thus, \(y\in \cR_S^{\mathrm{ow}}\). Moreover, \(y\Seq x\), so \(x\) and \(y\) belong to the same orbitwise node.
\end{proof}
\begin{proposition}
\label{prop: O-stream}
Assume that $\cF$ has compact dynamics and let $S$ be an $\Omega$-stream of $\cF$. 
Then: 
\begin{enumerate}
    \item for every $x\in\cR_S$, there is $y\in\Om(x)\subset\Gl$ such that $y\Seq x$, i.e. $\Node_S(x)=\Node_S(y)$.
    \item for every $x\in X$, $\Down_S(x)\cap \cR_S=\Down_S(\Om(x))\cap \cR_S$ ;
    \item $N\cap\Gl\neq\emptyset$ for every node $N$ of $S$.
\end{enumerate}
The same properties hold by replacing nodes and recurrent set with the corresponding orbitwise sets. 
\end{proposition}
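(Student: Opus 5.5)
The plan is to prove the three items in order, deriving (2) and (3) from (1), and then to note how the orbitwise versions become simpler thanks to Lemma~\ref{lemma: x owR => OM(x) in owR}. Two elementary facts are used throughout.
\begin{itemize}
\item[(a)] For every $I\in\cI^m$ we have $\Om(\P(x))\subset\Om(x)$. Indeed, if $\Phi^{J_n}_\cF(\P(x))\to y$ with $|J_n|\to\infty$, then $\Phi^{I J_n}_\cF(x)\to y$ and $|IJ_n|\to\infty$.
\item[(b)] For every $y\in\Om(x)$ we have $x\Sto y$. This holds because $x\Sto\Pn(x)$ for every $n$ and $S$ is closed.
\end{itemize}
Also, by Proposition~\ref{prop: omega(K) subset G}, $\Om(x)\subset\Gl$ always.

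\emph{Item (1).} Let $x\in\cR_S$, so $f_i(x)\Seq x$ for some generator $f_i$. Then $x\in\Down_S(f_i(x))$. By the $\Omega$-stream property applied at $f_i(x)$, either $x\in\cO_\cF(f_i(x))$ or $x\in\Down_S(y)$ for some $y\in\Om(f_i(x))$.
\begin{itemize}
\item In the first case $x=\Phi^{iJ}_\cF(x)$ for some $J$. Hence $x=\Phi^{(iJ)^k}_\cF(x)$ for all $k$, so $x\in\Om(x)$, and we take $y=x$.
\item In the second case, $y\in\Om(x)$ by (a), so $y\Sto x$. Conversely, $x\Sto f_i(x)\Sto y$ by (b) applied to $f_i(x)$, so $x\Sto y$ by transitivity. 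Thus $y\Seq x$.
\end{itemize}
In both cases $y\in\Om(x)\subset\Gl$, which proves (1). I expect this step to be the only delicate one: one has to notice the periodic case and the inclusion (a).

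\emph{Item (2).} The inclusion $\supset$ follows from Proposition~\ref{prop: streams}(5). For $\subset$, let $z\in\Down_S(x)\cap\cR_S$. By the $\Omega$-stream property, either $z\in\Down_S(\Om(x))$, and we are done, or $z=\P(x)$ for some $I$. In the latter case, item (1) applied to $z$ gives $y\in\Om(z)$ with $y\Seq z$. By (a), $y\in\Om(x)$, and $y\Sto z$, so $z\in\Down_S(\Om(x))$.

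\emph{Item (3).} Pick any $x\in N$. Item (1) gives $y\in\Gl$ with $\Node_S(y)=N$, so $N\cap\Gl\neq\emptyset$.

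\emph{Orbitwise versions.} These are easier. If $x\in\cR^{ow}_S$, Lemma~\ref{lemma: x owR => OM(x) in owR} gives $\Om(x)\subset\Node^{ow}_S(x)$, and Lemma~\ref{lemma: Om} (compact dynamics) gives $\Om(x)\neq\emptyset$. So any $y\in\Om(x)\subset\Gl$ works for item (1), without using the $\Omega$-stream property. Items (2) and (3) then follow verbatim as above, with $\cR_S$ replaced by $\cR^{ow}_S$ and $\Node_S$ replaced by $\Node^{ow}_S$.
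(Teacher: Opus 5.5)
Your case split in (1) and your derivation of (2) and (3) from (1) follow the paper. There is, however, a gap at the end of (1), and (3) inherits it.

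\textbf{The gap.} In your second case you obtain $y\in\Om(f_i(x))\subset\Om(x)$ with $y\Seq x$, but you never show that $y\in\cR_S$. Without this, neither the clause ``i.e.\ $\Node_S(x)=\Node_S(y)$'' nor item (3) follows. Nodes are equivalence classes \emph{inside} $\cR_S$, so a point of $\Gl$ that is merely $S$-equivalent to $x$ need not belong to $N$.

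For a general stream, $S$-equivalence to a recurrent point does not give recurrence, because $y\in\cR_S$ requires a generator $f_j$ with $f_j(y)\Sto y$. For instance, take $m=1$ on the discrete space $\{a,b,c\}$ with $f(a)=a$ and $f(b)=f(c)=c$. The diagonal together with $(a,b),(b,a),(a,c),(b,c)$ is a stream in which $b\Seq a\in\cR_S$, yet $b\notin\cR_S$ since $(c,b)\notin S$. (This stream is not an $\Omega$-stream.)

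So the missing step is not a formality, and it is exactly what the paper's proof spends its effort on. The paper reaches the same point $z\in\Om(f(x))$ with $z\Seq x$ that you reach. It then runs a Zorn's-lemma argument: it takes a minimal compact forward-invariant $K\subset\Om(f(x))$ meeting $\Up_S(x)$ and shows $K=\Om(K)$, so $K$ is $H$-invariant. This produces a point $w\Seq x$ with an exact successor $f_j(w)=u\in\Up_S(x)$, and that is what makes $w$ recurrent. Your item (2) is unaffected, since there you only use $y\Sto z$.

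\textbf{A cheaper repair.} You can close the gap without Zorn's lemma by applying the $\Omega$-stream identity once more, now at $y$. If $y=x$ there is nothing to prove, so assume $y\neq x$. Since $x\in\Down_S(y)$, there are two possibilities.
\begin{itemize}
\item[(i)] $x\in\cO_\cF(y)$. Then $x=\Phi_\cF^{jJ'}(y)=\Phi_\cF^{J'}(f_j(y))$ for some letter $j$ and word $J'$; the word is nonempty because $x\neq y$. Hence $f_j(y)\Sto x$.
\item[(ii)] There is $v\in\Om(y)$ with $v\Sto x$. Write the words $I_n$ with $\Phi_\cF^{I_n}(y)\to v$ as $I_n=j_nI_n'$ and pass to a subsequence with $j_n=j$ constant. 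Closedness of $S$ then gives $f_j(y)\Sto v\Sto x$.
\end{itemize}
In both cases $f_j(y)\Sto x\Sto y$, where the last relation comes from $x\Sto f_i(x)\Sto y$, which you already have. Since $y\Sto f_j(y)$ holds trivially, $f_j(y)\Seq y$, so $y\in\cR_S$.

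This argument shows more generally that, in an $\Omega$-stream, every point $S$-equivalent to a point distinct from itself is $S$-recurrent. It is the $\Omega$-stream analogue of the implication (3)$\Rightarrow$(1) in Proposition~\ref{prop: alternative weak node}.

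With this added, your proof of (1)--(3) is complete and shorter than the paper's. Compact dynamics is then used only to get $\Om(x)\subset\Gl$. Your orbitwise argument is fine as written: Lemma~\ref{lemma: x owR => OM(x) in owR} already places all of $\Om(x)$ in $\Node^{ow}_S(x)$, so the issue does not arise there.
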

\begin{proof}
    (1) Let \(x\in \cR_S\). 
    By definition, there exists a generator $f$ of $\cF$ such that $f(x)\Seq x$.
    If \(x\in \cO_\cF(f(x))\), then \(x\) belongs to a positive-length periodic orbit of the IFS. 
    Hence, $x\in\Om(x)$
    and the assertion follows by taking \(y=x\).
    Assume now that \(x\notin \cO_\cF(f(x))\). 
    Since \(S\) is an \(\Omega\)-stream, there exists $z\in\Om(f(x))$ such that $z\Seq x$.
    On the other hand, $x\Sto f(x)\Sto z$ and, consequently, \(z\Seq x\).

    Consider the family \(\cK\) of all nonempty compact forward-invariant sets \(K\subset\Om(f(x))\) such that
\[
K\cap\operatorname{Up}_S(x)\neq\varnothing.
\]
    This family is nonempty because \(\Om(f(x))\in\cK\).
    Moreover, every decreasing chain in \(\cK\) has an intersection belonging to \(\cK\). 
    Indeed, the sets are compact subsets of the compact set \(\Om(f(x))\), and their intersections with the closed set \(\operatorname{Up}_S(x)\) are nonempty nested compact sets. Hence, by Zorn's lemma, \(\cK\) has a minimal element \(K\).
    We claim that $\Om(K)\in\cK$.

    Indeed, the set \(\Om(K)\) is nonempty, compact, forward-invariant, and contained in \(K\). 
    It remains to show that it intersects \(\operatorname{Up}_S(x)\).

    Choose a $u\in K\cap\operatorname{Up}_S(x)$.
    If \(x\notin \cO_\cF(u)\), the \(\Omega\)-stream identity applied to \(u\Sto x\) gives a point $v\in\Om(u)\subset\Om(K)$ such that \(v\Sto x\).
    If \(x\in \cO_\cF(u)\), then \(x\in K\), because \(K\) is forward-invariant.
    Hence \(f(x)\in K\). 
    Since \(x\notin \cO_\cF(f(x))\), applying the \(\Omega\)-stream identity to \(f(x)\Sto x\) gives a $v\in\Om(f(x))\subset\Om(K)$ such that \(v\Sto x\).

    Thus, in either case, $\Om(K)\cap\operatorname{Up}_S(x)\neq\varnothing$, and therefore \(\Om(K)\in\cK\). 
    By minimality, $K=\Om(K)$.
    In particular, \(K\) is \(H\)-invariant.
    Hence, there exist \(w\in K\) and a generator \(f_j\) such that $f_j(w)=u$, so $w\Sto u\Sto x$.
    Moreover, since \(w\in K\subset\Om(f(x))\), we have also that $x\Sto f(x)\Sto w$.
    Thus, $w\Seq x$.
    It also follows that $u\Sto x\Sto w$, while \(w\Sto u\), and so $f_j(w)=u\Seq w$.
    Therefore, \(w\in \cR_S\). Finally, $w\in K\subset\Om(f(x))\subset\Om(x)$.
    Taking \(y=w\), this proves~(1).

    \medskip\noindent
    (2). The inclusion
\[
\operatorname{Down}_S\bigl(\Om(x)\bigr)\cap R_S
\subset
\operatorname{Down}_S(x)\cap R_S
\]
    follows from the closedness and transitivity of \(S\).
    For the reverse inclusion, let $y\in\operatorname{Down}_S(x)\cap \cR_S$.
    If \(y\notin \cO_\cF(x)\), the \(\Omega\)-stream identity gives directly $y\in\operatorname{Down}_S\bigl(\Om(x)\bigr)$.
    If \(y\in \cO_\cF(x)\), by part~(1), applied to \(y\), there is a point $z\in\Om(y)\cap \cR_S$ such that \(z\Seq y\). 
    Since $\Om(y)\subset\Om(x)$, we again obtain
\[
y\in\operatorname{Down}_S\bigl(\Om(x)\bigr).
\]

\medskip\noindent
    (3) Let \(N\) be an \(S\)-node and choose \(x\in N\). 
    By~(1), there exists $y\in\Om(x)\cap \cR_S$ such that \(y\Seq x\). Hence \(y\in N\). 
    Since \(\cF\) has compact dynamics, $\Om(x)\subset \Gl$, and therefore \(y\in N\cap \Gl\).

    \medskip\noindent
    With the help of Lemma~\ref{lemma: x owR => OM(x) in owR}, the argument above applies to the orbitwise case.
\end{proof}

\medskip\noindent
{\bf The graph of a stream.}
As mentioned at the beginning of the section, the qualitative properties of a stream of an IFS can be encoded in a graph.
%
Let $S$ be a stream of $\cF$. 
Given two nodes $A,B$ of $S$, we write $A\Sto B$ if and only if there are $x\in A$ and $y\in B$ such that $x\Sto y$.
The graph of $S$, denoted by {\bf\BF$\Gamma_S$}, is the directed graph whose nodes are the nodes of $\sR_S$ and such that there is an edge from a node $N_1$ to a node $N_2\neq N_1$ if $N_1\Sto N_2$.
    We say that a node is a {\bf bottom node} if all its edges are incoming and a {\bf top node} if all its edges are outgoing.

\medskip
Notice that the definition of edge is well posed since, by transitivity, if $x\Sto y$ for $x\in N_1$ and $y\in N_2$, then the same relation holds for every pair $x'\in N_1$ and $y'\in N_2$.
%

\medskip
We denote by {\BF$\Gamma^{ow}_S$} the graph built out of the orbitwise nodes.
Since every orbitwise node is contained in a unique node, there is a
natural node map $\Gamma_S^{\mathrm{ow}}\longrightarrow\Gamma_S$.
This map preserves downstream relations, although it need not be
injective. 
As shown below, every bottom node of \(\Gamma_S\) is itself
an orbitwise node.
The two graphs provide different information about the qualitative dynamics of the IFS:
$\Gamma_S$ about itineraries of points, $\Gamma^{ow}_S$ about the dynamics of the IFS as a semigroup.
As the following proposition shows, there is a type of nodes that are always common between the two graphs.    

\begin{proposition}
    Assume that \(\cF\) has compact dynamics and let \(S\) be a stream of \(\cF\).
    Then every bottom \(S\)-node is an orbitwise \(S\)-node.
\end{proposition}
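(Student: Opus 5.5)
The plan is to show two things: every point of a bottom node is orbitwise $S$-recurrent, and the orbitwise equivalence class of such a point is the whole node. The only dynamical input needed is Proposition~\ref{prop: streams}(4), which uses the compact dynamics assumption.

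Let $N$ be a bottom $S$-node, fix $x\in N$ and let $z\in\cO_\cF(x)$. Since $\cO_\cF\subset S$, we have $x\Sto z$, so it remains to prove $z\Sto x$.

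\begin{enumerate}
\item By Proposition~\ref{prop: streams}(4), which requires compact dynamics, there is a point $y\in\Om(z)\cap\cR^{ow}_S$.
\item Take multiindices $I_n$ with $|I_n|\to\infty$ and $\Phi_\cF^{I_n}(z)\to y$. Since $z\Sto\Phi_\cF^{I_n}(z)$ and $S$ is closed, we get $z\Sto y$. By transitivity, also $x\Sto y$.
\item Since $y$ is orbitwise $S$-recurrent, in particular $f_1(y)\Seq y$. Hence $y\in\cR_S$, and $y$ lies in some node $M=\Node_S(y)$.
\item From $x\Sto y$ we get $N\Sto M$. Because $N$ is a bottom node, it has no outgoing edge to a node different from itself, so $M=N$.
\item Therefore $y\Seq x$, and $z\Sto y\Sto x$ gives $z\Sto x$.
\end{enumerate}

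Hence $z\Seq x$ for every $z\in\cO_\cF(x)$, that is, $x\in\cR^{ow}_S$. Since $x\in N$ was arbitrary, $N\subset\cR^{ow}_S$.

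Now compare $N$ with the orbitwise node $\Node^{ow}_S(x)=\{w\in\cR^{ow}_S: w\Seq x\}$.
\begin{itemize}
\item All points of $N$ are mutually $S$-equivalent and lie in $\cR^{ow}_S$, so $N\subset\Node^{ow}_S(x)$.
\item Conversely, every $w\in\Node^{ow}_S(x)$ lies in $\cR^{ow}_S\subset\cR_S$ (as in step 3) and satisfies $w\Seq x$, so $w\in\Node_S(x)=N$.
\end{itemize}
Thus $N=\Node^{ow}_S(x)$ is an orbitwise node.

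The main obstacle is producing a point downstream of $z$ that lies in some node, so that the bottom-node hypothesis can be applied. Without compact dynamics $\Om(z)$ could be empty; this is exactly why the statement assumes compact dynamics and why Proposition~\ref{prop: streams}(4) is invoked. The rest is routine use of closedness and transitivity of $S$.
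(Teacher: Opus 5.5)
Your proposal is correct and follows essentially the same route as the paper: take an orbitwise recurrent point $y\in\Om(z)$, use the bottom-node hypothesis to force $\Node_S(y)=N$, and close the loop $z\Sto y\Sto x$. The differences are cosmetic. You invoke Proposition~\ref{prop: streams}(4) directly, whereas the paper combines Lemma~\ref{lemma: Om} with a short argument that the point lies in $\cR^{ow}_S$. You also check the final identification $N=\Node^{ow}_S(x)$ more explicitly than the paper does.
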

\begin{proof}
    Let \(N\) be a bottom \(S\)-node and let \(x\in N\). We claim that \(x\) is orbitwise \(S\)-recurrent.

    Indeed, fix \(z\in \cO_\cF(x)\). 
    By Lemma~\ref{lemma: Om}, there exists $p\in\Om(z)\cap \cR_\cF^{\mathrm{ow}}$.
    Since \(p\in \cR_\cF^{\mathrm{ow}}\), we also have that $p\in \cR_S^{\mathrm{ow}}\subset \cR_S$.
    Indeed, if \(w\in \cO_\cF(p)\), then \(p\in\Om(w)\), and therefore closedness of \(S\) gives \(w\Sto  p\); the reverse relation \(p\Sto  w\) follows from \(\cO_\cF\subset S\).
    Since \(z\in \cO_\cF(x)\), we have $x\Sto  z$, so that $x\Sto  p$.
    
    Let \(P=\operatorname{Node}_S(p)\). 
    If \(P\neq N\), the relation \(x\Sto  p\) would determine an outgoing edge $N\to P$, contrary to the fact that \(N\) is a bottom node. 
    Hence \(P=N\). 
    In particular, $p\Sto  x$.
    It follows that $z\Sto  p\Sto  x$.
    Since \(x\Sto  z\), we conclude that $x\Seq z$.

    As \(z\in \cO_\cF(x)\) was arbitrary, \(x\in \cR_S^{\mathrm{ow}}\).
    Thus, \(N\subset \cR_S^{\mathrm{ow}}\). 
    Since \(R_S^{\mathrm{ow}}\subset R_S\), \(N=\Node^{ow}(x)\). 
\end{proof}

\medskip\noindent
{\bf Connectedness of the graph of a stream.}
Let $\cN_1,\cN_2$ be two non-empty mutually disjoint collections of nodes of $S$ such that each node of $S$ belongs to either $\cN_1$ or $\cN_2$ and denote by $N_1$ and $N_2$ the unions of all points in, respectively, the nodes  $\cN_1$ and $\cN_2$.
We say that $\cN_1,\cN_2$ are a {\bf nodes partition} of $\Gamma_{S}$ if $N_1,N_2$ are closed and disjoint.
We say that the graph $\Gamma_{S}$  is {\bf connected} if, for each  nodes partition $\cN_1,\cN_2$, there is an edge from a node of $\cN_1$ to a node of $\cN_2$ or viceversa.
Notice that, given any partition $\cN_1,\cN_2$, we have that $N_1\cup N_2=\cR_S$.


One of the main results of streams theory is that, when the global attractor of a semiflow is connected, the graph of any of its streams is connected as well.
We start here proving that the same result holds for the graph of the stream of an IFS.
We will conclude the proof in next section using streams of chains.
In next section we will also show with examples that this does not hold for the orbitwise graph.

\begin{lemma}
    Assume that \(\cF\) has compact dynamics and let \(S\) be a stream of \(\cF\).
    Then every \(S\)-node is upstream from an \(S\)-node containing an \(\cA_\cF\)-node.
    More precisely, for every \(S\)-node \(N\), there exist an \(S\)-node
    \(P\) and an \(\cA_\cF\)-node \(M\) such that
\[
M\subset P
\qquad\text{and}\qquad
N\Sto P.
\]
\end{lemma}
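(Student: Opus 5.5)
Fix an $S$-node $N$ and a point $x\in N$. The plan is to take for $P$ the $S$-node of a suitable orbitwise recurrent limit point of $x$, and for $M$ the $\cA_\cF$-node of that same point. The only inputs are Lemma~\ref{lemma: Om}, Proposition~\ref{prop: streams}, and the fact that $\cA_\cF\subset S$, since $\cA_\cF$ is the smallest stream.

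\emph{Choice of the base point.} Since $\cF$ has compact dynamics, Lemma~\ref{lemma: Om} gives a point $p\in\Om(x)\cap\cR^{ow}_\cF$. By Proposition~\ref{prop: streams}(3), applied to the stream $\cA_\cF$ and to $S$, we get $p\in\cR^{ow}_{\cA_\cF}$ and $p\in\cR^{ow}_S$. Orbitwise recurrence implies recurrence: for instance $f_1(p)\in\cO_\cF(p)$ is equivalent to $p$. Hence $p\in\cR_{\cA_\cF}\cap\cR_S$, and we may set
\[
M=\Node_{\cA_\cF}(p),\qquad P=\Node_S(p).
\]

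\emph{$M\subset P$.} Let $q\in M$. By definition there is a generator $f$ with $f(q)\overset{\cA_\cF}{=}q$, and also $q\overset{\cA_\cF}{=}p$. Since $\cA_\cF\subset S$, every $\cA_\cF$-relation is also an $S$-relation. Therefore $f(q)\Seq q$, so $q\in\cR_S$, and $q\Seq p$, so $q\in P$.

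\emph{$N\Sto P$.} Since $p\in\Om(x)$, there are multi-indices $I_n$ with $|I_n|\to\infty$ and $\Pn(x)\to p$. Because $\cO_\cF\subset S$, we have $x\Sto\Pn(x)$ for every $n$. Closedness of $S$ then gives $x\Sto p$, which is exactly Proposition~\ref{prop: streams}(5). As $x\in N$ and $p\in P$, this yields $N\Sto P$. (If $P=N$, the relation holds trivially by reflexivity, which the statement allows.)

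There is no real obstacle here. The only step requiring care is checking that $\cA_\cF$-recurrence transfers to $S$-recurrence, so that $M$ really sits inside $\cR_S$ and not merely inside $\cR_{\cA_\cF}$. This is handled by the inclusion $\cA_\cF\subset S$ applied to the defining generator relation $f(q)\overset{\cA_\cF}{=}q$.
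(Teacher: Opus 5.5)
Your proposal is correct and follows essentially the same route as the paper's proof. Both arguments pick a point of $\Om(x)\cap\cR^{ow}_\cF$ via Lemma~\ref{lemma: Om}, take its $\cA_\cF$-node and its $S$-node, use $\cA_\cF\subset S$ to get $M\subset P$, and use closedness of $S$ to get $x\Sto p$. Your explicit check that $\cA_\cF$-recurrence and $\cA_\cF$-equivalence carry over to $S$ simply spells out the paper's remark that every $\cA_\cF$-node lies in a unique $S$-node.
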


\begin{proof}
    Let \(N\) be an \(S\)-node and choose \(x\in N\). By Lemma~\ref{lemma: Om}, there exists $y\in\Om(x)\cap \owR$.
    By Proposition~\ref{prop: streams}(3), applied to the smallest stream \(\cA_\cF\), $\owR\subset \cR_{\cA_\cF}^{\mathrm{ow}}\subset \cR_{\cA_\cF}$.
    Hence, \(y\) belongs to an \(\cA_\cF\)-node, which we denote by
\[
M=\operatorname{Node}_{\cA_\cF}(y).
\]

    Since \(\cA_\cF\subset S\), we have \(\cR_{\cA_\cF}\subset \cR_S\), and every \(\cA_\cF\)-node is contained in a unique \(S\)-node. 
    Let $P=\operatorname{Node}_S(y)$.
    Then $M\subset P$.

    Moreover, since \(y\in\Omega_F(x)\), there exist multiindices \(I_n\), with \(\lvert I_n\rvert\to\infty\), such that $\Pn(x)\to y$.
    Since \(S\) contains the orbit relation,
\[
x\Sto\Phi_\cF^{I_n}(x)
\]
    for every \(n\). 
    By the closedness of \(S\), passing to the limit gives $x\Sto y$.
    Therefore, $N\Sto P$.
\end{proof}
\begin{proposition}
    Assume that \(\cF\) has compact dynamics. 
    If the graph \(\Gamma_{\cA_\cF}\) of the smallest stream is connected, then the graph \(\Gamma_S\) is connected for every stream \(S\) of \(\cF\).
\end{proposition}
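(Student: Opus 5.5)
We argue by contradiction, pulling a node partition of $\Gamma_S$ back to a node partition of $\Gamma_{\cA_\cF}$. Suppose $\cN_1,\cN_2$ is a nodes partition of $\Gamma_S$ with no edge between $\cN_1$ and $\cN_2$ in either direction, and let $N_1,N_2$ be the corresponding closed, disjoint unions with $N_1\cup N_2=\cR_S$. Since $\cA_\cF\subset S$, every $\cA_\cF$-recurrent point is $S$-recurrent, so $\cR_{\cA_\cF}\subset\cR_S$, and each $\cA_\cF$-node lies in a unique $S$-node. Define $\cM_j$ to be the collection of $\cA_\cF$-nodes contained in some $S$-node of $\cN_j$, and let $M_j=N_j\cap\cR_{\cA_\cF}$ be the corresponding union of points.

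We then check that $\cM_1,\cM_2$ is a nodes partition of $\Gamma_{\cA_\cF}$, in three steps.
\begin{enumerate}
\item \emph{Closed and disjoint.} By Proposition~\ref{prop: streams}(2), $\cR_{\cA_\cF}$ is closed. Hence $M_1$ and $M_2$ are closed, and they are disjoint because $N_1$ and $N_2$ are.
\item \emph{Covering.} Every $\cA_\cF$-node lies in a unique $S$-node, and that $S$-node belongs to either $\cN_1$ or $\cN_2$. So $\cM_1$ and $\cM_2$ together contain all $\cA_\cF$-nodes.
\item \emph{Nonemptiness.} This is the key step. Take any $S$-node $N\in\cN_1$. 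By the preceding Lemma there are an $S$-node $P$ and an $\cA_\cF$-node $M\subset P$ with $N\Sto P$. If $P\neq N$, this relation is an edge of $\Gamma_S$ out of $N$, so by assumption it cannot land in $\cN_2$; hence $P\in\cN_1$. If $P=N$, then trivially $P\in\cN_1$. In both cases $M\in\cM_1$, so $\cM_1\neq\emptyset$, and symmetrically $\cM_2\neq\emptyset$.
\end{enumerate}

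Since $\Gamma_{\cA_\cF}$ is connected, there is an edge between $\cM_1$ and $\cM_2$, say $M\succcurlyeq_{\cA_\cF}M'$ with $M\in\cM_1$ and $M'\in\cM_2$ (the other direction is symmetric). Concretely, there are $x\in M$ and $y\in M'$ with $(x,y)\in\cA_\cF\subset S$, so $x\Sto y$. Now $x$ lies in an $S$-node of $\cN_1$ and $y$ in an $S$-node of $\cN_2$; these are distinct, so $x\Sto y$ gives an edge of $\Gamma_S$ from $\cN_1$ to $\cN_2$. This contradicts the assumption that no such edge exists, so $\Gamma_S$ is connected.

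The only delicate point is the nonemptiness of $\cM_1,\cM_2$. A priori, an $S$-node need not contain any $\cA_\cF$-node, which is exactly why the preceding Lemma is needed. That Lemma uses compact dynamics, through Lemma~\ref{lemma: Om}, to produce orbitwise recurrent limit points.
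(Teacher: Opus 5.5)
Your proof is correct and follows essentially the same route as the paper. You pull a disconnecting node partition of $\Gamma_S$ back to the $\cA_\cF$-nodes, use the preceding lemma (which relies on compact dynamics) to show both pulled-back families are nonempty, use closedness of $\cR_{\cA_\cF}$ to get a node partition of $\Gamma_{\cA_\cF}$, and push an $\cA_\cF$-edge forward to an edge of $\Gamma_S$ for the contradiction, exactly as the paper does, including the separate handling of the case $P=N$.
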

\begin{proof}
    Let \(S\) be a stream of \(\cF\). Since \(\cA_\cF\) is the smallest stream, $\cR_{\cA_\cF}\subset \cR_S$ and every \(\cA_\cF\)-node is contained in a unique \(S\)-node.

    Suppose, by contradiction, that \(\Gamma_S\) is disconnected. 
    Then the \(S\)-nodes admit a partition into two nonempty families $\mathcal N_1,\mathcal N_2$ such that there is no edge between a node in \(\mathcal N_1\) and a node in \(\mathcal N_2\). 
    Set
\[
N_i=\bigcup_{P\in\mathcal N_i}P,
\qquad i=1,2.
\]
By the definition of a node partition, \(N_1\) and \(N_2\) are closed
and disjoint.

For \(i=1,2\), let
\[
\mathcal M_i
=
\left\{
M:
\begin{array}{l}
M\text{ is an }\cA_\cF\text{-node and}\\
M\subset P\text{ for some }P\in\mathcal N_i
\end{array}
\right\}.
\]

    We first show that both families are nonempty. 
    Fix \(i\in\{1,2\}\) and choose \(P\in\mathcal N_i\). 
    By the preceding lemma, there exist an \(S\)-node \(Q\) and an \(A_F\)-node \(M\subset Q\) such that $P\Sto  Q$.
    The node \(Q\) must belong to \(\mathcal N_i\). 
    Indeed, if it belonged to the other family, then, unless \(P=Q\), the relation \(P\Sto  Q\) would give an edge between the two classes, contrary to the choice of the partition. 
    Hence $M\in\mathcal M_i$
    and, therefore, \(\mathcal M_i\neq\varnothing\).

    Since every \(\cA_\cF\)-node is contained in a unique \(S\)-node, the families \(\mathcal M_1\) and \(\mathcal M_2\) are disjoint and together contain all \(\cA_\cF\)-nodes. 
    Let
\[
M_i=\bigcup_{M\in\mathcal M_i}M,
\qquad i=1,2.
\]
    Then $M_i=N_i\cap \cR_{\cA_\cF}$.
    Indeed, an \(\cA_\cF\)-recurrent point belongs to \(M_i\) precisely when the \(S\)-node containing it belongs to \(\mathcal N_i\).

    By Proposition~\ref{prop: streams}(2), \(\cR_{\cA_\cF}\) is closed.
    Hence, each \(M_i\) is closed, being the intersection of the closed sets \(N_i\) with
    \(\cR_{\cA_\cF}\). 
    Thus \(\mathcal M_1,\mathcal M_2\) form a node partition of \(\Gamma_{\cA_\cF}\).

    We finally show that there is no edge of \(\Gamma_{\cA_\cF}\) between these two families. 
    Suppose that there were \(\cA_\cF\)-nodes $M\in\mathcal M_1$ and $L\in\mathcal M_2$ and points \(u\in M\), \(v\in L\) such that $u\Ato v$.
    Since \(A_F\subset S\), it follows that $u\Sto v$.

    Let \(P\in\mathcal N_1\) and \(Q\in\mathcal N_2\) be the \(S\)-nodes containing \(M\) and \(L\), respectively. Then the relation \(u\Sto  v\) gives an edge $P\to Q$ in \(\Gamma_S\), contradicting the choice of \(\mathcal N_1,\mathcal N_2\).
    Therefore \(\Gamma_{A_F}\) is disconnected, contrary to the hypothesis.
    Hence, \(\Gamma_S\) is connected.
\end{proof}
The corresponding orbitwise statements also hold. More precisely, every orbitwise \(S\)-node contains an orbitwise \(\cA_\cF\)-node.
Consequently, if the orbitwise graph \(\Gamma^{\mathrm{ow}}_{\cA_\cF}\) is connected, then \(\Gamma^{\mathrm{ow}}_S\) is connected for every stream \(S\) of \(\cF\).
The first assertion follows from Lemma~\ref{lemma: Om} and Lemma~\ref{lemma: x owR => OM(x) in owR}, the second follows by the same partition argument as in the preceding theorem.
\section{The chain streams}
\label{sec: chains}
The concept of chain was introduced in the theory of (continuous-time) dynamical systems by Charles Conley in 1972~\cite{Con72b} as a way to simplify the construction of generalized recurrence introduced by Joseph Auslander a decade earlier~\cite{Aus63}.
This concept was then extended to discrete-time dynamical systems by Bowen~\cite{Bow75}.
In~\cite{DLY25}, we used this concept to define chain streams for semiflows with compact dynamics on metric spaces.
Here, we use the extension of the $\eps$-chains concept to IFSs by Barnsley and Vince~\cite{BV11} to define analogous chain streams on IFSs.
We will show in this section that the infinitesimal chain stream $\cC_\cF$, that we refer to simply as the chain stream, of an IFS $\cF$ with compact dynamics is strongly related to the dynamics of the IFS: 1.  $\cC_\cF$ is topological; 2. for every $x\in X$, each minimal compact forward-invariant subset of $\Om(x)$ is chain-equivalent and lies in some orbitwise chain node; 3. the chain-recurrent set and each of its nodes are compact and contained in $\Gl$; 4. the orbitwise chain-recurrent set of $\cC_\cF$ and each of its nodes are forward-invariant, compact and contained in $\Gl$; 5. for each orbitwise chain node $N$, the set $\Om(N)$ is nonempty, compact, H-invariant, and contained in N; 6. the plain and orbitwise graphs of $\cC_\cF$ coincide with those of the chain stream of the restriction of $\cF$ to $\Gl$.

\medskip\noindent
{\BF Chain streams.}
Let $\varepsilon>0$.  
Following Barnsley and Vince~\cite{BV11}, we say that an {\BF$\varepsilon$-chain} from $x$ to $y$
is a finite sequence of points of $X$
\[
x_0=x,x_1,\dots,x_k=y
\]
together with a sequence of indices $i_0,\dots,i_{k-1}$ such that
\[
d(f_{i_n}(x_n),x_{n+1})<\varepsilon
\qquad (n=0,\dots,k-1).
\]
We set
$$
\text{{\BF$\cC_{\cF,d,\eta}$}}=\{(x,y) : \text{ for every $\eps>\eta$, there is an $\eps$-chain from $x$ to $y$}\}.
$$

Note that $\cC_{\cF,d,\eta}\subset\cC_{\cF,d,\eta'}$ for $\eta\leq\eta'$.
 
\begin{proposition}
\label{prop: C is a stream}
    The relation $\cC_{\cF,d,\eta}$ is a stream for every $\eta\geq0$.
\end{proposition}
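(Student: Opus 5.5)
We have to check four properties of $\cC_{\cF,d,\eta}$: it contains $\cO_\cF$, it is reflexive, it is transitive, and it is closed in $X\times X$. The first three are direct manipulations of chains. Closedness is the only step that uses continuity of the generators, and it is where the strict inequality in the definition ("for every $\eps>\eta$") is essential.

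\emph{Orbit relation and reflexivity.} If $y=\P(x)$ with $I=i_1\dots i_k$, set $x_0=x$ and $x_{n}=f_{i_n}(x_{n-1})$. Then $x_0,\dots,x_k=y$ with indices $i_1,\dots,i_k$ is an exact orbit segment, so it is an $\eps$-chain for every $\eps>0$. Hence $\cO_\cF\subset\cC_{\cF,d,\eta}$.

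Reflexivity needs care, since $x\Fto x$ holds through $I=0$, which is a chain of length $0$. We accept the length-zero sequence $x_0=x$ as an $\eps$-chain from $x$ to $x$, with the convention $k\ge0$ as in the definition. Then reflexivity is immediate, and in any case it is already contained in $\cO_\cF\subset\cC_{\cF,d,\eta}$.

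\emph{Transitivity.} Suppose $(x,y),(y,z)\in\cC_{\cF,d,\eta}$ and fix $\eps>\eta$. Take an $\eps$-chain from $x$ to $y$ and an $\eps$-chain from $y$ to $z$. Concatenate them, identifying the last point of the first with the first point of the second, and concatenate the index sequences as well. Every step inequality still holds, so the result is an $\eps$-chain from $x$ to $z$.

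\emph{Closedness.} Let $(x^j,y^j)\in\cC_{\cF,d,\eta}$ converge to $(x,y)$ and fix $\eps>\eta$. Choose $\eps'$ with $\eta<\eps'<\eps$ and set $\delta=\eps-\eps'>0$.

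By continuity of the finitely many maps $f_i$ at $x$, there is $j$ large enough that $d(f_i(x^j),f_i(x))<\delta$ for every $i$. Enlarging $j$ if needed, we also have $d(y^j,y)<\delta$.

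Now take an $\eps'$-chain $x^j=x_0,\dots,x_k=y^j$ and modify only its endpoints: replace $x_0$ by $x$ and $x_k$ by $y$.
\begin{itemize}
\item At the first step, the triangle inequality gives $d(f_{i_0}(x),x_1)<\delta+\eps'=\eps$.
\item At the last step, it gives $d(f_{i_{k-1}}(x_{k-1}),y)<\eps'+\delta=\eps$.
\item If $k=1$, both endpoint changes fall on the single step. Then $d(f_{i_0}(x),y)<\delta+\eps'+\delta$, so in this case we take $\delta=(\eps-\eps')/2$ from the start.
\item If $k=0$, we have $x^j=y^j$. We instead use a one-step chain: $d(f_i(x^j),f_i(x^j))=0$ does not directly give a chain from $x$ to $y$, so we replace the trivial chain by the chain $x^j,f_1(x^j),\dots$. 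More simply, reflexivity lets us assume without loss of generality that the chains have length at least $1$, and we then handle this case like $k=1$.
\end{itemize}
In every case we obtain an $\eps$-chain from $x$ to $y$. Since $\eps>\eta$ was arbitrary, $(x,y)\in\cC_{\cF,d,\eta}$.

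The main obstacle is precisely this closure step: it is where continuity and the slack between $\eps'$ and $\eps$ are used, and the degenerate short chains need the small bookkeeping above.
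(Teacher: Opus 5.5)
Your route is the paper's: exact orbit segments give $\cO_\cF\subset\cC_{\cF,d,\eta}$ and hence reflexivity, and concatenation gives transitivity. Closedness works by taking an $\eps'$-chain between nearby points $x^j,y^j$ and replacing its endpoints by $x,y$, with the one-step case treated separately. The estimates you give for $k\geq1$ are correct.

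The step that fails is your treatment of $k=0$. Reflexivity does \emph{not} let you assume that a chain from $x^j$ to $y^j=x^j$ has positive length. Reflexivity is witnessed only by the length-zero chain. A positive-length $\eps'$-chain from a point to itself need not exist at all: for $\eta=0$ and all small $\eps'$, its existence is exactly chain recurrence (Proposition~\ref{prop: alternative weak node}), and nothing forces $x^j$ to be chain-recurrent. Your alternative sequence $x^j,f_1(x^j),\dots$ does not end at $y^j$. More fundamentally, if $x^j=y^j$ while $x\neq y$, no endpoint modification of a length-zero chain can produce an $\eps$-chain from $x$ to $y$.

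The repair is the case split the paper uses. If $x=y$, then $(x,y)\in\cO_\cF\subset\cC_{\cF,d,\eta}$ and there is nothing to prove. If $x\neq y$, then $d(x^j,y^j)\to d(x,y)>0$, so by enlarging $j$ you may also assume $x^j\neq y^j$. Every chain from $x^j$ to $y^j$ then has length $k\geq1$, and the case $k=0$ never arises.

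Relatedly, you cannot decide $\delta$ according to whether $k=1$. The length $k$ is known only after $j$ has been chosen, and $j$ depends on $\delta$. Simply take $\delta=(\eps-\eps')/2$ from the outset; this covers both $k=1$ and $k\geq2$. The paper does the same thing with a single $\rho$ satisfying $\eta+3\rho<\eps$.
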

\begin{proof}
We must prove that $\cC_{\cF,d,\eta}$ is transitive, closed, and contains the orbit relation $\cO_\cF$.

\smallskip\noindent
{\BF 1. $\cO_\cF\subset \cC_{\cF,d,\eta}$.} 
Let $(x,y) \in \cO_\cF$. 
Then $y=\P(x)$ for some multiindex $I$.
The corresponding exact orbit segment is an $\varepsilon$-chain from $x$ to $y$ for every $\varepsilon>0$.
Therefore $\cO_\cF \subset \cC_{\cF,d,\eta}$.

\smallskip\noindent
{\BF 2. $\cC_{\cF,d,\eta}$ is transitive.}
Suppose that $(x,y)\in \cC_{\cF,d,\eta}$ and $(y,z)\in \cC_{\cF,d,\eta}$.
Fix an $\varepsilon>\eta$. 
Then there exists an $\varepsilon$-chain from $x$ to $y$ and an $\varepsilon$-chain from $y$ to $z$.
Concatenating the two chains gives an $\varepsilon$-chain from $x$ to $z$. 
Since $\varepsilon>\eta$ was arbitrary, it follows that $(x,z)\in \cC_{\cF,d,\eta}$, namely $\cC_{\cF,d,\eta}$ is transitive.

\smallskip\noindent
{\BF 3. $\cC_{\cF,d,\eta}$ is closed.}
Let $(x_n,y_n)\in \cC_{\cF,d,\eta}$ be such that $(x_n,y_n)\to (x,y)$.
We show that $(x,y)\in \cC_{\cF,d,\eta}$.
If $x=y$, then $(x,y)\in \cO_\cF\subset \cC_{\cF,d,\eta}$, so there is
nothing to prove. 
Assume therefore that $x\neq y$ and fix an
$\varepsilon>\eta$. 
Choose a $\rho>0$ such that $\eta+3\rho<\varepsilon$.
Since $\cF$ is finite and every $f_i$ is continuous at any $x$, for each $x$ there exists $\delta>0$ such that $d\bigl(f_i(x),f_i(u)\bigr)<\rho$ for every $u\in X$ with $d(x,u)<\delta$ and $i=1,\ldots,m$.

Choose $n$ sufficiently large that $d(x_n,x)<\delta$, $d(y_n,y)<\rho$ and $x_n\neq y_n$.
Since $(x_n,y_n)\in \cC_{\cF,d,\eta}$ and $\eta+\rho>\eta$, there exists an $(\eta+\rho)$-chain $x_n=u_0,u_1,\ldots,u_k=y_n$ from $x_n$ to $y_n$, with associated indices $i_0,\ldots,i_{k-1}$. 
Since $x_n\neq y_n$, we have $k\geq 1$.

Suppose first that $k=1$. 
Then $d\bigl(f_{i_0}(x_n),y_n\bigr)<\eta+\rho$, so that
$$
d\bigl(f_{i_0}(x),y\bigr)\leq d\bigl(f_{i_0}(x),f_{i_0}(x_n)\bigr) + d\bigl(f_{i_0}(x_n),y_n\bigr) +d(y_n,y) <\rho+(\eta+\rho)+\rho<\eps.
$$
Thus, $x,y$ is an $\varepsilon$-chain from $x$ to $y$.

Suppose now that $k\geq 2$ and consider the sequence $x,u_1,\ldots,u_{k-1},y$.
For its first step,
$$
d\bigl(f_{i_0}(x),u_1\bigr)\leq d\bigl(f_{i_0}(x),f_{i_0}(x_n)\bigr)+d\bigl(f_{i_0}(x_n),u_1\bigr)<\rho+(\eta+\rho)<\eps.
$$
Every intermediate step is unchanged from the original chain, and
therefore has error smaller than $\eta+\rho<\varepsilon$.
For the last step,
$$
d\bigl(f_{i_{k-1}}(u_{k-1}),y\bigr)\leq d\bigl(f_{i_{k-1}}(u_{k-1}),y_n\bigr)+d(y_n,y)<(\eta+\rho)+\rho=\eta+2\rho<\varepsilon.
$$
Hence, this sequence is an $\varepsilon$-chain from $x$ to $y$.
Since $\varepsilon>\eta$ was arbitrary, we conclude that $(x,y)\in \cC_{\cF,d,\eta}$.
Therefore, $\cC_{\cF,d,\eta}$ is closed.
\end{proof}

\medskip\noindent
{\bf Infinitesimal chains.} We often denote simply by {\BF$\cC_{\cF,d}$} the stream $\cC_{\cF,d,0}$.
We call this the {\bf stream of infinitesimal chains} or, more often, simply the {\bf chain stream}.
A point $y$ is $\cC_{\cF,d,\eta}$-downstream from $x$ when it is possible to get from $x$ to $y$ under $\Phi_\cF$ by steering at every step by no more than some $\eps$ arbitrarily close from above to $\eta$.
A point $y$ is $\cC_{\cF,d}$-downstream from $x$ when it is possible to get from $x$ to $y$ under $\Phi_\cF$ by steering at every step by some arbitrarily small $\eps>0$.

\medskip\noindent
{\bf Chain-recurrence.}
Let $\eta\geq0$.
Since $\cC_{\cF,d,\eta}$ is a stream, we have a concept of chain-recurrence.
Two points $x,y\in X$ are {\BF$\eta$-chain-equivalent}, and we write $$x\Cdeqeta y,$$ if, for every $\eps>\eta$, there is an $\eps$-chain from $x$ to $y$ and an $\eps$-chain from $y$ to $x$.
The set of {\BF$\eta$-chain-recurrent} points is then defined as
$$
\text{{\BF$\cR_{\cC_{\cF,d,\eta}}$}} = \{x\in X\;:\;x\Cdeqeta f(x)\text{ for some }f\in\cF\}
$$
and the set of all {\BF orbitwise $\eta$-chain-recurrent} points as
$$
\text{{\BF$\cR^{ow}_{\cC_{\cF,d,\eta}}$}} = \{x\in X\;:\;x\Cdeqeta z\text{ for all }z\in\cO_\cF(x)\}.
$$
When $\eta=0$, we just use the terms {\bf chain-equivalent} and {\bf chain-recurrent}/{\bf orbitwise chain-recurrent}.

\medskip\noindent
{\bf Chain-recurrence and limit sets.}
When $m=1$ and $\cF$ has compact dynamics, the limit set of each point is a (non-empty) chain-equivalent set, namely it is entirely contained in a single chain node.
For $m>1$, $\Om(x)$ is not necessarily a subset of $\cR_{\cC_{\cF,d}}$ and $\Om(x)\cap\cR_{\cC_{\cF,d}}$ is not necessarily chain-equivalent.
Nevertheless, as next proposition shows, 
it always contains at least one orbitwise chain-recurrent point. 

\begin{proposition}
    Assume that $\cF$ has compact dynamics.
    Then, for each $x\in X$, $\Om(x)$ has at least a non-empty minimal compact forward-invariant subset and each such minimal set is contained in some orbitwise chain node.
\end{proposition}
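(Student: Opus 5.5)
The proof has two parts: existence of a minimal set, then showing each minimal set lies in one orbitwise chain node.

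\textbf{Existence.} I would rerun the first half of the proof of Lemma~\ref{lemma: Om}. Choose a compact neighbourhood $K$ of $\Gl$. Since $\Gl$ attracts $\{x\}$, the set $\Om(x)$ is a decreasing intersection of nonempty compact subsets of $K$, hence nonempty and compact. By Proposition~\ref{prop: forward invariant} it is forward-invariant. A decreasing chain of nonempty compact forward-invariant subsets of $\Om(x)$ has nonempty compact forward-invariant intersection, so Zorn's lemma gives a minimal such set $M\subset\Om(x)$.

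\textbf{Each minimal set lies in one orbitwise chain node.} Let $M\subset\Om(x)$ be any minimal nonempty compact forward-invariant set. I would show that for all $p\in M$ and all $z\in M$ we have $p\in\Om(z)$. Indeed, $\cO_\cF(z)\subset M$ because $M$ is forward-invariant. Since $M$ is closed, Lemma~\ref{lemma: O(A)=A} gives $\Om(z)\subset\Om(M)\subset M$. As $z$ has compact dynamics, $\Om(z)$ is nonempty, compact and forward-invariant (Lemma~\ref{lemma: Om} and Proposition~\ref{prop: forward invariant}). Minimality then forces $\Om(z)=M$.

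From this I get chain equivalence. Fix $p,z\in M$ and pick $I_n$ with $|I_n|\to\infty$ and $\Phi^{I_n}_\cF(z)\to p$. Since $z\succcurlyeq_{\cC_{\cF,d}}\Phi^{I_n}_\cF(z)$ and $\cC_{\cF,d}$ is closed (Proposition~\ref{prop: C is a stream}), it follows that $z\succcurlyeq_{\cC_{\cF,d}} p$. By symmetry $p\succcurlyeq_{\cC_{\cF,d}} z$. Hence all points of $M$ are mutually chain-equivalent.

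Every $z\in\cO_\cF(p)$ lies in $M$, so every point of $M$ is orbitwise chain-recurrent. Since the points of $M$ are also mutually equivalent, $M$ sits inside a single equivalence class of $\cR^{ow}_{\cC_{\cF,d}}$, that is, a single orbitwise chain node.

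Alternatively, I could simply observe that $M\subset\owR$ by Lemma~\ref{lemma: Om}'s argument and apply Proposition~\ref{prop: streams}(3). That alone gives orbitwise recurrence pointwise, but the mutual equivalence still needs the $\Om(z)=M$ step above.

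\textbf{Main obstacle.} The only delicate point is that $\Om(z)$ is a subset of $M$ that is nonempty, compact and forward-invariant. Nonemptiness uses compact dynamics; the inclusion uses closedness of $M$ together with Lemma~\ref{lemma: O(A)=A}. Once these are in place, the rest follows from closedness of the stream.
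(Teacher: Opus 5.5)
Your proof is correct and follows essentially the same route as the paper. You get existence from the Zorn's-lemma construction in Lemma~\ref{lemma: Om}, use minimality to get $\Om(z)=M$ for every $z\in M$, use closedness of the chain stream for mutual chain-equivalence, and use forward-invariance of $M$ for orbitwise recurrence. You are also somewhat more careful than the paper in checking, via Lemma~\ref{lemma: O(A)=A} and Lemma~\ref{lemma: Om}, that $\Om(z)$ is indeed a nonempty compact forward-invariant subset of $M$.
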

\begin{proof}
    The existence of a non-empty compact forward-invariant $M\subset\Om(x)$ is shown in the proof of Lemma~\ref{lemma: Om}.
    Because of the forward invariance, for any $y\in M$ we have that $\Om(y)\subset M$.
    Since $\Om(y)$ is forward-invariant and $M$ is minimal, $\Om(y)=M$.
    Hence, for any distinct points $y,z\in M$, we have that $y\in\Om(z)$ and $z\in\Om(y)$.
    Hence, $y\Cto z$ and $z\Cto y$, namely $y\Ceq z.$

    Since $M$ is forward-invariant, $\cO_\cF(y)\subset M$ for every $y\in M$.
    Thus, every point of the orbit of $y$ is chain-equivalent to $y$, so $y$ is orbitwise chain-recurrent. Consequently, $M$ is contained in an orbitwise chain node.
\end{proof}
\medskip\noindent
{\bf Example: the $\eta$-chain-recurrent set of the Sierpi\'nski IFS.} 
In Section~\ref{sec: global attractor} we defined the Sierpi\'nski IFS $\cS$ on $\bR^2$ and discussed its trapping regions and global attractor $\cG_\cS$, the Sierpi\'nski gasket. 
Moreover, by Proposition~\ref{prop: R_F=K}, $\cR_\cS=\cR^{ow}_\cS=\cG_\cS$ and we will show in
Proposition~\ref{prop: R_C=K} also that $\cR_{\cC_{\cS,d}}=\cR^{ow}_{\cC_{\cS,d}}=\cG_\cS$.
Hence, 
each point of $\cR_{\cC_{\cS,d}}$ is orbitwise chain-equivalent to every other point in that set, namely $\cG_\cS$ is the unique orbitwise node of $\cC_{S,d}$. 

In the proposition below we study the case $\eta>0$.

\begin{proposition}
    \label{prop: R_eta subset N_2eta(R)}
    For every $\eta>0$, 
    $$
    \cR_{\cC_{\cS,d,\eta}}=\cR^{ow}_{\cC_{\cS,d,\eta}}=\overline{N_{2\eta}(\cG_\cS)}
    $$ 
    and the whole set is a single orbitwise $\cC_{\cS,d,\eta}$-node.
\end{proposition}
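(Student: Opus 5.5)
The plan is to prove the two inclusions $\cR_{\cC_{\cS,d,\eta}}\subset K$ and $K\subset\cR^{ow}_{\cC_{\cS,d,\eta}}$, where $K=\overline{N_{2\eta}(\cG_\cS)}$, and to show along the way that any two points of $K$ are $\eta$-chain-equivalent. Since $\cR^{ow}_{\cC_{\cS,d,\eta}}\subset\cR_{\cC_{\cS,d,\eta}}$, this gives all the claims at once. Everything rests on the elementary estimate, valid for each $P\in\{A,B,C\}$,
$$
d\bigl(s_P(u),\cG_\cS\bigr)\le \tfrac12\, d(u,\cG_\cS).
$$
It holds because $s_P(\cG_\cS)\subset\cG_\cS$ and $s_P$ is a homothety of ratio $1/2$. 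In particular, $K$ is forward-invariant.

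\emph{Step 1: $\cR_{\cC_{\cS,d,\eta}}\subset K$.} Let $r=d(x,\cG_\cS)>2\eta$ and choose $\eps$ with $\eta<\eps<r/2$. Along any $\eps$-chain $x=x_0,x_1,\dots$ the distances $r_n=d(x_n,\cG_\cS)$ satisfy $r_{n+1}<r_n/2+\eps$. The map $t\mapsto t/2+\eps$ sends $[0,r)$ into itself, since $r/2+\eps<r$. Hence, by induction, $r_n<r$ for every $n\ge1$. Thus no $\eps$-chain of positive length returns to $x$. In particular $x\Cdeqeta f(x)$ fails for every generator $f$ (a chain from $f(x)$ back to $x$ would produce one from $x$ to $x$), so $x$ is not $\eta$-chain-recurrent.

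\emph{Step 2: any two points $x,y\in K$ are $\eta$-chain-equivalent.} Fix $\eps>\eta$ and build an $\eps$-chain from $x$ to $y$ in two stages.

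First, move from $x$ to within $\eps$ of a prescribed point $g\in\cG_\cS$ using exact iterates. By $H$-invariance, $g=\Phi^{I}_\cS(h)$ for some $h\in\cG_\cS$ and some word $I$ of any length $n$. Then $d(\Phi^I_\cS(x),g)\le 2^{-n}d(x,h)$, which is small for large $n$, so the final point can be absorbed into one jump of size $<\eps$.

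Second, move from $g$ out to $y$. Choose $g\in\cG_\cS$ with $|y-g|=d(y,\cG_\cS)\le2\eta$, let $v$ be the unit vector in the direction $y-g$, and pick $\eps''\in(\eta,\eps)$. Write $g=s_{P_k}\circ\dots\circ s_{P_1}(h_0)$ with $h_0\in\cG_\cS$, and set $h_j=s_{P_j}(h_{j-1})\in\cG_\cS$. Define
$$
q_j=h_j+t_jv,\qquad t_0=0,\quad t_{j+1}=t_j/2+\eps''.
$$
Since $s_{P}(h+tv)=s_P(h)+\tfrac t2 v$, each step from $q_j$ to $q_{j+1}$ is a jump of size exactly $\eps''<\eps$. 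The values $t_j$ increase to $2\eps''>2\eta\ge|y-g|$. Let $j$ be the last index with $t_j\le|y-g|$; then $s_{P_{j+1}}(q_j)$ lies on the ray from $h_{j+1}$ in direction $v$ at distance $t_j/2$ from it, whereas $y$ lies on the ray from $g$, and we need $h_{j+1}$ to coincide with $g$ there.

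The two stages are stitched together by choosing $k$ large, taking $h_0$ as the target point of the first stage, and aligning the word so that $h_k=g$ and the first index $j$ with $t_j\ge |y-g|-\eps''$ occurs at $j=k$. Since $k$ is free, this is possible by prepending letters to the word. The final jump is then of size $<\eps''$.

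\emph{Step 3: conclusion.} By Step 2, every $x\in K$ is $\eta$-chain-equivalent to each point of $\cO_\cS(x)\subset K$ (using the forward-invariance of $K$). Hence $K\subset\cR^{ow}_{\cC_{\cS,d,\eta}}$, and $K$ lies in a single orbitwise node. Combined with Step 1, this gives equality and shows that $K$ is the unique orbitwise node.

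The main obstacle is the bookkeeping in the second stage of Step 2: synchronising the backward itinerary in $\cG_\cS$ with the number of outward jumps, so that the last point lands within $\eps$ of $y$ and not merely at the right distance from $\cG_\cS$. I would first try the parallel-translate trick above, fixing the direction $v$ and exploiting the linearity of $s_P$.
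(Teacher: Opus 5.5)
Your Steps~1 and~3 follow the paper's argument and are fine. The gap is in Step~2, at the synchronisation you flag yourself. The stitching paragraph asks for something that cannot happen, so no $\eps$-chain from $x$ to $y$ is actually produced. Since $t_0=0$ and $t_{j+1}=t_j/2+\eps''$, one has $t_j=2\eps''(1-2^{-j})$ whatever the word is. So the index at which the drift reaches a given level depends only on $|y-g|$ and $\eps''$. In particular $|y-g|-\eps''\le 2\eta-\eps''<\eps''=t_1$, so the first $j$ with $t_j\ge|y-g|-\eps''$ is $0$ or $1$. It cannot be made equal to a large $k$. Prepending letters does not move it either, because the recursion restarts at $t_0=0$ from the new initial point $h_0$ (unless the drift is switched off on the prepended letters, in which case they simply belong to your first stage). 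The requirement also contradicts your own preceding sentence. That sentence correctly shows that the drift phase must reach $g$ exactly one letter after the last index $j^*$ with $t_{j^*}\le|y-g|$. So the drift phase consists of exactly $j^*+1$ letters, a number fixed by $|y-g|$ and $\eps''$.

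The repair uses only what you already have. Such a $j^*$ exists because $t_0=0$ and $t_j\uparrow 2\eps''>2\eta\ge|y-g|$. Use $H$-invariance to write $g=s_{P_{j^*+1}}\circ\cdots\circ s_{P_1}(h_0)$ with $h_0\in\cG_\cS$. Drift by $\eps''v$ on the letters $P_1,\dots,P_{j^*}$, apply $s_{P_{j^*+1}}$, and jump to $y$; the last jump is $|y-g|-t_{j^*}/2<t_{j^*+1}-t_{j^*}/2=\eps''$. What must be long is the drift-free first stage, and its target must be $h_0$, not $g$. Its only inexact jump is at most $2^{-n}\bigl(d(x,\cG_\cS)+\operatorname{diam}\cG_\cS\bigr)$. (Alternatively, you may keep $k$ large and drift on every letter, provided the chain ends at $y$ in place of $q_k$. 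The last jump is then $\bigl||y-g|-t_{k-1}/2\bigr|$, which is $<\eps''$ once $k$ is large.)

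The paper avoids the two stages altogether. It follows one long word $I_n$ with $\Phi^{I_n}_\cS(q_n)=q$, where $q\in\cG_\cS$ is a nearest point to $y$, and adds the same vector $e=\bigl(y-\Phi^{I_n}_\cS(p)\bigr)/\bigl(2(1-2^{-n})\bigr)$ at every step. Since all generators have linear part $\frac12\operatorname{id}$, the accumulated displacement is $2(1-2^{-n})e$, so the chain ends exactly at $y$. Moreover $\lVert e\rVert<\eps$ for $n$ large, because $\lVert y-\Phi^{I_n}_\cS(p)\rVert\to d(y,\cG_\cS)\le 2\eta<2\eps$. Tuning the size and direction of the drift, rather than fixing $v$ and $\eps''$, absorbs the shadowing error and makes every large $n$ admissible. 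That is the freedom your remark that $k$ is free was reaching for.
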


\begin{proof}
Since every generator $s_i$ has Lipschitz constant $1/2$ and maps $\cG_\cS$ into itself, 
$$
d(s_i(x),\cG_\cS)\leq \frac12 d(x,\cG_\cS).
$$
Consequently, along any $\varepsilon$-chain
$x_0,\ldots,x_n$,
\[
d(x_{j+1},\cG_\cS)<\varepsilon+\frac12d(x_j,\cG_\cS).
\]

Let $x$ satisfy $d(x,\cG_\cS)>2\eta$ and choose an $\eps\in(\eta,\frac12d(x,\cG_\cS))$.
If an $\varepsilon$-chain starts at $x$, then
\[
d(x_1,\cG_\cS)<\varepsilon+\frac12d(x,\cG_\cS)<d(x,\cG_\cS),
\]
and induction gives $d(x_j,\cG_\cS)<d(x,\cG_\cS)$ for every $j\geq1$.
Thus, there is no positive-length $\varepsilon$-chain from $x$ to
itself.

    Choose a generator $s_i$ such that $s_i(x)\neq x$; such a generator exists because the generators of $\cS$ have no common fixed point.
    If $x$ were $\cC_{\cS,d,\eta}$-recurrent, there would be an $\varepsilon$-chain from $s_i(x)$ to $x$. 
    Prepending the exact step $x\mapsto s_i(x)$ would give a positive-length $\varepsilon$-chain from $x$ to itself, a contradiction. 
    Hence $\cR_{\cC_{\cS,d,\eta}}\subset \overline{N_{2\eta}(\cG_\cS)}$.

    Now, let $p\in\mathbb{R}^2$ and $y\in\overline{N_{2\eta}(\cG_\cS)}$ and fix an $\varepsilon>\eta$. 
    Choose $q\in \cG_\cS$ such that
    \[
     d(y,q)=d(y,\cG_\cS)\leq2\eta<2\varepsilon.
    \]
    Since $H_\cS^n(\cG_\cS)=\cG_\cS$, for every $n\geq1$ there exist a $I_n\in\cI^m$ with $|I_n|=n$ and a point $q_n\in \cG_\cS$ such that $q=\Phi_S^{I_n}(q_n)$.
    Hence
\[
d\bigl(\Phi_\cS^{I_n}(p),q\bigr)
=
d\bigl(\Phi_\cS^{I_n}(p),\Phi_S^{I_n}(q_n)\bigr)
\leq 2^{-n}d(p,q_n)\to0,
\]
    because $\cG_\cS$ is compact.
    Therefore, for some sufficiently large $n$,
\[
d\bigl(y,\Phi_\cS^{I_n}(p)\bigr)
<
2\varepsilon(1-2^{-n}).
\]
    Set
\[
e=
\frac{y-\Phi_\cS^{I_n}(p)}
     {2(1-2^{-n})},
\]
    so that $\lVert e\rVert<\varepsilon$, and define
\[
x_0=p,
\qquad
x_{j+1}=s_{i_{j+1}}(x_j)+e,
\qquad j=0,\ldots,n-1.
\]
    Then
\[
d\bigl(s_{i_{j+1}}(x_j),x_{j+1}\bigr)=\lVert e\rVert<\varepsilon,
\]
    and, since every $s_i$ has linear part $\frac12\operatorname{id}$,
\[
x_n
=
\Phi_\cS^{I_n}(p)
+
\left(\sum_{k=0}^{n-1}2^{-k}\right)e
=
\Phi_\cS^{I_n}(p)+2(1-2^{-n})e
=
y.
\]
    Thus, there is an $\varepsilon$-chain from $p$ to $y$. 
    Since $\varepsilon>\eta$ was arbitrary, $p\succcurlyeq_{\cC_{\cS,d,\eta}}y$.

    Now, let $x\in\overline{N_{2\eta}(\cG_\cS)}$ and let $z\in \cO_\cS(x)$.
    Since the chain relation contains the orbit relation, $x\succcurlyeq_{\cC_{\cS,d,\eta}}z$, while the preceding claim, applied with $p=z$ and $y=x$, gives $z\succcurlyeq_{\cC_{\cS,d,\eta}}x$.
    Hence $x$ is orbitwise $\cC_{\cS,d,\eta}$-recurrent, namely $\overline{N_{2\eta}(\cG_\cS)}\subset \cR^{ow}_{\cC_{\cS,d,\eta}}$.

    Finally, the same argument shows that any two points of $\overline{N_{2\eta}(\cG_\cS)}$ are mutually $\cC_{\cS,d,\eta}$-equivalent, namely this set is a single node.
\end{proof}

Notice that every point in $T$ lies at distance not larger than $\sqrt{3}/12$ from the gasket (the maximum is achieved at the barycenter of the triangle).
Hence, for $\eta\geq\sqrt{3}/24\simeq0.072$ we have that $\cR_{\cC_{\cS,d,\eta}}=\cR^{ow}_{\cC_{\cS,d,\eta}}\supset T$.

\medskip\noindent
{\bf Alternative definitions of chain-recurrence.}
For the chain stream, holds the following alternate definition of recurrence.
\begin{proposition}
    \label{prop: alternative weak node}
    For \(x\in X\), the following are equivalent:
\begin{enumerate}
    \item \(x\) is chain-recurrent, namely there is a generator $f_i$ such that $f_i(x)\Ceq x$;
    \item \(f_i(x)\Cto x\) for some generator \(f_i\);
    \item either \(f_i(x)=x\) for some generator \(f_i\), or
    \(x\) is chain-equivalent to a point distinct from itself;
    \item for every $\eps>0$ there is a positive-length $\eps$-chain from $x$ to $x$.
\end{enumerate}
\end{proposition}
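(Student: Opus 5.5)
We prove the cycle of implications $(1)\Rightarrow(2)\Rightarrow(4)\Rightarrow(1)$ and separately show $(1)\Leftrightarrow(3)$. Throughout we use only the definition of $\eps$-chains and the fact, from Proposition~\ref{prop: C is a stream}, that $\cC_{\cF,d}$ is a stream. In particular, the exact step $x\mapsto f_i(x)$ is an $\eps$-chain of length one for every $\eps>0$, and chains can be concatenated.

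$(1)\Rightarrow(2)$ is immediate, since $f_i(x)\Ceq x$ includes $f_i(x)\Cto x$.

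$(2)\Rightarrow(4)$. Fix $\eps>0$ and take an $\eps$-chain from $f_i(x)$ to $x$. Prepend the exact step $x\mapsto f_i(x)$ with index $i$. The result is an $\eps$-chain from $x$ to $x$ of length at least one, which is exactly what $(4)$ asks for.

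$(4)\Rightarrow(1)$. This is the only step requiring a small argument. Fix $\eps>0$ and a positive-length $\eps$-chain $x=x_0,x_1,\dots,x_k=x$ with indices $i_0,\dots,i_{k-1}$. The first index $i_0$ may depend on $\eps$. Since there are only $m$ generators, some index $i$ occurs as $i_0$ for a sequence $\eps_n\to0$. For a fixed $\eps$, pick $\eps_n<\eps$ with first index $i$. Replacing $x_1$ by $f_i(x)$ in the tail $x_1,\dots,x_k$ changes the first error: $d(f_{i_1}(f_i(x)),x_2)$ need not be small. We avoid this by keeping the original chain and starting it from $f_i(x)$, treating $f_i(x)$ as the new initial point followed directly by $x_1$ with the identity-type step.

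That is not allowed, since every step must apply a generator. The better fix is continuity. Given $\eps$, choose $\delta<\eps/2$ such that $d(u,f_i(x))<\delta$ implies $d(f_j(u),f_j(f_i(x)))<\eps/2$ for all $j$. Take an $\eps_n<\min(\delta,\eps/2)$ chain with first index $i$, so that $d(f_i(x),x_1)<\delta$.

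If $k\ge2$, the sequence $f_i(x),x_2,\dots,x_k$ with indices $i_1,\dots$ is an $\eps$-chain, because its first error is at most $\eps/2+\eps_n<\eps$.

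If $k=1$, then $d(f_i(x),x)<\eps_n$, so the single step $f_i(x),x$ with index $i$ has error $d(f_i(f_i(x)),x)$, which is not obviously small. Instead, use the chain $x\to x$ of length $1$ repeated twice, then drop the first step as above. This gives an $\eps$-chain from $f_i(x)$ to $x$.

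Hence $f_i(x)\Cto x$ for every $\eps$. Together with $x\Cto f_i(x)$ from the orbit relation, this gives $(1)$.

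$(1)\Leftrightarrow(3)$. If $f_i(x)=x$ then $(1)$ holds trivially; if $f_i(x)\ne x$, then $x$ is chain-equivalent to the distinct point $f_i(x)$. This shows $(1)\Rightarrow(3)$.

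For $(3)\Rightarrow(1)$, suppose $y\ne x$ and $x\Ceq y$. Any $\eps$-chain from $x$ to $y$ has positive length because $y\neq x$. Concatenating it with an $\eps$-chain from $y$ back to $x$ gives a positive-length $\eps$-chain from $x$ to $x$, i.e.\ $(4)$, and hence $(1)$.

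The main obstacle is the bookkeeping in $(4)\Rightarrow(1)$. There one must fix a single generator independently of $\eps$, which is done by pigeonhole on a sequence $\eps_n\to0$ together with monotonicity: an $\eps_n$-chain is an $\eps$-chain for $\eps_n<\eps$. One must also absorb the first perturbation using continuity of the finitely many $f_j$ at $f_i(x)$.
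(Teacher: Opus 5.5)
Your proof is correct and essentially matches the paper's. The substantive step $(4)\Rightarrow(2)$ is handled the same way: you pigeonhole the first generator along $\eps_n\to0$ and use continuity at $f_i(x)$ to absorb the first jump. Your doubling of length-one loops simply replaces the paper's limit argument forcing $f_i(x)=x$.

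For the write-up, drop the abandoned ``identity-type step'' detour. In $(1)\Leftrightarrow(3)$, say explicitly that $f_i(x)=x$ gives $(3)$, and conversely gives $(1)$ by reflexivity; your text currently says ``$(1)$ holds'' in the direction $(1)\Rightarrow(3)$.
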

%
\begin{proof}
    (1) $\iff$ (2).
    By definition, there is a $f_i$ such that $f_i(x)\Seq x$ , and so in particular $f_i(x)\Cto x$.

    (2) $\implies$ (3).
    Since the chain stream contains the orbit relation, we have that $x\Cto f_i(x)$ and so $x$ is $\cC_\cF$-equivalent to $f_i(x)$.
    Assume that $x$ is not fixed under any $f_j$.
    Then $f_i(x)\neq x$ and so $x$ is chain-equivalent to a point distinct from itself. 

    (3) $\implies$ (4).
    If $x$ is fixed under some generator of $\cF$, the claim is obvious.
    If there is a $y\neq x$ with $y\Cto x$ and $x\Cto y$, then for every $\eps>0$ there are $\eps$-chains from $y$ to $x$ and viceversa.
    By concatenating those chains, we get an $\eps$-look at $x$ for every $\eps>0$.

    (4) $\implies$ (2).
    Let $\varepsilon_n\downarrow 0$. 
    For every $n$, choose an $\varepsilon_n$-loop at $x$
\[
x^{(n)}_0=x,x^{(n)}_1,\ldots,x^{(n)}_{k_n}=x.
\]

    Let $i^{(n)}_0,\ldots,i^{(n)}_{k_n-1}$ be the generators associated with this loop. 
    Assume first that $k_n=1$ for infinitely many $n$.
    Then $d\bigl(f_i(x),x\bigr)<\varepsilon_n$.
    Letting $n\to\infty$, we obtain $f_i(x)=x$, and therefore $f_i(x)\Cto x$.
    
    Assume now that $k_n>1$ for $n$ large enough. 
    Then, since $\cF$ is finite, after passing to a subsequence we may assume that $i^{(n)}_0=i$, $i^{(n)}_1=j$ are independent of $n$. 
    From the first jump of the chain, $d\bigl(f_i(x),x^{(n)}_1\bigr)<\varepsilon_n$, and hence $x^{(n)}_1\to f_i(x)$.

    We claim that $f_i(x)\Cto x$.
    Indeed, fix an $\varepsilon>0$. 
    By continuity of $f_j$ at $f_i(x)$, for all sufficiently large $n$ we have
\[
d\bigl(f_j(f_i(x)),f_j(x^{(n)}_1)\bigr)<\frac{\varepsilon}{2}.
\]
    Choose such an $n$ also satisfying $\varepsilon_n<\frac{\varepsilon}{2}$.
    Since the second jump of the original loop satisfies $d\bigl(f_j(x^{(n)}_1),x^{(n)}_2\bigr)<\varepsilon_n$, we obtain
$$
d\bigl(f_j(f_i(x)),x^{(n)}_2\bigr)
\leq
d\bigl(f_j(f_i(x)),f_j(x^{(n)}_1)\bigr)
+d\bigl(f_j(x^{(n)}_1),x^{(n)}_2\bigr) 
<
\frac{\varepsilon}{2}+\varepsilon_n
<\varepsilon.
$$

    All the remaining jumps of $f_i(x),x^{(n)}_2,\ldots,x^{(n)}_{k_n}=x$ have error smaller than $\varepsilon_n<\varepsilon$. 
    This sequence is therefore an $\varepsilon$-chain from $f_i(x)$ to $x$.
    Since $\varepsilon>0$ was arbitrary, this means that $f_i(x)\Cto x$.
\end{proof}





\medskip\noindent
{\BF The chain stream is an $\Omega$stream.} 
With the help of some technical lemma, we prove below that the chain stream is an $\Omega$stream.

\begin{lemma}
\label{lem:local-shadowing-fixed-x}
Let $p\geq0$.
Then, for every $x\in X$ and $\varepsilon>0$, there exists $\delta>0$ such that, for every $\delta$-chain $x_0=x,x_1,\dots,x_n$ with indices $i_0,\dots,i_{n-1}\in\{1,\dots,m\}$, if we set $I_0=0$ and $I_j=i_0\cdots i_{j-1}$ for $j=1,\dots,n$, we have that
$$
d\!\left(\Phi_\cF^{I_j}(x),x_j\right)<\varepsilon\quad\text{for every $j=0,\dots,\min\{p,n\}$}.
$$
If $n>p$, moreover,
the tail $\Phi_\cF^{I_p}(x),\,x_{p+1},\,x_{p+2},\,\dots,\,x_n$ is an $\varepsilon$-chain from $\Phi_\cF^{I_p}(x)$ to $x_n$.
\end{lemma}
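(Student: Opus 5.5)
The plan is a finite backward induction on continuity moduli along the finitely many orbit points of length at most $p$, in the spirit of the closedness argument in Proposition~\ref{prop: C is a stream}. Fix $x$ and $\eps>0$, and consider the finite set of points $\Phi_\cF^I(x)$ with $|I|\le p$. There are at most $1+m+\dots+m^p$ of them.

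First I choose tolerances $\eps_p\le\eps_{p-1}\le\dots\le\eps_0$. Set $\eps_p=\eps/2$. For $j=p-1,\dots,0$, choose $\eps_j\le\eps_{j+1}/2$ such that
\[
d\bigl(f_i(\Phi_\cF^I(x)),f_i(u)\bigr)<\eps_{j+1}/2
\]
for every $I$ with $|I|=j$, every $i\in\{1,\dots,m\}$, and every $u$ with $d(u,\Phi_\cF^I(x))<\eps_j$. This is possible because the maps are finitely many and continuous, and only finitely many points $\Phi_\cF^I(x)$ are involved. Then let $\delta=\min_j\eps_{j+1}/2$, which is at most $\eps/2$.

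The key step is the inductive claim $d(\Phi_\cF^{I_j}(x),x_j)<\eps_j$ for $j\le\min\{p,n\}$. For $j=0$ both points equal $x$. Assuming the claim at step $j<p$, the triangle inequality gives
\[
d\bigl(\Phi_\cF^{I_{j+1}}(x),x_{j+1}\bigr)\le d\bigl(f_{i_j}(\Phi_\cF^{I_j}(x)),f_{i_j}(x_j)\bigr)+d\bigl(f_{i_j}(x_j),x_{j+1}\bigr)<\eps_{j+1}/2+\delta\le\eps_{j+1}.
\]
Here $\Phi_\cF^{I_{j+1}}=f_{i_j}\circ\Phi_\cF^{I_j}$ by the convention for the right action. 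Since every $\eps_j\le\eps$, this proves the first assertion.

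For the tail with $n>p$, only the first jump of $\Phi_\cF^{I_p}(x),x_{p+1},\dots,x_n$ is new. I would strengthen the choice of $\eps_p$ so that it also handles this jump. Pick $\eps_p\le\eps/2$ such that $d(f_i(\Phi_\cF^I(x)),f_i(u))<\eps/2$ for all $|I|=p$, all $i$, and all $u$ within $\eps_p$ of $\Phi_\cF^I(x)$. Then
\[
d\bigl(f_{i_p}(\Phi_\cF^{I_p}(x)),x_{p+1}\bigr)<\eps/2+\delta\le\eps.
\]
The remaining jumps are original $\delta$-jumps, and $\delta<\eps$, so the tail is an $\eps$-chain.

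I expect no step to be a real obstacle. The only care needed is to select $\delta$ uniformly over all itineraries, and finiteness of the branching at depth at most $p$ handles that. The adjustment to $\eps_p$ must be made before the backward recursion so that the constants stay consistent.
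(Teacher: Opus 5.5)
Your proposal is correct and follows essentially the same route as the paper. Both arguments run a backward recursion of continuity tolerances over the finite sets $H_\cF^j(\{x\})$, $j\le p$, anchored at depth $p$ so that the first jump of the tail costs less than $\eps/2+\delta$, and then a forward induction giving $d(\Phi_\cF^{I_j}(x),x_j)<\eps_j$. Two cosmetic fixes: your recursion actually produces $\eps_0\le\eps_1\le\dots\le\eps_p$ rather than the reverse order you state, and to cover $p=0$ you should set, e.g., $\delta=\min\{\eps_1,\dots,\eps_p,\eps\}/2$, which is what the paper's convention $\eta_{p+1}=\eps$ does.
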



\begin{proof}
Set
$\eta_{p+1}:=\varepsilon$.
We recursively choose positive numbers $\eta_p,\eta_{p-1},\ldots,\eta_0$ as follows. 
For each $j=p,p-1,\ldots,0$, choose
$\eta_j\in(0,\varepsilon)$ such that
\[
d(u,v)<\eta_j
\quad\Longrightarrow\quad
d\bigl(f_i(u),f_i(v)\bigr)
<
\frac{\eta_{j+1}}{2}
\]
for every
\[
u\in H_\cF^j(\{x\}),\qquad v\in X,
\qquad i\in\{1,\ldots,m\}.
\]
Such a choice is possible because $H_\cF^j(\{x\})$ is finite, $\cF$ has finitely many generators and every generator is continuous.

Now, set $\delta=\min\{\eta_1,\ldots,\eta_{p+1}\}/2$.
Since $\eta_{p+1}=\varepsilon$, in particular  $\delta\leq\varepsilon/2$.
Let $x_0=x,x_1,\ldots,x_n$ be a $\delta$-chain with associated indices $i_0,\ldots,i_{n-1}$. 
For $j=0$, since $I_0=0$ and $x_0=x$, we have that
\[
d\bigl(\Phi_\cF^{I_0}(x),x_0\bigr)
=
d(x,x)=0<\eta_0.
\]

    Assume now that the assertion holds for some $j<p$. 
    Since
\[
\Phi_\cF^{I_j}(x)\in H_\cF^j(\{x\}),
\]
the choice of $\eta_j$ gives
\[
d\left(
f_{i_j}\bigl(\Phi_\cF^{I_j}(x)\bigr),
f_{i_j}(x_j)
\right)
<
\frac{\eta_{j+1}}{2}.
\]
Moreover, since the given sequence is a $\delta$-chain,
\[
d\bigl(f_{i_j}(x_j),x_{j+1}\bigr)<\delta
\leq\frac{\eta_{j+1}}{2}.
\]
    Using $I_{j+1}=I_j i_j$ and $\Phi_\cF^{I_{j+1}}=f_{i_j}\circ\Phi_\cF^{I_j}$ we obtain
$$
d\bigl(\Phi_\cF^{I_{j+1}}(x),x_{j+1}\bigr)
\leq
d\left(
f_{i_j}\bigl(\Phi_\cF^{I_j}(x)\bigr),
f_{i_j}(x_j)
\right) +d\bigl(f_{i_j}(x_j),x_{j+1}\bigr)<
\frac{\eta_{j+1}}{2}
+
\frac{\eta_{j+1}}{2}
=
\eta_{j+1}.
$$
    Since each $\eta_j<\varepsilon$ for $j=0,\ldots,p$, it follows that
\[
d\bigl(\Phi_\cF^{I_j}(x),x_j\bigr)<\varepsilon
\]
for every $j=0,\ldots,p$.

It remains to prove the assertion about the tail. 
Since $d\bigl(\Phi_\cF^{I_p}(x),x_p\bigr)<\eta_p$ and $\Phi_\cF^{I_p}(x)\in H_\cF^p(\{x\})$, the choice of $\eta_p$ gives
\[
d\left(
f_{i_p}\bigl(\Phi_\cF^{I_p}(x)\bigr),
f_{i_p}(x_p)
\right)
<
\frac{\eta_{p+1}}{2}
=
\frac{\varepsilon}{2}.
\]
Also,
\[
d\bigl(f_{i_p}(x_p),x_{p+1}\bigr)
<
\delta
\leq
\frac{\varepsilon}{2}.
\]
Consequently,
$$d\left(
f_{i_p}\bigl(\Phi_\cF^{I_p}(x)\bigr),
x_{p+1}
\right)
\leq
d\left(
f_{i_p}\bigl(\Phi_\cF^{I_p}(x)\bigr),
f_{i_p}(x_p)
\right) +d\bigl(f_{i_p}(x_p),x_{p+1}\bigr)<\varepsilon.
$$
All subsequent jumps $x_{p+1}\to x_{p+2}\to\cdots\to x_n$ are unchanged from the original $\delta$-chain and therefore have error smaller than $\delta\leq\varepsilon/2<\varepsilon$.
Hence $\Phi_\cF^{I_p}(x),x_{p+1},\ldots,x_n$ is an $\varepsilon$-chain from $\Phi_\cF^{I_p}(x)$ to $x_n$.
\end{proof}

\begin{lemma}
\label{lem:lengths-go-to-infinity}
Assume that $x\Cto y$ with $y\not\in\cO_\cF(x)$.
Then, for every integer $p\ge 0$, there exists $\delta_p>0$ such that every
$\delta_p$-chain from $x$ to $y$ has length strictly greater than $p$.
\end{lemma}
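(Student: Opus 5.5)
We argue by contradiction, using Lemma~\ref{lem:local-shadowing-fixed-x} with the given $p$. Since $y\notin\cO_\cF(x)$, $y$ differs from every point of the finite set $\bigcup_{j=0}^{p}H^j_\cF(\{x\})$, which contains all points $\Phi_\cF^{I}(x)$ with $|I|\le p$. Set
\[
\varepsilon=\min\Bigl\{d\bigl(\Phi_\cF^{I}(x),y\bigr): |I|\le p\Bigr\}>0,
\]
a minimum over finitely many multiindices, each term positive.

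Apply Lemma~\ref{lem:local-shadowing-fixed-x} with this $p$, the point $x$ and this $\varepsilon$, obtaining a $\delta>0$, and set $\delta_p=\delta$. Suppose some $\delta_p$-chain $x_0=x,x_1,\dots,x_n=y$ with indices $i_0,\dots,i_{n-1}$ has length $n\le p$. Then $\min\{p,n\}=n$, so the lemma gives
\[
d\bigl(\Phi_\cF^{I_n}(x),y\bigr)=d\bigl(\Phi_\cF^{I_n}(x),x_n\bigr)<\varepsilon,
\]
where $|I_n|=n\le p$. This contradicts the definition of $\varepsilon$. Hence every $\delta_p$-chain from $x$ to $y$ has length strictly greater than $p$.

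The case $n=0$ is excluded automatically, since it would force $y=x\in\cO_\cF(x)$. The hypothesis $x\Cto y$ is not used to bound lengths; it only guarantees that $\delta_p$-chains from $x$ to $y$ exist, so the statement is not vacuous. The only delicate point is making $\varepsilon$ uniform over all itineraries of length at most $p$, and finiteness of the alphabet handles this.
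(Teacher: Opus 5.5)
Your proof is correct and is essentially the paper's argument. Both take the minimum distance from $y$ to the finitely many points $\Phi_\cF^{I}(x)$ with $|I|\le p$, then apply Lemma~\ref{lem:local-shadowing-fixed-x} to show that a $\delta_p$-chain of length $n\le p$ would end strictly closer than that minimum to $\Phi_\cF^{I_n}(x)$. The only cosmetic differences are that the paper takes half the minimum distance and re-runs the inductive estimate, while you take the full minimum and quote the lemma's conclusion at $j=\min\{p,n\}=n$ directly; this is equally valid because the lemma's inequality is strict.
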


\begin{proof}
Fix $p\ge 0$. Since $\bigcup_{j=0}^p H^j_\cF(\{x\})$
is a finite set and $y\notin \cO_\cF(x)$, the number
\[
\rho_p=\min\left\{ d\!\left(y,\P(x)\right) : |I|\le p \right\}
\]
is well-defined and strictly positive.

Apply Lemma \ref{lem:local-shadowing-fixed-x} with this $p$ and with $\varepsilon=\rho_p/2$, let $\delta_p>0$ be the corresponding number and let $x_0=x,x_1,\dots,x_n=y$ be a $\delta_p$-chain from $x$ to $y$ with $n\le p$. 
We may apply the same inductive argument
used in the proof of Lemma \ref{lem:local-shadowing-fixed-x} up to time $n$ and obtain
\[
d\!\left(\Phi_\cF^{I_n}(x),y\right)<\rho_p/2,
\]
where $I_n=i_0\cdots i_{n-1}$ is the multiindex associated with the chain. Since $n\le p$, this contradicts
the definition of $\rho_p$.
Therefore no $\delta_p$-chain from $x$ to $y$ can have length $\le p$.
\end{proof}

\begin{proposition}
\label{prop:chains-omega-stream}
Assume that $\cF$ has compact dynamics. 
Then $\cC_{\cF,d}$ is an $\Omega$stream;
equivalently, if $x\Cto y$ then either $y\in\cO_\cF(x)$ or there is a $z\in\Om(x)$ such that $z\Cto y$.
\end{proposition}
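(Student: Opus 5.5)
The plan is to prove the equivalent formulation: assume $x\Cto y$ with $y\notin\cO_\cF(x)$, and produce $z\in\Om(x)$ with $z\Cto y$. The reverse inclusion $\Down(x)\supset\cO_\cF(x)\cup\Down(\Om(x))$ holds for every stream, so this gives the $\Omega$stream identity. The idea is to follow $\eps$-chains from $x$ to $y$ whose lengths must blow up as $\eps\to0$. We shadow their initial segments by true orbit segments of increasing length, and extract a limit point of these orbit points.

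First fix a compact set to work in. By Lemma~\ref{lemma: for-inv} there is a compact global trapping region $Q$, and by local compactness we may take a compact neighborhood $K$ of $Q$; in fact it suffices that $\Gl$ absorbs $\{x\}$ into $Q$. Since $Q$ absorbs $\{x\}$, there is $\tau$ with $\P(x)\in Q$ for all $|I|\ge\tau$.

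Now take a sequence $p_n\to\infty$ with $p_n\ge\tau$ and $\eps_n\downarrow0$. For each $n$, apply Lemma~\ref{lem:local-shadowing-fixed-x} with $p=p_n$ and $\eps=\eps_n$ to get $\delta'_n$. Let $\delta_{p_n}$ be the constant of Lemma~\ref{lem:lengths-go-to-infinity}, and set $\delta_n=\min\{\delta'_n,\delta_{p_n},\eps_n\}$. Since $x\Cto y$, there is a $\delta_n$-chain from $x$ to $y$. By Lemma~\ref{lem:lengths-go-to-infinity} its length exceeds $p_n$. By Lemma~\ref{lem:local-shadowing-fixed-x}, if $J_n$ denotes the multi-index formed by its first $p_n$ indices, then $|J_n|=p_n$ and the tail $w_n:=\Phi_\cF^{J_n}(x),x_{p_n+1},\dots,y$ is an $\eps_n$-chain from $w_n$ to $y$.

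Since $p_n\ge\tau$, each $w_n$ lies in the compact set $Q$. Passing to a subsequence, $w_n\to z$, and $z\in\Om(x)$ because $|J_n|=p_n\to\infty$. It remains to show $z\Cto y$. This step mirrors the closedness argument in Proposition~\ref{prop: C is a stream}. Fix $\eps>0$ and choose $\rho$ with $3\rho<\eps$ and $\delta$ so that $d(z,u)<\delta$ implies $d(f_i(z),f_i(u))<\rho$ for all $i$. Take $n$ with $d(w_n,z)<\delta$ and $\eps_n<\rho$. Replacing the first point $w_n$ of the tail chain by $z$ gives an $\eps$-chain from $z$ to $y$: the first jump has error below $\rho+\eps_n<\eps$, and the other jumps are unchanged. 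The tail has positive length since $n_{\text{chain}}>p_n$. Alternatively, one can simply note that $(w_n,y)$ belongs to the relation ``$\eps_n$-chain-reachable'' and invoke closedness directly.

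The main obstacle is arranging the shadowing so that it is uniform. The constant $\delta$ of Lemma~\ref{lem:local-shadowing-fixed-x} depends on $p$, and we need simultaneously that the chains are long (Lemma~\ref{lem:lengths-go-to-infinity}), that the tails are $\eps_n$-chains with $\eps_n\to0$, and that the points $w_n$ stay in a compact set. Choosing $p_n$ first and then $\delta_n$ as the minimum of the three constants handles all three requirements at once, and compact dynamics is used only to guarantee precompactness of $\{\Phi_\cF^{J_n}(x)\}$.
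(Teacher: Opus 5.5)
Your argument is correct and is essentially the paper's proof: it uses the same two lemmas (local shadowing of initial segments, and chain lengths tending to infinity), extracts a limit point $z\in\Omega_\cF(x)$ from the shadowed orbit points in a compact set, and finishes with the same closedness step. The only difference is the order of limits: you send $p_n\to\infty$ and $\varepsilon_n\to0$ together and repair the first jump by hand (your aside about ``invoking closedness directly'' would need closedness of each $\cC_{\cF,d,\eta}$, but the explicit repair already suffices), whereas the paper first fixes $p$, pigeonholes over the finitely many words of length $p$ to obtain $\Phi_\cF^{J_p}(x)\Cto y$ exactly, and only then lets $p\to\infty$.
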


\begin{proof}
Let $y\in \Down_{\cC_{\cF,d}}(x)\setminus \cO_\cF(x)$.
We must prove that $y\in \Down_{\cC_{\cF,d}}(\Om(x))$.

Fix an integer $p\ge 0$ and an $\varepsilon>0$.
Let $\delta_p$ be given by Lemma \ref{lem:lengths-go-to-infinity} and 
$\delta_{p,\varepsilon}$ by Lemma \ref{lem:local-shadowing-fixed-x} and set
$\delta=\min\{\delta_p,\delta_{p,\varepsilon}\}$.
Since $y\in \Down_{\cC_{\cF,d}}(x)$, there exists a $\delta$-chain $x_0=x,x_1,\dots,x_n=y$ from $x$ to $y$. 
By Lemma \ref{lem:lengths-go-to-infinity}, its length satisfies $n>p$.
Let $I_p=i_0\cdots i_{p-1}$ be the multiindex determined by the first $p$ jumps of the chain.
By Lemma \ref{lem:local-shadowing-fixed-x}, the tail $\Phi_\cF^{I_p}(x),x_{p+1},\dots,x_n=y$ is an $\varepsilon$-chain from $\Phi_\cF^{I_p}(x)$ to $y$.

Notice that $I_p$ depends on $\eps$.
Now, choose a sequence $\eps_k\to0$.
For each $k$, there is a word $I^{(k)}_p$ such that there is an $\varepsilon_k$-chain from $\Phi_\cF^{I^{(k)}_p}(x)$ to $y$.
Since there are only finitely many words in $m$ symbols with a given length, we can extract a subsequence $k_i$ so that, for every $i=1,2,\dots$, $I^{(k_i)}_p$ is equal to a given word $J_p$ and there is an $\varepsilon_{k_i}$-chain from $\Phi_\cF^{J_p}(x)$ to $y$.
Hence, 
$\Phi_\cF^{J_p}(x)\Cto y.$

Consider now the sequence $\Phi_\cF^{J_p}(x)$ and choose a compact neighborhood $K$ of $\Gl$. 
Since $\Gl$ attracts $\{x\}$, all sufficiently large $\Phi_\cF^{J_p}(x)$ belong to $K$.
Since $|J_p|=p\to\infty$, the sequence has a subsequence converging to some point $z\in \Om(x)$.
Since $\cC_\cF$ is closed, passing to the limit yields $z\Cto y$.
\end{proof}

\black

\medskip\noindent
{\bf Trapping regions of a stream.}
We say that a closed set $Q$ is a {\BF trapping region} of a stream $S$ if $\Down_S(Q)\subset Q$.

\begin{lemma}[Robust trapping of small chains]
    \label{lemma: robust trapping region}
    Assume that $\cF$ has compact dynamics. 
    Then, for every $\varepsilon>0$, there exist $\eta\in(0,\varepsilon)$ and $\delta>0$ such that
    every $\delta$-chain starting at a point of $\overline{N_\eta(\Gl)}$ is entirely contained in $N_\varepsilon(\Gl)$.
\end{lemma}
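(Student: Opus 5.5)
We will use a compact global trapping region close to $\Gl$ together with uniform continuity on compact sets. Fix $\varepsilon>0$. By Lemma~\ref{lemma: for-inv} (and its proof), we choose a compact neighborhood $K$ of $\Gl$ with $K\subset N_\varepsilon(\Gl)$, and then a compact global trapping region $Q\subset \operatorname{int}(K)$. We may arrange this by applying the construction of Lemma~\ref{lemma: for-inv} to $K$: it gives $Q=\overline{\bigcup_{|I|\ge N}\P(K)}$. The choice of $N$ there gives $\bigcup_{|I|\ge N}\P(K)\subset\operatorname{int}(K)$, but its closure may touch $\partial K$. To avoid this, we pick $N$ so that $\P(K)\subset N_{\rho}(\Gl)$ for all $|I|\ge N$, where $\rho>0$ is such that $\overline{N_{2\rho}(\Gl)}\subset \operatorname{int}K$. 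This is possible by compactness. Then $Q\subset\overline{N_\rho(\Gl)}$, and $Q$ is compact and forward-invariant with $\Gl\subset Q$ (Proposition~\ref{prop: G is natural under restrictions}).

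The key step is to make $Q$ robust under small perturbations. Since $Q$ alone is not robust, we use absorption: $Q$ absorbs the compact set $K$, so there is $T$ with $\P(K)\subset Q$ for $|I|\ge T$. Let $\gamma=d(Q,X\setminus N_{2\rho}(\Gl))>0$. By uniform continuity of the finitely many maps $\P$ with $|I|\le 2T$ on the compact set $K$, and by a finite-step shadowing estimate in the spirit of Lemma~\ref{lem:local-shadowing-fixed-x} but uniform over starting points in $K$, we choose $\delta>0$ such that every $\delta$-chain of length at most $2T$ starting in $K$ stays $\gamma$-close to the corresponding true orbit. In particular it stays in $N_{2\rho}(\Gl)\subset K$ for as long as the true orbit does. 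The true orbit of a point of $K$ need not stay in $K$ for times less than $T$. To handle this we enlarge: we set $K'=\overline{\bigcup_{|I|\le T}\P(K)}$, which is compact since it is a finite union of compact sets. We take $\varepsilon$-control on $K'$ by choosing $K$ small enough that $\bigcup_{|I|\le T}\P(K)\subset N_\varepsilon(\Gl)$. This is possible because $\Gl$ is forward-invariant and the finitely many maps are continuous, so $K=\overline{N_{\eta'}(\Gl)}$ works for $\eta'$ small; we take $\eta$ accordingly, with $\overline{N_\eta(\Gl)}\subset K$.

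The argument then proceeds block by block. Take a $\delta$-chain starting in $\overline{N_\eta(\Gl)}\subset K$ and split it into blocks of length $T$. On each block starting at a point of $K$, shadowing keeps the chain within $\gamma$ of a true orbit segment lying in $\bigcup_{|I|\le T}\P(K)\subset N_{\varepsilon-\gamma}(\Gl)$, after shrinking $\gamma$ so that the $\gamma$-neighborhood is still inside $N_\varepsilon(\Gl)$. At the end of the block, the true point lies in $Q\subset\overline{N_\rho(\Gl)}$, so the chain point lies in $N_{2\rho}(\Gl)\subset K$. This restarts the induction, and every point of the chain lies in $N_\varepsilon(\Gl)$.

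The main obstacle is arranging all the constants consistently: $\rho$, $K$, $T$, $\gamma$ and $\delta$, in an order with no circularity. We must choose $K$ (hence $\eta$) first from $\varepsilon$ via continuity of the finitely many words of length at most $T$. But $T$ depends on $K$ through absorption by $Q$, which itself depends on $K$. We would break the circularity by first fixing a compact neighborhood $K_0\subset N_\varepsilon(\Gl)$ and a compact global trapping region $Q\subset \operatorname{int}K_0$, then getting $T$ from absorption of $K_0$ by $Q$. With $T$ fixed, we choose $\rho$ small so that $\overline{N_{2\rho}(\Gl)}$ has all its $T$-step images inside $N_{\varepsilon/2}(\Gl)$, and we choose $\delta$ last by uniform shadowing on $K_0$. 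The restart condition then needs the block endpoints to land in $\overline{N_{2\rho}(\Gl)}$ rather than merely in $Q$. If $Q$ is not that close to $\Gl$, we instead use the attraction of $K_0$ by $\Gl$: for $|I|=T'$ large, $\P(K_0)\subset N_\rho(\Gl)$, and we use blocks of length $T'$ with $\delta$ chosen for $T'$ steps.
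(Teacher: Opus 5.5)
Your proof is correct once the constants are ordered as in your last paragraph: $K_0$, then $Q$ and $T$, then $\rho$ with $\overline{N_{2\rho}(\Gl)}\subset K_0$, then $T'$, then $\gamma\le\rho$ small and $\delta$, and finally $\eta\le 2\rho$. With that ordering the circularity of your earlier paragraphs disappears, and the argument is essentially the paper's: an all-time bound on exact orbits from a small neighborhood of $\Gl$ (continuity for short words, long-time control for long ones), uniform finite-step shadowing, and a block-by-block restart via attraction. The only difference is that you get the long-time bound from the trapping region $Q$, which is superfluous: attraction of the compact set $K_0$ already gives $\Phi_\cF^I(K_0)\subset N_{\varepsilon/2}(\Gl)$ for all $|I|\ge T$, and this is exactly what the paper uses when choosing $\eta_0$, $N_0$ and $\eta_1$.
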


\begin{proof}
Choose $\eta_0\in(0,\varepsilon)$ sufficiently small that $\overline{N_{\eta_0}(\Gl)}$ is compact. 
Since $\Gl$ attracts every compact set, there exists $N_0\geq1$ such that 
\beqn
\label{eq: 5.9-1}
\P(\overline{N_{\eta_0}(\Gl)})\subset N_{\eta_0/2}(\Gl)
\qquad\text{ whenever } |I|\geq N_0.
\eeqn
Since $\Gl$ is $H$-invariant, for every $I\in\cI^m$ with $|I|<N_0$ we have that $\P(\Gl)\subset \Gl$.
Since there are only finitely many such multiindices, continuity and compactness of $\Gl$ imply that there exists $\eta_1\in(0,\eta_0)$ such that $\P\bigl(\overline{N_{\eta_1}(\Gl)}\bigr)\subset N_{\eta_0}(\Gl)$ whenever $|I|<N_0$.
For $|I|\geq N_0$, the same inclusion follows from~\eqref{eq: 5.9-1} since $\eta_1<\eta_0$.
Consequently,
\beqn
\label{eq: 5.9-2}
\P\bigl(\overline{N_{\eta_1}(\Gl)}\bigr)
\subset N_{\eta_0}(\Gl)
\qquad\text{for every }I\in \cI^m.
\eeqn

Now, choose an $\eta\in(0,\eta_1)$ and notice that there exists an $N\geq1$ such that
\beqn
\label{eq: 5.9-2b}
\P\bigl(\overline{N_{\eta}(\Gl)}\bigr)\subset N_{\eta/2}(\Gl)
\qquad\text{whenever } |I|\geq N.
\eeqn
By~\eqref{eq: 5.9-2}, we also have that
\beqn
\label{eq: 5.9-3}
\P\bigl(\overline{N_{\eta}(\Gl)}\bigr)\subset N_{\eta_0}(\Gl)
\qquad\text{for every }I\in \cI^m.
\eeqn

    Now, choose a $\rho\in(0,\min\left\{\varepsilon-\eta_0,\eta/2\right\})$ and a $\delta>0$ so that every $\delta$-chain of length at most $N$, starting in $\overline{N_{\eta}(\Gl)}$, stays within $\rho$ from the corresponding exact orbit segment.    
    Set $\rho_N:=\rho$ and recursively, for $j=N-1,N-2,\ldots,0$, choose $\rho_j\in(0,\rho)$ such that
\[
d(u,v)<\rho_j
\quad\Longrightarrow\quad
d\bigl(f_i(u),f_i(v)\bigr)<\frac{\rho_{j+1}}2
\]
    for every
\[
u\in H_\cF^j(\overline{N_{\eta}(\Gl)}),\qquad v\in X,\qquad i=1,\ldots,m.
\]
    Such a choice is possible because every $H_\cF^j(\overline{N_{\eta}(\Gl)})$ is compact and the family of generators is finite. 
    Set $\delta=\min\{\rho_1,\ldots,\rho_N\}/2$ and let $x_0,x_1,\ldots,x_r$, $r\leq N$, be a $\delta$-chain with $x_0\in \overline{N_{\eta}(\Gl)}$, and let $I_0=0$, $I_j=i_0\cdots i_{j-1}$.
    We claim that
\beqn
\label{eq: 5.9-4}
d\bigl(\Phi_\cF^{I_j}(x_0),x_j\bigr)<\rho_j
\qquad (j=0,\ldots,r).
\eeqn
The assertion is immediate for $j=0$. If it holds for some $j<r$,
then
$$
d\bigl(\Phi_\cF^{I_{j+1}}(x_0),x_{j+1}\bigr)
\leq
d\bigl(
f_{i_j}(\Phi_\cF^{I_j}(x_0)),f_{i_j}(x_j)
\bigr)
+
d\bigl(f_{i_j}(x_j),x_{j+1}\bigr)
<
\frac{\rho_{j+1}}2+\delta
\leq \rho_{j+1}.
$$
    Thus~\eqref{eq: 5.9-4} follows by induction. 
    In particular,
\beqn
\label{eq: 5.9-5}
d\bigl(\Phi_\cF^{I_j}(x_0),x_j\bigr)<\rho
\qquad (j=0,\ldots,r).
\eeqn

    Now, let $x_0,x_1,\ldots,x_\ell$ be an arbitrary $\delta$-chain with $x_0\in \overline{N_{\eta}(\Gl)}$.
    Consider first its first block of at most $N$ steps. 
    By~\eqref{eq: 5.9-3} and~\eqref{eq: 5.9-5}, $d(x_j,\Gl)<\eta_0+\rho<\varepsilon$, so every point in this block belongs to $N_\varepsilon(\Gl)$.
    If the block has length exactly $N$ then, by~\eqref{eq: 5.9-2b} and \eqref{eq: 5.9-5}, $d(x_N,\Gl)<\frac{\eta}{2}+\rho<\eta$.
    Hence,
\[
x_N\in N_\eta(\Gl)\subset \overline{N_{\eta}(\Gl)}.
\]
    We may therefore apply the same argument to the next block of $N$ steps. 
    Repeating this procedure, and applying the same finite-time estimate to the final block if it has fewer than $N$ steps, shows that $x_j\in N_\varepsilon(\Gl)$ for $j=0,\ldots,\ell$.

    Thus, every $\delta$-chain starting in $\overline{N_\eta(\Gl)}$ is entirely contained in $N_\varepsilon(\Gl)$.
\end{proof}

\begin{proposition}
    \label{prop: GF-trapping-for-CF}
    The global attractor $\Gl$ is a trapping region for the chain stream $\cC_{\cF,d}$. 
    More precisely,
\[
\Down_{\cC_{\cF,d}}(\Gl)\subset \Gl .
\]
    In particular,
\[
\cR_{\cC_{\cF,d}}\subset \Gl
\]
    and so also
\[
\cR^{ow}_{\cC_{\cF,d}}\subset \Gl.
\]
\end{proposition}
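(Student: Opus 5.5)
The plan is to derive everything from Lemma~\ref{lemma: robust trapping region}. First, fix $x\in\Gl$ and $y$ with $x\Cdto y$; we must show $y\in\Gl$. Fix $\varepsilon>0$ and let $\eta\in(0,\varepsilon)$ and $\delta>0$ be the numbers provided by Lemma~\ref{lemma: robust trapping region}. Since $x\Cdto y$, there is a $\delta$-chain from $x$ to $y$. Because $x\in\Gl\subset\overline{N_\eta(\Gl)}$, the lemma says the whole chain, and in particular its endpoint $y$, lies in $N_\varepsilon(\Gl)$. As $\varepsilon>0$ is arbitrary and $\Gl$ is closed (indeed compact), $d(y,\Gl)=0$ forces $y\in\Gl$. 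This gives $\Down_{\cC_{\cF,d}}(\Gl)\subset\Gl$. Closedness of $\Gl$ is also what makes it a trapping region in the sense of the definition.

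For the inclusion $\cR_{\cC_{\cF,d}}\subset\Gl$, I would not try to show directly that a chain-recurrent point lies in $\Gl$, since chain recurrence alone does not place $x$ in the attractor. Instead I would route through limit sets using the $\Omega$stream property. Let $x\in\cR_{\cC_{\cF,d}}$. By Proposition~\ref{prop:chains-omega-stream}, $\cC_{\cF,d}$ is an $\Omega$stream under compact dynamics. Proposition~\ref{prop: O-stream}(1) then gives a point $y\in\Om(x)$ with $y\overset{\cC_{\cF,d}}{=}x$; in particular $y\Cdto x$. By Proposition~\ref{prop: omega(K) subset G}, $y\in\Om(x)\subset\Gl$. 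Hence $x\in\Down_{\cC_{\cF,d}}(y)\subset\Down_{\cC_{\cF,d}}(\Gl)\subset\Gl$ by the first part.

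The last inclusion is immediate, because $\cR^{ow}_{\cC_{\cF,d}}\subset\cR_{\cC_{\cF,d}}$: if $x$ is orbitwise chain-recurrent, then $x\overset{\cC_{\cF,d}}{=}f_1(x)$, since $f_1(x)\in\cO_\cF(x)$.

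The substantive work has already been done in Lemma~\ref{lemma: robust trapping region} and in Proposition~\ref{prop: O-stream}(1). The only point needing care is the order of quantifiers in the first step: $\delta$ must be chosen after $\varepsilon$, and the existence of a $\delta$-chain is guaranteed for every $\delta>0$ by the definition of $\cC_{\cF,d}=\cC_{\cF,d,0}$.
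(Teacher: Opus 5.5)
Your proposal is correct and follows the paper's proof essentially step for step. For the first inclusion, the paper uses Lemma~\ref{lemma: robust trapping region} in contrapositive form with $\varepsilon=d(y,\Gl)/2$, while you apply it directly and let $\varepsilon\to0$; the inclusion $\cR_{\cC_{\cF,d}}\subset\Gl$ is then obtained exactly as in the paper, via the $\Omega$stream property, Proposition~\ref{prop: O-stream}(1) and $\Om(x)\subset\Gl$, and your justification of $\cR^{ow}_{\cC_{\cF,d}}\subset\cR_{\cC_{\cF,d}}$ through $f_1(x)\in\cO_\cF(x)$ supplies the step the paper leaves implicit.
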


\begin{proof}
    Let $x\in \Gl$ and $y\notin \Gl$ and set $\varepsilon=d(y,\Gl)/2>0$.
    By Lemma~\ref{lemma: robust trapping region}, there exist an $\eta\in(0,\varepsilon)$ and a $\delta>0$ such that every $\delta$-chain starting in $\overline{N_\eta(\Gl)}$ is entirely contained in $N_\varepsilon(\Gl)$.

    Since $x\in \Gl\subset\overline{N_\eta(\Gl)}$, every $\delta$-chain starting at $x$ is contained in $N_\varepsilon(\Gl)$. 
    On the other hand, $d(y,\Gl)=2\varepsilon$, so that $y\notin N_\varepsilon(\Gl)$.
    Hence, there is no $\delta$-chain from $x$ to $y$, and so $x\not\succcurlyeq_{\cC_{\cF,d}}y$.
    Since this holds for every $x\in \Gl$ and every $y\notin \Gl$, we conclude that
$
\operatorname{Down}_{\cC_{\cF,d}}(\Gl)\subset \Gl$.

    Now, let $x\in \cR_{\cC_{\cF,d}}$. 
    Since $\cC_\cF$ is an $\Omega$-stream by Proposition~\ref{prop:chains-omega-stream}, by Proposition~\ref{prop: O-stream}(1) there is a $z\in\Om(x)$ such that $z\Ceq x$.
    By Proposition~\ref{prop: omega(K) subset G}, $\Om(x)\subset \Gl$, so $z\in \Gl$. 
    Since $\Gl$ is a trapping region for $\cC_\cF$ and $z\Cto x$, 
    it follows that $x\in \Gl$. Therefore, $\cR_{\cC_{\cF,d}}\subset \Gl$.
\end{proof}

\medskip\noindent
{\bf The chain stream is topological.}
We show below that, when $\cF$ has compact dynamics, $\cC_{\cF,d}$ does not actually depend on $d$ but only on the topology induced by $d$ on $X$.
Hence, from now on, we will denote the chain stream of $\cF$ simply by {\BF$\cC_\cF$}.
The first proposition of this kind in literature is due, to our knowledge, to Mike Hurley, that proved in~\cite{Hur91} that, when $X$ is compact, the chain-recurrent set of any semiflow is topological.
We recently proved that the same is true for the chain stream of any semiflow with compact dynamics on a locally compact phase space~\cite{DLY26a} and of any semiflow with strong compact dynamics on a separable phase space~\cite{DLY26b}.
\begin{proposition}
    \label{prop: C_F is topological}
    Assume that $\cF$ has compact dynamics.
    Then the chain relation $\cC_{\cF,d}$ is topological, i.e. it does not depend on the particular compatible metric on $X$.
\end{proposition}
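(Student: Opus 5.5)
We have to show that if $d,d'$ are two compatible metrics on $X$, then $\cC_{\cF,d}=\cC_{\cF,d'}$. By symmetry it suffices to show $\cC_{\cF,d}\subset\cC_{\cF,d'}$. So fix $x\Cdto y$ with $x\neq y$ and $y\notin\cO_\cF(x)$ (orbit pairs lie in both relations), and fix $\varepsilon'>0$. We need to produce an $\varepsilon'$-chain from $x$ to $y$ with respect to $d'$.

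The idea is to localize everything in a compact set, where the two metrics are uniformly equivalent. Choose, by local compactness, a compact neighborhood $K$ of $\Gl$. Then choose $\varepsilon>0$ with $\overline{N_\varepsilon(\Gl)}\subset K$, where the neighborhood is taken in the metric $d$.

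On the compact set $K$ the identity $(K,d)\to(K,d')$ is uniformly continuous. We would like a single $\delta_0$ such that $u\in K$, $v\in X$ and $d(u,v)<\delta_0$ imply $d'(u,v)<\varepsilon'$. This does not follow from uniform continuity on $K$ alone when $v\notin K$. To handle this, shrink to a compact $K'\subset\operatorname{int}K$ containing $N_{\varepsilon}(\Gl)$, choose $\delta_0$ smaller than $d(K',X\setminus\operatorname{int}K)$, and apply uniform continuity on $K$.

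The chain has three parts, and I would treat them in order.

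\textbf{Tail near $\Gl$.} By Lemma~\ref{lemma: robust trapping region}, with this $\varepsilon$, there are $\eta$ and $\delta_1$ such that every $d$-$\delta_1$-chain starting in $\overline{N_\eta(\Gl)}$ stays in $N_\varepsilon(\Gl)\subset K'$. In such a chain, every jump $d(f_{i_n}(x_n),x_{n+1})<\delta$ has both $x_{n+1}$ and, by continuity, $f_{i_n}(x_n)$ close to $K'$. Arranging the constants so that $f_i(N_\varepsilon(\Gl))\subset\operatorname{int}K$ as well, each jump is then $d'$-small once $\delta\le\min\{\delta_0,\delta_1\}$.

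\textbf{Initial segment.} The chain from $x$ need not start near $\Gl$, so we need to control its first part. Since $\Gl$ attracts $\{x\}$, there is $p$ such that $\P(x)\in N_{\eta/2}(\Gl)$ for all $|I|\ge p$. By Lemma~\ref{lem:lengths-go-to-infinity}, small chains from $x$ to $y$ have length $>p$. By Lemma~\ref{lem:local-shadowing-fixed-x}, their first $p$ points shadow the finite set $\bigcup_{j\le p}H^j_\cF(\{x\})$ within any prescribed $d$-distance. Near this finite set, $d$ and $d'$ are comparable pointwise, by compactness of a small closed neighborhood of a finite set. Hence the first $p$ jumps are also $d'$-small, and $x_p\in N_\eta(\Gl)$.

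\textbf{End at $y$.} The tail near $\Gl$ is handled by the first part, but it must end at $y$. By Proposition~\ref{prop: GF-trapping-for-CF} and the $\Omega$stream property (Proposition~\ref{prop:chains-omega-stream}), we get $y\in\Gl$ whenever some $z\in\Om(x)\subset\Gl$ satisfies $z\Cdto y$. So the whole tail lives in $K$ and the last jump is covered.

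\textbf{Main obstacle.} The delicate step is the uniformity near the boundary of the compact set: jumps $d(f_i(u),v)<\delta$ with $u$ in the trapping neighborhood must be shown $d'$-small. This is why I would nest $N_\varepsilon(\Gl)$, $K'$ and $K$ and use the compact set $\overline{f_i(N_\varepsilon(\Gl))}$. The main risk is bookkeeping the constants so that the robust trapping lemma, the uniform-equivalence $\delta_0$ and the shadowing $\delta$ are chosen consistently. I would choose $\varepsilon$ first, then $\eta,\delta_1$, then $p$, then the shadowing $\delta$, and finally take the minimum.
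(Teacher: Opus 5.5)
Your argument is correct, but it takes a different route from the paper.

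\textbf{The paper's route.} The paper splits into four cases according to whether $x$ and $y$ lie in $\Gl$.
\begin{enumerate}
\item If $x,y\notin\Gl$, the $\Omega$stream property (Proposition~\ref{prop:chains-omega-stream}) and the trapping property of $\Gl$ (Proposition~\ref{prop: GF-trapping-for-CF}) force $y\in\cO_\cF(x)$.
\item The case $x\in\Gl$, $y\notin\Gl$ cannot occur, again by the trapping property.
\item The case $x,y\in\Gl$ is the only one where chains are actually converted. This uses robust trapping and the uniform continuity of the identity on the compact set $\overline{N_a(\Gl)}\cup H_\cF(\overline{N_a(\Gl)})$.
\item The case $x\notin\Gl$, $y\in\Gl$ is reduced to the previous one through a point $z\in\Om(x)$, using the closedness and transitivity of $\cC_{\cF,d'}$.
\end{enumerate}

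\textbf{Your route.} You convert every small $d$-chain from $x$ directly. Finite-time shadowing (Lemma~\ref{lem:local-shadowing-fixed-x}) keeps the first $p$ points inside a compact neighbourhood of the finite set $\bigcup_{j\le p}H^j_\cF(\{x\})$, where $d$ and $d'$ are uniformly equivalent. Robust trapping then keeps all later points inside a compact neighbourhood of $\Gl$. This avoids the $\Omega$stream property and the closedness of $\cC_{\cF,d'}$ entirely. It also gives a stronger, quantitative conclusion: for each $x$ and $\varepsilon'>0$ there is $\delta>0$ such that every $d$-$\delta$-chain starting at $x$ is a $d'$-$\varepsilon'$-chain. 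In particular you do not need Lemma~\ref{lem:lengths-go-to-infinity}, since chains of length at most $p$ are already covered by the shadowing estimate. The paper's route is shorter because it reuses structure it has already proved.

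\textbf{Small fixes to your write-up.}
\begin{itemize}
\item To have a compact $K'\subset\operatorname{int}K$ containing $N_\varepsilon(\Gl)$, you need $\overline{N_\varepsilon(\Gl)}\subset\operatorname{int}K$, not merely $\subset K$. Shrinking $\varepsilon$ arranges this.
\item No condition on $f_i(N_\varepsilon(\Gl))$ is needed. In the tail, $x_{n+1}\in N_\varepsilon(\Gl)\subset K'$, so your $\delta_0$-criterion applies directly with $u=x_{n+1}$ and $v=f_{i_n}(x_n)$.
\item Your ``End at $y$'' step is redundant. Robust trapping already places $y=x_n$ in $N_\varepsilon(\Gl)$.
\end{itemize}
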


\begin{proof}
    Let $d$ and $d'$ be two metrics on $X$ compatible with the topology of $X$. We must prove that
\[
\cC_{\cF,d}=\cC_{\cF,d'}.
\]

    Let $x,y\in X$, and assume first that $(x,y)\in \cC_{\cF,d}$. 
    We show that $(x,y)\in \cC_{\cF,d'}$.

    We sort the proof in four cases: 
    (1) $x,y\not\in\Gl$; 
    (2) $x\in\Gl,y\not\in\Gl$; 
    (3) $x,y\in\Gl$; 
    (4) $x\not\in\Gl, y\in\Gl$.

    (1) Since the chain stream is an $\Omega$stream and $\Om(x)\subset\Gl$ for every $x\in X$, $(x,y)\in\cC_{\cF,d}$ with $x,y\not\in\Gl$ implies that $y\in\cO_\cF(x)$, so that we have also that  $(x,y)\in\cC_{\cF,d'}$.

    (2) This case is not possible because, by Proposition~\ref{prop: GF-trapping-for-CF}, $\Gl$ is a trapping region for the chain stream.

    (3)
    Fix an $\varepsilon>0$ and choose $a>0$ such that $\overline{N_a(\Gl)}$ is compact, and set
\[
B=\overline{N_a(\Gl)}\bigcup H_\cF(\overline{N_a(\Gl)}).
\]
    Then $B$ is compact.
    Since $d$ and $d'$ induce the same topology, the identity $\operatorname{id}\colon(B,d)\longrightarrow(B,d')$ is uniformly continuous. 
    Hence, there exists $\gamma>0$ such that
\[
u,v\in B,\quad d(u,v)<\gamma
\quad\Longrightarrow\quad
d'(u,v)<\varepsilon.
\]

    By Lemma~\ref{lemma: robust trapping region}, there exists $\delta_0>0$ such that every $d$-$\delta_0$-chain starting in $\Gl$ is contained in $N_a(\Gl)\subset B$. Set $\delta:=\min\{\delta_0,\gamma\}$
    and choose a $d$-$\delta$-chain $x_0=x,x_1,\ldots,x_n=y$.
    Every $x_j$ belongs to $B$, and therefore both $f_{i_j}(x_j)$ and $x_{j+1}$ belong to $B$. 
    Since
\[
d\bigl(f_{i_j}(x_j),x_{j+1}\bigr)<\delta\leq\gamma,
\]
    we have
\[
d'\bigl(f_{i_j}(x_j),x_{j+1}\bigr)<\varepsilon.
\]
    Thus, the same sequence is a $d'$-$\varepsilon$-chain from $x$ to $y$.

    (4) In this case, either $y\in\cO_\cF(x)$, and so $(x,y)\in\cC_{\cF,d'}$, or, since the chain stream is an $\Omega$stream, there is a $z\in\Om(x)\subset\Gl$ with $(z,y)\in\cC_{\cF,d}$.
    By (3), $(z,y)\in\cC_{\cF,d'}$.
    Since $\cC_{\cF,d'}$ is closed and contains $\cO_\cF$, we also have that $x\succcurlyeq_{C_{F,d'}}z$ and so, by transitivity, $x\succcurlyeq_{C_{F,d'}}y$.
    


    Hence, $\cC_{\cF,d}\subset \cC_{\cF,d'}$;
    by symmetry, exchanging the roles of $d$ and $d'$, we ultimately get that
$\cC_{\cF,d}=\cC_{\cF,d'}$.
\end{proof}

\medskip\noindent
{\bf Chain nodes and the global attractor.}
Proposition~\ref{prop: GF-trapping-for-CF} shows that the chain-recurrent set lies in $\Gl$. 
Together with Proposition~\ref{prop: streams}, this implies immediately the following result.
\begin{corollary}
    \label{cor: chain nodes are compact}
    Assume that $\cF$ has compact dynamics.
    Then $\cR_{\cC_{\cF}}$, $\cR^{ow}_{\cC_{\cF}}$ and all of their nodes are compact.
\end{corollary}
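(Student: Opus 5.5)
The corollary combines two earlier facts: closedness from Proposition~\ref{prop: streams}(2) and containment in the compact set $\Gl$ from Proposition~\ref{prop: GF-trapping-for-CF}. A closed subset of a compact set is compact, and this gives all four claims at once.

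Concretely, I will first note that $\cC_\cF=\cC_{\cF,d,0}$ is a stream by Proposition~\ref{prop: C is a stream}. Proposition~\ref{prop: streams}(2), applied with $S=\cC_\cF$, then shows that $\cR_{\cC_\cF}$ and $\cR^{ow}_{\cC_\cF}$ are closed in $X$, and that each node and each orbitwise node is closed in $X$. Next, since $\cF$ has compact dynamics, Proposition~\ref{prop: GF-trapping-for-CF} gives
\[
\cR^{ow}_{\cC_\cF}\subset\cR_{\cC_\cF}\subset\Gl .
\]
Every node and orbitwise node lies inside one of these sets, so it also lies in $\Gl$.

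Since $\Gl$ is compact, each of these sets is a closed subset of a compact subset of the Hausdorff (metrizable) space $X$, and is therefore compact. Closedness in $X$ implies closedness in $\Gl$, so no subtlety arises there.

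I do not expect any real obstacle, because the substantive work is already done in the two propositions. The one point I would check is that Proposition~\ref{prop: streams}(2) really covers the nodes of $\cR_{\cC_\cF}$, since that proof was written out only for the orbitwise case. The same limit argument carries over directly: if $x_n\to x$ with $x_n\Ceq x_k$ for all $n$ and fixed $k$, closedness of $\cC_\cF$ gives $x\Ceq x_k$. Moreover, $x\in\cR_{\cC_\cF}$ because $\cR_{\cC_\cF}$ is closed. This is the same closedness argument, now applied to the relation $f_i(x_n)\Ceq x_n$ after passing to a subsequence along which the generator $i$ is constant.
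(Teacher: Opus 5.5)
Your proposal is correct and follows the paper's proof: Proposition~\ref{prop: streams}(2) makes $\cR_{\cC_\cF}$, $\cR^{ow}_{\cC_\cF}$ and their nodes closed, and Proposition~\ref{prop: GF-trapping-for-CF} places them inside the compact set $\Gl$, so they are compact. Your check of the plain-recurrence case, passing to a subsequence along which the generator is constant, correctly supplies the step the paper leaves to the reader.
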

The relation between $\Gl$ and the chain-recurrent set is even deeper: in the result below we show that, if two points are chain-equivalent in $X$ under $\cF$, then they are chain-equivalent even in $\Gl$, so that
the qualitative dynamics of $\cF$ can be entirely read from the restriction of $\cF$ to $\Gl$, which is a compact set. 
In other words, an IFS with compact dynamics has the same qualitative dynamics of an IFS on a compact set.
\begin{theorem}
\label{prop: C = C glob}
    Assume that $\cF$ has compact dynamics. 
    Then
\[
\cC_\cF|_{\Gl\times \Gl}=\cC_{\cF|_{\Gl}}.
\]
    In particular, both the plain and orbitwise chain-recurrent sets, nodes, and graphs of \(\cF\) coincide with those of \(\cF|_{\Gl}\).
\end{theorem}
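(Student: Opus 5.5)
The inclusion $\cC_{\cF|_{\Gl}}\subset\cC_\cF|_{\Gl\times\Gl}$ is immediate. Every $\eps$-chain for $\cF|_{\Gl}$ is an $\eps$-chain for $\cF$ in $X$, because $\Gl$ is forward-invariant and carries the restricted metric. The real content is the reverse inclusion. Given $x,y\in\Gl$ with $x\Cto y$ and $\eps>0$, we must produce an $\eps$-chain from $x$ to $y$ whose points all lie in $\Gl$.

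The plan is to project a fine chain in $X$ onto $\Gl$. Fix $\eps>0$. Choose $a>0$ with $\overline{N_a(\Gl)}$ compact. Each $f_i$ is uniformly continuous on $\overline{N_a(\Gl)}$, so there is $\gamma\in(0,a)$ with $d(u,v)<\gamma\Rightarrow d(f_i(u),f_i(v))<\eps/3$ for all $u,v\in\overline{N_a(\Gl)}$ and all $i$. By Lemma~\ref{lemma: robust trapping region}, applied with $\gamma/2$ in place of $\eps$, there is $\delta>0$ such that every $\delta$-chain starting in $\Gl$ stays inside $N_{\gamma/2}(\Gl)$. We may also take $\delta<\eps/3$.

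Now take a $\delta$-chain $x=x_0,x_1,\dots,x_n=y$ in $X$, with indices $i_0,\dots,i_{n-1}$. For each intermediate $j$ choose $y_j\in\Gl$ with $d(x_j,y_j)<\gamma/2$, which is possible by compactness of $\Gl$. Set $y_0=x$ and $y_n=y$. Then, for each step,
\[
d\bigl(f_{i_j}(y_j),y_{j+1}\bigr)\le d\bigl(f_{i_j}(y_j),f_{i_j}(x_j)\bigr)+d\bigl(f_{i_j}(x_j),x_{j+1}\bigr)+d(x_{j+1},y_{j+1})<\eps/3+\delta+\gamma/2.
\]
Shrinking $\gamma$ beforehand so that $\gamma/2<\eps/3$ makes the right-hand side less than $\eps$. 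Hence $y_0,\dots,y_n$ is an $\eps$-chain in $\Gl$ from $x$ to $y$, and therefore $(x,y)\in\cC_{\cF|_{\Gl}}$.

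The step that needs the most care is confining the chain to a compact neighbourhood of $\Gl$. Without that, uniform continuity of the $f_i$ is unavailable. Lemma~\ref{lemma: robust trapping region} supplies exactly this, together with the requirement that $\gamma$ be fixed before $\delta$.

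For the ``in particular'' part, recall from Proposition~\ref{prop: GF-trapping-for-CF} that $\cR_{\cC_\cF}\subset\Gl$ and $\cR^{ow}_{\cC_\cF}\subset\Gl$. Chain-recurrence is defined through the relation $\cC_\cF$ evaluated on pairs $(x,f_i(x))$, or on pairs $(x,z)$ with $z\in\cO_\cF(x)$. When $x\in\Gl$ all these pairs lie in $\Gl\times\Gl$, because $\Gl$ is forward-invariant. Equivalence classes and the edge relation between nodes likewise involve only pairs of points of $\Gl$. The equality $\cC_\cF|_{\Gl\times\Gl}=\cC_{\cF|_{\Gl}}$ therefore shows that the recurrent sets, orbitwise recurrent sets, nodes, orbitwise nodes and both graphs coincide for $\cF$ and $\cF|_{\Gl}$. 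Note that $\cF|_{\Gl}$ has compact dynamics by Corollary~\ref{cor: X compact => F has compact dynamics}, so all the earlier results also apply to it.
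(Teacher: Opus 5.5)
Your proof is correct and follows the paper's argument: use Lemma~\ref{lemma: robust trapping region} to keep a sufficiently fine chain from $x$ to $y$ inside a small neighbourhood of $\Gl$ within a compact neighbourhood, then replace each intermediate point by a nearby point of $\Gl$, and close the estimate with uniform continuity of the generators and an $\eps/3$ triangle inequality. Your justification of the ``in particular'' clause (recurrent sets lie in $\Gl$, and by forward-invariance every pair that matters lies in $\Gl\times\Gl$) is more explicit than the paper's, which leaves that step implicit.
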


\begin{proof}
Since every $\varepsilon$-chain for $\cF|_{\Gl}$ is also an $\varepsilon$-chain
for $\cF$, we have
\[
\cC_{\cF|_{\Gl}}\subset \cC_\cF|_{\Gl\times \Gl}.
\]
We now prove the opposite inclusion.
Let $x,y\in \Gl$ and assume that $x\to y$.
Fix an $\varepsilon>0$ and choose an $a>0$ such that $\overline{N_a(\Gl)}$
is compact.
By uniform continuity of the finitely many maps on $\overline{N_a(\Gl)}$ there is a $\rho\in(0,\min\{a,\varepsilon/3\})$ such that
\[
u,v\in \overline{N_a(\Gl)},\quad d(u,v)<\rho
\quad\Longrightarrow\quad
d\bigl(f_i(u),f_i(v)\bigr)<\varepsilon/3
\]
for every $i=1,\ldots,m$.

    Apply Lemma~\ref{lemma: robust trapping region} with outer radius $\rho$. 
    There exists $\delta_0>0$ such that every $\delta_0$-chain starting in $\Gl$ is contained in $N_\rho(\Gl)$. 
    Set $\delta=\min\{\delta_0,\varepsilon/3\}$.
    Choose a $\delta$-chain $x_0=x,x_1,\ldots,x_n=y$.
    Then $x_j\in N_\rho(\Gl)$ for every $j$. 
    Choose $z_j\in \Gl$ such that $d(x_j,z_j)<\rho$, with $z_0=x$ and $z_n=y$.

    For every $j$,
$$
d\bigl(f_{i_j}(z_j),z_{j+1}\bigr)
\leq
d\bigl(f_{i_j}(z_j),f_{i_j}(x_j)\bigr)
+d\bigl(f_{i_j}(x_j),x_{j+1}\bigr)
+d(x_{j+1},z_{j+1})\\
<\frac{\varepsilon}{3}
+\frac{\varepsilon}{3}
+\frac{\varepsilon}{3}
=\varepsilon.
$$
    Thus $z_0,\ldots,z_n$ is an $\varepsilon$-chain contained in $\Gl$ and so $(x,y)\in \cC_{\cF|_{\Gl}}$.
\end{proof}

\medskip\noindent
{\BF Orbitwise chain-recurrence, forward-invariance and $H$-invariance.}
When $m=1$, the chain-recurrent set of a semiflow with compact dynamics and all of its nodes are invariant~\cite{DLY25,DLY26a}.
By Proposition~\ref{prop: streams} we know that forward-invariance still holds for the orbitwise chain-recurrent set and each orbitwise chain node when $m>1$.
These sets are not necessarily $H$-invariant but we have the following weaker property.

\begin{theorem}
    \label{thm: N is H-invariant}
    Assume that $\cF$ has compact dynamics and let $N$ be one of its orbitwise chain nodes.
    Then $N$ is forward-invariant and $\Om(N)$ is the greatest non-empty compact $H$-invariant subset of $N$.
\end{theorem}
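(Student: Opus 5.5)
Forward-invariance of $N$ is already contained in Proposition~\ref{prop: streams}(1), applied to the chain stream $S=\cC_\cF$. By Corollary~\ref{cor: chain nodes are compact}, $N$ is compact and contained in $\Gl$. The plan is therefore to apply the earlier results on limit sets of compact forward-invariant sets to $K=N$, and then to check maximality.

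\textbf{Step 1: $\Om(N)$ is a nonempty compact $H$-invariant subset of $N$.} Since $N$ is closed and forward-invariant, Lemma~\ref{lemma: O(A)=A} gives $\Om(N)\subset N$. Since $N$ is compact and forward-invariant, Proposition~\ref{prop: omega(K) subset G} gives that $\Om(N)$ is $H$-invariant. Proposition~\ref{prop: topological description of Om} shows that $\Om(N)$ is closed, and as a subset of the compact set $N$ it is compact. For nonemptiness, $N$ is nonempty (being an equivalence class); pick $x\in N$. Then $\Om(x)\subset\Om(N)$, and $\Om(x)\neq\emptyset$ by Lemma~\ref{lemma: Om}. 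One can also argue directly: the sets $\bigcup_{|I|=n}\P(N)$ are nonempty, compact and decreasing, because $N$ is forward-invariant, so their intersection is nonempty.

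\textbf{Step 2: maximality.} Let $K\subset N$ be nonempty, compact and $H$-invariant. Fix $y\in K$. Since $H_\cF^n(K)=K$ for every $n$, there exist $y_n\in K\subset N$ and multi-indices $I_n$ with $|I_n|=n$ such that $\Pn(y_n)=y$. Hence $y$ belongs to $\Om(N)$ directly from the definition, taking $a_n=y_n$ and the constant sequence $\Pn(y_n)=y$. Therefore $K\subset\Om(N)$. This is the same argument used in Proposition~\ref{prop: G_F minimal}. Consequently $\Om(N)$ is the greatest nonempty compact $H$-invariant subset of $N$, and in fact it contains every $H$-invariant subset of $N$, compact or not.

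\textbf{Expected difficulty.} The steps are essentially bookkeeping. The only point to watch is that Proposition~\ref{prop: omega(K) subset G} requires $K$ to be compact and forward-invariant; this is why compactness of $N$ (Corollary~\ref{cor: chain nodes are compact}) and its forward-invariance (Proposition~\ref{prop: streams}(1)) must be established first. I expect no real obstacle beyond this.
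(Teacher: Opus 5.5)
Your proof is correct and follows essentially the same route as the paper. You get forward-invariance and compactness of $N$ from Proposition~\ref{prop: streams} and Corollary~\ref{cor: chain nodes are compact}; then $\Om(N)\subset N$, $H$-invariance and nonemptiness from Lemma~\ref{lemma: O(A)=A}, Proposition~\ref{prop: omega(K) subset G} and Lemma~\ref{lemma: Om}; and maximality from the same $H^n_\cF(K)=K$ preimage argument the paper uses, where you additionally spell out the compactness of $\Om(N)$ and the inclusion $\Om(x)\subset\Om(N)$ that the paper leaves implicit.
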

\begin{proof}
    We know that $N$ is compact and forward-invariant.
    Hence, by Lemma~\ref{lemma: O(A)=A}, $\Om(N)\subset N$ and so $\Om(N)$ is chain-equivalent.
    Moreover, by Lemma~\ref{lemma: Om} and Proposition~\ref{prop: omega(K) subset G}, $\Om(N)$ is $H$-invariant and non-empty.
    
    Finally, let \(K\subset N\) be a nonempty compact \(H\)-invariant set and fix \(y\in K\). 
    Since \(H_\cF^n(K)=K\) for every \(n\geq1\), there exist \(y_n\in K\) and \(I_n\in \cI^m\), with \(|I_n|=n\), such that $y=\Pn(y_n)$.
    Thus, \(y\in\Om(N)\) and so
\[
K\subset\Om(N),
\]
    which proves that \(\Om(N)\) is in fact the greatest compact \(H\)-invariant subset of \(N\).
\end{proof}

\medskip\noindent
{\bf Bitrajectories.}
A {\bf bitrajectory} of $\cF$ is a pair of a sequence of points $b_n\in X$ and a sequence of indices $i_n\in\{1,\dots,m\}$ defined for all $n\in\bZ$ and such that
\[
b_{n+1}=f_{i_n}(b_n)\qquad \forall n\in\mathbb Z.
\]
We denote by $\alpha(b)$ and $\omega(b)$, respectively, the $\alpha$- and
$\omega$-limit sets of $b$.
We say that $b$ is a {\BF bitrajectory through $x$} if $x\in b$.

\medskip
Notice that, since the global attractor is $H$-invariant, every $x\in\Gl$ has at least a bitrajectory of $\cF|_{\Gl}$ passing through it.
\begin{lemma}
    \label{lemma: limit sets of single itineraries}
    Let $S$ be a stream of $\cF$ and let $b=(b_n)_{n\geq0}$ be a trajectory of $\cF$ contained in a compact set.
    Then $\omega(b)$ is $S$-equivalent and is contained in a node of $S$. 
    If $b=(b_n)_{n\in\mathbb Z}$ is a bitrajectory contained in a compact set, then the same holds for $\alpha(b)$.
\end{lemma}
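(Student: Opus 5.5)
The plan is to use only that $S$ is closed, transitive and contains $\cO_\cF$, plus the fact that $b$ moves forward along a single itinerary. The key observation is that $b_j\Sto b_k$ whenever $j\le k$: indeed $b_k=\Phi_\cF^{I}(b_j)$ with $I=i_j i_{j+1}\cdots i_{k-1}$, so $(b_j,b_k)\in\cO_\cF\subset S$. Since $b$ lies in a compact set, $\omega(b)$ is non-empty (and compact), so the statement is not vacuous.

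\emph{Step 1: $\omega(b)$ is $S$-equivalent.} Let $y,z\in\omega(b)$, with $b_{n_k}\to y$ and $b_{m_j}\to z$, where $n_k,m_j\to\infty$. Fix $k$. For all $j$ large enough we have $m_j\ge n_k$, hence $b_{n_k}\Sto b_{m_j}$. Letting $j\to\infty$ and using that $S$ is closed gives $b_{n_k}\Sto z$. Letting $k\to\infty$ and using closedness again gives $y\Sto z$. Exchanging the roles of $y$ and $z$ gives $z\Sto y$, so $y\Seq z$.

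\emph{Step 2: every point of $\omega(b)$ is $S$-recurrent.} This needs a generator $f_i$ with $f_i(y)\Seq y$. Given $b_{n_k}\to y$, pass to a subsequence on which the index $i_{n_k}=i$ is constant; this is possible because the alphabet is finite. Then $b_{n_k+1}=f_i(b_{n_k})\to f_i(y)$ by continuity, so $f_i(y)\in\omega(b)$. By Step 1, $f_i(y)\Seq y$, so $y\in\cR_S$. Thus $\omega(b)$ is a non-empty $S$-equivalent subset of $\cR_S$, and therefore lies in a single node of $S$.

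\emph{Step 3: the $\alpha$-limit set.} For a bitrajectory in a compact set, repeat the argument with times going to $-\infty$. Take $y,z\in\alpha(b)$ with $b_{-n_k}\to y$ and $b_{-m_j}\to z$. For fixed $j$ and $k$ large we have $-n_k\le -m_j$, hence $b_{-n_k}\Sto b_{-m_j}$. Letting $k\to\infty$ gives $y\Sto b_{-m_j}$, and then letting $j\to\infty$ gives $y\Sto z$; symmetrically $z\Sto y$. For recurrence, fix the index $i_{-n_k}=i$ along a subsequence; then $b_{-n_k+1}=f_i(b_{-n_k})\to f_i(y)$, and $f_i(y)\in\alpha(b)$ because $-n_k+1\to-\infty$.

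No step is a genuine obstacle. The only points needing care are the order of the two limits in the closedness argument (first the inner index, with the outer point fixed) and extracting a constant generator index, which is needed to obtain recurrence rather than mere equivalence.
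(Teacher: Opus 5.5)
Your proof is correct and follows essentially the same route as the paper: closedness of $S$ along the single itinerary gives mutual equivalence of limit points, and a constant-index subsequence gives $f_i(y)\in\omega(b)$, hence recurrence. Your version is slightly tidier in two ways. It handles the singleton and non-singleton cases in one argument, where the paper splits them. It also writes out the $\alpha$-limit case, which the paper leaves to the reader.
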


\begin{proof}
    We prove the claim for $\omega(b)$; the reader can adapt the proof to the case $\alpha(b)$.
    
    Let $b=(b_n)_{n\geq0}$ and assume first that $\omega(b)=\{p\}$.
    Since the trajectory is contained in a compact set and has a unique limit point, we have $b_n\to p$.
    Since $\cF$ is finite, some index $i\in\{1,\ldots,m\}$ occurs infinitely often in the itinerary.
    Choose $n_k\to\infty$ such that $i_{n_k}=i$. 
    Then $b_{n_k}\to p$, $b_{n_k+1}=f_i(b_{n_k})\to p$.
    By continuity of $f_i$, $f_i(p)=p$.
    Hence, $\{p\}$ is contained in a node of $S$.

    Assume now that $\omega(b)$ is not a singleton and let $p,q\in\omega(b)$ be distinct. 
    Since both $p$ and $q$ are limit points of the trajectory, we can choose integers $n_k<r_k$, $n_k\to\infty$, such that $b_{n_k}\to p$ and $b_{r_k}\to q$.
    Since $b_{r_k}$ is an exact orbit point of $b_{n_k}$, we have that $b_{n_k}\succcurlyeq_S b_{r_k}$.
    The closedness of $S$ therefore gives $p\succcurlyeq_S q$.
    Interchanging the roles of $p$ and $q$ gives $q\succcurlyeq_S p$.
    Thus, all points of $\omega(b)$ are mutually $S$-equivalent.
    Moreover, using the same argument above, one can prove that there is a generator $f_i$ such that $f_i(p)\in\omega(b)$.
    Hence, $f_i(p)\Seq p$ and so $p$ is $S$-recurrent and, by transitivity, $\omega(b)\subset\Node_S(p)$.
\end{proof}
%
  
The following result shows that a point belongs to a given chain node if and only if there is a bitrajectory through it whose $\alpha$ and $\omega$ limits belong to that same node.
\begin{lemma}[Bitrajectories inside chain nodes]
\label{lemma: characterization of weak recurrence}
    Assume that $\cF$ has compact dynamics. 
    Let $N$ be a chain node of $\cF$ and let $x\in \Gl$. 
    Then $x\in N$ if and only if there exists a bitrajectory $b=(b_n)_{n\in\mathbb Z}$ through $x$ entirely contained in $\Gl$ such that $\omega(b)\cup\alpha(b)\subset N$.
    Moreover, if $x\in N$, the bitrajectory may be chosen entirely inside $N$.
\end{lemma}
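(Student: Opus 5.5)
The plan is to prove the two implications separately. Throughout, write $\succcurlyeq$ for $\Cto$ and use that $\cR_{\cC_\cF}\subset\Gl$ (Proposition~\ref{prop: GF-trapping-for-CF}) and that $N$ is compact (Corollary~\ref{cor: chain nodes are compact}).

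\emph{Sufficiency.} Let $b$ be a bitrajectory through $x=b_0$ contained in the compact set $\Gl$, with $\alpha(b)\cup\omega(b)\subset N$, and fix $p\in\alpha(b)$ and $q\in\omega(b)$.
\begin{enumerate}
\item Choose $b_{-n_k}\to p$. Since $b_{-n_k}\succcurlyeq x$ (exact orbit) and $\cC_\cF$ is closed, $p\succcurlyeq x$.
\item In the same way, $b_1=f_{i_0}(x)\succcurlyeq q$.
\item Since $p,q\in N$, we have $q\succcurlyeq p$.
\end{enumerate}
Hence $f_{i_0}(x)\succcurlyeq q\succcurlyeq p\succcurlyeq x$. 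By Proposition~\ref{prop: alternative weak node}(2), $x$ is chain-recurrent, and $x\Ceq p$, so $x\in N$.

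\emph{Necessity.} Let $x\in N$. We build a bitrajectory inside $N$ one step at a time, forward and backward; since $N$ is compact, its limit sets then automatically lie in $N$. The basic tool is Proposition~\ref{prop: alternative weak node}(3): any point $w$ with $w\Ceq y$ for some $y\in N$ lies in $N$. Indeed, either $w=y$, or $w$ is chain-equivalent to a point distinct from itself and is therefore chain-recurrent.

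\emph{Forward step.} For $y\in N$ there is a generator $f_i$ with $f_i(y)\Ceq y$. By the tool above, $f_i(y)\in N$. Iterating this gives $b_n\in N$ for all $n\ge 0$.

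\emph{Backward step (the main obstacle).} Given $y\in N$, we need $w\in N$ and $j$ with $f_j(w)=y$.
\begin{enumerate}
\item By Proposition~\ref{prop: alternative weak node}(4), for $\varepsilon_n\downarrow0$ there are positive-length $\varepsilon_n$-loops at $y$.
\item By Theorem~\ref{prop: C = C glob}, whose proof produces chains with all points in $\Gl$, we may take these loops inside $\Gl$.
\item Let $w_n$ be the penultimate point of the $n$-th loop and $j_n$ its last index, so that $d(f_{j_n}(w_n),y)<\varepsilon_n$.
\item By compactness of $\Gl$ and finiteness of the alphabet, pass to a subsequence with $j_n=j$ and $w_n\to w\in\Gl$. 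Then $f_j(w)=y$.
\item There is an $\varepsilon_n$-chain from $y$ to $w_n$, namely the loop truncated before its last step. Modifying the final point via continuity, exactly as in the closedness argument of Proposition~\ref{prop: C is a stream}, gives $y\succcurlyeq w$.
\item Also $w\succcurlyeq y$, since $f_j(w)=y$.
\end{enumerate}
So $w\Ceq y$, and by the tool above $w\in N$. Iterating this choice (dependent choice) gives $b_{-n}\in N$ for all $n\ge 0$.

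The care in the backward step lies in keeping the loops inside $\Gl$ to get compactness, and in passing $y\succcurlyeq w_n$ to the limit $y\succcurlyeq w$. Concatenating the backward and forward constructions yields a bitrajectory through $x$ inside $N\subset\Gl$ with $\alpha(b)\cup\omega(b)\subset N$, since $N$ is closed.
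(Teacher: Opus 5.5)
Your proof is correct and follows essentially the same route as the paper: successors come from the defining generator, predecessors come from a convergent penultimate point of ever smaller chains inside $\Gl$, and the converse uses $p\succcurlyeq x\succcurlyeq q\succcurlyeq p$ together with closedness of the chain stream. The only difference is cosmetic: you work with positive-length loops at $y$, whereas the paper works directly with chains in $\Gl$ from $f_i(z)$ to $z$ and treats the case $f_i(z)=z$ separately. Your loops need either the length-preserving projection inside the proof of Theorem~\ref{prop: C = C glob}, or simply prepending the exact step $y\mapsto f_i(y)$ to such a chain.
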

\begin{proof}
    We first show that every point $z\in N$ has both a successor and a predecessor in $N$.

    By Proposition~\ref{prop: alternative weak node}, there exists a generator $f_i$ such that $f_i(z)\Cto z$.
    Since the chain stream contains the orbit relation, we also have $z\Cto f_i(z)$.
    Thus, $z\Ceq f_i(z)$ and therefore $f_i(z)\in N$. 
    Hence, every point of $N$ has a successor in $N$.

    Now, if $f_i(z)=z$, then $z$ itself is a predecessor of $z$, and there is nothing to prove. 
    Assume, therefore, that $f_i(z)\neq z$ and let $\varepsilon_n\downarrow0$. 
    Since $f_i(z)\Cto z$ and both points belong to $\Gl$, Theorem~\ref{prop: C = C glob} allows us to choose, for every $n$, an $\varepsilon_n$-chain contained in $\Gl$,
\[
u^{(n)}_0=f_i(z),\,
u^{(n)}_1,\ldots,
u^{(n)}_{k_n}=z.
\]
    Since $f_i(z)\neq z$, we have $k_n\geq1$. 
    Let $y_n=u^{(n)}_{k_n-1}$ and let $j_n$ be the index associated with the last jump of the chain. 
    Thus
\beqn
\label{eq: 5.13-1}
d\bigl(f_{j_n}(y_n),z\bigr)<\varepsilon_n.
\eeqn

    Since $\Gl$ is compact and the alphabet is finite, after passing to a subsequence we may assume that $y_n\to y\in \Gl$, $j_n=j$, for some fixed $j\in\{1,\ldots,m\}$. Passing to the limit in \eqref{eq: 5.13-1} gives
\[
f_j(y)=z.
\]

    We claim that $z\Cto y$.
    Indeed, prepend the exact step $z\to f_i(z)$ to the $\varepsilon_n$-chain above and omit its final point $z$.
    This gives an $\varepsilon_n$-chain from $z$ to $y_n$. 
    Since $y_n\to y$ and $\varepsilon_n\to0$, replacing its final point $y_n$ by $y$ shows that, for every $\varepsilon>0$ and all sufficiently large $n$, there is an $\varepsilon$-chain from $z$ to $y$. 
    Hence, $z\Cto y$.

    On the other hand, since $f_j(y)=z$, we get that
    $y\Cto z$ and so $y\Ceq z$.
    Moreover, either $y=z$, in which case $f_j(y)=y$, or $y\neq z$ and $y$ is chain-equivalent to the distinct point $z$. 
    Thus, by Proposition~\ref{prop: alternative weak node}, $y$ is chain-recurrent, and consequently $y\in N$. 

    We now construct the required bitrajectory.
    Set $b_0=x$. 
    Recursively choose, for every $n\geq0$, a point $b_{n+1}\in N$ and an index $i_n$ such that $b_{n+1}=f_{i_n}(b_n)$.
    Using the predecessor property, recursively choose, for every $n\leq-1$, a point $b_n\in N$ and an index $i_n$ such that $b_{n+1}=f_{i_n}(b_n)$.
    Then $b=(b_n)_{n\in\mathbb Z}$ is a bitrajectory through $x$ contained entirely in $N$.

    Finally, $N$ is compact by Corollary~\ref{cor: chain nodes are compact}.
    Hence, both $\alpha(b)$ and $\omega(b)$ are nonempty. 
    Since $N$ is closed and $b_n\in N$ for every $n$, we also have $\alpha(b)\subset N$ and $\omega(b)\subset N$.

    \medskip
    Conversely, suppose that $b$ is a bitrajectory through $x$ and that $\alpha(b)\cup\omega(b)\subset N$.
    After reindexing, assume that $b_0=x$. Choose $p\in\alpha(b)$ and $q\in\omega(b)$.
    Since $\cC_\cF$ is closed, $p\Cto x\Cto q$.
    Since $p,q\in N$, we also have $q\Cto p$, so that $p\Cto x\Cto q\Cto p$.
    Hence, $x$ is chain-equivalent to $p$, so that $x$ is chain-recurrent and belongs to $N$.
\end{proof}

\medskip\noindent
{\bf Chain graph and orbitwise chain graph.}
As mentioned earlier in this article, one of the most general results of the theory of streams is that, whenever the global attractor of a semiflow $F$ is connected, the chain graph of $F$ is connected as well.
We prove below that the same result holds for IFSs as well.
The argument is an elementary adaptation of the one we used in case of semiflows and we include it here for completeness sake.
On the other side, the relevance of this result for IFS with $m>1$ is weaker than for semiflows (i.e. for $m=1$) since chain nodes for $m>1$ are not necessarily forward-invariant and so they do not represent, in general, elementary blocks of the dynamics of the IFS as a whole.
That is where the orbitwise chain graph will come into play.


\begin{theorem}
    \label{thm: weak chain graph is connected}
    Assume that $\cF$ has compact dynamics and that $\Gl$ is connected. 
    Then $\Gamma_{\cC_\cF}$
    is a connected graph.
\end{theorem}
\begin{proof}
    Suppose, by contradiction, that $\Gamma_{\cC_\cF}$ is disconnected.
    Then its chain nodes admit a partition into two nonempty families $\mathcal N_1,\mathcal N_2$ such that there is no edge between a node in $\mathcal N_1$ and a node in $\mathcal N_2$.
    For $i=1,2$, let $N_i=\bigcup_{M\in\mathcal N_i}M$.
    By the definition of a node partition, $N_1$ and $N_2$ are closed and disjoint and, by Corollary~\ref{cor: chain nodes are compact}, they lie in $\Gl$; in particular, they are compact.


    Now, choose a bitrajectory through $x\Gl$ fully contained in $\Gl$, whose existence follows from H-invariance.
    By Lemma~\ref{lemma: limit sets of single itineraries}, $\alpha(b)$ and $\omega(b)$ are contained in chain nodes. 
    Let $p\in\alpha(b)$, $q\in\omega(b)$.
    There are sequences $n_k,r_k\to\infty$ such that $b_{-n_k}\to p$, $b_{r_k}\to q$.   
    Since $b_{-n_k}\Fto x$ and $x\Fto b_{r_k}$, closedness of $\cC_\cF$ gives $p\Cto x\Cto q$.
    By transitivity, $p\Cto q$.
    
    If $p$ and $q$ belonged to different families, this would give an edge between the two families, contradicting the initial assumption.
    Hence, the chain nodes containing $p$ and $q$ must belong to the same family. 
    Now, for $i=1,2$, define
\[
A_i=
\left\{
x\in \Gl:
\text{there exist }p,q\in N_i
\text{ such that }
p\Cto x\Cto q
\right\}.
\]
    The preceding argument shows that $\Gl=A_1\cup A_2$.

    We claim that $A_1\cap A_2=\varnothing$.
    Indeed, if $x\in A_1\cap A_2$, then there would exist $p_1,q_1\in N_1$ and $p_2,q_2\in N_2$ such that $p_1\Cto x\Cto q_1$ and $p_2\Cto x\Cto q_2$.
    Hence, $p_1\Cto q_2$.
    This gives an edge from a chain node in $\mathcal N_1$ to a chain node in $\mathcal N_2$, a contradiction.

    Now, both $A_1$ and $A_2$ are nonempty since $N_i\subset A_i$ for $i=1,2$. 
    Moreover, $A_1$ and $A_2$ are closed. 
    We prove this for $A_1$. 
    Let $x_n\in A_1$, $x_n\to x\in \Gl$.
    For every $n$, choose $p_n,q_n\in N_1$ such that
\[
p_n\Cto x_n\Cto q_n.
\]
    Since $N_1$ is compact, after passing to a subsequence we may assume that $p_n\longrightarrow p\in N_1$ and $q_n\longrightarrow q\in N_1$.
    Since $\cC_\cF$ is closed, $p\Cto x\Cto q$.
    Therefore $x\in A_1$. The same argument proves that $A_2$ is closed.

    We have thus written the connected set $\Gl$ as the disjoint union of two nonempty closed subsets, against the assumption that $\Gl$ is connected.
    Hence, $\Gamma_{\cC_\cF}$ is connected.
\end{proof}

\medskip\noindent
{\bf IFSs with countably many chain nodes.}
We show below that, just as in the $m=1$ case, when $\cF$ has countably many chain nodes its smallest stream is the chain stream.

\begin{proposition}[Lyapunov levels and chain nodes]
    \label{prop: Lyapunov levels and chain nodes}
    Assume that $\cF$ has compact dynamics. 
    Let $L\in L_\cF$ and let
    $u,v\in \Gl$ satisfy $u\Cto  v$ and $L(u)<L(v)$.
    Then $\cA_\cF$ has uncountably many nodes.
\end{proposition}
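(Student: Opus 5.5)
Since $L$ is a Lyapunov function for $\cF$, it is a Lyapunov function for the orbit relation. By Proposition~\ref{prop: A_F via Lyap functs} it is therefore non-increasing along $\cA_\cF$. The idea is to show that every intermediate value $c\in(L(u),L(v))$ is attained at an $\cA_\cF$-recurrent point, and that $L$ is constant on each $\cA_\cF$-node. Then distinct values in the interval force distinct nodes, and there are uncountably many of them.

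The first step uses the chain hypothesis. Fix $c\in(L(u),L(v))$. By Theorem~\ref{prop: C = C glob}, for every $\eps_n\downarrow0$ we may pick an $\eps_n$-chain $u=x^{(n)}_0,\dots,x^{(n)}_{k_n}=v$ contained in $\Gl$. Along an exact step we have $L(f_i(x))\le L(x)$, so $L$ can increase along the chain only through the jumps. Since $L$ is uniformly continuous on the compact set $\Gl$, each jump raises $L$ by at most $\omega(\eps_n)\to0$. Let $j$ be the first index with $L(x^{(n)}_j)> c$. Then the preceding point $y_n=x^{(n)}_{j-1}$ satisfies $L(y_n)\le c$ and $L(f_{i}(y_n))\ge L(x^{(n)}_j)-\omega(\eps_n)> c-\omega(\eps_n)$. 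The chain rises from below $c$ to above $c$ and then continues to $v$, where $L(v)>c$.

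The key point is to find a point at level $c$ that is chain-recurrent, or better $\cA_\cF$-recurrent. Here I would argue that $\cA_\cF$ must separate levels. Suppose the set of values $L(\cR_{\cA_\cF})$ missed $c$. By the strict Lyapunov function $L_{\cA_\cF}$ of Proposition~\ref{prop:strict-lyapunov}, together with compactness of $\Gl$, the sets $\{L\le c\}\cap\Gl$ and $\{L\ge c\}\cap\Gl$ would be separated in a way incompatible with the existence of the arbitrarily fine chains above. Concretely, I would take limits $y_n\to y$ and $f_i(y_n)\to f_i(y)$, and use the $\Omega$stream property (Proposition~\ref{prop:chains-omega-stream}) and Proposition~\ref{prop: O-stream}(1). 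From these I aim to produce a node $P$ of $\cC_\cF$ that meets the level $c$.

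The main obstacle is passing from chain-recurrence to $\cA_\cF$-recurrence. Chain nodes may be larger than $\cA_\cF$-nodes, so meeting level $c$ with a chain node is not yet enough. My first attempt would be to argue by contradiction. Suppose $\cA_\cF$ had only countably many nodes. Then $L$, being constant on each of them, takes only countably many values on $\cR_{\cA_\cF}$, so some $c$ in the interval is missed. For such $c$, I would modify $L$ by a monotone reparametrization $\phi\circ L$ that is flat near every attained value and strictly increasing across $c$. The goal is a Lyapunov function $\tilde L$ for $\cF$ that strictly decreases, by a definite amount, along exact orbit steps crossing level $c$ within $\Gl$; compactness and the absence of recurrent points at level $c$ would supply this margin. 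Such a uniform drop would block sufficiently fine chains from crossing upward from $L(u)$ to $L(v)$, which contradicts $u\Cto v$. Making this uniform-drop argument rigorous, via the compactness of $\Gl\cap L^{-1}(c)$ and Lemma~\ref{lemma: Om}, is where I expect the real work to lie.
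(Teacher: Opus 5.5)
You have the paper's outer frame: for each $c\in(L(u),L(v))$, exhibit an $\cA_\cF$-node on which $L\equiv c$, and use that every $L\in\cL_\cF$ is constant on $\cA_\cF$-nodes because $\cA_\cF=\cD_{\cL_\cF}$. But the step that carries all the content, producing an $\cA_\cF$-recurrent point at level $c$, is never established, and the routes you sketch for it do not work as stated.

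\begin{itemize}
\item The first-crossing limit yields only a point $y\in\Gl$ with $L(y)=L(f_i(y))=c$. That is a single exact step at level $c$, and it says nothing about recurrence.
\item Producing a $\cC_\cF$-node that meets level $c$ is beside the point. Since $u\Cto v$ with $L(u)<L(v)$, $L$ is not a Lyapunov function for $\cC_\cF$. So it need not be constant on chain nodes, and such nodes do not separate levels.
\item In the contradiction route, a drop ``by a definite amount along exact orbit steps'' near level $c$ is impossible for any reparametrization $\phi\circ L$: your own point $y$ gives $\phi(L(y))=\phi(L(f_i(y)))$. A drop can hold only over blocks of $N$ steps. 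Proving that requires knowing that no genuine forward trajectory in $\Gl$ stays at level $c$ forever.
\item That fact is not supplied by Lemma~\ref{lemma: Om}. Its orbitwise recurrent point lies somewhere in $\Om(x)$, which is built from all itineraries at once, so the point may sit strictly below level $c$.
\end{itemize}

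The missing idea is to use the \emph{last} crossing instead of the first. Set $r_n=1+\max\{j:L(x^{(n)}_j)<c\}$, and let $\delta_n$ be the modulus of continuity of $L$ on $\Gl$ at scale $\eps_n$. Then $L(x^{(n)}_j)\ge c$ for all $j\ge r_n$, and $L(x^{(n)}_{r_n})<c+\delta_n$. Since $L$ rises by at most $\delta_n$ per step, $k_n-r_n>(L(v)-c)/\delta_n-1\to\infty$. Also $c\le L(x^{(n)}_{r_n+\ell})<c+(\ell+1)\delta_n$ for every fixed $\ell$. A diagonal limit, using compactness of $\Gl$ and finiteness of the alphabet, produces a genuine one-sided trajectory $(z_\ell)_{\ell\ge0}\subset\Gl$ with $L(z_\ell)=c$ for all $\ell$. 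Lemma~\ref{lemma: limit sets of single itineraries}, applied to the stream $\cA_\cF$, then places its nonempty $\omega$-limit set inside an $\cA_\cF$-node on which $L\equiv c$. This gives injectivity of $c\mapsto$ node directly, with no contradiction argument.

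Your contrapositive route can also be completed, but only through this same ingredient. If no $\cA_\cF$-node lies at level $c$, the lemma rules out infinite trajectories at level $c$. K\"onig's lemma and compactness of $\Gl\cap L^{-1}(c)$ then give a uniform drop below $c$ after $N$ steps for points near that level. Shadowing of $N$-step chains then prevents sufficiently fine chains from crossing upward. No reparametrization of $L$ is needed.
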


\begin{proof}
    For $r\geq 0$, let
\[
\vartheta_L(r)
=
\sup\left\{
|L(a)-L(b)|:
a,b\in \Gl,\ d(a,b)\leq r
\right\}.
\]
    Since $\Gl$ is compact and $L$ is continuous, then $\vartheta_L(r)\to 0$ as $r\longrightarrow 0$.

    Choose a sequence $\varepsilon_n\downarrow 0$. 
    Since $u\Cto v$ and the chain relation of $\cF$ agrees on $\Gl$ with the chain relation of $\cF|_{\Gl}$, for every $n$ there is an $\varepsilon_n$-chain contained in $\Gl$,
\[
x^{(n)}_0=u,x^{(n)}_1,\ldots,x^{(n)}_{k_n}=v,
\]
    with associated indices $i^{(n)}_0,\ldots,i^{(n)}_{k_n-1}$.
    Set $\delta_n=\vartheta_L(\varepsilon_n)$.
    Notice that $\delta_n>0$ since, otherwise, $L$ would be constant over the whole chain, against the fact that $L(u)<L(v)$.
    For every $j=0,\ldots,k_n-1$,
\[
d\left(
f_{i^{(n)}_j}(x^{(n)}_j),x^{(n)}_{j+1}
\right)<\varepsilon_n.
\]
    Since $L$ is a Lyapunov function for $\cF$,
\[
L\left(f_{i^{(n)}_j}(x^{(n)}_j)\right)
\leq L(x^{(n)}_j),
\]
    and therefore
\begin{equation}
\label{eq: 5.17-1}
L(x^{(n)}_{j+1})
\leq L(x^{(n)}_j)+\delta_n.
\end{equation}

    Fix a $t\in\bigl(L(u),L(v)\bigr)$ and define $r_n=1+\max\left\{j:L(x^{(n)}_j)<t \right\}$.
    This is well-defined because $L(x^{(n)}_0)=L(u)<t<L(v)=L(x^{(n)}_{k_n})$.
    By the definition of $r_n$,
\begin{equation}
\label{eq: 5.17-2}
L(x^{(n)}_j)\geq t
\qquad\text{for every }j\geq r_n.
\end{equation}
    Moreover,
\[
L(x^{(n)}_{r_n-1})<t,
\]
    so~\eqref{eq: 5.17-1} gives
\begin{equation}
\label{eq: 5.17-3}
t\leq L(x^{(n)}_{r_n})<t+\delta_n.
\end{equation}

    We claim that $k_n-r_n\to\infty$.
    Indeed, iterating~\eqref{eq: 5.17-1} from $r_n$ to $k_n$ and using~\eqref{eq: 5.17-3}, we obtain
\[
L(v)
\leq
L(x^{(n)}_{r_n})+(k_n-r_n)\delta_n
<
t+\delta_n+(k_n-r_n)\delta_n.
\]
Hence
\[
k_n-r_n
>
\frac{L(v)-t}{\delta_n}-1,
\]
    and the right-hand side tends to infinity because $\delta_n\to 0$.

    Fix now an integer $\ell\geq 0$. For all sufficiently large $n$, $r_n+\ell\leq k_n$. From~\eqref{eq: 5.17-1}--\eqref{eq: 5.17-3},
\begin{equation}
\label{eq: 5.17-4}
t
\leq L(x^{(n)}_{r_n+\ell})
<
t+(\ell+1)\delta_n.
\end{equation}
    Consequently,
\begin{equation}
\label{eq: 5.17-5}
L(x^{(n)}_{r_n+\ell})
\longrightarrow t
\qquad\text{for every fixed }\ell\geq 0.
\end{equation}

    Using compactness of $\Gl$, finiteness of the alphabet and a diagonal argument, we may pass to a subsequence such that, for every fixed $\ell\geq 0$, $x^{(n)}_{r_n+\ell}\to c^t_\ell$ and $i^{(n)}_{r_n+\ell}=i^t_\ell$ for all sufficiently large $n$, for some $i^t_\ell\in\{1,\ldots,m\}$.

Since
\[
d\left(
f_{i^{(n)}_{r_n+\ell}}
(x^{(n)}_{r_n+\ell}),
x^{(n)}_{r_n+\ell+1}
\right)
<\varepsilon_n,
\]
    passing to the limit gives $c^t_{\ell+1}=f_{i^t_\ell}(c^t_\ell)$.
    Thus, $\boldsymbol{c}^t=(c^t_\ell)_{\ell\geq 0}$ is a genuine one-sided trajectory in $\Gl$. 
    By~\eqref{eq: 5.17-5}, $L(c^t_\ell)=t$ for every $\ell\geq 0$.
    It follows that its limit set $W_t=\omega(\boldsymbol{c}^t)$ is nonempty and satisfies
\begin{equation}
\label{eq: 5.17-6}
L|_{W_t}\equiv t.
\end{equation}

    By Lemma~\ref{lemma: limit sets of single itineraries}, $W_t$ is contained in a chain node $P_t$ of the smallest stream $\cA_\cF$.
    Recall that $\cA_\cF=\cD_{L_\cF}$.
    Hence, every $L\in \cL_\cF$ is constant on each $\cA_\cF$-equivalence class:
    if $p=_{\cA_\cF}q$, then $L(p)\geq L(q)$ and $L(q)\geq L(p)$, and therefore $L(p)=L(q)$.
    It follows that distinct values of $t$ determine distinct chain nodes.
    Indeed, suppose that $s\neq t$ and $P_s=P_t$ and choose $p\in W_s$ and $q\in W_t$.
    Then $p\Aeq q$, and therefore $s=L(p)=L(q)=t$, a contradiction.
    Thus, the map $t\longmapsto P_t$ is injective on the nonempty interval $\bigl(L(u),L(v)\bigr)$.
    Consequently, $\cA_\cF$ has uncountably many nodes.
\end{proof}
\begin{corollary}
    Assume that $\cF$ has compact dynamics and that $\cA_\cF$ has at most countably many nodes. 
    Then, for every $L\in \cL_\cF$ and every $u,v\in \Gl$,
\[
u\Cto v
\quad\Longrightarrow\quad
L(u)\geq L(v).
\]
\end{corollary}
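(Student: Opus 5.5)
This is the contrapositive of Proposition~\ref{prop: Lyapunov levels and chain nodes}, so the plan is a short argument by contradiction.

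Fix $L\in\cL_\cF$ and $u,v\in\Gl$ with $u\Cto v$, and suppose $L(u)\geq L(v)$ fails, so that $L(u)<L(v)$. Then $u,v$, together with the Lyapunov function $L$, satisfy exactly the hypotheses of Proposition~\ref{prop: Lyapunov levels and chain nodes}:
\begin{itemize}
\item $\cF$ has compact dynamics;
\item $L$ is a Lyapunov function of $\cF$;
\item $u,v\in\Gl$;
\item $u\Cto v$;
\item $L(u)<L(v)$.
\end{itemize}
That proposition then says that $\cA_\cF$ has uncountably many nodes. This contradicts the assumption that $\cA_\cF$ has at most countably many nodes, so $L(u)\geq L(v)$.

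All the substantive work is already done inside Proposition~\ref{prop: Lyapunov levels and chain nodes}. There, $\eps$-chains inside $\Gl$ (available by Theorem~\ref{prop: C = C glob}) are used to produce, for each level $t\in(L(u),L(v))$, a genuine trajectory on which $L\equiv t$. These trajectories yield distinct $\cA_\cF$-nodes, because elements of $\cL_\cF$ are constant on $\cA_\cF$-equivalence classes (Proposition~\ref{prop: A_F via Lyap functs}).

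The only point to check is that nothing is lost in passing to the contrapositive. The proposition requires $u,v\in\Gl$, and the corollary's statement imposes exactly that restriction. Likewise, ``uncountably many nodes'' is precisely the negation of ``at most countably many nodes''. Once these two matches are confirmed, no further step is needed.
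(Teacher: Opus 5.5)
Your proof is correct and is exactly the paper's argument: the corollary is the contrapositive of Proposition~\ref{prop: Lyapunov levels and chain nodes}, since the negated conclusion ($u,v\in\Gl$, $u\Cto v$, $L(u)<L(v)$) is precisely that proposition's hypothesis. Nothing further is needed.
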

\begin{proof}
    If there were $L\in \cL_\cF$ and $u,v\in \Gl$ such that $u\Cto v$ and $L(u)<L(v)$, the preceding proposition would imply that $\cA_\cF$ has uncountably many nodes.
\end{proof}
\begin{theorem}
\label{thm: Omega-stream}
    Assume that $F$ has compact dynamics and that $\cA_\cF$ has at most countably many nodes. 
    Then
\[
\cA_\cF=\cC_\cF.
\]
\end{theorem}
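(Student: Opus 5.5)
Since $\cA_\cF$ is the smallest stream and $\cC_\cF$ is a stream (Proposition~\ref{prop: C is a stream}, with Proposition~\ref{prop: C_F is topological} making it metric-independent), we already have $\cA_\cF\subset\cC_\cF$. So the plan is to prove the reverse inclusion $\cC_\cF\subset\cA_\cF$. By Proposition~\ref{prop: A_F via Lyap functs}, $\cA_\cF=\cD_{\cL_\cF}$. Hence it suffices to show the following: whenever $x\Cto y$, every $L\in\cL_\cF$ satisfies $L(x)\geq L(y)$. The preceding corollary gives exactly this for $x,y\in\Gl$, so what remains is to reduce the general case to pairs in the global attractor.

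The reduction uses the $\Omega$stream property of Proposition~\ref{prop:chains-omega-stream}. Let $x\Cto y$ and fix $L\in\cL_\cF$. If $y\in\cO_\cF(x)$, then $L(x)\geq L(y)$ directly from the definition of Lyapunov function. Otherwise there is a $z\in\Om(x)$ with $z\Cto y$, and $z\in\Gl$ by Proposition~\ref{prop: omega(K) subset G}. Moreover $L(x)\geq L(z)$: writing $z=\lim\Phi_\cF^{I_n}(x)$, we have $L(x)\geq L(\Phi_\cF^{I_n}(x))$ for each $n$, and continuity of $L$ passes this to the limit.

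We now need $L(z)\geq L(y)$ with $z\in\Gl$. By Proposition~\ref{prop: GF-trapping-for-CF}, $\Down_{\cC_\cF}(\Gl)\subset\Gl$, so $y\in\Gl$ as well. The corollary then applies to $u=z$, $v=y$ and gives $L(z)\geq L(y)$. Combining the two inequalities, $L(x)\geq L(y)$.

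Since $L\in\cL_\cF$ was arbitrary, $(x,y)\in\cD_{\cL_\cF}=\cA_\cF$, which proves $\cC_\cF\subset\cA_\cF$ and hence equality. I expect no step to be a real obstacle, because the substantive work was done in Proposition~\ref{prop: Lyapunov levels and chain nodes}. The only points needing care are that $y\in\Gl$, which the trapping property supplies, and that Lyapunov monotonicity passes to limit points, which follows from continuity of $L$.
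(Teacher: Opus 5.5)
Your proposal is correct and follows the paper's proof essentially step for step. Both arguments get $\cA_\cF\subset\cC_\cF$ from minimality. For the reverse inclusion, both go through $\cA_\cF=\cD_{\cL_\cF}$, treat the case $y\in\cO_\cF(x)$ directly, and otherwise use the $\Omega$stream property to get $z\in\Om(x)\subset\Gl$ with $z\Cto y$. Then $y\in\Gl$ by the trapping property, $L(x)\geq L(z)$ by continuity of $L$, and $L(z)\geq L(y)$ by the preceding corollary.
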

\begin{proof}
    Since $\cA_\cF$ is the smallest stream of $\cF$ and $\cC_\cF$ is a stream, $\cA_\cF\subset \cC_\cF$.
    It remains to prove the opposite inclusion.

    Let $(x,y)\in \cC_\cF$.
    We prove that $L(x)\geq L(y)$ for every $L\in \cL_\cF$.
    Since $\cA_\cF=D_{\cL_\cF}$, this will imply $(x,y)\in \cA_\cF$.
    If $y\in \cO_\cF(x)$, then the desired inequality follows immediately from the definition of a Lyapunov function.

    Assume now that $y\notin \cO_\cF(x)$.
    Since $\cC_\cF$ is an $\Omega$-stream, there exists $z\in\Om(x)$ such that $z\Cto y$.
    Since $\cF$ has compact dynamics, $z\in \Gl$.
    Moreover, $\Gl$ is a trapping region for $\cC_\cF$. 
    Therefore, $z\Cto y$ implies $y\in \Gl$.
    Fix $L\in \cL_\cF$. 
    Since $z\in\Om(x)$, there are multiindices $I_n$ with $|I_n|\to\infty$ such that $\Pn(x)\to z$. 
    For every $n$, $L(x)\geq L\left(\Pn(x)\right)$.
    Passing to the limit and using continuity of $L$, we obtain $L(x)\geq L(z)$.
    Since $z,y\in \Gl$ and $z\Cto y$, the preceding corollary gives $L(z)\geq L(y)$.
    Consequently, $L(x)\geq L(z)\geq L(y)$.
    Since this holds for every $L\in \cL_\cF$, $(x,y)\in D_{\cL_\cF}=\cA_\cF$.
    Thus, $\cC_\cF\subset \cA_\cF$.
    Together with the opposite inclusion, this proves $\cA_\cF=\cC_\cF$.
\end{proof}

\begin{corollary}
    Assume that $\cF$ has compact dynamics, that $\Gl$ is connected and that $\cA_\cF$ has at most countably many nodes.
    Then the graph of any stream of $\cF$ is connected.
\end{corollary}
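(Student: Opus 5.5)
The plan is to combine two earlier results: Theorem~\ref{thm: Omega-stream}, which identifies $\cA_\cF$ with $\cC_\cF$ under the countability hypothesis, and Theorem~\ref{thm: weak chain graph is connected}, which gives connectedness of the chain graph when $\Gl$ is connected. I would then transfer connectedness from the smallest stream to every stream through the proposition stating that connectedness of $\Gamma_{\cA_\cF}$ implies connectedness of $\Gamma_S$ for every stream $S$ of $\cF$.

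The steps, in order, are as follows.

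First, since $\cF$ has compact dynamics and $\cA_\cF$ has at most countably many nodes, Theorem~\ref{thm: Omega-stream} gives $\cA_\cF=\cC_\cF$. Hence the two streams have the same recurrent set, the same nodes and the same edges, so $\Gamma_{\cA_\cF}=\Gamma_{\cC_\cF}$.

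Second, since $\Gl$ is connected, Theorem~\ref{thm: weak chain graph is connected} shows that $\Gamma_{\cC_\cF}$ is connected. Therefore $\Gamma_{\cA_\cF}$ is connected.

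Third, let $S$ be an arbitrary stream of $\cF$. The proposition preceding Section~\ref{sec: chains} states that, under compact dynamics, connectedness of $\Gamma_{\cA_\cF}$ implies connectedness of $\Gamma_S$. Applying it, $\Gamma_S$ is connected, which proves the corollary.

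There is essentially no obstacle here, since each step is a direct citation. The only point to check is that the notion of node partition, which requires the unions $N_1,N_2$ to be closed, is used in the same sense in Theorem~\ref{thm: weak chain graph is connected} and in the transfer proposition. It is, because both rely on the same definition of connectedness of $\Gamma_S$.

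If one also wanted the orbitwise graphs, the same chain would work via the remark after the transfer proposition. However, that would additionally require connectedness of $\Gamma^{ow}_{\cC_\cF}$, which $\Gl$ connected alone does not provide. I would therefore restrict the statement, as written, to plain graphs.
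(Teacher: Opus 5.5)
Your proof is correct and is exactly the route the paper intends. The corollary is stated without proof immediately after Theorem~\ref{thm: Omega-stream}, and it follows, as you do, by identifying $\cA_\cF$ with $\cC_\cF$, invoking Theorem~\ref{thm: weak chain graph is connected}, and transferring connectedness to every stream via the proposition at the end of Section~\ref{sec: streams}; your restriction to plain graphs is also appropriate, since the example $\cE$ has $\Gl=[0,3]$ connected but a disconnected orbitwise chain graph.
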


\medskip\noindent
{\bf Decomposition into recurrent and gradient dynamics of an IFS.}
For $m=1$, Conley proved in~\cite{Con78} that, when $X$ is compact, every point $x\in X$ either belongs to a chain node or every bitrajectory $b$ through it has the property that $\alpha(b)$ is a subset of some chain node $M$ while $\omega(b)$ is a subset of a different chain node $N$ (so, in particular, $M\Cto N$).
In the first case Conley says that the dynamics of $x$ is of ``recurrent type'', in the second of ``gradient type''.
In our setting, when $X$ is locally compact, we can extend this result to semiflows with compact dynamics, so that every point of $\Gl$ is either of recurrent or of gradient type as explained above.


For $m>1$, bitrajectories through a given point do not need to have the same $\omega$-limits and so the dynamics of $x$ can be of three distinct types.
Denote by $\cB_x$ the set of all bitrajectories through $x$ lying entirely in $\Gl$.
We have the following trichotomy.
\begin{theorem}[Bitrajectory characterization and trichotomy]
    \label{thm: trichotomy}
    Assume that \(\cF\) has compact dynamics and let $x\in\Gl$. 
    Then exactly one of the following three cases occurs.

\begin{enumerate}
    \item {\bf Orbitwise recurrent dynamics.}
    The point \(x\) is orbitwise chain-recurrent.
    Equivalently, there exists a chain node \(N\) such that
\[
\omega(b)\subset N
\qquad\text{for every }b\in\mathcal B_x,
\]
and
\[
\alpha(b_0)\subset N
\qquad\text{for some }b_0\in\mathcal B_x.
\]

\item {\bf Mixed recurrent-gradient dynamics.}
    The point \(x\) is chain-recurrent but not orbitwise chain-recurrent.
    Equivalently, denoting by \(N=\Node_{\cC_\cF}(x)\) its chain node, there exist bitrajectories \(b_{\mathrm r},b_{\mathrm g}\in\mathcal B_x\) and a chain node \(P\neq N\) such that
\[
\alpha(b_{\mathrm r})\cup\omega(b_{\mathrm r})\subset N,
\]
whereas
\[
\alpha(b_{\mathrm g})\subset N,
\qquad
\omega(b_{\mathrm g})\subset P.
\]

\item {\bf Gradient dynamics.}
    The point \(x\) is not chain-recurrent.
    Equivalently, for every \(b\in\mathcal B_x\), the sets \(\alpha(b)\) and \(\omega(b)\) are contained in distinct chain nodes. 
    In particular, every such bitrajectory determines an edge
\[
N_{\alpha(b)}\longrightarrow N_{\omega(b)}
\]
    between distinct nodes of the chain graph.
\end{enumerate}
\end{theorem}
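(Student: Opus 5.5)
The three alternatives are defined by two nested properties: chain-recurrence and orbitwise chain-recurrence. Since $\cR^{ow}_{\cC_\cF}\subset\cR_{\cC_\cF}$, exactly one of them holds for each $x\in\Gl$. The real work is proving the three bitrajectory characterizations. Throughout I will use three facts:
\begin{itemize}
\item every $b\in\cB_x$ lies in the compact set $\Gl$, so by Lemma~\ref{lemma: limit sets of single itineraries} both $\alpha(b)$ and $\omega(b)$ are nonempty and contained in chain nodes;
\item closedness of $\cC_\cF$ gives $p\Cto x\Cto q$ for every $p\in\alpha(b)$ and $q\in\omega(b)$;
\item Lemma~\ref{lemma: characterization of weak recurrence} characterizes membership in a chain node by bitrajectories.
\end{itemize}

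\emph{Case 3.} If $x$ is not chain-recurrent, take $b\in\cB_x$ and suppose $\alpha(b)$ and $\omega(b)$ lie in the same node $N$. The converse half of Lemma~\ref{lemma: characterization of weak recurrence} then puts $x\in N$, which is a contradiction. So the two nodes are distinct, and $p\Cto q$ gives the edge. Conversely, suppose every $b\in\cB_x$ has $\alpha$ and $\omega$ in distinct nodes. If $x$ were chain-recurrent, the direct half of that lemma would produce a $b$ inside $\Node(x)$, which is impossible. This settles case 3 and also shows that a chain-recurrent $x$ always admits some $b_{\mathrm r}$ with both limit sets in $N=\Node_{\cC_\cF}(x)$.

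\emph{Case 1.} Let $x$ be orbitwise chain-recurrent and $b\in\cB_x$, reindexed so that $b_0=x$. Each $b_n$ with $n\ge0$ lies in $\cO_\cF(x)$, hence $b_n\Ceq x$. Closedness then gives $\omega(b)\subset\Node(x)=N$, and the bitrajectory $b_0$ from the lemma supplies the $\alpha$-condition.

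For the converse, assume all $\omega(b)\subset N$ and some $\alpha(b_0)\subset N$. The lemma puts $x\in N$. Now let $z=\P(x)\in\cO_\cF(x)$. Here $\Gl$ being $H$-invariant matters but not backward-invariant. Concatenate a backward half of some bitrajectory in $\Gl$ through $x$, the exact segment from $x$ to $z$ along $I$, and a forward continuation from $z$; the forward continuation stays in $\Gl$ by forward invariance. This gives some $b\in\cB_x$ passing through $z$ after $x$. Its $\omega$-limit lies in $N$, so $z\Cto q\in N$ and hence $z\Cto x$. Together with $x\Cto z$ this gives $z\Ceq x$, so $x$ is orbitwise chain-recurrent.

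\emph{Case 2.} Let $x$ be chain-recurrent but not orbitwise. Then there is $z\in\cO_\cF(x)$ with $z\not\Cto x$. Build $b_{\mathrm g}\in\cB_x$ passing through $x$ and then $z$. Choose its backward half inside $N$ using the predecessor property from the proof of Lemma~\ref{lemma: characterization of weak recurrence}, so that $\alpha(b_{\mathrm g})\subset N$. Its $\omega$-limit lies in some node $P$, and $P\ne N$, since otherwise $z\Cto q\Cto x$, a contradiction. Together with the $b_{\mathrm r}$ obtained above, this gives the required pair. Conversely, the existence of $b_{\mathrm r}$ forces $x\in N$, so $x$ is chain-recurrent. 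The existence of $b_{\mathrm g}$ with $\omega(b_{\mathrm g})\subset P\ne N$ contradicts the necessity part of case 1, so $x$ is not orbitwise chain-recurrent.

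The main obstacle is the concatenation step: making sure every bitrajectory built this way really lies in $\Gl$ and has the prescribed past. The past part relies on $H$-invariance and the predecessor property, and the future part on forward invariance of $\Gl$. I expect this to go through directly.
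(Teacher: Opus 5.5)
Your proposal is correct and takes essentially the same route as the paper. It uses the same two lemmas (limit sets of trajectories in a compact set lie in chain nodes; the bitrajectory characterization of chain nodes) and the same concatenation of a backward half inside $N$, an exact orbit segment from $x$ to $z$, and a forward continuation in $\Gl$. The only difference is cosmetic: you get $z\Cto x$ directly from $z\Cto q$ with $q\in\omega(b)\subset N=\Node_{\cC_\cF}(x)$, whereas the paper re-applies the bitrajectory lemma at the point $z$.
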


\begin{proof}
    By Lemma~\ref{lemma: characterization of weak recurrence}, a point \(x\in \Gl\) is chain-recurrent if and only if there exists a bitrajectory \(b\in\mathcal B_x\) whose \(\alpha\)- and \(\omega\)-limit sets are contained in the same chain node. 
    This immediately proves the characterization in item~(3).

    We next prove item~(1). 
    Assume first that \(x\) is orbitwise chain-recurrent, and let \(N=\Node_{\cC_\cF}(x)\). 
    If \(b=(b_n)_{n\in\mathbb Z}\in\mathcal B_x\), after reindexing we may assume that \(b_0=x\). 
    For every \(n\geq 0\), one has \(b_n\in \cO_\cF(x)\). 
    Since \(x\) is orbitwise chain-recurrent, $b_n\Ceq x$, and so \(b_n\in N\). 
    Since \(N\) is closed, it follows that $\omega(b)\subset N$.
    Moreover, \(x\) is chain-recurrent, so that, by Lemma~\ref{lemma: characterization of weak recurrence}, there is a bitrajectory \(b_0\in\mathcal B_x\) entirely contained in \(N\).
    In particular, \(\alpha(b_0)\subset N\).

    Conversely, assume that there exists a chain node \(N\) such that \(\omega(b)\subset N\) for every \(b\in\mathcal B_x\), and that \(\alpha(b_0)\subset N\) for some \(b_0\in\mathcal B_x\). 
    Let \(z\in \cO_\cF(x)\) and choose a finite exact orbit segment from \(x\) to \(z\).
    Append to it an arbitrary forward trajectory starting at \(z\) and prepend the negative half of \(b_0\). 
    This produces a bitrajectory \(b\in\mathcal B_x\) passing through \(z\), with $\alpha(b)\subset N$.
    By hypothesis, also \(\omega(b)\subset N\). 
    Applying Lemma~\ref{lemma: characterization of weak recurrence} to every point of \(b\), we obtain \(x,z\in N\). 
    Thus, $x\Ceq z$.
    Since \(z\in \cO_\cF(x)\) was arbitrary, \(x\) is orbitwise chain-recurrent.

    It remains to prove item~(2).
    Suppose first that \(x\) is chain-recurrent but not orbitwise chain-recurrent, and let $N=\Node_{\cC_\cF}(x)$.
    By Lemma~\ref{lemma: characterization of weak recurrence}, there exists \(b_{\mathrm r}\in\mathcal B_x\), entirely contained in \(N\), and therefore
\[
\alpha(b_{\mathrm r})\cup\omega(b_{\mathrm r})\subset N.
\]

    Since \(x\) is not orbitwise chain-recurrent, there exists \(z\in \cO_\cF(x)\) such that \(z\notin N\). 
    Prepend the negative half of \(b_{\mathrm r}\) to a finite exact orbit segment from \(x\) to \(z\), and then append an arbitrary forward trajectory starting at \(z\). 
    The resulting bitrajectory \(b_{\mathrm g}\) satisfies $\alpha(b_{\mathrm g})\subset N$.
    By Lemma~\ref{lemma: characterization of weak recurrence}, its \(\omega\)-limit set is contained in some chain node \(P\). 
    Necessarily \(P\neq N\): otherwise
\[
\alpha(b_{\mathrm g})\cup\omega(b_{\mathrm g})\subset N,
\]
    and Lemma~\ref{lemma: characterization of weak recurrence}, applied to the point \(z\) of \(b_{\mathrm g}\), would imply \(z\in N\), a contradiction.

    Conversely, the existence of \(b_{\mathrm r}\) implies, by Lemma~\ref{lemma: characterization of weak recurrence}, that \(x\) is chain-recurrent and belongs to \(N\). 
    If \(x\) were orbitwise chain-recurrent, every point on the positive half of \(b_{\mathrm g}\), and therefore every point of \(\omega(b_{\mathrm g})\), would belong to \(N\). 
    This contradicts the fact that \(\omega(b_{\mathrm g})\subset P\) with \(P\neq N\). 
    Hence, \(x\) is not orbitwise chain-recurrent.

    The three alternatives are mutually exclusive and exhaust all points of \(\Gl\).
\end{proof}
The three alternatives above can be also reformulated in terms of Lyapunov functions as follows.
\begin{corollary}[Lyapunov characterization of the trichotomy]
    Assume that $\cF$ has compact dynamics and let
\[
L:X\longrightarrow[0,1]
\]
    be a strict Lyapunov function for the chain stream $\cC_\cF$.
    Then, for every $x\in \Gl$, exactly one of the following three alternatives occurs.
    \begin{enumerate}
    \item \emph{Orbitwise recurrent dynamics.}
    The point $x$ is orbitwise chain-recurrent if and only if
\[
L(\P(x))=L(x)
\qquad\text{for every } I\in \cI^m .
\]

\item \emph{Mixed recurrent--gradient dynamics.}
The point $x$ is chain-recurrent but not orbitwise chain-recurrent
if and only if
\[
L(f_i(x))=L(x)
\]
for some $i\in\{1,\ldots,m\}$, while
\[
L(\P(x))<L(x)
\]
for some $I\in \cI^m$.

\item \emph{Gradient dynamics.}
    The point $x$ is not chain-recurrent if and only if
\[
L(f_i(x))<L(x)
\qquad\text{for every }i=1,\ldots,m .
\]
\end{enumerate}
\end{corollary}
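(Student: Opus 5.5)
The plan is to reduce everything to the trichotomy of Theorem~\ref{thm: trichotomy}, using two facts about a strict Lyapunov function $L$ for the closed quasi-order $\cC_\cF$ (which exists by Proposition~\ref{prop:strict-lyapunov}). First, $L$ is constant on each chain-equivalence class. Second, since $\cO_\cF\subset\cC_\cF$, we always have $L(\P(x))\le L(x)$, with equality if and only if $\P(x)\Cto x$. Indeed, if $\P(x)\not\Cto x$ then $x\Cto\P(x)$ is a strict relation and strictness gives $L(\P(x))<L(x)$. Conversely, if $\P(x)\Cto x$ then $x\Ceq\P(x)$ and $L$ takes the same value at both points. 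The whole argument rests on this pointwise dictionary between $L$ and $\Ceq$ along orbits.

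For item~(1), orbitwise chain-recurrence of $x$ means $x\Ceq z$ for every $z\in\cO_\cF(x)=\{\P(x):I\in\cI^m\}$. By the dictionary, this is equivalent to $L(\P(x))=L(x)$ for every $I$. For item~(3), Proposition~\ref{prop: alternative weak node} (equivalence of (1) and (2)) shows that $x$ is chain-recurrent if and only if $f_i(x)\Cto x$ for some generator $f_i$. By the dictionary applied to single generators, this holds if and only if $L(f_i(x))=L(x)$ for some $i$. Hence $x$ is not chain-recurrent exactly when $L(f_i(x))<L(x)$ for every $i$. For item~(2), $x$ is chain-recurrent but not orbitwise chain-recurrent exactly when the condition of item~(3) fails and the condition of item~(1) fails. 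That is, $L(f_i(x))=L(x)$ for some $i$, and $L(\P(x))\ne L(x)$, hence $L(\P(x))<L(x)$, for some $I$.

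Mutual exclusivity and exhaustiveness follow directly from the set-theoretic inclusions $\cR^{ow}_{\cC_\cF}\subset\cR_{\cC_\cF}$. Alternatively they can be read off from Theorem~\ref{thm: trichotomy}. The restriction $x\in\Gl$ is not actually needed for these equivalences, but it is harmless.

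The only delicate step is the converse half of the dictionary: $L(\P(x))=L(x)$ must force $\P(x)\Cto x$. This uses strictness in its contrapositive form, together with the fact that $x\Cto\P(x)$ always holds. I would state this as a one-line lemma before treating the three items, so that the rest of the proof is bookkeeping.
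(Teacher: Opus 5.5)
Your proposal is correct and follows essentially the same route as the paper. The paper also first notes that, since $x\Cto\P(x)$ always holds, strictness gives $L(\P(x))=L(x)$ if and only if $\P(x)\Ceq x$. It then reads off item~(1) from the definition of orbitwise chain recurrence, item~(3) from Proposition~\ref{prop: alternative weak node}, and item~(2) by combining the two; your remark that the hypothesis $x\in\Gl$ is never actually used is also correct.
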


\begin{proof}
    A strict Lyapunov function $L$ for $\cF$ exists by Proposition~\ref{prop:strict-lyapunov}.
    Since the chain stream contains the orbit relation, for every $I\in \cI^m$ we have
\[
x\Cto\P(x).
\]
    As $L$ is strict,
\[
L(\P(x))=L(x)
\quad\Longleftrightarrow\quad
\P(x)\Cto x,
\]
and so
\[
L(\P(x))=L(x)
\quad\Longleftrightarrow\quad
\P(x)\Ceq x.
\]
    The first assertion now follows directly from the definition of orbitwise chain recurrence.

    By Proposition~\ref{prop: alternative weak node}, $x$ is chain-recurrent if and only if $f_i(x)\Cto x$ for some generator $f_i$. 
    By the preceding observation, this is equivalent to
\[
L(f_i(x))=L(x)
\]
for some $i$. Thus $x$ is not chain-recurrent precisely when
\[
L(f_i(x))<L(x)
\]
for every $i$.

    Finally, if $x$ is chain-recurrent but not orbitwise chain-recurrent, equality holds for at least one generator, while there exists $I\in \cI^m$ such that $\P(x)\not=_{\cC_\cF}x$. 
    Since $x\succeq_{\cC_\cF}\P(x)$, strictness of $L$ gives
\[
L(\P(x))<L(x).
\]
    The converse is immediate.
\end{proof}


\medskip\noindent
{\bf When the orbitwise chain graph of an IFS is connected.}
As we already pointed out, the graph that represents the semigroup dynamics of the IFS is the orbitwise one.
We will show at the end of the section with a fully worked out example that, unlike the ``plain'' chain graph, the orbitwise one is not necessarily connected.
Now, we provide a general condition that grants the connectedness of the orbitwise chain graph.



\begin{definition}
We say that $\cF$ has \emph{backward recurrent access} if, for every
$x\in \Gl$, there exists a backward trajectory
\[
b^-=\{x_0=x,x_{-1},x_{-2},\dots\}\subset \Gl
\]
such that
\[
\alpha(b^-)\cap \cR^{ow}_{\cC_\cF}\neq\varnothing.
\]
\end{definition}



\begin{theorem}
\label{thm: G conn => graph conn}
    Assume that $\cF$ has compact dynamics, that $\Gl$ is connected and that $\cF$ has backward recurrent access.
    Then the orbitwise graph $\Gamma^{ow}_{\cC_\cF}$ of the chain stream is connected.
\end{theorem}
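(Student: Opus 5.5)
The plan is to mimic the proof of Theorem~\ref{thm: weak chain graph is connected}, replacing chain nodes by orbitwise chain nodes. Suppose by contradiction that $\Gamma^{ow}_{\cC_\cF}$ is disconnected. Then there is a node partition $\mathcal N_1,\mathcal N_2$ of the orbitwise nodes with no edge between the two families. Set $N_i=\bigcup_{M\in\mathcal N_i}M$; these are closed, disjoint and, by Proposition~\ref{prop: GF-trapping-for-CF} and Corollary~\ref{cor: chain nodes are compact}, compact subsets of $\Gl$. As before, for $i=1,2$ define
\[
A_i=\{x\in\Gl:\ \text{there exist }p,q\in N_i\text{ such that }p\Cto x\Cto q\}.
\]
The closedness of each $A_i$ follows verbatim from the compactness argument in Theorem~\ref{thm: weak chain graph is connected}. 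Disjointness also goes through unchanged: $p_1\Cto x\Cto q_2$ with $p_1\in N_1$, $q_2\in N_2$ gives an edge between the two families. Nonemptiness holds since $N_i\subset A_i$.

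The key step is showing that $\Gl=A_1\cup A_2$, and this is where backward recurrent access is needed. Fix $x\in\Gl$.

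For the upstream point, backward recurrent access gives a backward trajectory $b^-\subset\Gl$ through $x$ and a point $p\in\alpha(b^-)\cap\cR^{ow}_{\cC_\cF}$. Exactly as in the proof of Theorem~\ref{thm: weak chain graph is connected}, the closedness of $\cC_\cF$ gives $p\Cto x$, and $p$ lies in some orbitwise node, hence in $N_1$ or $N_2$.

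For the downstream point, Lemma~\ref{lemma: Om} gives $q\in\Om(x)\cap\owR$. By Proposition~\ref{prop: streams}(3), $q\in\cR^{ow}_{\cC_\cF}$, and closedness again gives $x\Cto q$.

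Now $p\Cto q$ with $p,q$ in orbitwise nodes. If these nodes lay in different families, this would be an edge between the families; if they coincide, there is nothing to check. Hence $p,q$ lie in the same $N_i$, and $x\in A_i$. This writes the connected set $\Gl$ as a disjoint union of two nonempty closed sets, a contradiction.

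The main obstacle is exactly this covering step. The upstream endpoint cannot be obtained from Lemma~\ref{lemma: limit sets of single itineraries} as in the plain case, because $\alpha(b)$ need only lie in a chain node, not in an orbitwise one; this is precisely why the hypothesis is imposed. A secondary point to check is that the node-partition closedness of $N_i$ is used only through compactness, and that $\cR^{ow}_{\cC_\cF}$ being closed (Proposition~\ref{prop: streams}(2)) makes the limits $p_n\to p$ in the closedness argument stay in $N_1$. Both are immediate from the definitions.
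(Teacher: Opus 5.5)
Your proof is correct and follows the paper's argument essentially step for step. Backward recurrent access supplies the orbitwise recurrent upstream point $p$, and Lemma~\ref{lemma: Om} together with Proposition~\ref{prop: streams}(3) (which is exactly how the paper's Proposition~\ref{prop: streams}(4) is proved) supplies the downstream point $q$; the same sets $A_1,A_2$ then disconnect $\Gl$. Your explanation of why the extra hypothesis is needed is also accurate: Lemma~\ref{lemma: limit sets of single itineraries} only places $\alpha(b)$ in a plain chain node, not in an orbitwise one.
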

\begin{proof}
    Suppose, by contradiction, that $\Gamma^{ow}_{\cC_\cF}$ is disconnected.
    Then there exists a node partition $\mathcal N_1,\mathcal N_2$ such that there is no edge between a node in $\mathcal N_1$ and a node in $\mathcal N_2$. 
    Denote by $N_i=\bigcup_{M\in\mathcal N_i}M$, $i=1,2$, the corresponding unions of node points. 
    By the definition of a node partition, $N_1$ and $N_2$ are nonempty, closed, and disjoint.

    We first associate to every point $x\in \Gl$ one of the two classes $\mathcal N_1,\mathcal N_2$.
    By backward recurrent access, there exists a backward trajectory $b^-=\{b_0=x,b_{-1},b_{-2},\ldots\}\subset \Gl$
    and a point $p\in\alpha(b^-)\cap \cR^{ow}_{\cC_\cF}$.
    Choose a sequence $n_k\to\infty$ such that $b_{-n_k}\to p$.
    Since $b_{-n_k}\succcurlyeq_\cF x$ for every $k$, we have $b_{-n_k}\Cto x$. 
    Since $\cC_\cF$ is closed, $p\Cto x$.
    On the other hand, Proposition~\ref{prop: streams}(4), applied to $\cC_\cF$, gives a point $q\in\Om(x)\cap \cR^{ow}_{\cC_\cF}$.
    
    There are multiindices $I_k$, with $|I_k|\to\infty$, such that $\Phi_\cF^{I_k}(x)\to q$. 
    Since $x\Cto \Phi_\cF^{I_k}(x)$ for every $k$, closedness gives $x\Cto q$.
    Consequently,
\beqn
\label{eq: 8-1}
p\Cto x\Cto q.
\eeqn

    Both $p$ and $q$ are orbitwise chain-recurrent, and hence belong to orbitwise chain nodes. 
    These two nodes must belong to the same class $\mathcal N_i$. 
    Indeed, if the node containing $p$ belonged to $\mathcal N_1$ and the node containing $q$ belonged to $\mathcal N_2$, or vice versa, then by ~\eqref{eq: 8-1} there would be an edge between the two classes, contrary to the choice of the partition.

    For $i=1,2$, set 
    \[
A_i
=
\left\{
x\in \Gl:
\text{there exist }p,q\in N_i
\text{ such that }
p\Cto x\Cto q
\right\}.
\]
    The preceding argument shows that $\Gl=A_1\cup A_2$.    
    Moreover, the union is disjoint. 
    Indeed, if $x\in A_1\cap A_2$, then there would exist $p_1,q_1\in N_1$, $p_2,q_2\in N_2$ such that
\[
p_1\to x\to q_1,
\qquad
p_2\to x\to q_2.
\]
In particular,
\[
p_1\to x\to q_2,
\]
    which gives an edge from a node in $\mathcal N_1$ to a node in $\mathcal N_2$, a contradiction. 

    The sets $A_1$ and $A_2$ are nonempty. 
    Indeed, if $x\in N_i$, then, by reflexivity,
\[
x\to x\to x,
\]
    and therefore $x\in A_i$. 
    Thus, $N_i\subset A_i$.

    It remains to show that $A_1$ and $A_2$ are closed. 
    We prove this for $A_1$; the proof for $A_2$ is identical.
    Let $x_n\in A_1$ with $x_n\to x\in \Gl$.
    For every $n$, choose $p_n,q_n\in N_1$ such that $p_n\to x_n\to q_n$.
    By Corollary~\ref{cor: chain nodes are compact}, $\cR^{ow}_{\cC_\cF}$ is compact. 
    Since $N_1$ is a closed subset of $\cR^{ow}_{\cC_\cF}$, it is compact as well.
    Passing to a subsequence, we may assume that $p_n\to p\in N_1$, $q_n\to q\in N_1$.
    Since $\cC_\cF$ is closed, $p\to x\to q$.
    Hence $x\in A_1$, proving that $A_1$ is closed. 

    Since we assumed that $\Gl$ is connected, this is not possible.
    Consequently, every node partition admits an edge between its two classes, namely $\Gamma^{ow}_{\cC_\cF}$ is connected.
\end{proof}


\medskip\noindent
{\bf An example with not connected orbitwise chain graph.}
Set $X=[0,3]$ and $\cE=\{e_1,e_2\}$, where $e_1$ is the time-1 map of the ODE
$$
x'=-x(x-1)(x-3)
$$
and $e_2$ the time-1 map of
$$
x'=-x(x-2)(x-3).
$$
Then $\cE$ has two fixed points: $x=0$ and $x=3$.
We show below that $\Gamma_{\cC_\cE}$ consists of the three nodes $\{0\}$, $\{3\}$ and $\Lambda$, where $\Lambda\subset[1,2]$ is a Cantor set, and the two edges $\Lambda\to\{0\}$ and $\Lambda\to\{3\}$. 
%

\medskip
    {\BF The node $\Lambda$.}
    Set $v_1(x)=-x(x-1)(x-3)$ and $v_2(x)=-x(x-2)(x-3)$.
    On the interval $(1,3)$, $v_1(x)>0$ and so $e_1$ moves points of $(1,3)$ to the right whereas, similarly, $e_2$ moves points of $(0,2)$ to the left.
    There is a unique point $a\in(1,2)$ such that $e_1(a)=2$ and, in fact, $a<3/2$. 
    Indeed, on $[3/2,2]$, $v_1(x)\geq \frac{9}{8}$ and so
    the time required by the $v_1$-trajectory starting at $3/2$ to reach $2$ is at most
\[
\int_{3/2}^{2}\frac{ds}{v_1(s)}
\leq
\frac{2-3/2}{9/8}
=\frac49<1.
\]
    Hence, $e_1(3/2)>2$ and, consequently, $a<3/2$.

    The reflection $\rho(x)=3-x$ conjugates the two maps: $e_2=\rho\circ e_1\circ\rho$.
    Thus, setting $b=3-a$, we have that $\frac32<b<2$ and $e_2(b)=1$.

    The restrictions
\[
e_1|_{[1,a]}:[1,a]\longrightarrow[1,2],
\qquad
e_2|_{[b,2]}:[b,2]\longrightarrow[1,2]
\]
    are increasing homeomorphisms. 
    Let
\[
g_1=\bigl(e_1|_{[1,a]}\bigr)^{-1},
\qquad
g_2=\bigl(e_2|_{[b,2]}\bigr)^{-1}.
\]
Then
\[
g_1([1,2])=[1,a],
\qquad
g_2([1,2])=[b,2],
\]
and these two intervals are disjoint.

    We claim that $g_1$ and $g_2$ are contractions. 
    Let $\varphi_i^t$ denote the flow of $v_i$. The derivative of a time-one map is
\[
e_i'(x)
=
\exp\left(
\int_0^1 v_i'\bigl(\varphi_i^t(x)\bigr)\,dt
\right).
\]
    Now
\[
v_1'(x)=-3x^2+8x-3\geq1
\qquad\text{ for } x\in[1,2],
\]
and
\[
v_2'(x)=-3x^2+10x-6\geq1
\qquad\text{ for } x\in[1,2].
\]
    The $v_1$-orbit of every point of $[1,a]$ and the $v_2$-orbit of every point of $[b,2]$ remain in $[1,2]$ for times $t\in[0,1]$.
    Therefore, $e_1'(x)\geq e$ for $x\in[1,a]$ and $e_2'(x)\geq e$ for $x\in[b,2]$.
    It follows that $\operatorname{Lip}(g_1)\leq e^{-1}$ and $\operatorname{Lip}(g_2)\leq e^{-1}$.
    The contractive IFS $\{g_1,g_2\}$ therefore has a unique nonempty compact invariant set
\[
\Lambda=g_1(\Lambda)\cup g_2(\Lambda).
\]
    Since the images $g_1([1,2])$ and $g_2([1,2])$ are disjoint, the usual coding map
\[
\pi:\{1,2\}^{\mathbb N_0}\longrightarrow\Lambda
\]
    is a homeomorphism. 
    Hence $\Lambda$ is a Cantor set.
    Equivalently, $\Lambda$ is the survivor set
\[
\Lambda
=
\left\{
x\in[1,2]:
\begin{array}{l}
\text{there exist indices }i_0,i_1,\ldots
\text{ such that, setting}\\
x_0=x,\quad x_{n+1}=e_{i_n}(x_n),
\text{ one has }x_n\in[1,2]\text{ for every }n
\end{array}
\right\}.
\]

\medskip
\textbf{Chain equivalence on $\Lambda$.}
Define the branch map $T:\Lambda\to\Lambda$ by
\[
T(x)=
\begin{cases}
e_1(x),&x\in g_1(\Lambda),\\
e_2(x),&x\in g_2(\Lambda).
\end{cases}
\]
    The two domains are disjoint, so $T$ is well defined, and the coding map conjugates $T$ to the one-sided full shift:
\[
T\circ\pi=\pi\circ\sigma.
\]

    We show directly that any two points of $\Lambda$ are chain-equivalent under $\cE$. 
    Let
\[
x=\pi(\omega),
\qquad
y=\pi(\eta),
\]
where
\[
\omega=(\omega_0,\omega_1,\ldots),
\qquad
\eta=(\eta_0,\eta_1,\ldots).
\]
    Fix $\varepsilon>0$. Since the inverse branches are contractions, there exists $N\geq1$ such that every cylinder of length $N$ has diameter smaller than $\varepsilon$.

    Consider the sequence
\[
\zeta=
(\omega_1,\ldots,\omega_N,\eta_0,\eta_1,\ldots).
\]
    The sequences $\sigma(\omega)$ and $\zeta$ have the same first $N$ symbols, and hence $d\bigl(T(x),\pi(\zeta)\bigr)<\varepsilon$.
    Moreover, $T^N(\pi(\zeta))=y$.
    Therefore
\[
x,\ \pi(\zeta),\ T(\pi(\zeta)),\ldots,T^N(\pi(\zeta))=y
\]
    is an $\varepsilon$-chain for $F$ from $x$ to $y$: the first jump has error smaller than $\varepsilon$, and all subsequent jumps are exact applications of one of the generators.
    Interchanging $x$ and $y$ gives an $\varepsilon$-chain from $y$ to $x$. 
    Since $\varepsilon>0$ was arbitrary, $x$ is $\cC_\cE$-equivalent to $y$ for every $x,y\in\Lambda$.
    Thus, every point of $\Lambda$ is chain-recurrent.

\medskip
\textbf{Maximality of $\Lambda$.}
    We prove that no point outside $\Lambda$ is chain-equivalent to a point of $\Lambda$.
    For $n\geq0$, set
\[
Q_n
=
\bigcup_{i_0,\ldots,i_{n-1}\in\{1,2\}}
g_{i_0}\circ\cdots\circ g_{i_{n-1}}([1,2]).
\]
    Then
\[
Q_{n+1}\subset Q_n
\qquad\text{and}\qquad
\Lambda=\bigcap_{n\geq0}Q_n.
\]
    The set $Q_n$ consists precisely of the points from which some forward itinerary remains in $[1,2]$ for at least $n$ steps.
    Let $z\in[1,2]\setminus\Lambda$. 
    Then $z\notin Q_N$ for some $N$.
    Consequently, for every word $I$ of length $N$,
\[
\Phi_\cE^I(z)\in[0,1)\cup(2,3].
\]
    None of these finitely many images equals $1$ or $2$, since from either of these points there is an infinite itinerary contained in $[1,2]$. 
    Hence there is a $\delta>0$ such that
\beqn
\label{eq: 5.15-1}
\Phi_\cE^I(z)\in[0,1-\delta]\cup[2+\delta,3]
\qquad\text{for every }|I|=N.
\eeqn

    On $(0,1)$ both generators move every point to the left. 
    Define
\[
h(x)=\max\{e_1(x),e_2(x)\},\qquad x\in[0,1].
\]
    Then $h$ is increasing and $h(x)<x$ for $x\in(0,1)$, and therefore $h^n(1-\delta)\to0$.
    It follows that all sufficiently long continuations of any point in $[0,1-\delta]$ converge uniformly to $0$.

    Similarly, on $(2,3)$ both generators move every point to the right. 
    Defining
\[
k(x)=\min\{e_1(x),e_2(x)\},
\qquad x\in[2,3],
\]
    one has $k(x)>x$ for $x\in(2,3)$ and $k^n(2+\delta)\to3$.
    Thus, all sufficiently long continuations of a point in $[2+\delta,3]$ converge uniformly to $3$.

    Together with~\eqref{eq: 5.15-1}, this proves that
\beqn
\label{eq: 5.15-5}
\Omega_\cE(z)\subset\{0,3\}
\qquad\text{for every }z\notin\Lambda.
\eeqn

    We also observe that neither $0$ nor $3$ is chain-upstream from $\Lambda$. 
    Choose $r\in(0,1)$. 
    Since $e_i(r)<r$, $i=1,2$, the number
\[
\varepsilon_0
=
r-\max\{e_1(r),e_2(r)\}
\]
    is positive. 
    Every $\varepsilon$-chain starting at $0$, with $0<\varepsilon<\varepsilon_0$, remains in $[0,r]$. 
    Hence
\beqn
\label{eq: 5.15-6}
0\not\CEto x
\qquad\text{for every }x\in\Lambda.
\eeqn
    The symmetric argument near $3$ gives
\beqn
\label{eq: 5.15-7}
3\not\CEto x
\qquad\text{for every }x\in\Lambda.
\eeqn

    Suppose now that some $z\notin\Lambda$ were chain-equivalent to some $x\in\Lambda$. 
    No exact orbit of $z$ can meet $\Lambda$.
    Indeed, if $\Phi_\cE^I(z)\in\Lambda$,    then, working backwards along the word $I$ and using
\[
e_i^{-1}(\Lambda)=g_i(\Lambda)\subset\Lambda,
\]
    one would obtain $z\in\Lambda$.
    Thus, $x\notin \cO_\cE(z)$. 
    Since the chain stream is an $\Omega$-stream and $z\CEto x$, there exists a point $p\in\Omega_\cE(z)$ such that $p\CEto x$.
    By~\eqref{eq: 5.15-5}, $p\in\{0,3\}$, contradicting~\eqref{eq: 5.15-6} and~\eqref{eq: 5.15-7}.

    Therefore, no point of $X\setminus\Lambda$ is chain-equivalent to a point of $\Lambda$.
    Together with the chain-equivalence of $\Lambda$ proved above, this shows that $\Lambda$ is a maximal set of mutually chain-equivalent chain-recurrent points.
    Hence, $\Lambda$ is a chain node.

\medskip
{\bf There are no other chain-recurrent points.}
    We claim that $\cR_{\cC_\cE}=\{0,3\}\cup\Lambda$.
    The inclusion from right to left is already clear: the points $0$ and $3$ are fixed by both generators, while every point of $\Lambda$ has been shown to be chain-recurrent.

    Let now $z\in[0,3]\setminus\bigl(\Lambda\cup\{0,3\}\bigr)$ and suppose, by contradiction, that $z$ is chain-recurrent.
    By Proposition~\ref{prop: alternative weak node}, there exists $i\in\{1,2\}$ such that
\beqn
\label{eq: 5.15-11}
e_i(z)\CEto z.
\eeqn

    Assume first that $z\in \cO_\cE(e_i(z))$.
    Then there exists a word $I$ such that $\Phi_\cE^I(e_i(z))=z$.
    Thus, $z$ belongs to a positive-length periodic orbit of the semigroup action.

    Such an orbit cannot meet $(0,1)$, since both generators satisfy $e_j(x)<x$ for $x\in(0,1)$, $j=1,2$, and therefore every subsequent point of the orbit is strictly smaller.
    Similarly, it cannot meet $(2,3)$, since $e_j(x)>x$ for $x\in(2,3)$,  $j=1,2$.
    Consequently, the whole periodic orbit is contained in $[1,2]$.
    Repeating its periodic itinerary gives an infinite forward itinerary of $z$ entirely contained in $[1,2]$. 
    By the survivor-set characterization of $\Lambda$ above, this implies that $z\in\Lambda$, a contradiction.

    Assume now that $z\notin \cO_\cE(e_i(z))$.
    Since $\cC_\cE$ is an $\Omega$-stream, relation~\eqref{eq: 5.15-11} implies that there exists $p\in\Omega_\cE(e_i(z))$ such that
\beqn
\label{eq: 5.15-12}
p\CEto z.
\eeqn
    No point of the orbit of $z$ belongs to $\Lambda$. 
    Indeed, if $\Phi_\cE^J(z)\in\Lambda$ for some word $J$, then, working backward along $J$ and using
\[
e_j^{-1}(\Lambda)=g_j(\Lambda)\subset\Lambda,
\qquad j=1,2,
\]
    we would obtain $z\in\Lambda$. 
    In particular, $e_i(z)\notin\Lambda$.
    It follows from~\eqref{eq: 5.15-5} that $\Omega_\cE(e_i(z))\subset\{0,3\}$ and so $p\in\{0,3\}$.

    We now observe that neither endpoint is chain-upstream from any different point. 
    Fix $y>0$ and choose $r\in(0,y)$. 
    Since $e_j(r)<r$, $j=1,2$, the number
\[
\varepsilon_r
=
r-\max\{e_1(r),e_2(r)\}
\]
    is positive. 
    Every $\varepsilon$-chain starting at $0$, with $0<\varepsilon<\varepsilon_r$, remains in $[0,r]$. 
    Therefore
\beqn
\label{eq: 5.15-13}
0\not\CEto y.
\eeqn
    By symmetry,
\beqn
\label{eq: 5.15-14}
3\not\CEto y
\qquad\text{for every }y<3.
\eeqn
    Since $z\neq0,3$, relations~\eqref{eq: 5.15-13}--\eqref{eq: 5.15-14} contradict~\eqref{eq: 5.15-12}. 
    Thus, $z$ is not chain-recurrent.
    Hence, 
\[
\cR_{\cC_\cE}=\{0,3\}\cup\Lambda.
\]
    Since neither $0$ nor $3$ is chain-equivalent to any different point, the chain nodes are precisely
\[
\{0\},\qquad \Lambda,\qquad \{3\}.
\]

\medskip
{\bf The edges of the chain graph.}
    The points $1$ and $2$ belong to $\Lambda$. Moreover, $e_2^n(1)\to0$ and $e_1^n(2)\to3$.
    Since the chain stream contains the orbit relation and is closed, it follows that $1\CEto 0$ and $2\CEto 3$.
    Hence the chain graph contains the edges
\[
\Lambda\longrightarrow\{0\},
\qquad
\Lambda\longrightarrow\{3\}.
\]

    Relations~\eqref{eq: 5.15-13}--\eqref{eq: 5.15-14} show that neither $\{0\}$ nor $\{3\}$ has an outgoing edge to a different chain node. 
    Since the chain graph has only the three nodes identified above, there are no further edges.

Hence, $\Gamma_{\cC_\cE}$ is the following graph:
\[
\{0\}
\longleftarrow
\Lambda
\longrightarrow
\{3\}.
\]

\medskip
{\bf The orbitwise chain graph.}
No point of $\Lambda$ is orbitwise chain recurrent.

Indeed, let \(x\in\Lambda\). 
Since
\[
\Lambda=g_1(\Lambda)\,\dot\cup\,g_2(\Lambda),
\]
the point \(x\) belongs to exactly one of the two sets. 
If \(x\in g_1(\Lambda)\), then
\[
e_2(x)\notin\Lambda,
\]
because
\[
e_2^{-1}(\Lambda)\cap[1,2]=g_2(\Lambda).
\]
Likewise, if \(x\in g_2(\Lambda)\), then \(e_1(x)\notin\Lambda\).
By the maximality argument above, no point outside \(\Lambda\) is
chain-equivalent to a point of \(\Lambda\). 
Thus \(x\) is not chain-equivalent to every point of its orbit and so 
\[
x\notin \cR_{\cC_\cE}^{\mathrm{ow}}.
\]

Hence, the orbitwise chain graph consists of the two bottom nodes $\{0\},\{3\}$ and no edge.
In particular, it is not a connected graph.

\section{The Hutchinson map}
\label{sec: Hutch}

In~\cite{Hut81}, Hutchinson showed that it is possible to study the dynamics of a contractive IFS $\cF$ by studying the dynamics of the map that now takes his name on the space of all nonempty compact subspaces.
In this section, we go over this point of view and illustrate some of its relations with the IFS point of view illustrated in Sections~\ref{sec: global attractor} and~\ref{sec: streams}.

\medskip\noindent{\bf The space of all compact subspaces.}
We denote by {\BF$\KX$} the space of all non-empty compact subspaces of $X$.
We endow this space with the 
 {\bf Hausdorff metric}
\[
\text{\BF$\dH(A,B)$}=\max\Bigl\{\sup_{a\in A} d(a,B),\ \sup_{b\in B} d(b,A)\Bigr\},\quad A,B\in\KX.
\]
When $(X,d)$ is compact/locally compact/separable/metrizable, so is $(\KX,d_H)$.
This space is often referred in literature as the {\bf hyperspace} of $X$.

\medskip\noindent
{\bf The Hutchinson map.}
By {\bf Hutchinson map} of $F$ we mean the map $H_\cF:\KX\to\KX$ defined by
\[
\text{\BF$H_\cF(K)$}=\bigcup_{i=1}^m f_i(K),\quad K\in\KX.
\]
We recall the following important elementary property of $H_\cF$:
\[
H_\cF(A\cup B)=H_\cF(A)\cup H_\cF(B),\quad A,B\in\KX.
\]


\medskip\noindent
{\BF Trapping regions and global attractor of $H_\cF$.}
The map $H_\cF$ defines a discrete-time semiflow on $\KX$.
The whole theory developed in the previous sections applies therefore to $H_\cF$, with the advantage that, for $m=1$, plain and orbitwise chain nodes coincide and so do also invariance and $H$-invariance.
We list below the results relevant for this section, which were also proven in the semiflows setting by James Yorke and the present author in~\cite{DLY25,DLY26a}. 

\begin{thmX}[Yorke and De Leo, 2025]
    \label{thm:B}
    Let $\Phi$ be a discrete semiflow with compact dynamics on $X$.
    Then:
    \begin{enumerate}
        \item The global attractor of $\Phi$ is the maximal invariant compact subset of $X$ that attracts each compact subset of $X$ under $\Phi$.
        \item If the smallest stream of $\Phi$ has only countably many nodes, then its chain stream is its smallest stream.
        \item The chain-recurrent set of $\Phi$ and all of its nodes are compact invariant subsets of its global attractor $\cG_\Phi$.
        \item If $\cG_\Phi$ is connected, the chain graph of $\Phi$ is connected.
    \end{enumerate}
\end{thmX}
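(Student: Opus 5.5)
The plan is to obtain all four items by specializing the IFS results already proved to the case $m=1$, viewing $\Phi$ as the IFS $\cF=\{\Phi\}$. For a single generator the free monoid $\cI^1$ is just $\mathbb N_0$. Then $H_\cF=\Phi$ on sets, so $H$-invariance coincides with invariance. Also, $x$ is chain-recurrent iff $\Phi(x)\Ceq x$, and in that case every $\Phi^n(x)$ is chain-equivalent to $x$ by transitivity, so $\cR_{\cC_\cF}=\cR^{ow}_{\cC_\cF}$ and plain and orbitwise nodes coincide. With these identifications, the hypothesis ``compact dynamics'' is literally the one used throughout Sections~\ref{sec: global attractor}--\ref{sec: chains}.

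For item~1, I first note that $\cG_\Phi$ is a compact invariant set attracting all compacta, by definition. For maximality, let $K$ be any compact invariant set; in fact mere compactness and $\Phi(K)=K$ suffice. Since $\cG_\Phi$ attracts $K$, for every $\eps>0$ we have $K=\Phi^n(K)\subset N_\eps(\cG_\Phi)$ for $n$ large. This is the argument of Proposition~\ref{prop: G is unique}, and it gives $K\subset\cG_\Phi$ because $\cG_\Phi$ is closed. Together with Proposition~\ref{prop: G_F minimal}, this also shows that $\cG_\Phi$ is the smallest forward-invariant compact attracting set. Item~2 is exactly Theorem~\ref{thm: Omega-stream} with $m=1$, and item~4 is exactly Theorem~\ref{thm: weak chain graph is connected} with $m=1$. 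In both cases the proofs used only compact dynamics, so nothing further is required.

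Item~3 needs one extra step. Compactness of the chain-recurrent set and of its nodes, and their inclusion in $\cG_\Phi$, follow from Proposition~\ref{prop: GF-trapping-for-CF} and Corollary~\ref{cor: chain nodes are compact}. Forward invariance of each node $N$ comes from Proposition~\ref{prop: streams}(1), since nodes are orbitwise nodes when $m=1$. The step I expect to be the only real obstacle is backward invariance, $N\subset\Phi(N)$. For it I will use the predecessor construction in the proof of Lemma~\ref{lemma: characterization of weak recurrence}. Given $z\in N$, that construction produces $y\in\cG_\Phi$ with $f_j(y)=z$, and with $m=1$ this reads $\Phi(y)=z$. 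It also shows $y\Ceq z$ and that $y$ is chain-recurrent, so $y\in N$. Hence $\Phi(N)=N$, and taking unions over all nodes gives invariance of $\cR_{\cC_\Phi}$. The delicate point is that this construction uses Theorem~\ref{prop: C = C glob} to keep the approximating chains inside the compact set $\cG_\Phi$, so that the penultimate points $y_n$ have a convergent subsequence. That theorem is available under compact dynamics, which closes the argument.
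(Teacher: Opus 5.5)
Your argument is correct in substance, but it follows a different route from the paper. The paper gives no proof of this statement: it imports it from the semiflow papers \cite{DLY25,DLY26a}. You instead derive it as the case $m=1$ of the IFS results proved here, and that derivation is legitimate and non-circular. None of the results you invoke depends on Theorem~\ref{thm:B}; those are Propositions~\ref{prop: G is unique}, \ref{prop: GF-trapping-for-CF}, \ref{prop: alternative weak node}, Theorems~\ref{prop: C = C glob}, \ref{thm: Omega-stream}, \ref{thm: weak chain graph is connected}, and Lemma~\ref{lemma: characterization of weak recurrence}. Theorem~\ref{thm:B} is only used later, for $H_\cF$ and for the skew product.

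Your route makes the paper self-contained and checks that the IFS theory really extends the semiflow theory. Its cost is that it proves the statement only under the standing hypotheses on $X$ (locally compact, separable, metrizable), whereas the cited works treat semiflows more generally, e.g.\ in continuous time. That restriction is harmless here, because every application of Theorem~\ref{thm:B} in the paper is to $\cK(X)$ or to $\Sigma_m^+\times\Gl$, and both satisfy those hypotheses.

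One step needs repair. For $m=1$ you claim that every $\Phi^n(x)$ is chain-equivalent to $x$ ``by transitivity''. Transitivity only gives $x\Ceq\Phi(x)$. To iterate, you need $\Phi(x)$ itself to be chain-recurrent, and that does not follow from the stream axioms alone. For example, take $X=\{a,b,c\}$ with $\Phi(a)=b$ and $\Phi(b)=\Phi(c)=c$, and let $S$ be the stream generated by $\cO_\Phi$ and $(b,a)$. Then $\Phi(a)\Seq a$, but $c\not\Sto a$, so $a$ is $S$-recurrent without being orbitwise $S$-recurrent.

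For the chain stream the claim does hold, by Proposition~\ref{prop: alternative weak node}(3). If $\Phi(x)\neq x$, then $\Phi(x)$ is chain-equivalent to the distinct point $x$, so it is chain-recurrent. Hence $\Phi^2(x)\Ceq\Phi(x)\Ceq x$, and you can induct. Alternatively, the ``successor'' half of the proof of Lemma~\ref{lemma: characterization of weak recurrence} gives $\Phi(N)\subset N$ directly when $m=1$. Combined with the predecessor half you already use, this yields $\Phi(N)=N$ and completes item~3.
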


We leave to the reader the verification of the elementary identity 
\beqn
\label{lemma: H^n}
H_\cF^n(A) = \bigcup_{|I|=n} \P(A)
\eeqn
for every $A \in \KX$ and $n \ge 1$.









\begin{lemma}
\label{lemma: F has tr reg => H has tr reg}
Assume that $\cF$ has a compact global trapping region $Q \subset X$.
Then $\cK(Q)$ is a compact global trapping region for the Hutchinson map $H_\cF$.
\end{lemma}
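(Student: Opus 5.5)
The plan is to verify directly the three defining properties of a compact global trapping region for the semiflow $H_\cF$ on $\KX$. These are: compactness, closedness together with forward-invariance under $H_\cF$, and absorption of every compact subset of $\KX$. Throughout, $\cK(Q)=\{K\in\KX: K\subset Q\}$ is viewed as a subset of $\KX$.

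\emph{Step 1: compactness and closedness.} Since $Q$ is compact, the hyperspace $(\cK(Q),d_H)$ is compact by the classical Blaschke-type fact recalled at the beginning of this section, applied to the compact space $Q$. The Hausdorff metric on $\cK(Q)$ is the restriction of the one on $\KX$, so $\cK(Q)$ is a compact, hence closed, subset of $\KX$.

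\emph{Step 2: forward-invariance.} If $K\subset Q$, then $f_i(K)\subset Q$ for each $i$, because $Q$ is forward-invariant under $\Phi_\cF$. Hence $H_\cF(K)=\bigcup_i f_i(K)$ is a nonempty compact subset of $Q$, so $H_\cF(\cK(Q))\subset\cK(Q)$.

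\emph{Step 3: absorption, the main step.} Let $\mathfrak C\subset\KX$ be compact. The key point is that $U=\bigcup_{K\in\mathfrak C}K$ is a compact subset of $X$; this is the standard fact that the union of a $d_H$-compact family of compact sets is compact. I would prove it by sequences. Take $x_n\in K_n\in\mathfrak C$ and pass to a subsequence with $K_n\to K\in\mathfrak C$. Choose $y_n\in K$ with $d(x_n,y_n)\le d_H(K_n,K)\to0$, and extract $y_n\to y\in K$; then $x_n\to y\in U$. Since $Q$ absorbs $U$ under $\Phi_\cF$, there is $\tau$ with $\P(U)\subset Q$ for all $|I|\ge\tau$. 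By identity~\eqref{lemma: H^n}, for every $n\ge\tau$ and every $K\in\mathfrak C$ we get
\[
H_\cF^n(K)=\bigcup_{|I|=n}\P(K)\subset\bigcup_{|I|=n}\P(U)\subset Q,
\]
so that $H_\cF^n(\mathfrak C)\subset\cK(Q)$ for all $n\ge\tau$.

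The only genuinely nontrivial ingredient is the compactness of $U$ in Step 3. It is worth recording because absorption must be uniform over the family $\mathfrak C$, and it is exactly this compactness that makes a single $\tau$ work for all $K\in\mathfrak C$.
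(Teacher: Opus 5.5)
Your proposal is correct and follows the paper's proof essentially step by step. You establish compactness of $\cK(Q)$, then forward-invariance from $f_i(Q)\subset Q$, then absorption via the compact set $U$ (the union of the members of the compact family) together with identity~\eqref{lemma: H^n}. The only difference is that you justify the compactness of $U$ with an explicit sequential argument, which the paper simply asserts here (it gives the same sequential argument in the proof of the converse lemma that follows).
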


\begin{proof}
Since $Q$ is compact, the hyperspace $\cK(Q)$ of nonempty compact subsets
of $Q$ is compact in $\KX$.

We first prove that $\cK(Q)$ is forward-invariant under $H_\cF$.
Let $A \in \cK(Q)$. Then $A \subset Q$. Since $Q$ is forward-invariant under
$F$, we have $f_i(Q) \subset Q$ for every $i=1,\dots,m$. Hence
\[
H_\cF(A)
  = \bigcup_{i=1}^m f_i(A)
  \subset \bigcup_{i=1}^m f_i(Q)
  \subset Q.
\]
Therefore $H_\cF(A) \in \cK(Q)$ and $\cK(Q)$ is forward-invariant.

We now prove that $\cK(Q)$ absorbs every compact subset of $\KX$.
Let $\mathcal C \subset \KX$ be compact and set
\[
U = \bigcup_{A \in \mathcal C} A .
\]
Since $\mathcal C$ is compact in $\KX$, the set $U$ is compact in $X$ and so, since $Q$ is a global trapping region, there exists $\tau \ge 0$ such that
\[
\P(U) \subset Q
\qquad
\text{for every } I \in \cI^m \text{ with } |I| \ge \tau .
\]

Let $A \in \mathcal C$. Since $A \subset U$, for every multiindex $I$
with $|I| \ge \tau$ we have
\[
\P(A) \subset \P(U) \subset Q .
\]

Using the identity
\[
H_\cF^n(A) = \bigcup_{|I|=n} \P(A),
\]
it follows that for every $n \ge \tau$
\[
H_\cF^n(A) \subset Q.
\]

Hence $H_\cF^n(A) \in \cK(Q)$ for every $A \in \mathcal C$ and every
$n \ge \tau$, that is,
\[
H_\cF^n(\mathcal C) \subset \cK(Q)
\qquad
\text{for every } n \ge \tau .
\]

Therefore $\cK(Q)$ absorbs every compact subset of $\KX$.
Thus $\cK(Q)$ is a compact global trapping region for $H_\cF$.
\end{proof}

\begin{lemma}
\label{lemma: H has tr reg => F has tr reg}
Assume that $H_\cF$ has a compact global trapping region $\cQ \subset \KX$.
Then
\[
Q = \bigcup_{A \in \cQ} A \subset X
\]
is a compact global trapping region for $\cF$.
\end{lemma}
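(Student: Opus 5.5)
The plan is to verify the three defining properties of a compact global trapping region for $Q=\bigcup_{A\in\cQ}A$ in turn: compactness (hence closedness), forward invariance under $\Phi_\cF$, and absorption of every compact subset of $X$.

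\textbf{Compactness.} We use the standard hyperspace fact already invoked in the proof of Lemma~\ref{lemma: F has tr reg => H has tr reg}: the union of a compact family $\cQ\subset\KX$ is compact in $X$. In case a direct argument is wanted, take a sequence $x_n\in A_n\in\cQ$ and pass to a subsequence with $A_n\to A\in\cQ$ in $d_H$. Then $d(x_n,A)\le d_H(A_n,A)\to0$. Choose $a_n\in A$ with $d(x_n,a_n)\to0$; by compactness of $A$ a further subsequence has $a_n\to a\in A$, so $x_n\to a\in Q$. Thus $Q$ is compact, and in particular closed.

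\textbf{Forward invariance.} Let $x\in Q$, say $x\in A$ with $A\in\cQ$, and fix $i\in\{1,\dots,m\}$. Since $\cQ$ is forward-invariant under $H_\cF$, we have $H_\cF(A)\in\cQ$. Moreover $f_i(x)\in f_i(A)\subset H_\cF(A)$, so $f_i(x)\in Q$. Hence $f_i(Q)\subset Q$ for every $i$.

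\textbf{Absorption.} Let $K\subset X$ be compact. If $K=\emptyset$ there is nothing to prove, so assume otherwise. Consider the compact family $\mathcal C=\{K\}\subset\KX$ (alternatively one could use $\{\{x\}:x\in K\}$, which is isometric to $K$ and hence compact). Since $\cQ$ absorbs $\mathcal C$, there is $\tau$ with $H_\cF^n(K)\in\cQ$ for all $n\ge\tau$. By the identity~\eqref{lemma: H^n}, $\P(K)\subset H_\cF^{|I|}(K)$. Therefore, for every $I$ with $|I|\ge\tau$, $\P(K)\subset H_\cF^{|I|}(K)\subset Q$. So $Q$ absorbs $K$.

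\textbf{Main obstacle.} The only nontrivial step is the compactness of the union, since it needs the Hausdorff-metric argument above; the other two steps follow directly from the definitions together with the identity~\eqref{lemma: H^n}.
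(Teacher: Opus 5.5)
Your proposal is correct and follows the paper's proof essentially step for step. Compactness uses the same sequential Hausdorff-metric argument. Forward invariance follows from $f_i(x)\in H_\cF(A)\in\cQ$. Absorption applies the absorbing property to the compact singleton $\{K\}\subset\KX$ together with the identity $H_\cF^n(K)=\bigcup_{|I|=n}\P(K)$.
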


\begin{proof}
We first prove that $Q$ is compact.

Let $\{x_n\}$ be a sequence in $Q$. For each $n$ there exists $A_n \in \cQ$
such that $x_n \in A_n$. Since $\cQ$ is compact in $\KX$, passing to a
subsequence if necessary, we may assume that
\[
A_n \to A \in \cQ
\]
in the Hausdorff metric. Since $x_n \in A_n$ for every $n$ and
$d_H(A_n,A)\to 0$, we have
\[
d(x_n,A) \to 0.
\]
For each $n$ choose $a_n \in A$ such that
\[
d(x_n,a_n) = d(x_n,A).
\]
Since $A$ is compact, passing to a further subsequence we may assume that
\[
a_n \to a \in A.
\]
Then
\[
d(x_n,a) \le d(x_n,a_n)+d(a_n,a)\to 0,
\]
so $x_n \to a \in A \subset Q$. Hence every sequence in $Q$ has a convergent
subsequence with limit in $Q$, and therefore $Q$ is compact.

We now prove that $Q$ is forward-invariant under $F$. Let $x \in Q$.
Then there exists $A \in \cQ$ such that $x \in A$. Since $\cQ$ is forward-invariant
under $H_\cF$, we have
\[
H_\cF(A) \in \cQ.
\]
For every $i=1,\dots,m$,
\[
f_i(x) \in \bigcup_{j=1}^m f_j(A) = H_\cF(A) \subset Q.
\]
Thus
\[
f_i(Q) \subset Q \qquad (i=1,\dots,m),
\]
so $Q$ is forward-invariant.

It remains to show that $Q$ absorbs every compact subset of $X$.
Let $B \subset X$ be compact. Then $\{B\}$ is a compact subset of $\KX$.
Since $\cQ$ is a global trapping region for $H_\cF$, there exists $\tau \ge 0$
such that
\[
H_\cF^n(B) \in \cQ
\qquad
\text{for every } n \ge \tau.
\]
By~\eqref{lemma: H^n}, 
it follows that for every multiindex $I$ with $|I|=n \ge \tau$,
\[
\P(B) \subset H_\cF^n(B) \subset Q.
\]
Hence
\[
\P(B) \subset Q
\qquad
\text{for every } I \in \cI^m \text{ with } |I| \ge \tau.
\]
Therefore $Q$ absorbs $B$.

Since $B$ was arbitrary, $Q$ absorbs every compact subset of $X$.
Thus $Q$ is a compact global trapping region for $\cF$.
\end{proof}

\begin{lemma}
\label{lemma: d_H}
For every $A \in \KX$ and every compact set $G \subset X$,
\[
d_H(A,\cK(G))=\sup_{x\in A} d(x,G).
\]
\end{lemma}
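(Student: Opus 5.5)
We prove the identity by establishing the two inequalities separately, using only the definition of the Hausdorff distance on $\KX$. Recall that $d_H(A,\cK(G))=\inf_{B\in\cK(G)}d_H(A,B)$, and set $s=\sup_{x\in A}d(x,G)=e(A,G)$. We assume $G\neq\varnothing$, so that $\cK(G)$ is nonempty.

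\emph{Lower bound.} Let $B\in\cK(G)$. For every $x\in A$ we have $d(x,B)\ge d(x,G)$, because $B\subset G$. Hence
\[
d_H(A,B)\ge e(A,B)\ge e(A,G)=s.
\]
Taking the infimum over $B\in\cK(G)$ gives $d_H(A,\cK(G))\ge s$.

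\emph{Upper bound.} We look for a single $B\in\cK(G)$ with $d_H(A,B)\le s$; the infimum is then attained. The natural candidate is the set of nearest points,
\[
B=\{g\in G:\ d(g,x)=d(x,G)\text{ for some }x\in A\}.
\]
We check three things.
\begin{enumerate}
\item $B$ is nonempty. Since $G$ is compact and $d(x,\cdot)$ is continuous, each $x\in A$ has at least one nearest point in $G$.
\item $B$ is closed in the compact set $G$, hence compact. Let $g_n\in B$ with witnesses $x_n\in A$, and suppose $g_n\to g$. By compactness of $A$ we may pass to a subsequence with $x_n\to x\in A$. Continuity of $d$ and of $d(\cdot,G)$ then gives $d(g,x)=d(x,G)$, so $g\in B$.
\item $d_H(A,B)\le s$. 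Every $x\in A$ has a nearest point in $B$, so $d(x,B)\le d(x,G)\le s$. Conversely, every $g\in B$ has a witness $x\in A$, so $d(g,A)\le d(g,x)=d(x,G)\le s$.
\end{enumerate}

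Combining the two bounds gives $d_H(A,\cK(G))=s$, which is the claim. The only step that needs care is the closedness of $B$, and it follows from the subsequence argument in item 2.
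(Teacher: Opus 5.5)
Your proof is correct and has the same structure as the paper's: the identical lower bound from $d(x,B)\ge d(x,G)$ for $B\subset G$, and an explicit witness $B\in\cK(G)$ with $d_H(A,B)\le s$ for the upper bound. The only difference is the choice of witness. The paper takes $B=\{y\in G: d(y,A)\le s\}$, which is closed for free as a sublevel set of the continuous function $d(\cdot,A)$ and is nonempty because nearest points exist. Your smaller nearest-point set instead needs the subsequence argument using compactness of $A$, and you carry that argument out correctly.
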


\begin{proof}
    Let $B\subset G$ and set $\delta=\sup_{x\in A} d(x,G)$.
    Then $d(x,B)\ge d(x,G)$ for every $x\in A$, so that 
\[
d_H(A,B)\ge\sup_{x\in A} d(x,B)\ge \delta.
\]
    Since this holds for every $B\in\cK(G)$, we get that $d_H(A,\cK(G))\ge \delta$.
    Now, set 
\[
B=\{y\in G : d(y,A)\le\delta\}.
\]
    Since $G$ is compact and nonempty, $B$ is a nonempty compact subset of $G$ and so $B\in \cK(G)$.
    Now, for every $x\in A$, by definition there exists $y\in G$ such that $d(x,y)\le \sup_{x\in A} d(x,G)$, and such a point $y$ necessarily belongs to $B$.
    Therefore, $\sup_{x\in A} d(x,B)\le \delta$.
    Moreover, since $B\subset G$ and each point of $B$ lies within distance at most $\delta$ of $A$, we have $\sup_{y\in B} d(y,A)\le \delta$ and so $d_H(A,B)\le \delta$ and $d_H(A,\cK(G))\le \delta$.    
    Therefore, ultimately, $d_H(A,\cK(G))=\delta$.
\end{proof}

\begin{lemma}
    \label{lemma: H -> K}
    Assume that $\cF$ has compact dynamics and let $\cC \subset\KX$ be compact. 
    Then
$$
\sup_{A\in\cC}d_H(H_\cF^n(A),\cK(\Gl))\to0.
$$
    In particular,
\[
\Omega_{H_\cF}(\cC) \subset \cK(\Gl).
\]
\end{lemma}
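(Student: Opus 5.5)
The plan is to reduce everything to Lemma~\ref{lemma: d_H} and the identity~\eqref{lemma: H^n}, and then use the fact that $\Gl$ attracts compact subsets of $X$. First, set
\[
U=\bigcup_{A\in\cC}A .
\]
Exactly as in the proof of Lemma~\ref{lemma: H has tr reg => F has tr reg}, $U$ is a compact subset of $X$; the same sequence argument applies with $\cC$ in place of $\cQ$. Next, for $A\in\cC$, Lemma~\ref{lemma: d_H} gives
\[
d_H\bigl(H_\cF^n(A),\cK(\Gl)\bigr)=\sup_{y\in H_\cF^n(A)}d(y,\Gl).
\]
By~\eqref{lemma: H^n}, every $y\in H_\cF^n(A)$ has the form $y=\P(a)$ with $|I|=n$ and $a\in A\subset U$. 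Hence
\[
\sup_{A\in\cC}d_H\bigl(H_\cF^n(A),\cK(\Gl)\bigr)\le \sup_{|I|=n}e\bigl(\P(U),\Gl\bigr).
\]

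For the decay, fix $\eps>0$. Since $\Gl$ attracts the compact set $U$, there is $\tau$ with $\P(U)\subset N_\eps(\Gl)$ for all $I\in\cI^m_\tau$. Therefore the right-hand side above is at most $\eps$ for all $n\ge\tau$, which proves the uniform convergence. The only mildly delicate point is the compactness of $U$. I will either cite the earlier argument or redo it: take $x_n\in A_n\in\cC$, extract $A_n\to A\in\cC$, and use $d(x_n,A)\to0$ together with the compactness of $A$.

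For the second assertion, let $B\in\Omega_{H_\cF}(\cC)$, so $B=\lim H_\cF^{n_k}(A_k)$ with $A_k\in\cC$ and $n_k\to\infty$. By the first part, $d_H\bigl(H_\cF^{n_k}(A_k),\cK(\Gl)\bigr)\to0$. Hence, by the triangle inequality for $d_H$, $d_H(B,\cK(\Gl))=0$. Since $\Gl$ is compact, $\cK(\Gl)$ is compact and hence closed in $\KX$. It follows that $B\in\cK(\Gl)$.

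I do not expect a real obstacle. The only things to double-check are the direction of the equality in Lemma~\ref{lemma: d_H} (only the inequality $\le$ is needed here) and that the definition of $\Omega_{H_\cF}$ for a set of the discrete semiflow $H_\cF$ uses iterates $n_k\to\infty$ applied to points of $\cC$, matching the paper's definition with $m=1$.
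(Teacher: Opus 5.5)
Your proposal is correct and follows the paper's proof essentially step by step. Both arguments take the compact set $U=\bigcup_{A\in\cC}A$, use that $\Gl$ attracts $U$, combine this with the identity $H_\cF^n(A)=\bigcup_{|I|=n}\P(A)$ and Lemma~\ref{lemma: d_H} to get the uniform bound, and then deduce the limit-set inclusion from the closedness of $\cK(\Gl)$. You also spell out two points the paper only asserts: the compactness of $U$ and the triangle-inequality limit argument.
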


\begin{proof}
    Set $U=\bigcup_{A\in\cC}A$.
    The set $U$ is compact. 
    Since $\Gl$ attracts $U$, for every $\eps>0$ there exists $N\geq1$ such that $\P(U)\subset N_\eps(\Gl)$ whenever $|I|\geq N$.
    Hence, for $A\in \cC$ and $n\geq N$, 
    $$
    H_\cF^n(A) = \bigcup_{|I|=n}\P(A) \subset\bigcup_{|I|=n}\P(U)\subset N_\eps(\Gl).
    $$
    Then, by Lemma~\ref{lemma: d_H},
    $$
    d_H(H_\cF^n(A),\cK(\Gl))=e(H_\cF^n(A),\Gl)\leq\eps.
    $$
    The estimate is uniform in $A\in\cC$, so the assertion about the limit set follows.

    Since $\Gl$ is closed, this implies 
%
    that $\Omega_{H_\cF}(\cC) \subset \cK(\Gl)$.
\end{proof}



 \begin{theorem}
    \label{thm: F has comp dyn <=> H has comp dyn}
    $\cF$ has compact dynamics if and only if $H_\cF$ has compact dynamics.
    Moreover, if $\cH_\cF$ denotes the global attractor of $H_\cF$, then
    $$
    \Gl\in\cH_\cF \subset \cK(\Gl).
    $$
    In particular,
    $$
    \Gl=\bigcup_{A\in{\cH_\cF}}A\subset X.
    $$ 
    Finally, if $\cG_\cF$ is connected, then so is $\cH_\cF$. 
\end{theorem}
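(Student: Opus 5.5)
The two directions of the equivalence come straight from the two trapping-region lemmas together with Theorem~\ref{thm: compact dynamics = compact gl tr reg}. Suppose $\cF$ has compact dynamics. Then it has a compact global trapping region $Q$, and Lemma~\ref{lemma: F has tr reg => H has tr reg} makes $\cK(Q)$ a compact global trapping region for $H_\cF$. The semiflow analogue of Theorem~\ref{thm: Q => G} (the case $m=1$ applied on $\KX$, which is locally compact, separable and metrizable) then gives compact dynamics for $H_\cF$, with $\cH_\cF=\Omega_{H_\cF}(\cK(Q))$. Conversely, if $H_\cF$ has compact dynamics, Theorem~\ref{thm: compact dynamics = compact gl tr reg} with $m=1$ gives a compact global trapping region $\cQ$ for $H_\cF$. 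Lemma~\ref{lemma: H has tr reg => F has tr reg} turns it into the compact global trapping region $\bigcup_{A\in\cQ}A$ for $\cF$, and Theorem~\ref{thm: Q => G} finishes.

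For the inclusions, I will first use Lemma~\ref{lemma: H -> K} with $\cC=\cK(Q)$, where $Q$ is a compact global trapping region of $\cF$. This gives $\cH_\cF=\Omega_{H_\cF}(\cK(Q))\subset\cK(\Gl)$, and $\cK(\Gl)$ is closed in $\KX$. Next, $\Gl$ is a fixed point of $H_\cF$ by $H$-invariance, so $\{\Gl\}$ is a compact invariant set. Since $\cH_\cF$ attracts $\{\Gl\}$ and $\Gl=H^n_\cF(\Gl)$ for all $n$, the same argument as Proposition~\ref{prop: G is unique} gives $d_H(\Gl,\cH_\cF)=0$. Hence $\Gl\in\cH_\cF$, because $\cH_\cF$ is closed.

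The union formula follows at once. The inclusion $\cH_\cF\subset\cK(\Gl)$ gives $\bigcup_{A\in\cH_\cF}A\subset\Gl$, and $\Gl\in\cH_\cF$ gives the reverse inclusion.

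The connectedness claim is the step I expect to be the main obstacle. My approach is to show that every $A\in\cH_\cF$ can be joined to $\Gl$ by a connected subset of $\cH_\cF$. For $A\subset\Gl$, consider the path $t\mapsto A_t=\{x\in\Gl: d(x,A)\le t\}$ for $t\in[0,\operatorname{diam}\Gl]$. When $\Gl$ is connected this is a continuous path in $\cK(\Gl)$ from $A$ to $\Gl$. The difficulty is that this path need not stay inside $\cH_\cF$. To fix this I would use the union property of $H_\cF$ instead: for $A\in\cH_\cF$, the sets $A\cup B$ with $B\in\cH_\cF$ should again lie in $\cH_\cF$. The justification is that $\cH_\cF$ is invariant, so each element has preimages of every order, and $H_\cF(A'\cup B')=H_\cF(A')\cup H_\cF(B')$; the set of such unions is then compact and invariant, and it is attracted by $\cH_\cF$, hence contained in it. In particular $\cH_\cF$ is closed under finite unions.

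Granting this, I would argue by contradiction. Suppose $\cH_\cF=\cU_1\sqcup\cU_2$ with both parts clopen and $\Gl\in\cU_1$. Pick $A\in\cU_2$. The map $B\mapsto A\cup B$ is continuous, so the image of any connected piece of $\cH_\cF$ under it is connected. I would try to run the standard argument that $\cK(Y)$ is connected when $Y$ is connected, working along chains of small sets inside $\Gl$ and using that $\Gl=\bigcup_{A\in\cH_\cF}A$ is connected. The aim is to link $A$ to $A\cup\Gl=\Gl$ by finitely many unions whose consecutive Hausdorff distances are small, which contradicts clopenness. If this union-closure route fails, my fallback is to invoke the semiflow result of~\cite{DLY26a} on connectedness of the global attractor of a semiflow whose absorbing region $\cK(Q)$ is connected. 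In that case I would choose $Q$ connected, for instance a connected compact neighborhood of the connected set $\Gl$, and use Lemma~\ref{lemma: for-inv}.
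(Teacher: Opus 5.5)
Your arguments for the equivalence, for $\Gl\in\cH_\cF\subset\cK(\Gl)$ and for the union formula are correct and essentially match the paper's. The one difference is that you get $\cH_\cF\subset\cK(\Gl)$ from Lemma~\ref{lemma: H -> K} applied to $\cC=\cK(Q)$, where the paper uses the minimality in Proposition~\ref{prop: G_F minimal}; both work.

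\textbf{The gap.} Neither of your routes proves that $\cH_\cF$ is connected.

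\emph{Union-closure route.} Closure of $\cH_\cF$ under finite unions is true. But it cannot force connectedness, even together with $\Gl\in\cH_\cF$ and the connectedness of $\Gl=\bigcup_{A\in\cH_\cF}A$. The family $\{\{p\},\Gl\}$, with $p\in\Gl$ and $\Gl\neq\{p\}$, has all these properties and is disconnected. Your ``chain of small unions'' needs small elements of $\cH_\cF$ to add to $A$, and in general there are none. For the Sierpi\'nski IFS, Hutchinson's theorem gives $\cH_\cF=\{\Gl\}$, so it contains no small sets at all. Once an intermediate union leaves $\cH_\cF$, the clopen partition of $\cH_\cF$ gives no information.

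\emph{Fallback.} This needs a connected compact global trapping region, which need not exist. Take $X=\{0\}\times[0,1]\cup\bigcup_{n\geq1}\{1/n\}\times[0,1]$ with $f(1/n,y)=(1/(n+1),y)$ and $f(0,y)=(0,y)$. Here $\Gl=\{0\}\times[0,1]$ is connected. But every global trapping region contains $f^\tau(X)$ for some $\tau$, hence both $\Gl$ and a whole column $\{1/k\}\times[0,1]$, which is clopen in $X$. So no global trapping region, and indeed no neighborhood of $\Gl$, is connected.

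\textbf{The missing idea.} No absorbing set is needed. The set $\cK(\Gl)$ has the following properties:
\begin{itemize}
\item it is compact and forward-invariant under $H_\cF$;
\item it contains $\cH_\cF$, as you proved;
\item it is connected when $\Gl$ is. The finite subsets of $\Gl$ are dense in $\cK(\Gl)$, and they form a connected set: it is the increasing union of the images of the connected spaces $\Gl^n$ under the continuous maps $(x_1,\dots,x_n)\mapsto\{x_1,\dots,x_n\}$, each of which contains the set of singletons.
\end{itemize}
Your path $t\mapsto\{x\in\Gl:d(x,A)\le t\}$ is not a valid substitute for the last point: it can jump when $d(\cdot,A)$ has a local minimum on $\Gl$.

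Now finish as follows. Invariance of $\cH_\cF$ gives $\cH_\cF=H^n_\cF(\cH_\cF)\subset H^n_\cF(\cK(\Gl))$ for every $n$. Conversely, $\bigcap_{n\geq0}H^n_\cF(\cK(\Gl))$ is a nonempty compact invariant set (argue as in Theorem~\ref{thm: Q => G}). It is therefore attracted by $\cH_\cF$, and hence contained in it. Thus $\cH_\cF=\bigcap_{n\geq0}H^n_\cF(\cK(\Gl))$ is a decreasing intersection of compact connected sets, and so is connected. This is precisely the paper's argument.
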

%


\begin{proof}
    Assume first that $\cF$ has compact dynamics. 
    By Theorem~\ref{thm: compact dynamics = compact gl tr reg}, there exists a compact global trapping region $Q\subset X$ for $\cF$. 
    By Lemma~\ref{lemma: F has tr reg => H has tr reg}, the hyperspace $\cK(Q)\subset\c\KX$ is a compact global trapping region for $H_\cF$. 
    Therefore, $H_\cF$ has compact dynamics.
    
    Now, since $\Gl$ is compact in $X$ and forward-invariant under $\cF$, it follows that $\cK(\Gl)$ is compact in $\c\KX$ and forward-invariant under $H_\cF$.
    Moreover, by Lemma~\ref{lemma: H -> K}, $\cK(\Gl)$ attracts every compact subset of $\KX$.
    Hence, by Proposition~\ref{prop: G_F minimal}, $\cH_\cF\subset\cK(\Gl)$.

    Since $\Gl$ is $H$-invariant, $\Gl$ is a fixed point of the map $H_\cF$ and, since $\mathcal \cH_\cF$ attracts the singleton $\{\Gl\}$,
\[
d_H\bigl(H_\cF^n(\Gl),\mathcal \cH_\cF\bigr)
=
d_H\bigl(\Gl,\mathcal \cH_\cF\bigr)
\longrightarrow0.
\]
    As $\mathcal \cH_\cF$ is closed, this gives $\Gl\in\mathcal \cH_\cF$.

    \medskip
    Conversely, assume that $H_\cF$ has compact dynamics. By Theorem~\ref{thm: compact dynamics = compact gl tr reg} in the semiflow setting, there exists a compact global trapping region $\cC\subset \KX$ for $H_\cF$. 
    By Lemma~\ref{lemma: H has tr reg => F has tr reg}, the set
\[
Q=\bigcup_{A\in \cC} A
\]
    is a compact global trapping region for $\cF$. 
    Hence, again by Theorem~\ref{thm: compact dynamics = compact gl tr reg}, $\cF$ has compact dynamics.

Finally, since $\Gl\in\mathcal \cH_\cF
\subset\mathcal K(\Gl)$, we have
\[
\Gl
\subset
\bigcup_{A\in\mathcal \cH_\cF}A
\subset
\Gl.
\]
so that 
\[
\Gl=\bigcup_{A\in\mathcal \cH_\cF}A.
\]
    Finally, assume that $\Gl$ is connected.
    Then $\cK(\Gl)$ is connected and so is
    $$
    \cH_\cF=\bigcap_{n\geq0}H^n_\cF(\cK(\Gl))
    $$
    since the sets $H^n_\cF(\cK(\Gl))$ are nested compact connected sets. 
\end{proof}

Notice that, being the global attractor of a semiflow, $\cH_\cF$ is invariant under $H_\cF$.

 \medskip\noindent
{\BF The chain stream of $H_\cF$.}
Let $\eps>0$. 
An $\eps$-chain from $K$ to $L$ is a finite sequence
\[
K=K_0,K_1,\dots,K_n=L
\]
such that
\[
\dH\bigl(H_\cF(K_j),K_{j+1}\bigr)<\eps
\quad\text{for all }j=0,\dots,n-1.
\]
We write that $K\geq L$ if, for every $\eps>0$, there is an $\eps$-chain from $K$ to $L$.
In this case, we say that $L$ is $H_\cF$-downstream from $K$.
We set
\[
\cC_{H_\cF} = \{(K,L)\in \KX\times \KX:\ K\Hto L\}.
\]
This relation is closed and transitive for any semiflow~\cite{DLY25,DLY26a} and we call it the {\bf chain stream} of $H_\cF$.


\medskip\noindent
{\bf Contractive IFSs.}
We recall the following seminal result of Hutchinson.
\begin{thmX}[Hutchinson, 1981~\cite{Hut81}]
\label{thm: Hutch}
Assume that $\cF$ is contractive and $X$ is complete.
Then $H_\cF$ is a contraction in $\KX$ and so, in particular, it has a unique fixed point $K$ and every point in $\KX$ asymptotes to $K$ under $H_\cF$. 
\end{thmX}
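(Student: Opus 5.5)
The plan is the classical Banach-fixed-point route on the hyperspace, following Hutchinson's original argument. Fix a common contraction constant $\lambda\in(0,1)$ for the finitely many generators, i.e. $d(f_i(u),f_i(v))\le\lambda\,d(u,v)$ for all $u,v$ and all $i$. The argument then has three steps.

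\textbf{Step 1: each generator acts as a $\lambda$-contraction on $\KX$.} For $A,B\in\KX$ and a single $f_i$, take $a\in A$ and a point $b\in B$ realizing $d(a,B)$, which exists by compactness. Then
\[
d\bigl(f_i(a),f_i(B)\bigr)\le d\bigl(f_i(a),f_i(b)\bigr)\le\lambda\,d(a,B).
\]
Taking the supremum over $a\in A$, and then exchanging the roles of $A$ and $B$, gives $d_H(f_i(A),f_i(B))\le\lambda\,d_H(A,B)$. Note that $f_i(A)$ is compact and nonempty, so $H_\cF$ indeed maps $\KX$ into itself.

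\textbf{Step 2: the union inequality.} Show that
\[
d_H\Bigl(\bigcup_i A_i,\bigcup_i B_i\Bigr)\le\max_i d_H(A_i,B_i).
\]
This follows from $e\bigl(\bigcup_i A_i,\bigcup_i B_i\bigr)\le\max_i e(A_i,B_i)$, which holds because each point of $A_i$ is within $e(A_i,B_i)$ of $B_i\subset\bigcup_j B_j$. Combining Steps 1 and 2 gives $d_H(H_\cF(A),H_\cF(B))\le\lambda\,d_H(A,B)$, so $H_\cF$ is a contraction.

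\textbf{Step 3: completeness of $(\KX,d_H)$.} This is the main obstacle, since it is the only genuinely non-routine ingredient. Given a Cauchy sequence $A_n$, I would define
\[
A=\bigcap_n\overline{\bigcup_{k\ge n}A_k}.
\]
I would then show three things:
\begin{enumerate}
\item $A$ is nonempty and $e(A_n,A)\to0$. Starting from a point of $A_n$, pick successive points in $A_{n_j}$ along a fast subsequence with $d_H(A_{n_j},A_{n_{j+1}})<2^{-j}$; this gives a Cauchy sequence in $X$, whose limit lies in $A$ by completeness of $X$.
\item $e(A,A_n)\to0$. This follows directly from the Cauchy property.
\item $A$ is compact. $A$ is closed, hence complete, and it is totally bounded because it lies within $\varepsilon$ of some compact $A_n$.
\end{enumerate}

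With these in hand, the Banach fixed point theorem gives a unique fixed point $K\in\KX$ and the convergence $d_H(H_\cF^n(B),K)\le\lambda^n d_H(B,K)\to0$ for every $B\in\KX$. This is exactly the statement that every point of $\KX$ asymptotes to $K$ under $H_\cF$.
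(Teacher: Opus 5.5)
Your proof is correct: it is the classical Banach fixed-point argument of Hutchinson~\cite{Hut81}, and the paper gives no proof of its own, only quoting the result. The paper later relies on exactly your two ingredients, completeness of $(\KX,\dH)$ and $\lambda$-contractivity of $H_\cF$, in its results on contractive IFSs.

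The only point worth tightening is Step~3(1). Choose the subsequence so that $\dH(A_m,A_{n_j})<2^{-j}$ for all $m\ge n_j$, not just for consecutive terms. This is what lets you link an arbitrary point of $A_n$, for $n$ large, to the subsequence.
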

Below we rephrase Hutchinson's result
within our framework and then we extend it using the tools we developed so far.
%
\begin{corollary}
    Assume that $\cF$ is contractive and has compact dynamics.
    Then the global attractor of $H_\cF$ is a fixed point singleton $\{K\}$, which is also the unique node of $H_\cF$.
\end{corollary}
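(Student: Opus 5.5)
The plan is to combine Hutchinson's theorem with the identification of the global attractor of $H_\cF$ obtained in Theorem~\ref{thm: F has comp dyn <=> H has comp dyn}, and then use the fact that, for the semiflow $H_\cF$, chain nodes lie in its global attractor (Theorem~\ref{thm:B}(3)).

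\emph{Step 1: the fixed point.} Since $\cF$ has compact dynamics, Theorem~\ref{thm: F has comp dyn <=> H has comp dyn} gives that $H_\cF$ has compact dynamics with global attractor $\cH_\cF$, and $\Gl\in\cH_\cF\subset\cK(\Gl)$. The set $\Gl$ is a nonempty compact $H$-invariant set, so $H_\cF(\Gl)=\Gl$ is a fixed point of $H_\cF$ in $\KX$. I would avoid invoking completeness of $X$ and work on the compact set $\cK(\Gl)$ instead. If $\lambda<1$ is a common contraction constant, the standard estimate
\[
d_H\bigl(H_\cF(A),H_\cF(B)\bigr)\le\lambda\, d_H(A,B)
\]
holds for all $A,B\in\KX$, since $d_H\bigl(\cup_i f_i(A),\cup_i f_i(B)\bigr)\le\max_i d_H\bigl(f_i(A),f_i(B)\bigr)$. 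So $H_\cF$ is a contraction on $\KX$, and any fixed point in $\KX$ is unique. Set $K:=\Gl$.

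\emph{Step 2: $\cH_\cF=\{K\}$.} Let $A\in\cH_\cF$. Since $\cH_\cF$ is invariant under $H_\cF$ (the remark after Theorem~\ref{thm: F has comp dyn <=> H has comp dyn}), for each $n$ there is $A_n\in\cH_\cF$ with $H_\cF^n(A_n)=A$. Then
\[
d_H(A,K)=d_H\bigl(H_\cF^n(A_n),H_\cF^n(K)\bigr)\le\lambda^n\operatorname{diam}_{d_H}(\cH_\cF)\to0,
\]
using compactness of $\cH_\cF$. Hence $A=K$, and $\cH_\cF=\{K\}$.

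\emph{Step 3: unique node.} By Theorem~\ref{thm:B}(3), the chain-recurrent set of $H_\cF$ is a subset of $\cH_\cF=\{K\}$. Since $K$ is fixed, it is chain-recurrent, so $\{K\}$ is the unique chain node of $H_\cF$.

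The only step needing care is Step 1: one must check that the contraction estimate for $H_\cF$ uses only the common Lipschitz bound and not completeness of $X$. This reduces to the elementary Hausdorff-distance inequality for finite unions, which I expect to be routine.
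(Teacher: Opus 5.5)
Your proof is correct and complete. The paper gives no separate argument for this corollary; it presents it as a rephrasing of Theorem~\ref{thm: Hutch}. The intended proof is that $H_\cF$ is a contraction with a unique fixed point $K$ to which every compact set converges, so $\{K\}$ is the global attractor and, by Theorem~\ref{thm:B}(3), the only chain node.

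That route tacitly relies on completeness of $X$, which Theorem~\ref{thm: Hutch} needs to produce the fixed point, and completeness is not among the corollary's hypotheses. Your version removes this dependence. Compact dynamics supplies the fixed point $\Gl$. The estimate $d_H(H_\cF(A),H_\cF(B))\le\lambda\, d_H(A,B)$ holds on $\KX$ with no completeness assumption, so uniqueness of the fixed point and the collapse of $\cH_\cF$ both follow. So your argument actually proves the statement under the stated hypotheses.

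You could shorten Step~2 and avoid Theorem~\ref{thm: F has comp dyn <=> H has comp dyn} altogether. The singleton $\{\Gl\}$ is compact and invariant under $H_\cF$, and it attracts every compact $\mathcal C\subset\KX$ because $\sup_{A\in\mathcal C}d_H(H_\cF^n(A),\Gl)\le\lambda^n\sup_{A\in\mathcal C}d_H(A,\Gl)\to0$. It is therefore the global attractor of $H_\cF$ by Proposition~\ref{prop: G is unique}, and Step~3 goes through unchanged.
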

%
%
\begin{proposition}
\label{prop: R_C=K}
Assume that $\cF$ is contractive and let $K\in\KX$ be the unique fixed point of $H_\cF$.
Then $\Gl=K\subset X$ and $K$ is also the only chain node and orbitwise chain node of $\cF$, namely $\cR_{\cC_{\cF}}=\cR^{ow}_{\cC_{\cF}}=K$ and all points of $K$ are chain-equivalent to each other.
\end{proposition}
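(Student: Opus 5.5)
The argument has three steps: identify $\Gl$ with $K$, show that $K$ is a single chain-equivalent set, and show that no point outside $K$ is chain-recurrent.

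\emph{Step 1: $\Gl=K$.} The statement does not require local compactness, so I would not invoke Corollary~\ref{cor: K=G_F} directly. Since $X$ is locally compact throughout the paper, and $X$ is complete (implicit in the existence of the fixed point $K$ via Theorem~\ref{thm: Hutch}), Corollary~\ref{cor: K=G_F} applies anyway. It gives that $\cF$ has compact dynamics and that $\Gl=K$. Consequently Proposition~\ref{prop: GF-trapping-for-CF} gives $\cR^{ow}_{\cC_\cF}\subset\cR_{\cC_\cF}\subset K$.

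\emph{Step 2: every point of $K$ is orbitwise chain-recurrent.} By Proposition~\ref{prop: R_F=K}, $K=\owR$. Proposition~\ref{prop: streams}(3), applied to the chain stream, then gives $K\subset\cR^{ow}_{\cC_\cF}\subset\cR_{\cC_\cF}$. Together with Step 1, this yields $\cR_{\cC_\cF}=\cR^{ow}_{\cC_\cF}=K$.

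It remains to show that any two points $x,y\in K$ are chain-equivalent. I would do this directly, as in part 1 of the proof of Proposition~\ref{prop: R_F=K}. Since $H_\cF(K)=K$, for each $n$ there are $I_n$ with $|I_n|=n$ and $y_n\in K$ such that $y=\Phi^{I_n}_\cF(y_n)$. Then
\[
d\bigl(\Phi^{I_n}_\cF(x),y\bigr)\le\lambda^n\operatorname{diam}(K)\to0,
\]
so $y\in\Om(x)$. Closedness of $\cC_\cF$ together with $\cO_\cF\subset\cC_\cF$ gives $x\Cto y$. By symmetry $y\Cto x$.

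Hence $K$ is a single chain-equivalent set. It is therefore a single chain node, and, since every point of $K$ is orbitwise chain-recurrent, also a single orbitwise chain node.

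\emph{Main obstacle.} The only delicate point is Step 1: securing compact dynamics, and hence the trapping property of $\Gl$ for $\cC_\cF$, needed to exclude chain-recurrent points outside $K$. If one wanted to avoid local compactness, I would instead argue as in Proposition~\ref{prop: R_eta subset N_2eta(R)}. The estimate $d(f_i(u),K)\le\lambda d(u,K)$ shows that an $\eps$-chain from $x\notin K$ with $\eps<(1-\lambda)d(x,K)$ stays strictly closer to $K$ than $x$. So no positive-length $\eps$-loop at $x$ exists, and Proposition~\ref{prop: alternative weak node}(4) rules out chain recurrence of $x$.
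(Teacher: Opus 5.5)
Your proof is correct, but it is organized differently from the paper's. You obtain $\cR_{\cC_\cF}\subset K$ from the general fact that $\Gl$ is a trapping region for the chain stream (Proposition~\ref{prop: GF-trapping-for-CF}, which rests on the $\Omega$-stream property and on a compact neighbourhood of $\Gl$). You obtain $K\subset\cR^{ow}_{\cC_\cF}$ from $K=\owR$ together with Proposition~\ref{prop: streams}(3). The paper instead argues directly from the contraction. The estimate $d(f_i(u),K)\le\lambda\, d(u,K)$ gives $\sup_{x\in A}d(\Phi^I_\cF(x),K)\le\lambda^{|I|}\sup_{x\in A}d(x,K)$ for every compact $A$, so $K$, being compact and $H$-invariant, is the global attractor by definition. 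The paper then builds $\eps$-chains between any two points of $K$ directly, which is essentially your limit-set argument. Finally, it excludes points outside $K$ with exactly the estimate in your ``main obstacle'' paragraph: an $\eps$-loop at $x\notin K$ with $\eps<(1-\lambda)d(x,K)$ cannot close up. Your route shows the proposition as an instance of the general theory. The paper's is self-contained and needs neither local compactness, nor completeness, nor the $\Omega$-stream machinery. Two consequences for your write-up follow. First, Step~1 is not delicate: the estimate you already write gives attraction of compact sets at once, so you can drop Corollary~\ref{cor: K=G_F}. Second, your claim that completeness is ``implicit'' in the existence of $K$ is false as stated. On $X=[0,1)$ the contraction $x\mapsto x/2$ has Hutchinson map with unique fixed point $\{0\}$, yet $X$ is not complete. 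The argument still survives, because the parts of Corollary~\ref{cor: K=G_F} and Proposition~\ref{prop: R_F=K} you actually use require only that $K$ exist and be the unique fixed point.
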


\begin{proof}
Since $\cF$ is contractive, there exists a $\lambda\in(0,1)$ such that
\[
d(f_i(u),f_i(v))\le \lambda\, d(u,v)
\qquad \text{ for all }u,v\in X\text{ and }  i=1,\dots,m.
\]
Hence, for every $I\in \cI^m$,
\[
d(\Phi_\cF^I(u),\Phi_\cF^I(v))\le \lambda^{|I|} d(u,v).
\]

Hence, for every compact $A\subset X$, 
$$
\sup_{x\in A}d(\P(x),K)\leq\lambda^{|I|}\sup_{x\in A}d(x,K).
$$
    Therefore $K$ attracts every compact subset of $X$. 
    Since it is $H$-invariant, it follows that $\Gl=K$.     
    
    Now, let $x,y\in K$ and $\varepsilon>0$ and choose $n\in \mathbb N$ such that $\lambda^n \operatorname{diam}(K)<\varepsilon$.
Since $K=H_\cF^n(K)$,
there exists an $I\in\cI^m$ of length $n$ such that $y\in \Phi_\cF^I(K)$.
Therefore
\[
d(\Phi_\cF^I(x),y)\le \operatorname{diam}\bigl(\Phi_\cF^I(K)\bigr)
\le \lambda^n \operatorname{diam}(K)<\varepsilon.
\]
Thus there is an $\varepsilon$-chain from $x$ to $y$. By symmetry, there is also an
$\varepsilon$-chain from $y$ to $x$. Since $\varepsilon>0$ was arbitrary, we conclude that
$x\Ceq y$. Hence $K$ is contained in a single chain node.

Moreover, no point of $X\setminus K$ is chain recurrent. 
To see this, set $L(x)=d(x,K)$.
Since each $f_i$ maps $K$ into itself, for every $x\in X$ and every $i=1,\dots,m$ we have that $L(f_i(x))\le \lambda L(x)$.
Indeed, for every $z\in K$ one has $f_i(z)\in K$, and therefore
\[
d(f_i(x),K)\le d(f_i(x),f_i(z))\le \lambda d(x,z).
\]
Taking the infimum over $z\in K$ gives the claim.

Let now $x\notin K$, and set $\delta=L(x)>0$ and choose an $\varepsilon>0$ such that $\varepsilon< (1-\lambda)\delta$.
Suppose by contradiction that there exists an $\varepsilon$-chain from $x$ to itself $x_0=x,\ x_1,\dots,x_n=x$.
By definition, for each $j=0,\dots,n-1$ there exists $i_j\in\{1,\dots,m\}$ such that $d(f_{i_j}(x_j),x_{j+1})<\varepsilon$.
Then
\[
L(x_{j+1})
\le d(x_{j+1},f_{i_j}(x_j)) + L(f_{i_j}(x_j))
< \varepsilon + \lambda L(x_j).
\]
We prove by induction that $L(x_j)<\delta$ for all $j\ge 1$.
For $j=0$ we have $L(x_0)=\delta$, so that
\[
L(x_1)<\varepsilon+\lambda\delta<\delta.
\]
If $L(x_j)<\delta$, then
\[
L(x_{j+1})<\varepsilon+\lambda L(x_j)
<\varepsilon+\lambda\delta<\delta.
\]
Thus indeed $L(x_j)<\delta$ for all $j\ge 1$.
In particular, $L(x_n)<\delta$.
But $x_n=x$, so $L(x_n)=L(x)=\delta$, a contradiction. Therefore there is no
$\varepsilon$-chain from $x$ to itself. It follows that $x\notin \cR_{\cC_\cF}$.

Hence, every chain recurrent point belongs to $K$. 
Moreover, since all points of $K$ are chain equivalent, $K$ is the unique node of the chain stream of $\cF$.
If \(x\in K\) and \(z\in \cO_\cF(x)\), then \(z\in K\), because \(K\) is forward-invariant. 
Since all points of \(K\) are chain-equivalent, \(x\Ceq z\). 
Hence, \(x\in \cR_{\cC_\cF}^{\mathrm{ow}}\).
\end{proof}

\medskip\noindent
{\BF Relations between $\cC_{\cF}$ and $\cC_{H_\cF}$.}
Recall that $\Gl$ is $H$-invariant under $\Phi_\cF$.
Hence, when $\cF$ has compact dynamics, the map $H_\cF$ has at least one fixed point: the global attractor $\Gl$.

Moreover, by Theorem~\ref{thm: N is H-invariant}, inside each orbitwise chain node of $\cF$ there is a nonempty $H$-invariant subset, namely a fixed point for the map $H_\cF$.
Hence, we have a map $$N\mapsto\Node_{\cC_{H_\cF}}(\Om(N))$$ 
from orbitwise nodes of $\cC_{\cF}$ to nodes of $\cC_{H_\cF}$.
We show in next proposition that this map is injective.
In particular, this shows that the chain graph of $H_\cF$ has at least as many nodes as the orbitwise chain graph of $\cF$.
\begin{proposition}
    \label{prop: N->Node(N) is injective}
    Assume that $\cF$ has compact dynamics and let $N_1,N_2$ be two distinct orbitwise chain nodes of $\cF$. 
    Then $\Node_{H_\cF}(\Om(N_1))\neq\Node_{H_\cF}(\Om(N_2))$.
\end{proposition}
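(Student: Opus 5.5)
The plan is to argue by contraposition. Assume that $\Node_{H_\cF}(\Om(N_1))=\Node_{H_\cF}(\Om(N_2))$; I will show that some point of $N_1$ is chain-equivalent under $\cF$ to some point of $N_2$. That forces $N_1=N_2$: both sets are equivalence classes of $\cR^{ow}_{\cC_\cF}$ under $\Ceq$, and by transitivity one equivalent pair makes every point of $N_1$ equivalent to every point of $N_2$. By Theorem~\ref{thm: N is H-invariant}, each $\Om(N_k)$ is a nonempty compact $H$-invariant subset of $N_k$, so it is a fixed point of $H_\cF$ and the map in the statement is well defined. By symmetry it suffices to prove the following: if $\Om(N_1)\succcurlyeq_{H_\cF}\Om(N_2)$ in the chain stream of $H_\cF$, then there is $p\in N_1$ with $p\Cto y$ for a fixed $y\in\Om(N_2)$.

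\textbf{Step 1: lift a hyperspace chain to a point chain, going backwards.} Fix $\eps>0$ and an $\eps$-chain $K_0=\Om(N_1),K_1,\dots,K_n=\Om(N_2)$ for $H_\cF$. Since $e(K_{j+1},H_\cF(K_j))\le d_H(H_\cF(K_j),K_{j+1})<\eps$, every point $u\in K_{j+1}$ has a predecessor $x\in K_j$ and an index $i$ with $d(f_i(x),u)<\eps$. Start from $y\in K_n$ and choose predecessors step by step down to a point $p_\eps\in K_0=\Om(N_1)$. Reading the resulting sequence forward gives an $\eps$-chain for $\cF$ from $p_\eps$ to $y$. If $n=0$, then $\Om(N_1)=\Om(N_2)$ and the two nodes already intersect, so there is nothing to prove.

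\textbf{Step 2: remove the dependence of the starting point on $\eps$.} This is the only delicate step, because $p_\eps$ varies with $\eps$. Take $\eps_k\downarrow0$. Since $\Om(N_1)$ is compact, after passing to a subsequence $p_{\eps_k}\to p\in\Om(N_1)\subset N_1$. Now repeat the argument from the closedness part of Proposition~\ref{prop: C is a stream}. Fix $\eps>0$ and pick $\delta$ so that $d(f_i(p),f_i(u))<\eps/2$ for all $i$ whenever $d(p,u)<\delta$. Take $k$ with $\eps_k<\eps/2$ and $d(p_{\eps_k},p)<\delta$. Replacing the first point $p_{\eps_k}$ of the $\eps_k$-chain by $p$ then yields an $\eps$-chain from $p$ to $y$. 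If that chain has length zero, then $y=p_{\eps_k}$; this can happen only for finitely many $k$ or else $y=p$, and either case is harmless. Hence $p\Cto y$, with $p\in N_1$ and $y\in N_2$.

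\textbf{Step 3: conclude.} Applying Steps 1 and 2 with the roles of $N_1$ and $N_2$ exchanged gives $q\in N_2$ and $w\in N_1$ with $q\Cto w$. Inside each orbitwise node all points are chain-equivalent, so
$$p\Cto y\Ceq q\Cto w\Ceq p .$$
Therefore $p\Ceq y$. Since $p,y\in\cR^{ow}_{\cC_\cF}$, they lie in the same orbitwise chain node, which gives $N_1=N_2$ and contradicts the hypothesis that $N_1$ and $N_2$ are distinct.
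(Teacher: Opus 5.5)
Your proposal is correct and follows the paper's proof essentially step for step. You lift the $H_\cF$-chain backwards to an $\cF$-chain ending at a fixed $y\in\Om(N_2)$, you pass to a limit of the starting points using compactness and the first-point replacement from Proposition~\ref{prop: C is a stream}, and you close the cycle using both directions together with chain-equivalence inside each orbitwise node.
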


\begin{proof}
    Let $N_1$ and $N_2$ be two orbitwise nodes of the chain stream of $\cF$, and assume that
\[
\Node_{{H_\cF}}(\Om(N_1))=\Node_{{H_\cF}}(\Om(N_2)).
\]
    We show that $N_1=N_2$.

    Since $\cF$ has compact dynamics, for each orbitwise chain node $N$ of $\cF$ the set $\Om(N)$ is compact and $H$-invariant. 
    Hence, regarded as points of $\KX$, both $\Om(N_1)$ and $\Om(N_2)$ belong to the chain recurrent set of $H_\cF$, and the assumption means that $\Om(N_1) \Hto \Om(N_2)$ and $\Om(N_2) \Hto \Om(N_1)$.

    We claim that if $K,L\in \KX$ satisfy $K \Hto L$, then, for every $y\in L$, there exists $x\in K$ such that $x \Cto y$.
    Indeed, fix an $\varepsilon>0$. 
    Since $K \Hto L$, there exist compact sets $K=K_0,K_1,\dots,K_r=L$ such that $d_H(H_\cF(K_j),K_{j+1})<\varepsilon$ for $j=0,\dots,r-1$.
    Fix $y_r=y\in L=K_r$. 
    Since $d_H(H_\cF(K_{r-1}),K_r)<\varepsilon$, there exist $y_{r-1}\in K_{r-1}$ and $i_{r-1}\in\{1,\dots,m\}$ such that $d\bigl(f_{i_{r-1}}(y_{r-1}),y_r\bigr)<\varepsilon$.
    Repeating the same argument backwards, we find points $y_j\in K_j$, $j=0,\dots,r$, and indices $i_j\in\{1,\dots,m\}$, $j=0,\dots,r-1$, such that
\[
d\bigl(f_{i_j}(y_j),y_{j+1}\bigr)<\varepsilon
\qquad\text{for }j=0,\dots,r-1.
\]
    Notice that this $\eps$-chain depends on $\eps$.
    Take any \(\varepsilon_n\downarrow0\). 
    For every \(n\), the preceding backtracking construction gives an \(\varepsilon_n\)-chain from some \(x_n\in K\) to \(y\). 
    By compactness of \(K\), after possibly passing to a subsequence, we may assume that \(x_n\to x\in K\). 
    Replacing the initial point \(x_n\) of these chains by \(x\), exactly as in the proof of Proposition~\ref{prop: C is a stream}, shows that for every \(\varepsilon>0\) there is an \(\varepsilon\)-chain from \(x\) to \(y\). 
    Hence, \(x\Cto y\).

    We now apply the claim with $K=\Om(N_1)$ and $L=\Om(N_2)$. 
    Fix $y\in \Om(N_2)$. 
    Since $\Om(N_1) \Hto \Om(N_2)$, there exists $x\in \Om(N_1)$ such that $x \Cto y$.
    Likewise, applying the claim with $K=\Om(N_2)$ and $L=\Om(N_1)$, for every $x'\in \Om(N_1)$ there exists $y'\in\Om(N_2)$ such that $y' \Cto x'$.

    Since $N_1$ and $N_2$ are nodes of the chain stream of $\cF$, any two points in the same node are mutually chain-related. 
    Therefore, from the existence of $x\in N_1$ and $y\in N_2$ with $x \Cto y$, it follows that $N_1 \Cto N_2$.
    Similarly, from the existence of $y'\in \Om(N_2)$ and $x'\in\Om(N_1)$ with $y' \Cto x'$, it follows that $N_2 \Cto N_1$.

    Hence, $N_1\Ceq N_2$, namely $N_1=N_2$.
\end{proof}

The following is the strongest general result we can prove on the relation between the chain graphs of $\cF$ and $H_\cF$.

\begin{theorem}
\label{thm: H one node => F one node}
    Assume that $\cF$ has compact dynamics.
    Then the chain graph of $H_\cF$ has at least as many nodes as the orbitwise chain graph of $\cF$.
    In particular, if $H_\cF$ has a unique chain node, then both the plain and orbitwise chain graphs of \(\cF\) have a unique node, and this node is \(\Gl\).
\end{theorem}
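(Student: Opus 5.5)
The first assertion follows from Proposition~\ref{prop: N->Node(N) is injective}. By Theorem~\ref{thm: N is H-invariant}, each orbitwise chain node $N$ of $\cF$ gives a nonempty compact $H$-invariant set $\Om(N)$. This set is a fixed point of $H_\cF$, hence a chain-recurrent point of $H_\cF$, so the assignment $N\mapsto\Node_{\cC_{H_\cF}}(\Om(N))$ is well defined. Proposition~\ref{prop: N->Node(N) is injective} says this assignment is injective. Hence the number of nodes of the chain graph of $H_\cF$ is at least the number of orbitwise chain nodes of $\cF$.

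For the second assertion, assume $H_\cF$ has a unique chain node. By the injectivity just recalled, $\cF$ has at most one orbitwise chain node. By Lemma~\ref{lemma: Om} and Proposition~\ref{prop: streams}(3),(4), $\cR^{ow}_{\cC_\cF}\neq\emptyset$, so there is exactly one orbitwise chain node $N$. Moreover, for every $y\in\Gl$ we have $\Om(y)\cap\cR^{ow}_{\cC_\cF}\neq\emptyset$, so $\Om(y)$ meets $N$. Since $y$ is chain-upstream of its limit points, $y\Cto q$ for some $q\in N$.

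Next I would get the reverse relation $N\Cto y$. Both $\Gl$ and $\Om(N)$ are fixed points of $H_\cF$, hence lie in the unique chain node of $H_\cF$, and in particular $\Om(N)\Hto\Gl$. I would then use the auxiliary claim proved inside Proposition~\ref{prop: N->Node(N) is injective}: if $K\Hto L$, then every $y\in L$ has some $x\in K$ with $x\Cto y$. Applied with $K=\Om(N)$ and $L=\Gl$, it gives, for every $y\in\Gl$, a point $x\in\Om(N)\subset N$ with $x\Cto y$. Since all points of $N$ are chain-equivalent, we get $q\Cto x\Cto y\Cto q$. So every $y\in\Gl$ is chain-equivalent to the points of $N$, and therefore all points of $\Gl$ are mutually chain-equivalent.

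It then remains to check recurrence and identify the nodes. If $\Gl$ is a single point, that point is fixed by every generator, since $\Gl$ is $H$-invariant. Otherwise each $y\in\Gl$ is chain-equivalent to a distinct point, so it is chain-recurrent by Proposition~\ref{prop: alternative weak node}. Since $\Gl$ is forward-invariant, the whole orbit of each $y$ stays in this single equivalence class, so $y\in\cR^{ow}_{\cC_\cF}$. Combined with $\cR^{ow}_{\cC_\cF}\subset\cR_{\cC_\cF}\subset\Gl$ from Proposition~\ref{prop: GF-trapping-for-CF}, this gives $\cR_{\cC_\cF}=\cR^{ow}_{\cC_\cF}=\Gl$, a single node. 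The only delicate step is the transfer of $H_\cF$-chains to $\cF$-chains, and that is already handled by the backtracking claim, so I expect no real obstacle beyond verifying that $\Gl$ and $\Om(N)$ are indeed $H_\cF$-chain recurrent, which holds because they are fixed points.
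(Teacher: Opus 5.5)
Your proof is correct and follows essentially the same route as the paper. You use injectivity of $N\mapsto\Node_{\cC_{H_\cF}}(\Om(N))$, then apply the backtracking claim to $\Om(N)\Hto\Gl$ and combine it with $\Om(y)\cap N\neq\varnothing$ to make every point of $\Gl$ chain-equivalent to $N$. The differences are cosmetic: you explicitly justify that an orbitwise node exists, which the paper's appeal to injectivity alone leaves implicit, and you get orbitwise recurrence from forward-invariance of $\Gl$ once it is a single chain class, whereas the paper applies Lemma~\ref{lemma: Om} to each $z\in\cO_\cF(y)$.
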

\begin{proof}


    Assume that \(H_\cF\) has a unique chain node. Proposition~\ref{prop: N->Node(N) is injective}, implies that \(\cF\) has a unique orbitwise chain node; denote it by \(N\).
    By Theorem~\ref{thm: N is H-invariant}, \(\Om(N)\) is a nonempty compact \(H\)-invariant subset of \(N\). 
    Hence, both \(\Om(N)\) and \(\Gl\) are fixed points of \(H_\cF\), and therefore they belong to the unique chain node of \(H_\cF\). 
    In particular, $\Om(N)\Hto \Gl$.

    Fix \(y\in \Gl\). 
    By the claim proved in Proposition~\ref{prop: N->Node(N) is injective}, there is \(x\in\Om(N)\subset N\) such that $x\Cto y$.

    Let \(z\in \cO_\cF(y)\). 
    By Lemma~\ref{lemma: Om}, there exists $q_z\in\Omega_F(z)\cap \cR_\cF^{\mathrm{ow}}$.
    By Proposition~\ref{prop: streams}, there is a $q_z\in \cR_{\cC_\cF}^{\mathrm{ow}}$ and therefore \(q_z\in N\). 
    Since \(q_z\in\Om(z)\), we have that $z\Cto q_z$ and, since \(q_z,x\in N\), we also have that $q_z\Cto x$.
    Consequently,
\[
z\Cto q_z\Cto x\Cto y\Cto z.
\]
    Hence, $x\Ceq y$ and $y\Ceq z$ for every $z\in \cO_\cF(y)$, so \(y\in N\subset  \cR_{\cC_\cF}^{\mathrm{ow}}\), which means that
    \(\Gl\subset N\). 
    
    Conversely,
\[
N\subset \cR_{\cC_\cF}\subset \Gl
\]
    by Proposition~\ref{prop: GF-trapping-for-CF} and so \(N=\Gl\).

    Finally, every plain chain-recurrent point belongs to \(\Gl=N\).
    Thus, the plain chain graph also has the single node \(\Gl\).
\end{proof}

The following elementary example shows that the inverse of Theorem~\ref{thm: H one node => F one node} does not hold, even for $m=1$.

\medskip\noindent
{\bf Example of a semiflow with a single chain node whose Hutchinson map has infinitely many chain nodes:  the odometer.}
Let \(X=\mathbb Z_2\) be the space of \(2\)-adic integers and let
\(f(x)=x+1\). The map \(f\) is minimal, hence its chain graph consists
of the single node \(X\).
The induced Hutchinson map on \(\mathcal K(\mathbb Z_2)\) is
\[
H(A)=A+1.
\]
For every \(n\ge1\), the compact-open subgroup $$A_n=2^n\mathbb Z_2$$ is a periodic point of \(H\), of period \(2^n\). 
Moreover, these periodic points lie in pairwise distinct chain nodes. 
Indeed, for \(k\ge n\), the canonical projection
\[
\pi_k:\mathbb Z_2\to\mathbb Z/2^k\mathbb Z
\]
induces a factor map on hyperspaces, and
\[
|\pi_k(A_n)|=2^{k-n}.
\]
If \(n\ne m\) and \(k\ge\max\{n,m\}\), the images of \(A_n\) and \(A_m\) have different cardinalities. Since translation on the finite hyperspace preserves cardinality, they belong to distinct chain nodes. 
Therefore, $H$ has infinitely many chain nodes.

\medskip\noindent
{\BF The chain graph of $H_\cE$.}
We conclude the article with a discussion on the graph of the chain graph of the Hutchinson map of the IFS $\cE$ on $X=[0,3]$ introduced below Theorem~\ref{thm: G conn => graph conn}.
Recall that the orbitwise chain graph of $E$ consists of the two isolated nodes $\{0\}$ and $\{3\}$, whereas its plain chain graph is
\[
\{0\}\longleftarrow\Lambda\longrightarrow\{3\},
\]
where $\Lambda\subset[1,2]$ is the survivor Cantor set.

We show below that the chain graph of $H_\cE$ is considerably richer than either the orbitwise or the plain chain graph of $E$.

\begin{proposition}
    \label{prop: six nodes}
    The chain graph of $H_\cE$ is connected and contains at least six distinct nodes.

    More precisely, define
\[
P_0:=\{0\},\qquad
P_3:=\{3\},\qquad
P_{03}:=\{0,3\},
\]
and
\[
K_L
:=
\overline{\bigcup_{n\geq0}H_\cE^n(\{1\})},
\qquad
K_R
:=
\overline{\bigcup_{n\geq0}H_\cE^n(\{2\})},
\qquad
K_B:=K_L\cup K_R.
\]
    Then all six compact sets
\[
P_0,\quad P_3,\quad P_{03},\quad K_L,\quad K_R,\quad K_B
\]
    are fixed points of $H_\cE$ and belong to pairwise distinct chain nodes.

    Writing
\[
\mathcal N_0=\Node_{C_{H_\cE}}(P_0),\qquad
\mathcal N_3=\Node_{C_{H_\cE}}(P_3),\qquad
\mathcal N_{03}=\Node_{C_{H_\cE}}(P_{03}),
\]
and
\[
\mathcal N_L=\Node_{C_{H_\cE}}(K_L),\qquad
\mathcal N_R=\Node_{C_{H_\cE}}(K_R),\qquad
\mathcal N_B=\Node_{C_{H_\cE}}(K_B),
\]
the graph contains the edges
\[
\mathcal N_L\longrightarrow\mathcal N_0,
\qquad
\mathcal N_R\longrightarrow\mathcal N_3,
\]
\[
\mathcal N_L\longrightarrow\mathcal N_B,
\qquad
\mathcal N_R\longrightarrow\mathcal N_B,
\]
and
\[
\mathcal N_B\longrightarrow\mathcal N_{03}.
\]
Consequently, by transitivity, it also contains
\[
\mathcal N_L\longrightarrow\mathcal N_{03},
\qquad
\mathcal N_R\longrightarrow\mathcal N_{03}.
\]
\end{proposition}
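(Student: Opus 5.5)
We would first record that the six sets are fixed points of $H_\cE$. They are then chain-recurrent points of the semiflow $H_\cE$, so they lie in chain nodes. The three singletons of fixed points are clear. For $K_L$, note that $1$ is fixed by $e_1$ and $0$ is fixed by both maps. On $(0,1)$ both generators decrease points, so the orbit of $1$ is $\{1\}$ together with a countable set in $(0,1)$ accumulating only at $0$. Hence $H_\cE(K_L)=K_L$, since $1=e_1(1)$ and every other orbit point is the image of an earlier one. The conjugacy $\rho(x)=3-x$ gives the same for $K_R\subset[2,3]$, and $K_B=K_L\cup K_R$ is then fixed because $H_\cE(A\cup B)=H_\cE(A)\cup H_\cE(B)$. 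We would also note that $\max\bigl(K_L\setminus\{1\}\bigr)=e_2(1)<1$, so $1$ is isolated in $K_L$; symmetrically, $2$ is isolated in $K_R$.

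\textbf{Distinctness.} Both generators are increasing, so $\min H_\cE(A)=h(\min A)$ and $\max H_\cE(A)=k(\max A)$, with $h=\min\{e_1,e_2\}$ and $k=\max\{e_1,e_2\}$. The map $A\mapsto(\min A,\max A)$ is $1$-Lipschitz from $(\cK(X),d_H)$ to $[0,3]^2$. It therefore semiconjugates $H_\cE$ to the product map $h\times k$ and carries $\varepsilon$-chains to $\varepsilon$-chains, so chain-equivalent sets must have chain-equivalent images. The map $h$ has fixed points $0,2,3$ with $h(t)<t$ on $(0,2)$ and $h(t)>t$ on $(2,3)$. The map $k$ has fixed points $0,1,3$ with $k(t)<t$ on $(0,1)$ and $k(t)>t$ on $(1,3)$. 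Both are monotone interval maps, so their chain-equivalence classes are the fixed points, and by the Lyapunov arguments used for~\eqref{eq: 5.15-13} those classes are distinct even for the product. The images of the six sets are $(0,0)$, $(3,3)$, $(0,3)$, $(0,1)$, $(2,3)$, $(0,3)$. This separates every pair except $P_{03}$ and $K_B$. For that pair we use the backtracking claim in the proof of Proposition~\ref{prop: N->Node(N) is injective}: if $P_{03}\Hto K_B$, then the point $1\in K_B$ would be $\cC_\cE$-downstream from $0$ or $3$, contradicting~\eqref{eq: 5.15-13}--\eqref{eq: 5.15-14}.

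\textbf{Edges.} For $\mathcal N_L\to\mathcal N_0$, fix $\delta>0$ small and replace the isolated point $1$ of $K_L$ by $1-\delta$; call the result $A_\delta$. Then $d_H(A_\delta,K_L)\le\delta$, every point of $A_\delta$ lies in $[0,1-\delta]$, and by the argument after~\eqref{eq: 5.15-1} all long words converge uniformly to $0$ there. So $H_\cE^n(A_\delta)\to\{0\}$ in $d_H$, which gives an $\varepsilon$-chain $K_L,A_\delta,H_\cE(A_\delta),\dots$ ending within $\varepsilon$ of $P_0$. The edge $\mathcal N_R\to\mathcal N_3$ follows by applying $\rho$. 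For $\mathcal N_B\to\mathcal N_{03}$, we perturb $1\mapsto 1-\delta$ and $2\mapsto2+\delta$ simultaneously. Then $K_L$ is pushed into $[0,1-\delta]$ and $K_R$ into $[2+\delta,3]$, and the iterates converge in $d_H$ to $\{0,3\}$, since $0$ and $3$ remain in the set.

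\textbf{Main obstacle.} The hard step will be the edges $\mathcal N_L\to\mathcal N_B$ and $\mathcal N_R\to\mathcal N_B$, because a chain must create the whole of $K_R$ starting from a set near $K_L$. We would first try adding the point $p=g_1^N(2)\in\Lambda$, which is close to $1$, so that $K_L\cup\{p\}$ is within $d(p,1)$ of $K_L$ and $e_1^N(p)=2$. Then $H_\cE^{N+n}(K_L\cup\{p\})$ contains $K_L$ and $H_\cE^n(\{2\})$, whose union over $n$ has closure $K_R$. The difficulty is the stray images $\Phi_\cE^w(p)$ for words $w\ne1^N$. The plan is to absorb each stray image by a small Hausdorff jump at the step where it appears, either onto a nearby point of $K_L\cup K_R$ or onto a point slightly off $\Lambda$ that is then swept to $0$ or $3$ as in~\eqref{eq: 5.15-1}. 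For words of bounded length, continuity gives $\Phi_\cE^w(p)$ close to $\Phi_\cE^w(1)\in K_L$; for long words we would invoke the uniform convergence to $\{0,3\}\subset K_B$ off $\Lambda$. Once both edges into $\mathcal N_B$ are established, the remaining two edges follow by transitivity.

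Connectedness of the whole chain graph of $H_\cE$ is already covered by the remark after Theorem~\ref{thm: F has comp dyn <=> H has comp dyn}. Indeed, $\cG_\cE=[0,3]$ is connected, so $\cH_\cE$ is connected, and item~4 of Theorem~\ref{thm:B} applies.
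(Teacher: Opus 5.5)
Your fixed-point verification, the edges $\mathcal N_L\to\mathcal N_0$, $\mathcal N_R\to\mathcal N_3$, $\mathcal N_B\to\mathcal N_{03}$, and the connectedness argument are all sound. Your distinctness argument is also correct, but it follows a different route from the paper's.

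\emph{Distinctness.} The paper shows that $P_0,P_3,P_{03}$ are singleton bottom nodes using the trapping neighbourhoods $\mathcal U_0(r),\mathcal U_3(r),\mathcal U_{03}(r)$. It then separates $K_L,K_R,K_B$ with $\min A$ and $\max A$. You instead push all six sets through the $1$-Lipschitz semiconjugacy $A\mapsto(\min A,\max A)$ onto $e_2\times e_1$. The one pair this leaves unseparated, $(P_{03},K_B)$, you handle with the backtracking claim of Proposition~\ref{prop: N->Node(N) is injective}. This is more economical. It does not, however, recover the paper's extra information that the three endpoint nodes are bottom nodes.

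\emph{The gap.} The edge $\mathcal N_L\to\mathcal N_B$ is not proved, and neither is its mirror $\mathcal N_R\to\mathcal N_B$, on which the transitive edges into $\mathcal N_{03}$ also depend. You offer only a plan. It targets a difficulty that does not arise and omits the fact that settles the step.

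\begin{itemize}
\item With $p=g_1^N(2)$, one has $e_1^j(p)=g_1^{N-j}(2)\in[1,a]$ for $0\le j<N$.
\item A word that leaves the route $1^N$ before reaching $2$ applies $e_2$ to a point of $[1,a]$. It therefore lands in $e_2([1,a])=[e_2(1),e_2(a)]\subset(0,1)$, because $a<b$ and $e_2(b)=1$. The mirror case uses $e_1([b,2])\subset(2,3)$.
\item A word that follows the route to $2$ continues inside $K_R$.
\item Hence, for $k\ge N$, $H_\cE^{k}(K_L\cup\{p\})=K_L\cup H_\cE^{k-N}(\{2\})\cup S_k$. Every point of $S_k$ is a continuation of a point of the fixed compact set $e_2([1,a])\subset(0,1)$, so $\max S_k\to0$.
\item The \emph{exact} orbit therefore converges to $K_L\cup K_R=K_B$ in $d_H$.
\end{itemize}

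The chain is then one $\varepsilon$-jump $K_L\mapsto K_L\cup\{p\}$, followed by exact steps and a final small jump. Nothing needs to be absorbed along the way, because intermediate sets of a chain need not be close to anything. For example, $e_1^{N-1}(p)=a$ is far from $K_L$, and this is harmless.

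\emph{Why the named tools do not close the step.}
\begin{itemize}
\item Continuity for words of bounded length controls nothing relevant. Departures occur at every time $j<N$, and $N\to\infty$ as $\varepsilon\to0$.
\item The appeal to ``uniform convergence to $\{0,3\}$ off $\Lambda$'' is not what \eqref{eq: 5.15-1} provides. That estimate is pointwise, and its escape time blows up as the point approaches $\Lambda$.
\item You therefore need one of two facts: the strays stay a definite distance from $[1,2]$, as shown above; or, at minimum, for fixed $N$ there are only the $N$ departure points $e_2(e_1^j(p))$, all outside $\Lambda$.
\item The alternative of jumping a stray ``slightly off $\Lambda$'' presupposes strays lying on $\Lambda$, which never happens here.
\end{itemize}

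Until one of these facts is supplied, the two edges into $\mathcal N_B$, and hence the transitive edges into $\mathcal N_{03}$, remain unproved.
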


\begin{proof}
We divide the proof into several steps.

\medskip
\noindent
\emph{The six compact sets are fixed points.}
The sets $P_0$, $P_3$, and $P_{03}$ are fixed because both
generators fix $0$ and $3$:
\[
H_\cE(P_0)=P_0,\qquad
H_\cE(P_3)=P_3,\qquad
H_\cE(P_{03})=P_{03}.
\]

Since $e_1(1)=1$, we have
\[
\{1\}\subset H_\cE(\{1\}).
\]
By monotonicity of the Hutchinson map,
\[
H_\cE^n(\{1\})
\subset
H_\cE^{n+1}(\{1\})
\qquad(n\geq0).
\]
Therefore
\[
K_L
=
\overline{\bigcup_{n\geq0}H_\cE^n(\{1\})}
\]
is fixed by $H_\cE$.

Similarly, since $e_2(2)=2$, the sets
$H_\cE^n(\{2\})$ are increasing and
\[
H_\cE(K_R)=K_R.
\]
Finally, the Hutchinson map preserves unions, and hence
\[
H_\cE(K_B)
=
H_\cE(K_L\cup K_R)
=
H_\cE(K_L)\cup H_\cE(K_R)
=
K_L\cup K_R
=
K_B.
\]

Notice also that, if $u=e_2(1)\in(0,1)$, then
\[
K_L
=
\{0,1\}
\cup
\{\Phi_E^I(u):I\in\mathcal I^2\}.
\]
Thus $K_L$ is countable, $0$ is its only accumulation point, and
$1$ is isolated in $K_L$. By symmetry, $K_R$ is countable, $3$ is
its only accumulation point, and $2$ is isolated in $K_R$.

\medskip
\noindent
\emph{The three endpoint fixed points are distinct bottom nodes.}
Choose $r\in(0,1)$. On $(0,1)$ both generators move every point
strictly toward $0$. Consequently, the hyperspace neighborhood
\[
\mathcal U_0(r)
=
\{A\in\mathcal K([0,3]):A\subset[0,r]\}
\]
is forward-invariant for $H_\cE$, and every point of
$\mathcal U_0(r)$ converges to $P_0$ under iteration of
$H_\cE$.

Similarly,
\[
\mathcal U_3(r)
=
\{A\in\mathcal K([0,3]):A\subset[3-r,3]\}
\]
is a trapping neighborhood of $P_3$.

Finally, the set
\[
\begin{split}
\mathcal U_{03}(r)
=
\bigl\{
A\in\mathcal K([0,3]):\;&
A\subset[0,r]\cup[3-r,3],\\
&
A\cap[0,r]\neq\varnothing,\quad
A\cap[3-r,3]\neq\varnothing
\bigr\}
\end{split}
\]
is a trapping neighborhood of $P_{03}$, and every compact set in
this neighborhood converges to $\{0,3\}$.

It follows that $P_0$, $P_3$, and $P_{03}$ belong to three distinct
bottom chain nodes. In particular, none of them is chain-equivalent
to any of $K_L$, $K_R$, or $K_B$.

\medskip
\noindent
\emph{The sets $K_L$, $K_R$, and $K_B$ belong to distinct nodes.}
For $A\in\mathcal K([0,3])$, write
\[
m(A)=\min A,\qquad M(A)=\max A.
\]
The two generators are increasing and satisfy
\[
e_2(x)\leq e_1(x)
\qquad(x\in[0,3]).
\]
Hence
\beq
m(H_\cE(A))=e_2(m(A)),
\qquad
M(H_\cE(A))=e_1(M(A)).
\eeq

Choose $c\in(1,3)$. Since $e_1(c)>c$, sufficiently small
$H_\cE$-chains starting at a compact set $A$ with
$M(A)\geq c$ continue to satisfy $M(A)>c$. Since
\[
M(K_R)=M(K_B)=3,
\qquad
M(K_L)=1,
\]
there is no arbitrarily small chain from either $K_R$ or $K_B$ to
$K_L$.

Likewise, choose $d\in(0,2)$. Since $e_2(d)<d$, sufficiently small
chains starting at a compact set $A$ with $m(A)\leq d$ continue to
satisfy $m(A)<d$. Since
\[
m(K_B)=0,
\qquad
m(K_R)=2,
\]
there is no arbitrarily small chain from $K_B$ to $K_R$.

It follows that $K_L$, $K_R$, and $K_B$ are not pairwise
chain-equivalent. Together with the preceding step, this proves that
the six listed fixed points belong to six distinct chain nodes.

\medskip
\noindent
\emph{The edge $\mathcal N_L\to\mathcal N_0$.}
Since $1$ is isolated in $K_L$, for sufficiently small $\delta>0$
the compact set
\[
A_\delta
=
\bigl(K_L\setminus\{1\}\bigr)\cup\{1-\delta\}
\]
is arbitrarily close to $K_L$ in the Hausdorff metric.

Every point of $A_\delta$ lies strictly below $1$. Since both
generators move every point of $(0,1)$ toward $0$, uniformly on
compact subsets of $[0,1)$,
\[
H_\cE^n(A_\delta)\longrightarrow\{0\}.
\]
Thus an arbitrarily small first jump from $K_L$ to $A_\delta$,
followed by an exact $H_\cE$-orbit segment, gives arbitrarily
small chains from $K_L$ to $P_0$. Hence
\[
\mathcal N_L\longrightarrow\mathcal N_0.
\]

By reflection about $3/2$, the symmetric argument gives
\[
\mathcal N_R\longrightarrow\mathcal N_3.
\]

\medskip
\noindent
\emph{The edge $\mathcal N_B\to\mathcal N_{03}$.}
Perturb simultaneously the isolated points $1\in K_L$ and
$2\in K_R$. Namely, set
\[
B_\delta
=
\bigl(K_L\setminus\{1\}\bigr)\cup\{1-\delta\}
\cup
\bigl(K_R\setminus\{2\}\bigr)\cup\{2+\delta\}.
\]
Then
\[
B_\delta\longrightarrow K_B
\qquad\text{as }\delta\downarrow0.
\]
The left component of $B_\delta$ lies strictly below $1$ and hence
converges uniformly to $0$, while its right component lies strictly
above $2$ and converges uniformly to $3$. Therefore
\[
H_\cE^n(B_\delta)\longrightarrow\{0,3\}.
\]
This gives
\[
\mathcal N_B\longrightarrow\mathcal N_{03}.
\]

\medskip
\noindent
\emph{The edge $\mathcal N_L\to\mathcal N_B$.}
For every $n\geq1$, let $x_n\in(1,2)$ be the unique point satisfying
\[
e_1^n(x_n)=2
\]
and such that
\[
e_1^j(x_n)\in[1,2]
\qquad(0\leq j\leq n).
\]
Equivalently, $x_n$ is obtained by applying $n$ times the inverse
branch of $e_1$ fixing $1$. Hence
\[
x_n\longrightarrow1.
\]

Set
\[
A_n=K_L\cup\{x_n\}.
\]
Then
\beq
d_H(A_n,K_L)\longrightarrow0.
\eeq

Along the unique itinerary that remains in $[1,2]$, the point $x_n$ reaches $2$ after $n$ iterates. Any itinerary that departs from this
route before reaching $2$ enters $(0,1)$ and subsequently converges uniformly to $0$. 
After the point $2$ is reached, its full Hutchinson orbit generates $K_R$. 
Consequently,
\beqn
\label{eq: 6.8-3}
H_\cE^k(A_n)\longrightarrow K_L\cup K_R=K_B
\qquad\text{as }k\longrightarrow\infty.
\eeqn

Given $\varepsilon>0$, choose $n$ so large that $d_H(A_n,K_L)<\varepsilon$. 
By~\eqref{eq: 6.8-3}, an exact forward orbit of $A_n$ eventually enters the $\varepsilon$-neighborhood of $K_B$. 
Since $H_\cE(K_L)=K_L$, the sequence consisting of the first $\varepsilon$-jump
\[
K_L\longmapsto A_n
\]
followed by this exact orbit segment gives an $\varepsilon$-chain from $K_L$ to $K_B$. 
Hence
\[
\mathcal N_L\longrightarrow\mathcal N_B.
\]

The symmetric construction, using inverse iterates of $e_2$ that
converge to $2$, gives
\[
\mathcal N_R\longrightarrow\mathcal N_B.
\]

Transitivity now gives
\[
\mathcal N_L\longrightarrow\mathcal N_{03},
\qquad
\mathcal N_R\longrightarrow\mathcal N_{03}.
\]

\medskip
\noindent
\emph{Connectedness of the full chain graph.}
The hyperspace $\mathcal K([0,3])$ is connected; in fact, it is
contractible through the homotopy
\[
A\longmapsto (1-t)A,
\qquad 0\leq t\leq1.
\]
Since $H_\cE$ is continuous, every set
\[
H_\cE^n\bigl(\mathcal K([0,3])\bigr)
\]
is compact and connected. These sets form a decreasing sequence, and
the global attractor of $H_\cE$ is
\[
\cH_\cE
=
\bigcap_{n\geq0}
H_\cE^n\bigl(\mathcal K([0,3])\bigr).
\]
It is therefore connected. The connectedness theorem for the chain
graph of a discrete semiflow now implies that the full chain graph of
$H_\cE$ is connected.
\end{proof}

The six nodes above contain the following connected subgraph:
\[
\begin{array}{ccccc}
\mathcal N_0
&
\longleftarrow
&
\mathcal N_L
&
\longrightarrow
&
\mathcal N_B
\\[1mm]
&&&&\downarrow
\\[-1mm]
&&&&\mathcal N_{03}
\\[2mm]
\mathcal N_3
&
\longleftarrow
&
\mathcal N_R
&
\longrightarrow
&
\mathcal N_B .
\end{array}
\]
Equivalently,
\[
\mathcal N_0
\longleftarrow
\mathcal N_L
\longrightarrow
\mathcal N_B
\longleftarrow
\mathcal N_R
\longrightarrow
\mathcal N_3,
\qquad
\mathcal N_B\longrightarrow\mathcal N_{03}.
\]

\section{Old and new results from literature}
\label{sec: related}
In this section we discuss related literature and we import from it several results that are relevant for this work.

\subsection{Barnsley and Vince's Conley attractors for IFSs}
\label{sec: BV}
The closest antecedent to the present work is the theory of Conley attractors for IFSs developed by Barnsley and Vince in \cite{BV13}, building on the
$\varepsilon$-chains introduced in \cite{BV11} and adopted here.

\medskip\noindent
{\bf Pairs attractor-repeller.}
Barnsley and Vince work on a compact metric space $X$ and consider an IFS
$\F=\{f_1,\dots,f_m\}$ of continuous invertible maps. 
Notice that each such attractor is $H$-invariant.
Barnsley and Vince call a compact set $A$ an {\bf attractor} of $\F$ if it has an open neighbourhood $U$ such that $H_\F^n(U)\to A$ in the Hausdorff metric and define the {\BF basin $B(A)$} of $A$ the largest open neighborhood of $A$ that converges to $A$ in the Hausdorff metric.
They prove the following characterization for the basin:
\[
B(A)
=
\left\{
x :
\lim_{n\to\infty}
e\!\left(H_{\mathcal F}^{n}(\{x\}),A\right)=0
\right\}
=
\left\{
x :
\omega(\{x\})\subset A
\right\},
\]
where 
$$
\omega(\{x\}) = \bigcap_{K\geq1}\bigcup_{k\geq K}H^k_\cF({x}).
$$
The reader can verify that, in $\Gl$, $\omega({x})$ coincides with our $\Om(x)$.
Hence, we get the characterization
$$
B(A)
=
\left\{
x\in\Gl :
\Omega_{\cF}(x)\subset A
\right\}.
$$
Then, they associate to each attractor $A$ a {\BF dual repeller $A^*$}$=X\setminus B(A)$ and  characterize the chain recurrent set of $\F$ in terms of the resulting attractor--repeller pairs as follows.
\begin{thmX}[Barnsley and Vince 2013~\cite{BV13}, Theorem~5.5]
    \label{thm: BV}
    Let $\cF$ be an IFS with invertible generators over a compact metric space $X$ and denote by $\mathscr A$ the set of its attractors.
    Then 
    $$
    \cR_{\cC_\cF}=\bigcap_{A\in\mathscr A}(A\cup A^*).
    $$
\end{thmX}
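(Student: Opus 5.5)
The plan is to prove the two inclusions separately. For $\subset$ we exploit that the chain stream is an $\Omega$stream (Proposition~\ref{prop:chains-omega-stream}). For $\supset$ we use the classical Conley construction of an attractor out of the set of points reachable by positive-length $\varepsilon$-chains. Since $X$ is compact, Corollary~\ref{cor: X compact => F has compact dynamics} gives compact dynamics with $\Gl=\Om(X)$, so all results of Sections~\ref{sec: streams}--\ref{sec: chains} are available. I do not expect to need invertibility of the generators.

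\emph{First inclusion.} Fix an attractor $A$ with an open neighbourhood $U$ such that $H_\cF^n(U)\to A$. I would first prove a local version of Lemma~\ref{lemma: robust trapping region}, namely $\Down_{\cC_\cF}(A)\subset A$. Choose a closed neighbourhood $V\subset U$ of $A$. Hausdorff convergence gives an $N$ with $H^n_\cF(V)$ inside an arbitrarily thin neighbourhood of $A$ for $n\ge N$, and the block-by-block shadowing argument of that lemma then shows that sufficiently fine chains starting in $A$ never leave a small neighbourhood of $A$. Closing up as in Proposition~\ref{prop: GF-trapping-for-CF} yields the trapping property.

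Now let $x\in\cR_{\cC_\cF}$ and suppose $x\in B(A)\setminus A$, so that $\Om(x)\subset A$. By Proposition~\ref{prop: alternative weak node} there is $i$ with $f_i(x)\Cto x$. If $x\in\cO_\cF(f_i(x))$, then $x$ lies on a positive-length periodic itinerary, so $x\in\Om(x)\subset A$, a contradiction. Otherwise the $\Omega$stream property gives $z\in\Om(f_i(x))\subset\Om(x)\subset A$ with $z\Cto x$. The trapping property then forces $x\in A$, again a contradiction. Hence $x\in A\cup A^*$ for every $A\in\mathscr A$.

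\emph{Second inclusion.} Let $x\notin\cR_{\cC_\cF}$. By Proposition~\ref{prop: alternative weak node}(4) there is $\varepsilon>0$ with no positive-length $\varepsilon$-chain from $x$ to $x$. Let $U$ be the set of endpoints of positive-length $\varepsilon$-chains starting at $x$ (strict inequalities).
\begin{enumerate}
\item $U$ is open, since the last jump can be perturbed.
\item $f_i(x)\in U$ for every $i$, while $x\notin U$.
\item $H_\cF(\overline U)\subset U$: for $y\in\overline U$, pick $u\in U$ close enough that $d(f_i(u),f_i(y))<\varepsilon$. Then appending the jump from $u$ to $f_i(y)$ gives an $\varepsilon$-chain, so $f_i(y)\in U$.
\end{enumerate}
Set $A=\bigcap_n H^n_\cF(\overline U)$. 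This is a nested intersection of nonempty compacta, so $A$ is nonempty, compact and $H$-invariant (argue as in Theorem~\ref{thm: Q => G}). Moreover $e(H^n_\cF(\overline U),A)\to0$, and since $A=H^n_\cF(A)\subset H^n_\cF(U)$, in fact $H^n_\cF(U)\to A$ in $d_H$. So $A\in\mathscr A$ with $U\subset B(A)$. Then $x\notin A$, because $A\subset H_\cF(\overline U)\subset U$. Also $\Om(x)=\Om(H_\cF(\{x\}))\subset\Om(U)\subset A$, so $x\in B(A)$, i.e.\ $x\notin A\cup A^*$.

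The step I expect to be most delicate is the local trapping property $\Down_{\cC_\cF}(A)\subset A$ in the first inclusion. Lemma~\ref{lemma: robust trapping region} uses that $\Gl$ attracts \emph{all} compacta, whereas here $A$ only attracts the neighbourhood $U$. I will need to check that the uniform shadowing estimates on $V$ suffice to keep fine chains inside $V$, and then inside arbitrarily small neighbourhoods of $A$.
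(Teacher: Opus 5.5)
Your proof is correct, and it takes a genuinely different route from the paper. The paper does not prove this statement at all: it imports it from Barnsley--Vince, and later notes that without invertibility the same decomposition follows from Akin--Wiseman (Theorem~\ref{thm:akin}).

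You instead give a direct Conley-style argument using only tools already established in the paper.
\begin{itemize}
\item Attractors are chain-trapping. Combined with the $\Omega$stream property (Proposition~\ref{prop:chains-omega-stream}), this forces any chain-recurrent point of $B(A)$ into $A$.
\item For a non-recurrent $x$, the open set $U$ of endpoints of positive-length $\varepsilon$-chains from $x$ satisfies $H_\cF(\overline U)\subset U$, $H_\cF(\{x\})\subset U$ and $x\notin U$. So $A=\bigcap_n H_\cF^n(\overline U)$ is an attractor with $x\in B(A)\setminus A$.
\end{itemize}

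The step you were worried about is fine. The proof of Lemma~\ref{lemma: robust trapping region} uses attraction only of the closed neighbourhoods $\overline{N_{\eta_0}(\Gl)}$ and $\overline{N_{\eta}(\Gl)}$. It therefore transfers verbatim to $A$ once $\eta_0$ is chosen with $\overline{N_{\eta_0}(A)}\subset U$, so that $\Phi_\cF^I(\overline{N_{\eta_0}(A)})\subset H_\cF^{|I|}(U)$. Here $H_\cF(A)=A$ replaces the $H$-invariance of $\Gl$, and compactness of $X$ supplies the uniform continuity.

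What each approach buys:
\begin{itemize}
\item Your route gives a self-contained proof and shows that invertibility plays no role in this decomposition. In the present setting, its only visible consequence is $H_\cF(X)=X$, i.e.\ $\Gl=X$.
\item The citation buys brevity. Through Akin--Wiseman it also gives the finer information that signatures determine the chain nodes and their downstream order, which your argument does not address.
\end{itemize}

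One small point to tidy up. Since you drop invertibility, $\Gl$ may be smaller than $X$, while the paper states the basin characterization $B(A)=\{x:\Om(x)\subset A\}$ only on $\Gl$. So in the second inclusion, check $x\in B(A)$ directly from the definition:
\begin{itemize}
\item choose $r>0$ with $H_\cF(N_r(\{x\}))\subset U$;
\item then $W=U\cup N_r(\{x\})$ is an open neighbourhood of $A$ with $A\subset H_\cF^n(W)\subset H_\cF^{n-1}(\overline U)$ for $n\geq 1$;
\item hence $H_\cF^n(W)\to A$, so $x\in B(A)$.
\end{itemize}
Likewise, the implication $x\in B(A)\Rightarrow\Om(x)\subset A$ used in your first inclusion follows directly from the definition of the basin.
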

\paragraph{Example: pairs attractor-repeller in the IFS $\cE$.}
Consider the IFS $\cE=\{e_1,e_2\}$ on $X=[0,3]$ introduced below Theorem~\ref{thm: G conn => graph conn}.
The reader can verify that the only attractors of $\cE$ in the sense of Barnsley and Vince are \{0\}, \{3\}, \{0,3\} and $X$. 
Now, we evaluate for each of them the repeller.
\begin{itemize}
    \item [\BF$\{0\}$] For $x\in[0,1)$, both generators move $x$ toward 0, uniformly with respect to all choices of generators. Thus, $B(\{0\})=[0,1)$ and $\{0\}^*=[1,3]$.
    \item [\BF$\{3\}$] Similarly,  $B(\{3\})=(2,3]$ and $\{3\}^*=[0,2]$.
    \item [\BF$\{0,3\}$] Every point outside of the Cantor set $\Lambda$ asymptotes to either 0 or 3, namely $\Om(x)\subset\{0,3\}$ for every $x\in X\setminus\Lambda$.
    Conversely, if $x\in\Lambda$, there is some trajectory that asymptotes to some point of $\Lambda$, so $\Om(x)\not\subset\{0,3\}$ and so $x\not\in B(\{0,3\})$.
    Hence, $B(\{0,3\})=X\setminus\Lambda$ and $\{0,3\}^*=\Lambda$.
    \item [\BF$X$] Since $X$ is $H$-invariant, $B(X)=X$ and $X^*=\varnothing$.
\end{itemize}
As an application of Theorem~\ref{thm: BV}, we obtain again that 
$$
\cR_{\cC_\cE}=(\{0\}\cup[1,3])\cap ([0,2]\cup\{3\})\cap(\{0,3\}\cup\Lambda)=\{0,3\}\cup\Lambda.
$$
Barnsley and Vince do not consider the sorting in nodes of the chain-recurrent set, which instead was studied by Akin and Akin and Wiseman, as we will illustrate in Subsection~\ref{subsec: Akin Wiseman}.
\paragraph{Attractor blocks.}
In the IFS setting, Barnsley and Vince also establish the existence of attractor blocks for Conley attractors.
In our notation, an {\bf attractor block} for an attractor $A$ can be defined as a trapping region $Q$ such that 
$$
A=\bigcap_{n\geq1}H^n_\cF(Q)\text{ and }H_\cF(Q)\subset int(Q).
$$
Hence, attractor blocks are a {\em local} (and stronger) analogue of our global trapping regions, whose attractor is the global attractor $\Gl$, since we showed that, when an IFS $\cF$ has compact dynamics, there are {\em global} trapping regions $Q$ such that $
\Gl=\cap_{n\geq1}H^n_\cF(Q).
$
%

\paragraph{Barnsley-Vince chain recurrence.}
The notion of chain recurrence introduced by Barnsley and Vince in~\cite[Definition~5.4]{BV13} coincides with the plain chain recurrence of Section~\ref{sec: chains}. 
Indeed, they call $x$ chain recurrent when for every $\varepsilon>0$ there is an $\varepsilon$-chain of positive length from $x$ to itself, and Proposition~\ref{prop: alternative weak node} identifies that condition with ours: a positive-length loop either has length $1$ for arbitrarily small $\varepsilon$, forcing $f_i(x)=x$ for some generator, or has length at least $2$, in which case $x$ is chain-equivalent to a point distinct from itself.

Notice that the requirement that these loops have \emph{positive} length is not cosmetic.
The relation $\C_{\F,d,\eta}$ was defined admitting chains of length $0$, since reflexivity is needed for it to be a quasi-order, so ``there is an $\varepsilon$-chain from $x$ to $x$'' holds vacuously for every $x\in X$. 
The loops of length exactly $1$ are the reason why Proposition~\ref{prop: alternative weak node}(3) lists fixed points of a single generator as a separate alternative: a length-$1$ loop of arbitrarily small error forces an exact fixed point rather than a nontrivial chain equivalence.

Although~\cite{BV13} assumes $X$ compact, Theorem~\ref{prop: C = C glob} makes its results available here whenever each generator restricts to a homeomorphism of $\Gl$ onto itself, so that $F|_{\Gl}$ is an invertible IFS on a compact metric space. 
Note that $H$-invariance of $\Gl$ alone is not enough: it gives $\bigcup_i f_i(\Gl)=\Gl$, not $f_i(\Gl)=\Gl$ for each $i$.
Without invertibility one may use Theorem~\ref{thm:akin} instead, which
requires only compactness.
The attractor--repeller characterization of~\cite[Thm.~5.5]{BV13} applies verbatim to $\F|_{\Gl}$ and, through Theorem~\ref{prop: C = C glob}, describes $\cR_{\C_\F}$. 
One caveat is that the attractors of \cite{BV13} are defined through the asymptotics of $H_\F$ on neighbourhoods, so the dual repeller of an attractor is a subset of $\Gl$ rather than of $X$; the decomposition obtained this way is a decomposition of the global attractor.

Our framework departs from \cite{BV13} in four respects. 
First, we do not assume $X$ compact, and Theorems~\ref{thm: Q => G} and~\ref{thm: compact dynamics = compact gl tr reg} identify compact dynamics with the existence of a compact global trapping
region. 
Second, we consider arbitrary streams and not only the chain stream:
Proposition~\ref{prop: A_F via Lyap functs} identifies the smallest stream through Lyapunov functions and Theorem~\ref{thm: Omega-stream} gives conditions under which the chain stream is the smallest one, so that the chain relation is not merely a convenient closure of the orbit relation but the canonical one. 
Third, we encode the resulting decomposition in a directed graph and study its connectedness (Theorems~\ref{thm: weak chain graph is connected} and~\ref{thm: G conn => graph conn}). 
Fourth, and most importantly for $m>1$, we distinguish plain from orbitwise chain recurrence.
The example of Section~\ref{sec: chains} shows that the distinction is not vacuous: there $\cR_{\C_\cE}$ contains a Cantor set $\Lambda$ no point of which is orbitwise chain recurrent. 
Since the nodes carrying the dynamics of the IFS as a semigroup action are the orbitwise ones, this distinction is what makes the graph a faithful record of the qualitative dynamics.

The chaos game results of Barnsley and Le\'sniak \cite{BL14} concern the same class of general IFSs and the same $\varepsilon$-chain apparatus, from the complementary point of view of which orbits realize the attractor; we do not pursue that direction here.

\subsection{Generalized recurrence and Lyapunov functions}

The smallest stream $\cA_\F$ of Section~\ref{sec: streams} is, by
Proposition~\ref{prop: A_F via Lyap functs}, the closed quasi-order generated by the continuous
Lyapunov functions of $\F$. 
For $m=1$ this object is classical: it is the relation introduced by Auslander in \cite{Aus63}, and its recurrent set is Auslander's \emph{generalized recurrent set}. 
The identification is immediate from the definitions.

\begin{proposition}
\label{prop:AFgenrec}
    Let $\F$ be an IFS on $X$. 
    Then
\[
\cR_{\cA_\F}
=\bigl\{x\in X:\ L(f_i(x))=L(x)\ \text{for all }L\in\cL_\F,
\ \text{for some }i\in\{1,\dots,m\}\bigr\},
\]
\[
\cR^{ow}_{\cA_\F}
=\bigl\{x\in X:\ L(z)=L(x)\ \text{for all }L\in\cL_\F
\ \text{and all }z\in\cO_\F(x)\bigr\}.
\]
    In particular, for $m=1$ the set $\cR_{\cA_\F}=\cR^{ow}_{\cA_\F}$ is the generalized recurrent set of the map $f_1$ in the sense of \cite{Aus63}.
\end{proposition}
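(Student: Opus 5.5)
The plan is to unwind the definitions of $\cR_{\cA_\F}$ and $\cR^{ow}_{\cA_\F}$ and then use the description $\cA_\F=\cD_{\cL_\F}$ from Proposition~\ref{prop: A_F via Lyap functs}.

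First I rewrite the stream equivalence. By that proposition, $x\Ato y$ holds if and only if $L(x)\ge L(y)$ for every $L\in\cL_\F$. Hence $x\Aeq y$ holds if and only if $L(x)=L(y)$ for every $L\in\cL_\F$.

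Second, plain recurrence. By definition, $x\in\cR_{\cA_\F}$ means there is a generator $f_i$ with $f_i(x)\Aeq x$. By the first step this is exactly the statement that, for that fixed $i$, $L(f_i(x))=L(x)$ for all $L\in\cL_\F$. This is the first displayed set; the only point to watch is the order of quantifiers (the index $i$ is chosen once, independent of $L$), which matches the definition directly.

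Third, orbitwise recurrence. By definition, $x\in\cR^{ow}_{\cA_\F}$ means $z\Aeq x$ for all $z\in\cO_\F(x)$. By the first step this is equivalent to $L(z)=L(x)$ for all $L$ and all $z$, which is the second displayed set.

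For $m=1$, the orbit of $x$ is $\{f_1^n(x)\}_{n\ge0}$. If $L(f_1(x))=L(x)$ for all $L\in\cL_\F$, then $f_1(x)\Aeq x$. Since $\cA_\F$ contains $\cO_\F$ and is transitive, induction gives $f_1^n(x)\Aeq x$ for every $n$: indeed $x\Ato f_1^n(x)$ always holds, and $f_1^n(x)\Ato f_1^{n-1}(x)$ follows from $f_1(x)\Ato x$. That last step needs care, because closed quasi-orders are not automatically preserved by $f_1$. I would argue it directly in Lyapunov terms instead: for any $L\in\cL_\F$, the function $L\circ f_1^{n-1}$ is again in $\cL_\F$, since $L(f_1^{n-1}(f_1(y)))=L(f_1(f_1^{n-1}(y)))\le L(f_1^{n-1}(y))$. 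Applying the hypothesis to $L\circ f_1^{n-1}$ gives $L(f_1^n(x))=L(f_1^{n-1}(x))$, and chaining these equalities gives $L(f_1^n(x))=L(x)$. So $\cR_{\cA_\F}=\cR^{ow}_{\cA_\F}$ when $m=1$. This set is the set of points where every continuous Lyapunov function is constant along the orbit, which is Auslander's characterization of the generalized recurrent set in \cite{Aus63} (proved via Theorem~\ref{thm: Joe}). The main care point is this last identification with Auslander's original definition, for which I would cite his Lyapunov characterization rather than reprove it.
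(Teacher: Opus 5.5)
Your proof is correct and follows the paper's route. Both arguments reduce membership in $\cR_{\cA_\F}$ and $\cR^{ow}_{\cA_\F}$ to Lyapunov equalities via $\cA_\F=\cD_{\cL_\F}$ and defer the final identification to Auslander; the paper's phrasing ($f_i(x)\Ato x$ together with the automatic reverse inequality) is the same as yours, and your check that $\cR_{\cA_\F}=\cR^{ow}_{\cA_\F}$ for $m=1$, using $L\circ f_1^{n-1}\in\cL_\F$, is valid and supplies a step the paper leaves implicit.
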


\begin{proof}
    By definition $x\in\cR_{\cA_\F}$ if and only if $f_i(x)\succcurlyeq_{\cA_\F}x$ for some generator $f_i$. 
    Since $\cA_\F=D_{\cL_\F}$ by Proposition~\ref{prop: A_F via Lyap functs}, this holds if and only if $L(f_i(x))\ge L(x)$ for every $L\in\cL_\F$. 
    The reverse inequality holds for every $L\in\cL_\F$ by the definition of a Lyapunov function, whence the equality. The orbitwise case is identical, using $\cO_\F\subset\cA_\F$.
\end{proof}

Auslander's relation and Conley's chain relation have been compared systematically in recent years. 
Working with homeomorphisms of metric spaces, Fathi and Pageault \cite{FP15} developed an Aubry--Mather-style approach in which the generalized recurrent set appears as an Aubry set, and related it to the \emph{strong chain recurrent set} $SCR_d(f)$ introduced by Easton \cite{Eas77}: here one requires, for every $\varepsilon>0$, a chain
whose jumps satisfy
\[
\sum_{n=0}^{k-1} d\bigl(f(x_n),x_{n+1}\bigr)<\varepsilon
\]
    rather than the uniform bound used in Section~\ref{sec: chains}. 
    Their main result identifies Auslander's set with the intersection of the strong chain recurrent sets taken over all compatible metrics,
\[
GR(f)=\bigcap_{d} SCR_d(f),
\]
    so that the metric dependence of $SCR_d$ is exactly what separates the two notions. 
    This circle of ideas has been developed further by Wiseman~\cite{Wis18}, by Bernardi and Florio \cite{BF19a}, who obtain a Conley-type decomposition for the strong chain recurrent set, and by Bernardi, Florio and Wiseman \cite{BFW20}, who study the explosions and the Lyapunov functions associated with the generalized recurrent set. 
    Akin and Wiseman~\cite{AW17} treat chain recurrence for closed relations on general spaces, and their results apply to $\RF$ as explained in the next subsection.

\paragraph{A remark on terminology.}
The distinction drawn in this article between plain and orbitwise chain
recurrence is not the distinction between $CR$ and $SCR_d$, and we have
deliberately avoided calling the orbitwise notion ``strong chain recurrence'',
even though it is the stronger of our two conditions. The two notions are incomparable in nature. 
Strong chain recurrence in the sense of Easton and Fathi--Pageault is a genuinely metric notion --- $SCR_d(f)$ depends on the choice of $d$ within a fixed topology, and that dependence is the content of the theorem quoted above --- and it is defined already for $m=1$, where it may be strictly smaller than $CR(f)$. 
Orbitwise chain recurrence, by contrast, is topological, since it is defined through $\C_\F$, which by Proposition~\ref{prop: C_F is topological} does not depend on the compatible metric, and it is vacuous for $m=1$, where $\cR^{ow}_{\C_\F}=\cR_{\C_\F}$. 
It measures the branching of the free-monoid action, not the size of the accumulated error.

\paragraph{Strictness of $\cA_\F\subset\C_\F$.}
Theorem~\ref{thm: Omega-stream} shows that $\cA_\F=\C_\F$ whenever $\cA_\F$ has at most countably many nodes. 
For $m=1$ the question of whether the inclusion can be strict is settled by the literature above: examples of homeomorphisms of compact metric spaces with $GR(f)\subsetneq CR(f)$ are known, and by Proposition~\ref{prop:AFgenrec} any such example is an IFS with $m=1$ for which $\cA_\F\subsetneq\C_\F$. By Theorem~\ref{thm: Omega-stream} the smallest stream of such  a system necessarily has uncountably many nodes, in accordance with Proposition~\ref{prop: Lyapunov levels and chain nodes}.
We do not know whether the phenomenon takes a genuinely different form for $m>1$, that is, whether there exists an IFS with $\cA_\F\subsetneq\C_\F$ whose restriction to each single generator satisfies $\cA_{\{f_i\}}=\C_{\{f_i\}}$.

\paragraph{Complete Lyapunov functions.}
Proposition~\ref{prop:strict-lyapunov} associates with every closed quasi-order $D$ on $X$ a bounded strict Lyapunov function $L_D$, which is constant on each $D$-equivalence class and  strictly decreasing along every strict $D$-relation.
For the chain stream of an IFS with compact dynamics, more is expected: by Theorem~\ref{prop: C = C glob} the chain relation is determined by the restriction of $\F$ to the compact set $\Gl$, and there $\C_\F$ is the chain relation of the closed relation $\RF|_{\Gl}$, so the Conley decomposition  should yield a complete Lyapunov function for $\C_\F$, namely one whose set of critical values is compact and nowhere dense and whose critical set is exactly $\cR_{\C_\F}$. 
Proposition~\ref{prop:strict-lyapunov} already gives constancy on chain nodes and strict decrease along the strict chain order for a Lyapunov function; what it does not provide is the complete separation and nowhere-dense critical-value property.
The existence of a complete Lyapunov function for IFSs on a metric space is a consequence of Theorem 3.12 by Akin in \cite{Akin93}, that grants the existence of a complete chain Lyapunov function for any closed relation on a compact metric space: its critical set is exactly the chain-recurrent set, it is constant on each chain component, takes distinct values on distinct components, and its critical values form a closed nowhere-dense set.
In our setting, this implies the following.
\begin{thmX}[Akin, 1993 \cite{Akin93}, Theorem 3.12]
    If \(\cF\) has compact dynamics, there exists a continuous \(L:\Gl\to[0,1]\) such that \(L\) is constant on every plain chain node, takes distinct values on distinct chain nodes, is strictly decreasing whenever one passes strictly downstream between nodes, and \(L(\cR_{\cC_\cF})\) is compact and nowhere dense.
\end{thmX}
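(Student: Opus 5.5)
The plan is to reduce the statement to Akin's theorem for closed relations on compact metric spaces, applied to the one-step relation $\RF=\bigcup_{i=1}^m\operatorname{graph}(f_i)$ restricted to the global attractor. We then transport the conclusion back to $\cF$ through Theorem~\ref{prop: C = C glob}.

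\textbf{Step 1: setting up the compact relation.} Since $\Gl$ is compact and $H$-invariant, every $f_i$ maps $\Gl$ into itself. Hence
\[
R:=\RF\cap(\Gl\times\Gl)=\bigcup_{i=1}^m\operatorname{graph}(f_i|_{\Gl})
\]
is a closed relation on the compact metric space $\Gl$. It is a finite union of graphs of continuous maps, so it is closed. It is also surjective in both directions: it has nonempty images, and by $H$-invariance it has nonempty preimages. Akin's Theorem~3.12 then provides a continuous $L:\Gl\to[0,1]$ with the following properties:
\begin{itemize}
\item $L$ is constant on each chain component of $R$ and separates distinct components;
\item $L$ strictly decreases from $x$ to $y$ whenever $y$ is chain-downstream from $x$ but $x$ is not chain-downstream from $y$;
\item $L$ maps the chain-recurrent set of $R$ onto a compact nowhere dense set.
\end{itemize}

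\textbf{Step 2: identifying Akin's objects with those of the paper.} This is where the real work lies.

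First, an $\varepsilon$-chain for a relation $R$ is a sequence $x_0,\dots,x_k$ in which each $x_{j+1}$ is $\varepsilon$-close to some element of $R(x_j)=\{f_1(x_j),\dots,f_m(x_j)\}$. This is exactly a Barnsley--Vince $\varepsilon$-chain for $\cF|_{\Gl}$. On a compact space, Akin's chain relation is the intersection over $\varepsilon$ of the $\varepsilon$-chain relations, possibly together with the diagonal. It therefore equals $\cC_{\cF|_{\Gl}}$, which by Theorem~\ref{prop: C = C glob} equals $\cC_\cF|_{\Gl\times\Gl}$. If Akin phrases chains through entourages instead, compactness makes the two formulations equivalent.

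Next, Akin's chain-recurrent points are those admitting positive-length $\varepsilon$-loops for every $\varepsilon$. By Proposition~\ref{prop: alternative weak node}, (1)$\Leftrightarrow$(4), these are exactly the points of $\cR_{\cC_\cF}\cap\Gl$. By Proposition~\ref{prop: GF-trapping-for-CF}, this set is all of $\cR_{\cC_\cF}$.

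Finally, Akin's chain components are the classes of mutual chain equivalence inside the chain-recurrent set. These are precisely our chain nodes.

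\textbf{Step 3: translating the conclusions.} Once the identifications are in place, each property of Akin's function becomes one of the claimed properties:
\begin{itemize}
\item constancy on components means $L$ is constant on every chain node;
\item separation of components means $L$ takes distinct values on distinct chain nodes;
\item strict decrease off the recurrent part, read on the graph $\Gamma_{\cC_\cF}$, means strict decrease along edges between distinct nodes;
\item the nowhere-density statement says exactly that $L(\cR_{\cC_\cF})$ is compact and nowhere dense. Compactness also follows directly from Corollary~\ref{cor: chain nodes are compact} and the continuity of $L$.
\end{itemize}

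\textbf{Main obstacle.} The delicate point is Step 2. One must check that Akin's conventions match ours:
\begin{itemize}
\item whether his chain relation includes length-zero chains;
\item whether his recurrence requires positive-length loops;
\item how he treats points that are fixed by a single generator.
\end{itemize}
The length-one versus length-two dichotomy in Proposition~\ref{prop: alternative weak node} is exactly what reconciles these conventions. If Akin instead works with the transitive closure $\overline{R}\circ R^{*}$, then the ``strictly downstream'' clause has to be read through the closedness and transitivity of $\cC_\cF$ (Proposition~\ref{prop: C is a stream}).
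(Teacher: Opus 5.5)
Your proposal is correct and is essentially the paper's argument: the paper obtains the statement by applying Akin's Theorem~3.12 to the closed relation $\RF|_{\Gl}$ on the compact metric space $\Gl$ and transferring it to $\cF$ through Theorem~\ref{prop: C = C glob}. Your Step~2 spells out identifications the paper leaves implicit (Akin's positive-length loops via Proposition~\ref{prop: alternative weak node}, and $\cR_{\cC_\cF}\subset\Gl$ via Proposition~\ref{prop: GF-trapping-for-CF}); also, strict decrease between distinct nodes already follows from the Lyapunov property plus separation of values, so you do not need the stronger pointwise strictness you attribute to Akin.
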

An immediate corollary is that the space of plain chain nodes, with the quotient topology, is homeomorphic to a compact nowhere-dense subset of [0,1]. Hence it is compact, metrizable and zero-dimensional.

\subsection{Conley theory for closed relations}
\label{subsec: Akin Wiseman}

Systematic Conley-type theories for closed relations on compact spaces were developed by McGehee~\cite{McG92} and Akin~\cite{Akin93}, and subsequently by McGehee and Wiandt~\cite{MW06} and by Akin and Wiseman~\cite{AW17,AW19}. 
These theories are the closest structural antecedents of the present framework. 
McGehee and Wiandt, in particular, argue that closed relations provide a natural setting for Conley theory: they permit multivalued dynamics, restore the symmetry between forward and backward dynamics through the transpose relation, and turn pseudo-orbits into ordinary orbits of neighboring relations.



Their starting point is that an IFS $\F$ determines the closed relation
\[
\RF=\{(x,y)\in X\times X:\ y=f_i(x)\ \text{for some }i\},
\]
whose set-image operator is precisely the Hutchinson map:
\[
\RF(A)=H_\F(A),\qquad \RF^{\,n}(A)=H_\F^n(A)=\bigcup_{|I|=n}\Phi^I_\F(A).
\]

Under this identification the dictionary between the two languages is complete.
Indeed, all the qualitative constructions central to Sections~\ref{sec: IFS}--\ref{sec: Hutch} are determined by $\F$ only through $\RF$:
\begin{itemize}
\item $A$ is forward-invariant if and only if $\RF(A)\subset A$, and
      $H$-invariant if and only if $\RF(A)=A$; these are the confining and the
      invariant sets of $\RF$;
\item $\cO_\cF(x)=\bigcup_{n\ge0}\RF^{\,n}(x)$ and, by
      Proposition~\ref{prop: topological description of Om},
      $\Omega_\F(A)=\bigcap_{n\ge0}\overline{\bigcup_{k\ge n}\RF^{\,k}(A)}$;
\item a sequence $x_0,\dots,x_k$ is an $\varepsilon$-chain if and only if
      $d\bigl(x_{j+1},\RF(x_j)\bigr)<\varepsilon$ is realized by an element of
      $\RF(x_j)$ for every $j$, so $\C_\F$ is determined by $\RF$;
\item consequently the streams of $\F$, their recurrent sets, their nodes and
      their graphs, in both the plain and the orbitwise version, are invariants
      of $\RF$; the same holds for bitrajectories, which are the bi-infinite
      sequences with $b_{n+1}\in \RF(b_n)$.
\end{itemize}

Hence, the stream- and chain-theoretic constructions of Sections~\ref{sec: IFS}--\ref{sec: Hutch} and the structural results formulated solely in terms of them may be read throughout as results about the closed relation $\RF$, and the finitely generated presentation is used only for convenience, chiefly to obtain uniform continuity estimates on compact sets.

For a closed relation on a compact Hausdorff space, McGehee and Wiandt define its Conley relation in a set-theoretical way and their results imply the following.

McGehee and Wiandt define chain recurrence directly for a closed relation $R$: a point $x$ is chain recurrent if $x$ is periodic for every closed neighborhood of $R$ in $X\times X$. 
Equivalently, the chain-recurrent set is the fixed-point set of their Conley relation, which they denote by $R^\Omega$. 
Applied to our setting, their result writes as follows.

\begin{thmX}[McGehee and Wiandt, 2006~\cite{MW06}, Definition 9.22 and Theorem 9.23; see also~\cite{AW17}, Corollary 6.9]
    \label{thm: MW}
    Assume that $\cF$ is an IFS on a compact and metrizable space $X$.
    Then $\cR_{\C_\F}$ coincides with the chain-recurrent set of the closed relation $\RF$ in the sense of~\cite{MW06}.
\end{thmX}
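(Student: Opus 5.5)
The plan is to show that, for an IFS $\cF$ on a compact metrizable $X$, a point $x$ is chain-recurrent for $\cC_\cF$ if and only if it is chain recurrent for $\RF$ in the sense of McGehee--Wiandt. That is, we need $x\in R^{\,k}(x)$ for some $k\geq1$, for every closed neighborhood $R$ of $\RF$ in $X\times X$. The bridge between the two notions is Proposition~\ref{prop: alternative weak node}(4): $x\in\cR_{\C_\F}$ if and only if for every $\varepsilon>0$ there is a positive-length $\varepsilon$-chain from $x$ to $x$. So it suffices to translate positive-length $\varepsilon$-loops into periodic points of neighborhoods of $\RF$, and back.

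Forward direction. Let $R$ be a closed neighborhood of $\RF$. Since $\RF=\bigcup_i\operatorname{graph}(f_i)$ is compact, a Lebesgue-number argument in the compact space $X\times X$ (product metric) gives $\varepsilon>0$ with $N_\varepsilon(\RF)\subset R$. Take a positive-length $\varepsilon$-loop $x_0=x,\dots,x_k=x$ from Proposition~\ref{prop: alternative weak node}(4). Each step satisfies $d(f_{i_j}(x_j),x_{j+1})<\varepsilon$, so $(x_j,x_{j+1})$ lies within $\varepsilon$ of $(x_j,f_{i_j}(x_j))\in\RF$. Hence $(x_j,x_{j+1})\in R$ for every $j$, so $x\in R^{\,k}(x)$ with $k\ge1$, and $x$ is periodic for $R$.

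Converse. Fix $\varepsilon>0$ and set
\[
R_\varepsilon=\{(u,v): d(v,f_i(u))\le\varepsilon/2 \text{ for some } i\}.
\]
This set is closed by continuity of the $f_i$ and finiteness of $\cF$. It is a neighborhood of $\RF$: if $(u,f_i(u))\in\RF$, then any $(u',v')$ close to it satisfies $d(v',f_i(u'))<\varepsilon/2$, again by continuity. Periodicity of $x$ for $R_\varepsilon$ yields a chain $x=x_0,\dots,x_k=x$ with $k\geq1$ and $(x_j,x_{j+1})\in R_\varepsilon$, which is a positive-length $\varepsilon$-loop. Proposition~\ref{prop: alternative weak node} then gives $x\in\cR_{\C_\F}$.

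The main obstacle is matching the exact definitions of \cite{MW06}, in particular ensuring that their ``periodic'' requires positive length, so that it lines up with item (4) rather than with the vacuous reflexive loops. I would also cross-check the result against \cite{AW17}, Corollary 6.9, which phrases chain recurrence via $\varepsilon$-chains of closed relations on compact metric spaces. Compactness is used exactly in the uniform-neighborhood step of the forward direction.
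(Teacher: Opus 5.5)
Your argument is correct. It differs from the paper, which gives no proof here: the paper imports the statement from the intrinsic closed-relation theory of \cite{MW06}, with \cite{AW17}, Corollary~6.9, identifying the neighborhood-defined chain relation with the metric $\varepsilon$-chain relation on compact metric spaces.

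You instead take the definition as the paper glosses it, namely periodic for every closed neighborhood of $\RF$. You then translate it directly into item~(4) of Proposition~\ref{prop: alternative weak node}. One direction uses that every closed neighborhood of the compact set $\RF$ contains a uniform $\varepsilon$-neighborhood. Strictly, this is the positive distance from a compact set to a disjoint closed set rather than a Lebesgue-number argument, and it needs only compactness of $\RF$, not of $X\times X$. The other direction uses the closed neighborhoods $R_\varepsilon$, whose periodic loops are positive-length $\varepsilon$-chain loops.

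Your route gives a self-contained, elementary proof that shows exactly where compactness enters. The inclusion of the McGehee--Wiandt recurrent set into $\cR_{\C_{\F,d}}$ holds on any metric space and for every compatible $d$; only the inclusion $\cR_{\C_\F}\subset$ (McGehee--Wiandt set) needs the uniform neighborhood. Moreover, the McGehee--Wiandt set makes no reference to $d$, so your argument also reproves, for compact $X$, that $\cR_{\C_{\F,d}}$ does not depend on the compatible metric. The citation, by contrast, places $\cR_{\C_\F}$ inside the full Conley-relation and attractor--repeller machinery, which the paper draws on for its other imported results.

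Your stated caveat is the right one. The argument is exactly as good as the identification of the definition in \cite{MW06} with ``$x\in R^n(x)$ for some $n\ge1$, for every closed neighborhood $R$ of $\RF$''. The paper asserts this identification, but you should verify it against their Definition~9.22.
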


The relation-theoretic formulation above shows that our basic  constructions -- streams, recurrent classes, nodes and graphs -- can also be defined for closed relations that do not arise from finitely many continuous maps. 
Theorem~\ref{thm: MW} shows that, for the particular relation $\RF$, for IFSs on compact metrizable spaces our metric chain recurrence agrees with the corresponding intrinsic closed-relation notion of McGehee and Wiandt. 
We do not pursue the general closed-relation extension here.

McGehee and Wiandt also develop the corresponding theory of attractor blocks for closed relations: every Conley attractor admits arbitrarily small closed attractor blocks, and conversely an attractor block determines an attractor. 
In the IFS setting this reduces to the attractor-block construction already discussed in Section~\ref{sec: BV}.

\paragraph{The Conley decomposition  determines the chain graph.}
Recall that Barnsley and Vince proved that, given an IFS $\cF$ with invertible generators over a compact metric space $X$, $\cR_{\cC_\cF}=\cap_{A}(A\cup A^*)$.
Akin's general theory shows that this decomposition holds for more general IFSs and that it does determine the chain nodes as follows.
\begin{thmX}[Akin and Wiseman, 2017~\cite{AW19}, Propositions~6.22,~6.23]
    \label{thm:akin}
    Assume that $\cF$ is an IFS on a  metrizable space $X$, let $d$ be a metric on $X$ compatible with its topology
    and denote by ${\mathscr A}_{\cF,d}$ the set of the attractors of $\cF$ (in the attractor-repeller pairs sense illustrated in Section~\ref{sec: BV}).
    In general, the chain stream may depend on the compatible metric $d$ we choose on $X$ and
    $$
    \cR_{\cC_{\cF,d}}=\bigcap_{A\in{\mathscr A}_{\cF,d}}(A\cup A^*).
    $$
    Moreover, for any $x,y\in\cR_{\cC_{\cF,d}}$,
    $$
     x\Cdto y \iff \text{ for each attractor }A\in{\mathscr A}_{\cF,d} ,\; x\in A\implies y\in A.
    $$
    Similarly, given two chain-equivalent sets $M,N\subset\cR_{\cC_{\cF,d}}$,
    $$
    M\Cdto N \iff \text{ for each attractor }A\in{\mathscr A}_{\cF,d} ,\; M\subset A\implies N\subset A.
    $$
    Equivalently, set 
    $$
    {\mathscr A}_{\cF,d}(x) = \{A\in{\mathscr A}_{\cF,d}\,:\,x\in A\}.
    $$
    Then, for any $x\in\cR_{\cC_{\cF,d}}$,
    $$
    \Node_{\cC_{\cF,d}}(x)=\{y\in \cR_{\cC_{\cF,d}}\,:\,{\mathscr A}_{\cF,d}(x)={\mathscr A}_{\cF,d}(y)\}.
    $$
\end{thmX}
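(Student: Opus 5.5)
The plan is to obtain the statement as a translation of the Akin--Wiseman results on closed relations rather than to re-prove it. Everything goes through the closed relation $\RF=\bigcup_i\operatorname{graph}(f_i)$ introduced above. The argument has three parts: match chains, match attractors, then read off the conclusions.

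\emph{Matching chains.} First I will check that for every $\varepsilon>0$ the $\varepsilon$-chains of $\cF$ in the sense of Barnsley and Vince coincide with the $\varepsilon$-chains of $\RF$ with respect to $d$. A jump $x_j\to x_{j+1}$ with $d(f_{i_j}(x_j),x_{j+1})<\varepsilon$ is exactly a jump to within $\varepsilon$ of some point of $\RF(x_j)$. Because $\RF(x_j)$ is finite, the infimum defining $d(x_{j+1},\RF(x_j))$ is attained, so the strict inequality transfers in both directions. Hence $\cC_{\cF,d}$ equals the metric chain relation $\mathcal C_d\RF$ of Akin--Wiseman. By definition of $\cR_{\cC_{\cF,d}}$ and Proposition~\ref{prop: alternative weak node}, the chain-recurrent set and the equivalence $\Cdeqeta$ (with $\eta=0$) then also agree with their relation-theoretic counterparts.

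\emph{Matching attractors.} Next I will identify ${\mathscr A}_{\cF,d}$ with the family of attractors of $\RF$ used in \cite{AW19}. I will take the Barnsley--Vince attractor with its basin and dual repeller $A^*=X\setminus B(A)$ and rewrite them through $\RF(U)=H_\cF(U)$ and $\RF^{\,n}=H_\cF^n$. Then I will check that $A$ admits a neighbourhood $U$ with $\RF^{\,n}(U)\to A$ exactly when $A=\omega\RF(U)$ for an inward (trapping) neighbourhood $U$, which is the Akin--Wiseman notion. Along the way I will check that the dual repellers coincide. This is where compactness must be handled with care. In \cite{AW19} attractors are defined via the $d$-uniformity, possibly using $\mathcal C_d\RF$-invariant neighbourhoods, so I expect to have to phrase both notions as ``$A=\mathcal C_d\RF(A)$ and $A$ is the $\omega$-limit of a $\mathcal C_d\RF$-invariant neighbourhood''.

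\emph{Conclusion and main obstacle.} Once both dictionaries are in place, Propositions~6.22 and~6.23 of \cite{AW19} give, verbatim, the formula $\cR_{\cC_{\cF,d}}=\bigcap_{A}(A\cup A^*)$ and the criterion for $x\Cdto y$. The version for chain-equivalent sets $M,N$ follows pointwise, since any $x\in M$, $y\in N$ represent the sets by transitivity. The node description then follows by applying the criterion in both directions. The remark that the chain stream may depend on $d$ just records that, without compact dynamics, Proposition~\ref{prop: C_F is topological} is unavailable, so ${\mathscr A}_{\cF,d}$ is genuinely metric-dependent. I expect the main obstacle to be the second step. Showing that the Barnsley--Vince attractors, defined through Hausdorff convergence of $H_\cF^n(U)$ on a possibly noncompact space, are exactly the Akin--Wiseman attractors for the $d$-uniformity requires care with closed versus open neighbourhoods and with the uniformity. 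I would first try to prove both implications using the robust-trapping argument of Lemma~\ref{lemma: robust trapping region} applied locally around $A$.
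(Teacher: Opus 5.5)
Your route is the paper's own. The paper gives no proof of this statement; it imports it from Akin--Wiseman through the $\RF$ dictionary of Subsection~\ref{subsec: Akin Wiseman}. Your first step is fine, up to one small point: $\cC_{\cF,d}$ is the reflexive closure $\Delta_X\cup\mathcal C_d\RF$, not $\mathcal C_d\RF$ itself. Your appeal to Proposition~\ref{prop: alternative weak node} already handles this.

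\textbf{The gap is your second step.} At the generality you are aiming for, you cannot identify the Barnsley--Vince attractors with the inward-set attractors you attribute to Akin--Wiseman. Worse, the displayed formula itself is false when ${\mathscr A}_{\cF,d}$ means the compact Barnsley--Vince attractors. Take $X=\bR$ with the usual metric and $\cF=\{f\}$, $f(x)=x+1$; this even satisfies the paper's standing assumptions. For $\varepsilon<1$ every $\varepsilon$-chain is strictly increasing, so $\cR_{\cC_{\cF,d}}=\varnothing$. Now let $A\neq\varnothing$ be compact and $U\supset A$ open. Then $H_\cF^n(U)=U+n$ stays at Hausdorff distance at least $n-\operatorname{diam}A$ from $A$. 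So there are no nonempty Barnsley--Vince attractors, and admitting $A=\varnothing$ changes nothing since then $A^*=X$. Hence $\bigcap_A(A\cup A^*)=X\neq\cR_{\cC_{\cF,d}}$.

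In the inward-set picture, the escaping points are excluded by the pair generated by the non-compact inward set $[a,\infty)$. Its $\omega$-limit is empty, and so is the $\omega$-limit of $(-\infty,a]$ under $f^{-1}$. Also, $X$ itself is an inward-set attractor but not a Barnsley--Vince one. So no argument can make the two families coincide.

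Your proposed tool points to the missing hypothesis. Lemma~\ref{lemma: robust trapping region} requires compact dynamics and a compact set $\overline{N_{\eta_0}(\Gl)}$, and both fail in this example. To close the plan you have two options:
\begin{enumerate}
\item Take ${\mathscr A}_{\cF,d}$ to be Akin--Wiseman's own attractors for the $d$-uniformity. Then your second step disappears, and the statement is a direct citation once your chain dictionary is in place.
\item Restrict the hypotheses. For compact $X$ the two notions of attractor agree and Akin's 1993 compact-space result applies. Under thick compact dynamics, Theorem~\ref{thm:localization} lets you work on a compact trapping neighbourhood of $\Gl$, where the robust-trapping comparison you have in mind becomes available. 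This is the setting of the paper's own later extension of the signature description.
\end{enumerate}
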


We call ${\mathscr A}_{\cF,d}(x)$ the {\bf signature} of $x$.
Notice that this theorem was proven first by Akin in 1993 (see~\cite{Akin93}, Proposition~3.11) in the compact case, in which case the chain stream and the signature function do not depend on the metric $d$ but only on the topology of $X$.
Notice moreover that the signature is also well-defined for nodes, since all points in the same node have the same signature.
For instance, in case of our example IFS $\cE$ over the compact set $X=[0,3]$, we get that 
$$
{\mathscr A}_\cE(\{0\})=\{\{0\},\{0,3\},X\}, \;{\mathscr A}_\cE(\{3\})=\{\{3\},\{0,3\},X\}\,,\;{\mathscr A}_\cE(\Lambda)=\{X\},
$$
which gives edges from $\Lambda$ to $\{0\}$ and $\{3\}$ and no other edge.


\paragraph{Strong chains and the smallest stream.}
The results of Akin and Wiseman \cite{AW17} also give a chain-theoretic description of the smallest stream.
For a compatible metric $d$, let $A_d\mathscr R_{\mathcal F}$
denote the Aubry--Mather, or strong-chain, relation of
Akin--Wiseman: $(x,y)\in A_d\mathscr R_{\mathcal F}$ if, for every
$\varepsilon>0$, there is a finite chain
$x=x_0,\ldots,x_k=y$, with associated indices
$i_0,\ldots,i_{k-1}$, such that
\[
\sum_{j=0}^{k-1}
d\bigl(f_{i_j}(x_j),x_{j+1}\bigr)<\varepsilon.
\]
Since $\mathscr R_{\mathcal F}$ is compactly upper semicontinuous, Corollary~6.15 gives
\[
\mathcal G\mathscr R_{\mathcal F}
=
\mathcal A_{\mathcal U_M}\mathscr R_{\mathcal F}.
\]
Corollary~5.13 then gives
\[
\mathcal A_{\mathcal U_M}\mathscr R_{\mathcal F}
=
\bigcap_d\mathcal A_d\mathscr R_{\mathcal F}
=
\mathcal A_{d_*}\mathscr R_{\mathcal F}
\]
for some compatible metric $d_*$. Finally, Corollary~6.9 identifies $\mathcal A_d\mathscr R_{\mathcal F}$ with the strong-chain relation defined above.

Since our streams are reflexive, we set
\[
S_{\mathcal F,d}
=
\Delta_X\cup\mathcal A_d\mathscr R_{\mathcal F}
\]
and call it the \emph{strong-chain stream} associated with $d$.
Similarly, we have that
$$
\cA_\cF=\Delta_X\cup\mathcal G\mathscr R_{\mathcal F}
$$
Since our standing assumptions on $X$ imply that $X$ is paracompact and $\sigma$-compact, we can import from their theory the following result.
Notice that Akin and Wiseman do not state such result for IFSs, but it follows directly from their general theory of relations on uniform spaces.
%
\begin{thmX}[Akin-Wiseman, 2017~\cite{AW17}, Corollaries~5.13,~6.9 and~6.15]
    \label{thm: AW}
    Let $\mathcal F$ be an IFS on a separable, locally compact metrizable space $X$ and let $\mathcal M_{X}$ denote the set of its compatible distance functions. 
    Then
\[
\mathcal A_{\mathcal F}
=
\bigcap_{d\in\cM_X} S_{\mathcal F,d}.
\]
    Moreover, there exists a $d_*\in\cM_X$ such that
\[
\mathcal A_{\mathcal F}=S_{\mathcal F,d_*}.
\]
\end{thmX}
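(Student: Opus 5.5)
The plan is to derive the statement directly from the relation-theoretic results of Akin and Wiseman. The only work is translating between their objects and ours, together with checking their hypotheses for the closed relation $\RF$.

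\emph{Step 1: identify $\cA_\cF$ with $\Delta_X\cup\mathcal G\RF$.} Here $\mathcal G\RF$ is the smallest closed transitive relation containing $\RF$. Since $\cO_\cF$ is the union of $\Delta_X$ with the transitive closure of $\RF$, the relation $\Delta_X\cup\mathcal G\RF$ has the required properties.
\begin{itemize}
\item It is closed, as a union of two closed sets.
\item It is transitive. A composition involving $\Delta_X$ returns one of the factors, and $\mathcal G\RF$ is transitive.
\item It contains $\cO_\cF$, hence it is a stream. This gives $\cA_\cF\subset\Delta_X\cup\mathcal G\RF$.
\end{itemize}
Conversely, $\cA_\cF$ is closed and transitive and contains $\RF\subset\cO_\cF$. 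By minimality $\mathcal G\RF\subset\cA_\cF$, and reflexivity gives $\Delta_X\subset\cA_\cF$. Proposition~\ref{prop: A_F via Lyap functs} is not needed here.

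\emph{Step 2: verify the hypotheses of \cite{AW17} for $\RF$.}
\begin{itemize}
\item $\RF=\bigcup_i\operatorname{graph}(f_i)$ is a closed relation.
\item It is compactly upper semicontinuous. For compact $K$, the image $\RF(K)=H_\cF(K)$ is compact. Moreover, closedness of $\RF$ together with local compactness of $X$ gives upper semicontinuity of $x\mapsto\RF(x)$.
\item The space setting is admissible. $X$ is separable, locally compact and metrizable, hence $\sigma$-compact and paracompact. Its uniformity $\mathcal U_M$, generated by all compatible metrics, is therefore the one treated in \cite{AW17}.
\end{itemize}
Under these hypotheses, Corollary~6.15 gives $\mathcal G\RF=\mathcal A_{\mathcal U_M}\RF$. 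Corollary~5.13 then gives $\mathcal A_{\mathcal U_M}\RF=\bigcap_d\mathcal A_d\RF=\mathcal A_{d_*}\RF$ for some compatible metric $d_*$.

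\emph{Step 3: identify $\mathcal A_d\RF$ with the strong-chain relation.} By Corollary~6.9, $\mathcal A_d\RF$ is the relation defined by chains whose total error $\sum_j d\bigl(f_{i_j}(x_j),x_{j+1}\bigr)$ is arbitrarily small. Here we use that $d\bigl(x_{j+1},\RF(x_j)\bigr)$ is attained at some $f_{i_j}(x_j)$, because $\RF(x_j)$ is finite.

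\emph{Step 4: combine.} Taking the union with $\Delta_X$ commutes with intersection: $\Delta_X\cup\bigcap_d\mathcal A_d\RF=\bigcap_d\bigl(\Delta_X\cup\mathcal A_d\RF\bigr)$. Hence
\[
\cA_\cF=\Delta_X\cup\mathcal G\RF=\bigcap_{d\in\cM_X}S_{\cF,d}=S_{\cF,d_*}.
\]

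The main obstacle I expect is Step~2. It requires confirming that the uniform-space hypotheses of Corollaries~5.13 and~6.15 hold exactly in our setting, in particular that the intersection over $\cM_X$ is realized by a single metric $d_*$. That realization uses $\sigma$-compactness, via a countable exhaustion by compact sets and a diagonal choice of metric. I would first check carefully the precise form of compact upper semicontinuity used in \cite{AW17} for $\RF$ on a noncompact $X$.
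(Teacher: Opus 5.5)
Your proposal follows the paper's route: apply Corollaries~6.15, 5.13 and~6.9 of \cite{AW17} to the closed relation $\RF$ and then adjoin $\Delta_X$. Your explicit check that $\cA_\cF=\Delta_X\cup\mathcal G\RF$, and that $\Delta_X\cup(\cdot)$ distributes over the intersection, usefully fills in what the paper states without argument.

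One justification in Step~2 needs correcting. Closedness of a relation plus local compactness of $X$ does not give upper semicontinuity in general: the graph of $x\mapsto 1/x$, $0\mapsto 0$ is closed in $\mathbb R^2$ but not upper semicontinuous at $0$. For $\RF$ the property holds for a different reason. Given any open $U\supset\{f_1(x),\dots,f_m(x)\}$, continuity of the finitely many $f_i$ gives a neighborhood $V$ of $x$ with $\RF(V)\subset U$.
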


Thus, although the strong-chain relation generally depends on the metric, the smallest stream is obtained by intersecting the strong-chain streams over all compatible metrics and, in fact, is already realized by the strong-chain stream associated with one suitably chosen compatible metric.



 

\medskip
It should be clear to the reader by now that what distinguishes the present work from this research direction is not the underlying object but what is done with it.

\begin{enumerate}
    \item We separate plain from orbitwise recurrence. 
    Orbitwise chain recurrence of $x$ requires every point of $\bigcup_n \RF^{\,n}(x)$ to be chain-equivalent to $x$, a condition that is meaningful when the relation is genuinely multivalued along the forward dynamics
    and that is expressible in the closed-relation language, but which, to our knowledge, has not been studied there. 
    It is this notion, and not plain chain recurrence, whose nodes are forward-invariant (Proposition~\ref{prop: streams}) and carry a nonempty $H$-invariant core (Theorem~\ref{thm: N is H-invariant}).
    \item Our main interest is the graph canonically associated to a stream, in particular in its connectedness.
    
    \item We work on locally compact separable metric spaces under the hypothesis of compact dynamics, rather than assuming the phase space compact, and characterize that hypothesis through compact global trapping regions (Theorems~\ref{thm: Q => G} and~\ref{thm: compact dynamics = compact gl tr reg}).
    \item We compare the dynamics of $\F$ on $X$ with that of $H_\F$ on the hyperspace $\KX$. 
    Although $H_\F$ \emph{is} the set-image operator of $\RF$, the chain recurrence of the map $H_\F$ on $\KX$ can distinguish dynamical structures that are invisible to the chain recurrence of the relation $\RF$ on $X$:
    the odometer of Section~\ref{sec: Hutch} has a single chain node while its Hutchinson map has infinitely many, and Proposition~\ref{prop: six nodes} exhibits the same phenomenon for the IFS $\cE$. 
    Proposition~\ref{prop: N->Node(N) is injective} and Theorem~\ref{thm: H one node => F one node} quantify one half of this comparison.
\end{enumerate}

\newcommand{\Sig}{\Sigma^+_m}                 
\newcommand{\sh}{\sigma}                      
\newcommand{\sk}[1]{\widehat{#1}}             
\newcommand{\piX}{\pi_X}                      
\newcommand{\dSig}{d_\Sigma}                  
\newcommand{\GA}[1]{\cG_{#1}}                   
\newcommand{\Hut}[1]{H_{#1}}                  
\newcommand{\Ch}[1]{\mathcal{C}_{#1}}         
\newcommand{\Rec}[1]{\cR_{\Ch{#1}}}             
\newcommand{\Recow}[1]{R^{\mathrm{ow}}_{\Ch{#1}}}   
\newcommand{\Gr}[1]{\Gamma_{\Ch{#1}}}         
\newcommand{\Grow}[1]{\Gamma^{\mathrm{ow}}_{\Ch{#1}}} 
\newcommand{\Nd}[1]{\mathrm{Node}_{\Ch{#1}}}  
\newcommand{\dstr}[1]{\succcurlyeq_{\Ch{#1}}} 
\newcommand{\chq}[1]{\overset{\Ch{#1}}{=}}    
\newcommand{\Orb}[1]{O_{#1}}                  
\newcommand{\multi}{\mathcal{I}^m}            
\newcommand{\Fl}[2]{\Phi^{#1}_{#2}}           


\paragraph{A localization principle.}
Moving the focus from the phase space to the dynamical system allows to extend naturally all results reviewed in this subsection.
Each of those results above says that, if $X$ satisfies some conditions $A$, then every IFS $\cF$ on $X$ satisfies some property $B$.
We are rather interested in finding a set $A'$ of conditions on $\cF$ such that every $\cF$ satisfying $A'$ satisfies property $B$ regardless of the properties of the phase space $X$ -- to be more precise, we still do ask that the phase space be metrizable.


For an IFS $\cF$ with compact dynamics on a metrizable space $X$, under the additional hypothesis that $\Gl$ has a compact 
neighborhood, we recover the results of Theorem~\ref{prop: C = C glob}, namely, independently on the topological properties of $X$, compact-space results concerning the chain-recurrent set, its chain components, and their downstream order can be transferred immediately to such IFSs.
\begin{theorem}[Localization principle]
    \label{thm:localization}
    Let $\mathcal F$ be an IFS with compact dynamics on a metrizable space $X$ and assume that $\Gl$ has a compact neighborhood.
    Then
\[
\cC_\cF|_{\Gl\times \Gl}=\cC_{\cF|_{\Gl}}.
\]
    In particular, both the plain and orbitwise chain-recurrent sets, nodes, and graphs of \(\cF\) coincide with those of \(\cF|_{\Gl}\).
\end{theorem}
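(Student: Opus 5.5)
The plan is to rerun the proof of Theorem~\ref{prop: C = C glob} and to check that local compactness and separability of $X$ enter only through the existence of a compact neighborhood of $\Gl$, which we now assume directly. The inclusion $\cC_{\cF|_{\Gl}}\subset \cC_\cF|_{\Gl\times\Gl}$ is immediate, because every $\eps$-chain of $\cF|_{\Gl}$ is also an $\eps$-chain of $\cF$. For the reverse inclusion I will fix $x,y\in\Gl$ with $x\Cto y$ and $\eps>0$. I then choose $a>0$ such that $\overline{N_a(\Gl)}$ lies inside the given compact neighborhood, and is therefore compact. Such an $a$ exists because $\Gl$ is compact and its neighborhood $V$ satisfies $d(\Gl,X\setminus V)>0$. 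Uniform continuity of the finitely many $f_i$ on this compact set gives $\rho\in(0,\min\{a,\eps/3\})$ with $d(f_i(u),f_i(v))<\eps/3$ whenever $u,v\in\overline{N_a(\Gl)}$ and $d(u,v)<\rho$.

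The key step is to re-establish Lemma~\ref{lemma: robust trapping region} under the new hypotheses. Going through its proof, it uses the following ingredients: compactness of $\overline{N_{\eta_0}(\Gl)}$, which the compact neighborhood supplies; attraction of compact sets by $\Gl$, which is part of compact dynamics; $H$-invariance of $\Gl$; compactness of the finitely many sets $H^j_\cF(\overline{N_\eta(\Gl)})$, which holds as continuous images of a compact set; and finite-step shadowing estimates. None of these needs local compactness of $X$ away from $\Gl$, nor separability. So the lemma holds verbatim, and it yields $\delta_0>0$ such that every $\delta_0$-chain starting in $\Gl$ stays in $N_\rho(\Gl)$.

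Next set $\delta=\min\{\delta_0,\eps/3\}$ and take a $\delta$-chain $x=x_0,\dots,x_n=y$. For each $j$ choose $z_j\in\Gl$ with $d(x_j,z_j)<\rho$, taking $z_0=x$ and $z_n=y$. The three-term triangle inequality
\[
d\bigl(f_{i_j}(z_j),z_{j+1}\bigr)\le d\bigl(f_{i_j}(z_j),f_{i_j}(x_j)\bigr)+d\bigl(f_{i_j}(x_j),x_{j+1}\bigr)+d(x_{j+1},z_{j+1})<\eps
\]
shows that $z_0,\dots,z_n$ is an $\eps$-chain in $\Gl$. Hence $(x,y)\in\cC_{\cF|_{\Gl}}$.

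The statements about recurrent sets, nodes and graphs then follow. They require that every chain-recurrent point of $\cF$ lies in $\Gl$, that is, Proposition~\ref{prop: GF-trapping-for-CF}, whose proof uses the $\Omega$-stream property (Proposition~\ref{prop:chains-omega-stream}) and $\Om(x)\subset\Gl$. I expect the main obstacle to lie here. Proposition~\ref{prop:chains-omega-stream} extracts a convergent subsequence of $\Phi^{J_p}_\cF(x)$ by using the compact neighborhood $K$ of $\Gl$ together with attraction of $\{x\}$, and this remains valid under the present hypothesis. Lemma~\ref{lemma: Om} similarly needs only a compact neighborhood of $\Gl$. I would also check that Proposition~\ref{prop: C is a stream} and the trapping argument use only the metric and not separability. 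Once the recurrent sets are known to lie in $\Gl$, orbitwise recurrence of a point $x\in\Gl$ involves only $\cO_\cF(x)\subset\Gl$, so the nodes and the edges all coincide with those of $\cF|_{\Gl}$.
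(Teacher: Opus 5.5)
Your proof is correct, but it takes a different route from the paper's.

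\textbf{Your route.} You audit the proofs of Theorem~\ref{prop: C = C glob}, Lemma~\ref{lemma: robust trapping region} and Proposition~\ref{prop: GF-trapping-for-CF}, and observe that local compactness and separability enter only through a compact neighborhood of $\Gl$. You then rerun those proofs directly in $X$.

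\textbf{The paper's route.} It reduces to the compact case instead. It builds a compact forward-invariant global trapping region $Q=\bigcup_{|I|<N}\Phi^I_\cF(K)$ that is a neighborhood of $\Gl$. It shows that small chains starting in $\Gl$ stay in $Q$, so $\cC_\cF|_{\Gl\times\Gl}=\cC_{\cF|_Q}|_{\Gl\times\Gl}$. It then applies Theorem~\ref{prop: C = C glob} to $\cF|_Q$ on the compact space $Q$.

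\textbf{Where the routes really differ.} The difference lies in the proof of $\cR_{\cC_\cF}\subset\Gl$.
\begin{itemize}
\item The paper argues directly. A $\delta$-chain from $x\notin\Gl$ of length at least $N_0$ is trapped near $\Gl$ and cannot return to $x$. Loops of bounded length with errors tending to zero yield an exact word with $\Phi^I_\cF(x)=x$, which is impossible because $\Gl$ attracts $\{x\}$.
\item You instead reuse Proposition~\ref{prop: GF-trapping-for-CF}. That result rests on the $\Omega$-stream property and on the Zorn-lemma argument of Proposition~\ref{prop: O-stream}(1), which produces $z\in\Om(x)$ with $z\Ceq x$.
\end{itemize}

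\textbf{A step missing from your checklist.} Your list of results to re-verify omits Proposition~\ref{prop: O-stream}(1). It does survive: it only needs $\Om(f_i(x))$ to be nonempty and compact. Nonemptiness follows from Lemma~\ref{lemma: Om}, and compactness from $\Om(x)\subset\Gl$. You should state this explicitly, since your argument depends on it.

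\textbf{What each route buys.}
\begin{itemize}
\item The paper's route keeps the heavier machinery (the $\Omega$-stream property and the Zorn argument) confined to a compact space where it is already established. It pays for this with the construction of $Q$ and a separate short-loop argument.
\item Your route is shorter to write but depends on auditing a longer dependency chain. As a by-product, it shows that $\cC_\cF$ is an $\Omega$-stream and that $\Gl$ is a chain trapping region under the weaker hypothesis. That is exactly what one needs to extend Proposition~\ref{prop: C_F is topological} to this setting.
\end{itemize}
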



\begin{proof}
We first observe that the hypothesis provides a compact global trapping
region which is a neighborhood of $\Gl$.

    Indeed, since $\Gl$ is compact and has a locally compact neighborhood, there exists a compact set $K\subset X$ such that
\[
    \Gl\subset \operatorname{int}_X K.
\]
    Since $\Gl$ attracts $K$, there exists $N\geq 1$ such that $\Phi_\cF^I(K)\subset \operatorname{int}_X K$ for every $|I|\geq N$.
Set
\[
    Q=\bigcup_{|I|<N}\Phi_\cF^I(K).
\]
The union is finite, hence $Q$ is compact, and $K\subset Q$, so $Q$ is
a neighborhood of $\Gl$.

We claim that $Q$ is forward-invariant. Indeed, applying a generator
to a set $\Phi_\cF^I(K)$ with $|I|<N$ produces an image of $K$ by a word
of length $|I|+1$. If $|I|+1<N$, this image is one of the sets occurring
in the definition of $Q$; if $|I|+1=N$, it is contained in $K\subset Q$.
Thus
\[
    f_i(Q)\subset Q,\qquad i=1,\ldots,m.
\]
    Moreover, if $L\subset X$ is compact, then, since $\Gl$ attracts $L$ and $\operatorname{int}_XK$ is a neighborhood of $\Gl$, there exists $M$ such that $\Phi_\cF^I(L)\subset K\subset Q$ for every $|I|\geq M$.
    Hence $Q$ is a compact global trapping region.

    Choose $r>0$ such that
\[
    \overline{N_r(\Gl)}\subset \operatorname{int}_X Q.
\]
    The finite-block argument of Lemma~\ref{lemma: robust trapping region} now applies inside the compact set $Q$: for every $\rho\in(0,r)$ there exist $\eta\in(0,\rho)$ and $\delta_0>0$ such that every $\delta_0$-chain starting in $N_\eta(\Gl)$ remains in $N_\rho(\Gl)$.

    We first show that $\cR_{\cC_\cF}\subset \Gl$.
    Suppose that $x\notin \Gl$, set $a=d(x,\Gl)>0$ and choose $\rho\in(0,\min\{r,a/2\})$.
    Let $\eta$ and $\delta_0$ be as above. 
    Since $\Gl$ attracts $\{x\}$, there exists $N_0\geq1$ such that $\Phi_\cF^I(x)\in N_{\eta/2}(\Gl)$ whenever $|I|\geq N_0$.
    By continuity of the generators and finiteness of the set of words of length $N_0$, there exists $\delta_1>0$ such that every $\delta_1$-chain
\[
    x=x_0,x_1,\ldots,x_{N_0}
\]
satisfies
\[
    d\bigl(x_{N_0},\Phi_\cF^I(x)\bigr)<\eta/2,
\]
where $I$ is the word associated with the chain. Hence
$x_{N_0}\in N_\eta(\Gl)$.

Set $\delta=\min\{\delta_0,\delta_1\}$. Every $\delta$-chain starting
at $x$ and having length at least $N_0$ therefore enters
$N_\eta(\Gl)$ at time $N_0$ and subsequently remains in
$N_\rho(\Gl)$. Since $x\notin N_\rho(\Gl)$, no such chain can return
to $x$.

Suppose, by contradiction, that $x$ is chain-recurrent. 
By Proposition~\ref{prop: alternative weak node} there is a positive-length $\varepsilon$-chain from $x$ to itself for every $\varepsilon>0$. 
Taking $\varepsilon_n\downarrow0$, all sufficiently small such loops have length less than $N_0$. 
Passing to a subsequence, we may assume that they have the same positive length $k$ and the same associated word $I$. 
Since their errors tend to zero, continuity gives $\Phi_\cF^I(x)=x$.
Repeating the word $I$ produces words of arbitrarily large length sending $x$ to itself, contradicting the fact that $\Gl$ attracts $\{x\}$. 
Hence $\cR_{\cC_\cF}\subset \Gl$.

We now compare the chain relations on $\Gl$. 
The inclusion
\[
    \cC_{\cF|_Q}|_{\Gl\times \Gl}
    \subset C_F|_{\Gl\times \Gl}
\]
    is immediate. 
    Conversely, let $x,y\in \Gl$ and assume $x\succeq_{\cC_\cF}y$. 
    Fix $\varepsilon>0$ and choose $\rho\in(0,r)$. 
    Let $\eta,\delta_0>0$ be as above and take $\delta\in(0,\min\{\delta_0,\varepsilon\})$.
    There is a $\delta$-chain from $x$ to $y$. 
    Since $x\in \Gl\subset N_\eta(\Gl)$, the whole chain remains in $N_\rho(\Gl)\subset Q$.
    It is therefore an $\varepsilon$-chain for $F|_Q$. 
    Since $\varepsilon>0$ was arbitrary,
\beqn
\label{eq: 11-s}
    \cC_\cF|_{\Gl\times \Gl}
    =\cC_{\cF|_Q}|_{\Gl\times \Gl}.
\eeqn

    The space $Q$ is compact, $F|_Q$ has compact dynamics, and its global attractor is $\Gl$. 
    Hence Theorem~\ref{prop: C = C glob}, applied to $F|_Q$, gives
\[
    \cC_{\cF|_Q}|_{\Gl\times \Gl}=\cC_{\cF|_{\Gl}}.
\]
    Together with~\eqref{eq: 11-s}, this yields
\[
    \cC_\cF|_{\Gl\times \Gl}=\cC_{\cF|_{\Gl}}.
\]

    Since every chain-recurrent point of $F$ lies in $\Gl$, and $\Gl$ is forward-invariant, the equality of the two chain relations on $\Gl\times \Gl$ implies
\[
    \cR_{\cC_\cF}=R_{\cC_{\cF|_{\Gl}}}.
\]
    It also implies that the chain-equivalence relations on this common set coincide. 
    Hence the two systems have the same chain nodes, and the downstream relations between these nodes are the same. 
    Therefore
\[
    \Gamma_{\cC_\cF}=\Gamma_{\cC_{\cF|_{\Gl}}}.
\]

    Finally, the orbit of every point of $\Gl$ is contained in $\Gl$.
    Consequently, the same equality of chain relations also yields the corresponding statements for orbitwise chain recurrence, orbitwise chain nodes, and their downstream relations.
\end{proof}

With our angle, we obtain for instance the following extensions of Theorems~\ref{thm: MW} and~\ref{thm:akin}.
Let $\cF$ be an IFS with compact dynamics. 
We say that $\cF$ has a {\bf thick compact dynamics} if $\cF$ has compact dynamics and $\Gl$ has a compact neighborhood.
Notice that this includes every IFS with compact dynamics on a locally compact space.
%
\begin{thmF}
    Let $\cF$ be an IFS on a metrizable space $X$ and assume that $\cF$ has thick compact dynamics.
    Then $\cR_{\C_\F}$ coincides with the chain-recurrent set of the closed relation $\RF|_{\Gl}$ in the sense of~\cite{MW06}.
    In particular, $\cR_{\C_\F}$ is topological.
\end{thmF}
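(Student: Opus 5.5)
The plan is to reduce to the compact case by the localization principle, and then apply the McGehee--Wiandt result on the compact space $\Gl$. Since $\cF$ has thick compact dynamics, Theorem~\ref{thm:localization} gives
\[
\cC_\cF|_{\Gl\times\Gl}=\cC_{\cF|_{\Gl}},
\]
and its proof shows that $\cR_{\cC_\cF}\subset\Gl$. Moreover, that proof shows that the chain-recurrent sets coincide: $\cR_{\cC_\cF}=\cR_{\cC_{\cF|_{\Gl}}}$.

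Here $\cF|_{\Gl}$ is a genuine IFS on $\Gl$, because $\Gl$ is forward-invariant. Since $\Gl$ is compact and metrizable, Theorem~\ref{thm: MW} applies to $\cF|_{\Gl}$. It says that $\cR_{\cC_{\cF|_{\Gl}}}$ is the McGehee--Wiandt chain-recurrent set of the closed relation $\mathscr{R}_{\cF|_{\Gl}}$.

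Next I would check the identification $\mathscr{R}_{\cF|_{\Gl}}=\RF|_{\Gl}$, meaning $\RF\cap(\Gl\times\Gl)$. This holds because $f_i(\Gl)\subset\Gl$, so every pair $(x,f_i(x))$ with $x\in\Gl$ already lies in $\Gl\times\Gl$. Chaining the three equalities proves the first claim.

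For the ``in particular'' clause, the McGehee--Wiandt chain-recurrent set is defined purely in terms of closed neighborhoods of the relation $\RF|_{\Gl}$ in $\Gl\times\Gl$, so it depends only on the topology. One further point needs care: $\Gl$ itself must be independent of the metric $d$. It is, because attraction of compact sets is a topological notion on compact sets. Indeed, $\Gl$ is characterized by Proposition~\ref{prop: char of G} and Proposition~\ref{prop: G_F minimal}, and for a compact $\Gl$, attraction is equivalent to eventually entering every open neighborhood of $\Gl$. Hence $\cR_{\C_\F}$ is topological.

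The main obstacle is making sure Theorem~\ref{thm:localization} really yields equality of the recurrent sets, and not just of the restricted relations. In particular, the fact that $\cR_{\cC_\cF}\subset\Gl$ must hold without local compactness of $X$. That proof provides it, but it relies on a compact global trapping region built from the compact neighborhood of $\Gl$. If this step were questionable, I would first re-derive it from Lemma~\ref{lemma: robust trapping region}, applied inside that compact trapping region $Q$.
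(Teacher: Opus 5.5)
Your proposal is correct and matches the argument the paper intends; the paper gives no separate proof of this result and presents it as a consequence of Theorem~\ref{thm:localization}. The localization principle gives $\cR_{\cC_\cF}=\cR_{\cC_{\cF|_{\Gl}}}$, Theorem~\ref{thm: MW} applied to $\cF|_{\Gl}$ on the compact metrizable space $\Gl$ identifies this set with the McGehee--Wiandt chain-recurrent set of $\RF|_{\Gl}$, and metric-independence follows because that notion is intrinsic. Your check that $\Gl$ does not depend on the metric is a point the paper leaves implicit. Your direct argument settles it: for compact $\Gl$ the sets $N_\eps(\Gl)$ form a neighborhood base, so attraction is topological, and Proposition~\ref{prop: G is unique} gives uniqueness. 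You therefore do not need Propositions~\ref{prop: char of G} and~\ref{prop: G_F minimal}, which the paper establishes under its standing local-compactness assumption.
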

\begin{thmG}
    Let $\cF$ be an IFS on a metrizable space $X$ and assume that $\cF$ has thick compact dynamics.
    Then the chain stream is topological and, for every compatible metric $d$, the signatures of Theorem~\ref{thm:akin} determine the same chain nodes and the same downstream order.
\end{thmG}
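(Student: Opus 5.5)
The plan is to reduce the statement to the compact case through the localization principle (Theorem~\ref{thm:localization}), and then transport the Akin--Wiseman characterization (Theorem~\ref{thm:akin}) from $\Gl$ back to $X$.

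\textbf{Step 1: the chain stream of $\cF$ is topological.} Fix two compatible metrics $d,d'$ on $X$. Their restrictions to the compact set $\Gl$ are compatible metrics on $\Gl$. By Theorem~\ref{thm:localization}, applied once with $d$ and once with $d'$,
\[
\cC_{\cF,d}|_{\Gl\times\Gl}=\cC_{\cF|_{\Gl},d}
\quad\text{and}\quad
\cC_{\cF,d'}|_{\Gl\times\Gl}=\cC_{\cF|_{\Gl},d'}.
\]
On a compact space the chain relation does not depend on the metric. This follows from Proposition~\ref{prop: C_F is topological} applied to $\cF|_{\Gl}$, since a compact space has compact dynamics; alternatively, one can use uniform equivalence of the metrics directly. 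Hence the two restrictions to $\Gl\times\Gl$ agree.

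It remains to handle pairs $(x,y)$ that are not both in $\Gl$, reusing the case analysis of Proposition~\ref{prop: C_F is topological}. That case analysis needs three facts about $\cC_{\cF,d}$: it is an $\Omega$stream, $\Om(x)\subset\Gl$, and $\Gl$ is a chain trapping region. The $\Omega$stream property (Proposition~\ref{prop:chains-omega-stream}) used local compactness only to extract a convergent subsequence of $\Phi^{J_p}_\cF(x)$. Here I would instead use the compact neighborhood $Q$ of $\Gl$ built in the proof of Theorem~\ref{thm:localization}, which eventually contains every orbit. The trapping property follows from the robust trapping estimate, which that proof already establishes inside $Q$. With these facts:
\begin{enumerate}
\item If $x,y\notin\Gl$, then $y\in\cO_\cF(x)$.
\item The case $x\in\Gl$, $y\notin\Gl$ cannot occur.
\item If $x\notin\Gl$, $y\in\Gl$, then the pair reduces, via some $z\in\Om(x)\subset\Gl$ with $z\succcurlyeq y$, to the case already treated on $\Gl\times\Gl$.
\end{enumerate}
Each of these descriptions is metric-free, so $\cC_{\cF,d}=\cC_{\cF,d'}$.

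\textbf{Step 2: signatures.} By Theorem~\ref{thm:localization}, the chain nodes of $\cF$ and their downstream order coincide with those of $\cF|_{\Gl}$, an IFS on a compact metric space. Apply Theorem~\ref{thm:akin} to $\cF|_{\Gl}$ with the metric $d|_{\Gl}$. Chain nodes are then the level sets of the signature, and $M\succcurlyeq N$ exactly when every attractor containing $M$ contains $N$.

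By Akin's compact-case remark, the attractor family of $\cF|_{\Gl}$ and the resulting node and order data depend only on the topology of $\Gl$. Since every compatible metric on $X$ restricts to a compatible metric on $\Gl$, every $d$ yields the same nodes and the same order, which agree with the metric-free nodes and order from Step~1.

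\textbf{Main obstacle.} I expect the hard part to be the precise meaning of ``the signatures of Theorem~\ref{thm:akin}'' for $d$ on $X$ itself. If the attractors $\mathscr A_{\cF,d}$ are meant on all of $X$ rather than on $\Gl$, I would need a bijection between attractors of $\cF$ meeting $\cR_{\cC_\cF}$ and attractors of $\cF|_{\Gl}$, via $A\mapsto A\cap\Gl$. Alternatively, I would argue directly from Theorem~\ref{thm:akin} on $X$: its characterization of $\succcurlyeq_{\cC_{\cF,d}}$ on $\cR_{\cC_{\cF,d}}$, combined with Step~1, shows that the node partition and the order induced by the signature are independent of $d$, even if the attractor families themselves differ. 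I would take this second route, because it avoids comparing attractor families altogether.
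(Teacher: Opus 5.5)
Your proposal is correct, but its chosen route differs from the paper's. The paper gives no separate proof of this theorem. It presents it as an immediate consequence of the localization principle, Theorem~\ref{thm:localization}, which identifies the chain-recurrent set, chain nodes and downstream order of $\cF$ with those of $\cF|_{\Gl}$. This is combined with Akin's compact-space result, where attractors, signatures and the chain relation depend only on the topology of $\Gl$. That is exactly the first half of your Step~2.

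You depart from this in two places.

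First, in Step~1 you prove that $\cC_{\cF,d}$ is metric-independent on all of $X\times X$. You do this by rerunning the four-case argument of Proposition~\ref{prop: C_F is topological}. You first check that its three ingredients need only a compact neighborhood of $\Gl$, not local compactness of $X$:
\begin{itemize}
\item the $\Omega$stream property of Proposition~\ref{prop:chains-omega-stream};
\item the inclusion $\Om(x)\subset\Gl$;
\item the chain-trapping property of $\Gl$, via the robust trapping estimate.
\end{itemize}
That check is right. You also implicitly use that thick compact dynamics does not depend on $d$, which holds because $\Gl$ is compact. Your Step~1 supplies something the localization principle by itself does not state. Theorem~\ref{thm:localization} only controls $\cC_\cF$ on $\Gl\times\Gl$, so the off-attractor pairs, and hence the full claim that the chain stream is topological, need the $\Omega$stream argument you give.

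Second, for the signatures you prefer to apply Theorem~\ref{thm:akin} on $(X,d)$ directly and read off metric-independence of the induced node partition and order from Step~1. This makes the signature claim a one-line corollary. It also sidesteps comparing $\mathscr A_{\cF,d}$ on $X$ with the attractors of $\cF|_{\Gl}$, the ambiguity you rightly flag. The cost is that it relies on the non-compact, metric-dependent form of Theorem~\ref{thm:akin}. It also gives invariance only of the induced partition and order, not of the attractor families themselves.

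The paper's route needs only Akin's compact theorem. It yields a topologically intrinsic attractor family on $\Gl$, but leaves implicit how that family relates to $\mathscr A_{\cF,d}$ on $X$.
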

What we did in the two theorems above is to allow phase spaces not locally compact by requiring in exchange that the global attractor of the IFS has a compact neighborhood.
This localization argument though does not provide an extension of Theorem~\ref{thm: AW}. 
Indeed, thick compact dynamics allows us to localize the chain relation, and hence its recurrent set, nodes and downstream order, to a compact neighborhood of $\Gl$. 
Theorem~\ref{thm: AW}, however, concerns the smallest stream and the strong-chain relations on the whole phase space. 
Although the theorem applies to the restriction of $\cF$ to any compact trapping neighborhood of $\Gl$, there is no corresponding localization result identifying these relations with their ambient counterparts. 

Such an extension, though, can be achieved using a different route, namely giving up compactness in exchange for uniform continuity.
More precisely, in~\cite{DLY26b}, in case of semiflows with compact dynamics on a metrizable space, we showed that, if the dynamics is uniformly continuous in a neighborhood of the global attractor, then the compact-space Conley theory can still be transferred to the ambient system. 
In case of a IFS $\cF$ with strong compact dynamics, we say that $\cF$ has {\bf strong compact dynamics} if there is a neighborhood $U$ of $\Gl$ such that $U$ is attracted by $\Gl$ and each generator $f_i$ is uniformly continuous on $U$.
This case would cover, for instance, IFSs whose generators are time-1 maps of semiflows of reaction-diffusion PDEs over a given Banach space (see~\cite{DLY26b} for a thorough discussion about this case when $m=1$).
For an IFS with strong compact dynamics, the results imported in Section~\ref{subsec: Akin Wiseman} remain available without assuming that $X$ be locally compact. 
For instance, we claim the following extension of Theorem H.
%
\begin{conH}
    Let $\mathcal F$ be an IFS with strong compact dynamics over a metrizable space $X$ and denote by $\cM_X$ the set of all distance functions on $X$ compatible with its topology.
    Then: 
    \begin{enumerate}
        \item its chain stream is topological, namely $\cC_{\cF,d_1}=\cC_{\cF,d_2}$ for any $d_1,d_2\in\cM_X$;
        \item $\displaystyle\mathcal A_{\mathcal F}=\bigcap_{d\in\cM_X} S_{\mathcal F,d}$;
    \item there exists a $d_*\in\cM_X$ such that $\displaystyle\mathcal A_{\mathcal F}=S_{\mathcal F,d_*}$.
    \end{enumerate}
\end{conH}
In case $m=1$, the conjecture is known to be trus and consists of Proposition 5.19 and Proposition 5.39 in~\cite{DLY26b}.
A full proof of the case $m>1$ requires the generalization of several results in the present article to IFSs with strong compact dynamics over metrizable spaces.
We do not pursue this extension to IFS here.




\subsection{Chain stream and skew products}
\label{sec:skew}

Some authors (see e.g.~\cite{Hom18}) study the IFS dynamics by considering the following map.
Set
\[
\text{\BF$\Sigma_m^+$}=\{1,\ldots,m\}^{\mathbb N_0}.
\]
This is called the {\bf one-sided shift space}, which is a metric space with respect to the distance
\[
d_\Sigma(\omega,\nu)
=
\begin{cases}
0, & \omega=\nu,\\[1mm]
2^{-k(\omega,\nu)}, &
k(\omega,\nu):=\min\{j\ge0:\omega_j\neq\nu_j\},
\end{cases}
\]
and call {\bf left shift} the map
\[
\text{\BF$\sigma$}:\Sigma_m^+\longrightarrow\Sigma_m^+,
\qquad
\sigma(\omega_0,\omega_1,\ldots)
   =(\omega_1,\omega_2,\ldots).
\]
Given an IFS $\cF$, we define the skew-product space
\[
\Sigma_m^+\times \Gl
\]
with the product metric
\[
d_{\times}\bigl((\omega,x),(\nu,y)\bigr)
=
\max\bigl\{
d_\Sigma(\omega,\nu),\,d(x,y)
\bigr\}.
\]
We call {\bf full one-sided step skew product} the map
\[
\widehat F:\Sigma_m^+\times \Gl
   \longrightarrow\Sigma_m^+\times \Gl
\]
defined by
\[
\widehat F(\omega,x)
   =\bigl(\sigma\omega,f_{\omega_0}(x)\bigr).
\]
Finally, we define the projection
\[
\text{\BF$\pi_X$}:\Sigma_m^+\times \Gl
   \longrightarrow \Gl,
\qquad
\pi_X(\omega,x)=x,
\]

Since the resulting system is a discrete-time semiflow on a compact space, our stream theory 
applies to it directly, and it is reasonable to ask whether the apparatus of Sections~\ref{sec: streams}--\ref{sec: Hutch} is needed at all. 
The purpose of this subsection is to answer that question. 
Theorem~\ref{thm:skew} below shows that the lift is exactly faithful to the \emph{plain} chain graph of~$\cF$; the two results that follow it show that this is all it is faithful to. 
The lift has no analogue retaining the IFS's orbitwise branching information,
and the hypothesis of the connectedness theorem for semiflows fails for it identically, for every IFS with $m\ge 2$. 
The structure that the present framework is built to record is precisely the structure that the reduction destroys.


\begin{theorem}
    \label{thm:skew}
    Assume that $\cF$ has compact dynamics. 
    Then $\piX$ maps $\cR_{\cC_{\sk{F}}}$ onto $\cR_{\cC_\cF}$ and induces a bijection between the chain nodes of $\sk{F}$ and the chain nodes of $\cF$. 
    More precisely, if $\sk{N},\sk{M}$ are chain nodes of $\sk{F}$, then
\[
  \sk{N}\CtoF\sk{M}
  \iff
  \piX(\sk{N})\Cto\piX(\sk{M}),
\]
    so that $\piX$ induces an isomorphism of directed graphs
\[
  \Gr{\sk{F}}\;\simeq\;\Gr{\cF}.
\]
    Moreover, if $N$ is a chain node of $\cF$, the corresponding node upstairs is
\[
  \sk{N}=\Rec{\sk{F}}\cap\bigl(\Sig\times N\bigr).
\]
\end{theorem}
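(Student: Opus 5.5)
The plan is to compare $\varepsilon$-chains upstairs and downstairs directly, and to handle the itinerary coordinate by using that the shift $\sigma$ is chain transitive on $\Sig$. First I would note that $\sk{F}$ is a continuous map on the compact space $\Sig\times\Gl$. Hence the semiflow theory applies: $\cC_{\sk F}$ is a closed quasi-order, its nodes are compact, and the chain-recurrent points are those admitting arbitrarily small positive-length $\varepsilon$-loops. By Theorem~\ref{prop: C = C glob} the chain relation of $\cF$ on $\Gl$ coincides with that of $\cF|_{\Gl}$, so I may work entirely in $\Gl$.

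\textbf{Key lemma (projection of chains).} I will prove that $(\omega,x)\CtoF(\nu,y)$ implies $x\Cto y$. Conversely, if $x\Cto y$ by chains of positive length, then $(\omega,x)\CtoF(\nu,y)$ for all $\omega,\nu$.

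For the first direction, take a $\delta$-chain $(\omega^j,x_j)$ in the max metric. Its $X$-components satisfy $d(f_{\omega^j_0}(x_j),x_{j+1})<\delta$, so they form a $\delta$-chain for $\cF$ with indices $i_j=\omega^j_0$.

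For the converse, let $x=x_0,\dots,x_k=y$ be an $\varepsilon$-chain with indices $i_0,\dots,i_{k-1}$, where $k\ge1$. Choose $n$ with $2^{-n}<\varepsilon$. I will build upstairs points whose symbol sequences are
\[
\omega^0=(i_0,i_1,\dots,i_{k-1},\nu_0,\nu_1,\dots)
\]
and its successive shifts, so that each step downstairs uses the correct first symbol. I only need to splice two extra jumps:
\begin{itemize}
\item[] at the start, replace $\omega$ by $\omega^0$ with a jump in the $\Sigma$ coordinate;
\item[] at the end, land on $\nu$ exactly.
\end{itemize}
A $\Sigma$-jump is cheap only if the sequences agree on a long prefix. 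To arrange this, I first prepend $n$ exact steps of $\sk F$ from $(\omega,x)$ and then return to $x$. Downstairs this requires an $\varepsilon$-chain from $\Phi^{I}(x)$ back to $x$, which holds when $x$ is chain recurrent, i.e.\ when $x\Cto$ its images. So I will state the converse only for chain-recurrent $x,y$ and for the case where $y$ lies downstream of a recurrent point; this suffices for nodes.

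Concretely: for $x\in\cR_{\cC_\cF}$, use the loops given by Proposition~\ref{prop: alternative weak node}(4) to absorb an arbitrary prefix. Since $\sigma$ is chain transitive, any symbol sequence can be reached with errors at most $2^{-n}$ by inserting one jump after shifting $n$ times. I expect this bookkeeping, which synchronizes the symbolic jump with a downstairs loop so that the first symbols still match, to be the main obstacle. What I would try first is concatenating a downstairs loop at $x$ of length at least $n$ with the target itinerary.

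\textbf{Recurrent sets.} If $(\omega,x)$ is $\sk F$-chain-recurrent, the projected loops show $x\in\cR_{\cC_\cF}$. Conversely, for $x\in\cR_{\cC_\cF}$, I would build an upstairs recurrent point by taking $\omega$ to be a symbol sequence that periodically repeats loops at $x$ of shrinking size. Alternatively, I would use the bitrajectory in $N$ from Lemma~\ref{lemma: characterization of weak recurrence}, with $\omega$ its forward itinerary. The lemma above then gives arbitrarily small loops at $(\omega,x)$. This proves that $\piX(\cR_{\cC_{\sk F}})=\cR_{\cC_\cF}$.

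\textbf{Nodes and edges.} The converse half of the lemma, applied to recurrent points, shows that all of $\cR_{\cC_{\sk F}}\cap(\Sig\times N)$ is mutually equivalent. The forward half shows that equivalent points upstairs project to equivalent points downstairs. Together these give $\sk N=\cR_{\cC_{\sk F}}\cap(\Sig\times N)$, that the induced map on nodes is a bijection, and the stated equivalence for downstream relations. This yields the graph isomorphism.
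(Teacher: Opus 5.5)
\paragraph{Gap in the converse of your key lemma.}
Your projection half is correct. The splice you have in mind also has the right shape: burn the forced prefix with exact steps, make one symbolic jump, then lift a downstairs chain whose itinerary is followed by $\nu$. The gap is the converse half of your key lemma. It is false under the hypothesis you settle on ($x\in\cR_{\cC_\cF}$ and $\omega$ arbitrary), and the step meant to justify it does not work.

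In any $\varepsilon$-chain for $\sk{F}$ starting at $(\omega,x)$, the first fibre step is $f_{\omega_0}$. For $\varepsilon\le 2^{-L}$, the first $L+1$ fibre steps are forced to use $\omega_0,\dots,\omega_L$. So you need a downstairs chain that returns to $x$ (or goes on to $y$) from $\Phi_\cF^{\omega_0\cdots\omega_L}(x)$, i.e.\ from an image along a \emph{prescribed} word.

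Plain chain recurrence only gives $f_i(x)\Cto x$ for \emph{some} $i$. (You wrote that $x\Cto$ its images, which is the trivial direction.) The loops of Proposition~\ref{prop: alternative weak node}(4) come with their own itineraries. So they cannot absorb an arbitrary prefix, and they cannot be concatenated with the target itinerary while matching the first symbols of $\omega$.

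A concrete counterexample is $\cE$. Take $x\in g_1(\Lambda)$ and any $\omega$ with $\omega_0=2$. Then $e_2(x)\in(0,1)$, and every sufficiently small chain from $(\omega,x)$ has all fibre points after the first inside some $[0,r]$ with $r<1$. Hence $(\omega,x)$ reaches no $(\nu,y)$ with $y\in\Lambda\setminus\{x\}$, although $\Lambda$ is a single chain node.

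\paragraph{The missing idea.}
Put the hypothesis upstairs, $z=(\omega,x)\in\Rec{\sk{F}}$, and obtain the return chain from the single-map theory. Chain nodes of $\sk{F}$ are invariant (Theorem~\ref{thm:B}(3)), so $\sk{F}^{2L+2}(z)$ is $\cC_{\sk{F}}$-equivalent to $z$. Projecting with your first half gives $q:=\piX(\sk{F}^{2L+2}(z))\Ceq x$, and hence $q\Cto y$. Let $a_0,\dots,a_{r-1}$ be the indices of an $\varepsilon$-chain from $q$ to $y$.

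Now the splice closes:
\begin{enumerate}
\item take $L+1$ exact steps;
\item jump to the symbol sequence $\eta=(\omega_{L+2},\dots,\omega_{2L+1},a_0,\dots,a_{r-1},\nu_0,\nu_1,\dots)$, at cost at most $2^{-L}$;
\item take $L$ exact steps, which consume exactly $\omega_{L+2},\dots,\omega_{2L+1}$ and bring the fibre to $q$;
\item lift the chain from $q$ to $y$.
\end{enumerate}
This upstairs version is exactly what your ``Nodes and edges'' step needs, since both endpoints there are $\sk{F}$-recurrent. With it, the rest of your argument goes through.

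Your bitrajectory construction for surjectivity is fine. It works precisely because the itinerary of a bitrajectory inside $N$ keeps the forced prefix inside $N$, which is the property that fails for a general $\omega$. The paper instead lifts $\varepsilon_n$-loops using periodic symbol sequences and passes to a limit in $\Sig$.
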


\begin{proof}
By Theorem~\ref{prop: C = C glob}, the chain-recurrent set, the chain nodes and
the chain graph of $F$ coincide with those of $F|_{\Gl}$, so no generality
is lost in working over $\Gl$.

\medskip
\noindent\textbf{Step 1 (projection).}
If $(\omega^0,x_0),\dots,(\omega^n,x_n)$ is an $\varepsilon$-chain for
$\sk{F}$, then, the product metric being the maximum of the two coordinate
metrics, its $j$-th jump satisfies
$d\bigl(f_{\omega^j_0}(x_j),x_{j+1}\bigr)<\varepsilon$; hence
$x_0,\dots,x_n$ is an $\varepsilon$-chain for $\cF$ of the same length. In
particular positive-length chain loops project to positive-length chain loops,
so Proposition~\ref{prop: alternative weak node} gives
\begin{equation}\label{eq:skew-proj}
  \piX\bigl(\Rec{\sk{F}}\bigr)\subset\Rec{\cF}.
\end{equation}

\medskip
\noindent\textbf{Step 2 (lifting).}
Let $x\in\Rec{\cF}$ and let $\varepsilon_n\downarrow 0$. For each $n$ choose a
positive-length $\varepsilon_n$-chain loop
$x=x^{(n)}_0,\dots,x^{(n)}_{k_n}=x$ with associated word
$I_n=i^{(n)}_0\cdots i^{(n)}_{k_n-1}$, and let $\omega^{(n)}\in\Sig$ be the
periodic sequence obtained by repeating $I_n$. Since
$\sh^{k_n}\omega^{(n)}=\omega^{(n)}$, the sequence
\[
  \bigl(\omega^{(n)},x^{(n)}_0\bigr),
  \bigl(\sh\omega^{(n)},x^{(n)}_1\bigr),\dots,
  \bigl(\sh^{k_n}\omega^{(n)},x^{(n)}_{k_n}\bigr)
\]
is a positive-length $\varepsilon_n$-chain loop for $\sk{F}$ based at
$z_n=(\omega^{(n)},x)$: the symbolic coordinate evolves exactly and the fibre
error is that of the original loop. By compactness of $\Sig$ we may assume
$\omega^{(n)}\to\omega$; set $z=(\omega,x)$. Given $\varepsilon>0$, choose $n$
with $d(z_n,z)<\varepsilon/3$, $d\bigl(\sk{F}(z_n),\sk{F}(z)\bigr)<\varepsilon/3$
and $\varepsilon_n<\varepsilon/3$, and replace the initial and final
occurrences of $z_n$ in the above loop by $z$; the triangle inequality shows
that the result is a positive-length $\varepsilon$-chain loop based at $z$.
By Proposition~\ref{prop: alternative weak node}, $z\in\Rec{\sk{F}}$. Together with
\eqref{eq:skew-proj} this gives
\begin{equation}\label{eq:skew-onto}
  \piX\bigl(\Rec{\sk{F}}\bigr)=\Rec{\cF}.
\end{equation}

\medskip
\noindent\textbf{Step 3 (lifting the downstream relation).}
We claim that if $z=(\omega,x)\in\Rec{\sk{F}}$ and $x\Cto y$, then
$(\omega,x)\dstr{\sk{F}}(\nu,y)$ for every $\nu\in\Sig$.

Fix $\varepsilon>0$ and choose $L\ge 1$ with $2^{-L}<\varepsilon$. Since
$z\in\Rec{\sk{F}}$ and the chain nodes of a semiflow are invariant
(Theorem~\ref{thm:B}(3)), $z\chq{\sk{F}}\sk{F}^{2L+2}(z)$; projecting by
Step~1 and setting $q=\piX\bigl(\sk{F}^{2L+2}(z)\bigr)$ we obtain
$q\chq{F}x$, and hence $q\Cto y$. Choose an $\varepsilon$-chain
\[
  q=y_0,y_1,\dots,y_r=y
\]
for $F$, with associated indices $a_0,\dots,a_{r-1}$.

Write $\sk{F}^{L+1}(z)=(\sh^{L+1}\omega,p)$ and put $p'=f_{\omega_{L+1}}(p)$,
so that $\sk{F}^{L+2}(z)=(\sh^{L+2}\omega,p')$. Define
\[
  \eta=(\omega_{L+2},\dots,\omega_{2L+1},a_0,\dots,a_{r-1},\nu_0,\nu_1,\dots).
\]
The sequences $\eta$ and $\sh^{L+2}\omega$ agree in their first $L$ entries, so
$\dSig(\eta,\sh^{L+2}\omega)\le 2^{-L}<\varepsilon$. Consequently
\[
  (\sh^{L+1}\omega,p)\longmapsto(\eta,p')
\]
is an $\varepsilon$-step for $\sk{F}$: its fibre component is exact, while its
symbolic error is smaller than $\varepsilon$.

Starting from $(\eta,p')$, follow $\sk{F}$ exactly for $L$ steps. The symbols
consumed are $\omega_{L+2},\dots,\omega_{2L+1}$, that is, exactly those consumed
by $\sk{F}^{L+2}(z)\mapsto\sk{F}^{2L+2}(z)$; hence this brings us to
\[
  \bigl((a_0,\dots,a_{r-1},\nu_0,\nu_1,\dots),\,q\bigr).
\]
We may now lift the chosen chain from $q$ to $y$:
\[
  \bigl((a_0,\dots,a_{r-1},\nu),y_0\bigr),
  \bigl((a_1,\dots,a_{r-1},\nu),y_1\bigr),\dots,
  (\nu,y_r)=(\nu,y).
\]
At each of these steps the symbolic coordinate evolves exactly and the fibre
error is smaller than $\varepsilon$. Prepending the exact orbit segment from
$z$ to $\sk{F}^{L+1}(z)$ gives an $\varepsilon$-chain from $(\omega,x)$ to
$(\nu,y)$. Since $\varepsilon>0$ was arbitrary, the claim follows.

\medskip
\noindent\textbf{Identification of the nodes.}
Let $\sk{N}$ be a chain node of $\sk{F}$, choose $(\omega,x)\in\sk{N}$ and set
$N=\Nd{F}(x)$, which is legitimate by Step~1. Projection of chains gives
$\piX(\sk{N})\subset N$. Conversely, let $y\in N$ and use Step~2 to choose a
lift $(\nu,y)\in\Rec{\sk{F}}$. Since $x\chq{F}y$, applying Step~3 in both
directions yields $(\omega,x)\chq{\sk{F}}(\nu,y)$, so $(\nu,y)\in\sk{N}$ and
$y\in\piX(\sk{N})$. Hence $\piX(\sk{N})=N$, and the same argument shows that
two chain nodes of $\sk{F}$ with the same projection coincide; by Step~2 every
chain node of $F$ is attained. This also gives
$\sk{N}=\Rec{\sk{F}}\cap(\Sig\times N)$.

Finally, if $\sk{N}\dstr{\sk{F}}\sk{M}$ then projecting a chain gives
$\piX(\sk{N})\Cto \piX(\sk{M})$; the converse is Step~3 applied to a pair of
chain-recurrent lifts. Thus $\piX$ preserves and reflects the downstream
order, and therefore induces an isomorphism $\Gr{\sk{F}}\simeq\Gr{\cF}$.
\end{proof}

By Theorem~\ref{thm:skew} the lift therefore records $\Gr{\cF}$ and nothing more: the orbitwise graph is not encoded upstairs in any form, not merely absent from the standard notation. 
The IFS $\mathcal{E}$ of Section~\ref{sec: chains} makes the loss explicit. 
There, indeed,
\[
  \Gr{\sk{\mathcal{E}}}\;\simeq\;\Gr{\mathcal{E}}\;=\;
  \bigl(\{0\}\longleftarrow\Lambda\longrightarrow\{3\}\bigr),
\]
whereas $\Grow{\mathcal{E}}$ consists of the two isolated nodes $\{0\}$ and
$\{3\}$. 
In particular the Cantor set $\Lambda$, no point of which is orbitwise chain recurrent, is an ordinary chain node of the lift, and the distinction which for $m>1$ carries the dynamics of the IFS as a semigroup action is invisible after the reduction.

\paragraph{Connectedness of the chain graph of the skew product.}
We point out an interesting fact related to the skew product.

\begin{proposition}
    \label{prop:skew-attractor}
    Assume that $\cF$ has compact dynamics. 
    Then $\sk{F}$ has compact dynamics and
\[
  \GA{\sk{F}}=\Sig\times\Gl.
\]
    If $m\ge 2$, this set is disconnected, whatever $\Gl$ is.
\end{proposition}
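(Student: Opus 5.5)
The phase space of $\sk{F}$ is $\Sig\times\Gl$ itself, which is compact: $\Sig$ is compact (a countable product of finite discrete sets, with $\dSig$ inducing the product topology) and $\Gl$ is compact by definition. So the plan is first to apply Corollary~\ref{cor: X compact => F has compact dynamics} in the case $m=1$. Viewing $\sk{F}$ as a single-map IFS on the compact space $\Sig\times\Gl$, it has compact dynamics, and its global attractor equals the set of limit points of the whole space $\Sig\times\Gl$. By Propositions~\ref{prop: G is unique} and~\ref{prop: G_F minimal}, it will then be enough to show that the compact set $\Sig\times\Gl$ is itself invariant under $\sk{F}$. Once that is done, it is trivially a compact invariant set attracting every compact subset of the phase space, and uniqueness gives $\GA{\sk{F}}=\Sig\times\Gl$.

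Forward invariance is immediate, because each $f_i$ maps $\Gl$ into $\Gl$ and $\sh$ maps $\Sig$ into $\Sig$. The key step is surjectivity of $\sk{F}$ onto $\Sig\times\Gl$, and this is where $H$-invariance of $\Gl$ is used. Fix a target $(\nu,y)\in\Sig\times\Gl$. Since $H_\cF(\Gl)=\Gl$, there are an index $i$ and a point $x\in\Gl$ with $f_i(x)=y$. Setting $\omega=(i,\nu_0,\nu_1,\dots)$, we get $\sh\omega=\nu$ and $\omega_0=i$, so $\sk{F}(\omega,x)=(\nu,y)$. This gives $\sk{F}(\Sig\times\Gl)=\Sig\times\Gl$, so the set is invariant. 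Attraction is vacuous, since every compact subset of the phase space already lies inside this set.

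For disconnectedness when $m\ge2$, the plan is to split the set by the first symbol. For each $i$ the cylinder $[i]=\{\omega\in\Sig:\omega_0=i\}$ is both open and closed: it is the $\dSig$-ball of radius $1$ around any of its points, since two sequences with the same first symbol are at distance at most $1/2$, and sequences with different first symbols are at distance exactly $1$. Take $A=[1]\times\Gl$ and $B=\bigl(\Sig\setminus[1]\bigr)\times\Gl$. These are disjoint, open and closed, and cover $\Sig\times\Gl$. Both are nonempty, because $\Gl\neq\varnothing$ and $m\ge2$ guarantees a symbol other than $1$. Hence $\Sig\times\Gl$ is disconnected, whatever $\Gl$ is.

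I expect no serious obstacle. The only delicate point is to confirm that the global attractor is taken relative to the phase space $\Sig\times\Gl$ rather than a larger space. With that convention, surjectivity via $H$-invariance is the substantive step.
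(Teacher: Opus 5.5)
Your proof is correct and is essentially the paper's. It has the same three ingredients: surjectivity of $\widehat{F}$ on $\Sigma_m^+\times\Gl$ from $H_\cF(\Gl)=\Gl$ by prepending the index $i$ to $\nu$, attraction being trivial because this set is the whole phase space, and disconnectedness from a clopen splitting of $\Sigma_m^+$, where your first-symbol cylinder is a concrete instance of the paper's generic clopen partition of the Cantor set. The appeal to Corollary~\ref{cor: X compact => F has compact dynamics} and to minimality is harmless but unnecessary: exhibiting one compact invariant set that attracts every compact subset already gives compact dynamics by definition, and uniqueness then identifies it with the global attractor.
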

\begin{proof}
    Since $\Gl$ is $H$-invariant, $\Hut{F}(\Gl)=\Gl$, so for every $(\nu,y)\in\Sig\times\Gl$ there are $i\in\{1,\dots,m\}$ and $x\in\Gl$ with $f_i(x)=y$; taking $\omega=i\nu$ gives $\sk{F}(\omega,x)=(\nu,y)$. 
    Hence $\sk{F}$ is onto and $\Sig\times\Gl$ is invariant. 
    It is compact, and it is the whole phase space of $\sk{F}$, so it trivially attracts each of its compact subsets. 
    It is therefore the global attractor of $\sk{F}$.

    If $m\ge 2$, the set $\Sig$ is a Cantor set; choosing a partition $\Sig=U\sqcup V$ into nonempty clopen sets gives the partition $\GA{\sk{F}}=(U\times\Gl)\sqcup(V\times\Gl)$ into nonempty closed disjoint sets.
\end{proof}

\begin{corollary}
    \label{cor:skew-conn}
    Let $m\ge 2$. 
    Then the hypothesis of Theorem~\ref{thm:B}(4) fails for $\sk{F}$ for every IFS $F$ with compact dynamics. 
    Nevertheless, if $\Gl$ is connected, then $\Gr{\sk{F}}$ is connected.
\end{corollary}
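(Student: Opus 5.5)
The first assertion is immediate from Proposition~\ref{prop:skew-attractor}: for $m\ge2$ the global attractor $\Sig\times\Gl$ of $\sk{F}$ is disconnected, so the connectedness hypothesis of Theorem~\ref{thm:B}(4) never holds for the lift, whatever $\Gl$ is.

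For the second assertion I will not use the lift's own connectedness theory. Instead I will transport connectedness from downstairs through the graph isomorphism of Theorem~\ref{thm:skew}. If $\Gl$ is connected, Theorem~\ref{thm: weak chain graph is connected} says that $\Gr{\cF}$ is connected. By Theorem~\ref{thm:skew}, $\piX$ induces a bijection of chain nodes that preserves and reflects the downstream order, so $\Gr{\sk{F}}\simeq\Gr{\cF}$ as directed graphs.

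The point that needs care is that connectedness in this paper is not purely combinatorial. It is defined through node partitions whose unions are closed and disjoint, so I must check that $\piX$ matches node partitions of the two graphs. Let $\widehat{\mathcal N}_1,\widehat{\mathcal N}_2$ be a node partition of $\Gr{\sk{F}}$ with no edges between the two families. Let $\widehat N_1,\widehat N_2$ be the unions of their nodes; these are closed and disjoint subsets of $\Rec{\sk{F}}$. Set $\mathcal N_i=\{\piX(\sk{M}):\sk{M}\in\widehat{\mathcal N}_i\}$ and let $N_i$ be the union of the nodes in $\mathcal N_i$, so that $N_i=\piX(\widehat N_i)$.

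Each $\widehat N_i$ is closed in the compact space $\Sig\times\Gl$, hence compact, so $N_i$ is compact and therefore closed. By the node bijection, $\mathcal N_1$ and $\mathcal N_2$ are disjoint nonempty families covering all chain nodes of $\cF$, and since distinct nodes are disjoint, $N_1\cap N_2=\varnothing$. Thus $\mathcal N_1,\mathcal N_2$ is a node partition of $\Gr{\cF}$.

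Connectedness of $\Gr{\cF}$ then gives an edge, say $M_1\Cto M_2$ with $M_i\in\mathcal N_i$. Because $\piX$ reflects the downstream order (the converse direction in Theorem~\ref{thm:skew}), the corresponding lifted nodes satisfy $\sk{M}_1\CtoF\sk{M}_2$. This is an edge between $\widehat{\mathcal N}_1$ and $\widehat{\mathcal N}_2$, contradicting the choice of partition. Hence $\Gr{\sk{F}}$ is connected.

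The only step requiring more than citation is closedness of $N_i$. It rests on compactness of $\Sig\times\Gl$ and continuity of $\piX$, which together make projected closed sets closed.
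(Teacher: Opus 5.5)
Your proof is correct and follows the paper's own argument. The first assertion comes from Proposition~\ref{prop:skew-attractor}, and the second from combining Theorem~\ref{thm: weak chain graph is connected} with the graph isomorphism of Theorem~\ref{thm:skew}; your added check that a node partition upstairs projects to a node partition downstairs (closedness via compactness of $\Sigma_m^+\times\Gl$ and continuity of $\pi_X$) fills in a step the paper leaves implicit, which is needed because the paper's notion of graph connectedness is topological rather than purely combinatorial.
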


\begin{proof}
    The first assertion is  Proposition~\ref{prop:skew-attractor}. 
    For the second, Theorem~\ref{thm: weak chain graph is connected} gives that $\Gr{\cF}$ is connected, and Theorem~\ref{thm:skew} gives $\Gr{\sk{F}}\simeq\Gr{\cF}$.
\end{proof}

Hence neither Theorem~\ref{thm: weak chain graph is connected} nor Theorem~\ref{thm: G conn => graph conn} can be obtained by transporting Theorem~\ref{thm:B}(4) through the lift: for $m\ge 2$ the global attractor of $\sk{F}$ is never connected. Incidentally, the step skew products of IFSs with connected global attractor thus furnish a natural family of discrete-time semiflows with disconnected global attractor and connected chain graph, so that connectedness of the global attractor is
sufficient but far from necessary for connectedness of the chain graph.

\subsection{Semigroup actions and chain recurrence}

There is a substantial literature extending Conley's theory from
flows and semiflows to actions of more general semigroups. Particularly relevant to the present work is the theory developed by Braga Barros and Souza in~\cite{BS10}, and the related work of Lalwani~\cite{Lal22} and Kumar and Lalwani~\cite{KL24}.

Braga Barros and Souza consider an arbitrary semigroup \(S\) acting on a topological space \(M\). Since a general semigroup has no distinguished notion of positive time tending to infinity, their asymptotic notions depend on an additional family \(\mathscr F\) of subsets of \(S\), which specifies a direction of limiting behavior.
For instance, the usual semiflow case is recovered from \(S=\mathbb R_+\) by taking the tails $[t,\infty)$, $t\geq0$.

Given such a family, they introduce corresponding limit sets, attractors and complementary repellers and develop a notion of chain recurrence for the semigroup action. 
Under suitable hypotheses, they obtain a Conley-type characterization of the chain-recurrent set in terms of attractor--repeller pairs. 
Braga Barros and Souza subsequently developed the corresponding theory of finest Morse decompositions for semigroup actions, with particular emphasis on actions on fiber bundles~\cite{BS10b}.

This point of view is closely related to the one adopted here, but there is an important difference. 
An IFS $\mathcal F=\{f_1,\ldots,f_m\}$ determines, on the one hand, the semigroup of transformations $S_{\mathcal F}
 =
 \{\Phi_{\mathcal F}^{I}: I\in\mathbb I^m\}$, but, on the other hand, it also determines the action of the free monoid
\[
 \Phi_{\mathcal F}:\mathbb I^m\times X\longrightarrow X.
\]
These two objects need not carry the same information. Distinct words \(I,J\in\mathbb I^m\) may define the same transformation,
$\Phi_{\mathcal F}^{I}=\Phi_{\mathcal F}^{J}$, and are therefore identified in \(S_{\mathcal F}\), whereas they remain distinct itineraries in the IFS. 
In the present article we retain the free-monoid structure. 
In particular, the length $|I|$ provides a canonical notion of time, and our limit sets are defined by sequences of words satisfying \(|I_n|\to\infty\).

Thus, an IFS should not in general be regarded merely as the abstract semigroup of transformations generated by its maps. This distinction is especially relevant when there are relations among the generators.
For example, if \(f_1^2=f_1\), the words $1,\;11,\;111,\ldots$ have lengths tending to infinity although, as transformations, they all determine the same element of \(S_{\mathcal F}\).

One can of course apply the framework of Braga Barros and Souza directly to the free monoid \(S=\mathbb I^m\). 
The natural family encoding the notion of time used here would then be
\[
 \mathscr F_{\rm len}
 =
 \{\mathbb I^m_n:n\geq0\},
 \qquad
 \mathbb I^m_n=\{I\in\mathbb I^m:|I|\geq n\}.
\]
However, the translation properties imposed on the families of
semigroup elements in several results of semigroup-action Conley
theory need not hold for this natural length filtration. For example,
if \(J\) is a nonempty word, then $\mathbb I^m_nJ$ consists only of words ending in \(J\), and therefore contains no
whole length tail \(\mathbb I^m_N\); similarly,
\(J\mathbb I^m_n\) contains only words beginning in \(J\).
This reflects the branching nature of the free-monoid time of an IFS.

There is a related, but different, approach in the work of Lalwani~\cite{Lal22} and Kumar and Lalwani~\cite{KL24}. They consider a semigroup \(G\) of continuous self-maps generated by a family of maps.
For \(g\in G\), an \((\varepsilon,g)\)-chain has links of the form
\[
 x_j\longmapsto g_j\circ g(x_j),
 \qquad g_j\in G\cup\{\id\},
\]
up to an error smaller than \(\varepsilon\), and a point is called chain recurrent if it admits such a chain loop for every \(\varepsilon>0\) and every \(g\in G\). 
In the compact metric case, Lalwani proves, for abelian \(G\), a Conley-type characterization
\[
 X\setminus CR(G)
 =
 \bigcup_A \bigl(B(A)\setminus A\bigr),
\]
where the union is taken over their attractors \(A\).
Kumar and Lalwani extend this result to noncompact metric spaces by
following Hurley's approach: the constant error \(\varepsilon\) is
replaced by a positive continuous function
\[
 \varepsilon:X\longrightarrow(0,\infty).
\]
They prove, in particular, that this chain-recurrent set is closed,
that it is invariant when \(G\) is abelian, and, again for abelian
\(G\), obtain the same Conley characterization in terms of
attractors and their basins.

Their notion of chain recurrence should not be identified with the ordinary chain recurrence used in the present article. 
Here, an \(\varepsilon\)-chain consists of successive applications of individual generators,
 \[
  d(f_{i_j}(x_j),x_{j+1})<\varepsilon,
 \]
with the generator \(i_j\) freely chosen at every step. 
Now, suppose that $G$ is generated by a finite family $\cF=\{f_1,\dots,f_m\}$. 
Then $CR(G)\subset\cR^{ow}_{\cC_\cF}$.
To prove the inclusion, let $x\in CR(G)$ and $g\in G$ and fix a generator $f_i$. 
For every $\eps>0$, apply semigroup chain recurrence to $f_i\circ g\in G$, obtaining an $(\eps,f_i\circ g)$-loop based at $x$. 
Expanding each semigroup element occurring in this loop into a word in the generators gives an ordinary $\eps$-chain of $\cF$ from $g(x)$ to $x$. 
Since $g(x)$ belongs to the $\cF$-orbit of $x$, we also have that $x\Cto g(x)$, so that $g(x)\Cto x$. 
Since $g\in G$ was arbitrary, $x\in \cR_{\cC_\cF}^{ow}$.


There is a similar distinction at the level of asymptotic behavior.
Kumar and Lalwani call a sequence of semigroup maps ``unbounded'' when one fixed generator occurs an increasing number of times in its representatives, possibly interspersed with other semigroup elements.
Our definition instead requires only $|I_n|\longrightarrow\infty$, and therefore treats all increasingly long itineraries on an equal footing.

Another fundamental difference concerns invariance. 
In semigroup dynamics an invariant set is normally required to be preserved by the whole semigroup. 
For a general IFS this is often too restrictive.
Already for a contractive IFS whose generators have different fixed points there need not exist any nonempty compact set invariant under every generator. 
For this reason, in the present setting the natural replacement at the level of the global attractor is \(H\)-invariance, namely $H_{\mathcal F}(A)=A$, rather than invariance under every element of the semigroup.
Likewise, ordinary chain nodes of an IFS need not be forward-invariant. 
This motivates the separate notion of \emph{orbitwise chain recurrence}: its nodes retain the chain equivalence structure while being respected by the whole
free-monoid action.

Thus, the semigroup-action theory and the present framework address
closely related Conley-type questions from complementary viewpoints.
The former studies recurrence and attraction for an abstract
semigroup after specifying an appropriate direction to infinity;
the latter keeps the chosen finite generating family, the branching
set of finite itineraries, and their word lengths as part of the
dynamical structure. This extra information is responsible, in
particular, for the distinction between chain recurrence and
orbitwise chain recurrence and for the comparison, developed here,
between pointwise IFS dynamics and the Hutchinson dynamics on the
hyperspace.



\bibliographystyle{amsplain}  
\bibliography{refs}

\end{document}